\theoremstyle{plain}
\newtheorem*{theorem*}{Theorem}
\newtheorem{theorem}[subsection]{Theorem}
\newtheorem{lemma}[subsection]{Lemma}
\newtheorem{proposition}[subsection]{Proposition}
\newtheorem{corollary}[subsection]{Corollary}
\newtheorem{definition}[subsection]{Definition}
\newtheorem{remark}[subsection]{Remark}
\DeclareMathOperator{\degree}{deg}
\DeclareMathOperator{\modulus}{mod}
\DeclareMathOperator{\residue}{Res}
\DeclareMathOperator{\rad}{rad}
\renewcommand{\Re}{\operatorname{Re}}
\newcommand*\conj[1]{\overline{#1}}
\newcommand{\sumstar}{\sideset{}{^*} \sum}
\begin{document}

\title{The Fourth Power Mean of Dirichlet $L$-functions in $\mathbb{F}_q [T]$}
\author{J. C. Andrade}
\author{M. Yiasemides}
\date{\today}
\address{Department of Mathematics, University of Exeter, Exeter, EX4 4QF, UK}
\email{j.c.andrade@exeter.ac.uk}
\email{my298@exeter.ac.uk}

\subjclass[2010]{Primary 11M06; Secondary 11M38, 11M50, 11N36}
\keywords{moments of $L$-functions, Dirichlet characters, polynomials, function fields}

\maketitle

\begin{abstract}
\noindent We prove results on moments of $L$-functions in the function field setting, where the moment averages are taken over primitive characters of modulus $R$, where $R$ is a polynomial in $\mathbb{F}_{q}[T]$. We consider the behaviour as $\degree R \rightarrow \infty$ and the cardinality of the finite field is fixed. Specifically, we obtain an exact formula for the second moment provided that $R$ is square-full, and an asymptotic formula for the fourth moment for any $R$. The fourth moment result is a function field analogue of Heath-Brown's result in the number field setting, which was subsequently improved by Soundararajan. Both the second and fourth moment results extend work done by Tamam in the function field setting who focused on the case where $R$ is prime.
\end{abstract}

%\lfoot{\today}

\tableofcontents

\section{Introduction}

\noindent The study of moments of families $L$-functions is a central theme in analytic number theory. These moments are connected to the famous Lindel\"{o}f hypothesis for such $L$-functions and have many applications in analytic number theory. It is a very challenging problem to establish asymptotic formulas for higher moments of families of $L$-functions and until now we only have asymptotic formulas for the first few moments of any given family of $L$-functions. However, we do have precise conjectures for higher moments of families of $L$-functions due to the work of many mathematicians (see for example \cite{IntegralMomLFunc_CFKRS} and \cite{MultDirSerMomZetaLF_DiaconuEtAl}). In this paper the focus is on the moments of Dirichlet $L$-functions associated to primitive Dirichlet characters.  \\

\noindent In 1981, Heath-Brown \cite{FourthPowerMeanDLF_HeathBrown1981} proved that

\begin{align} \label{Heath-Brown Primitive Fourth Moment}
\sumstar_{\chi (\modulus q)} \Big\lvert L \Big( \frac{1}{2} , \chi \Big) \Big\rvert^4
= \frac{1}{2 \pi^2} \phi^* (q) \prod_{p \mid q} \frac{(1-p^{-1})^3}{(1+p^{-1})} (\log q)^4
+ O \big( 2^{\omega (q)} q (\log q)^3 \big) ,
\end{align}
where, for all positive integers $q$, $\sumstar_{\chi (\modulus q)}$ represents a summation over all primitive Dirichlet characters of modulus $q$, $\phi^* (q)$ is the number of primitive characters of modulus $q$, and $\omega (q)$ is the number of distinct prime divisors of $q$ and $L ( s , \chi )$ is the associated Dirichlet $L$-function. \\
%analytic continuation to $\mathbb{C} \backslash \{ 1 \}$ of the following function which is defined for $\Re (s) > 1$.
%\begin{align*}
%L ( s , \chi ) = \sum_{n=1}^{\infty} \frac{ \chi (n)}{n^s} .
%\end{align*}

\noindent In the equation above (\ref{Heath-Brown Primitive Fourth Moment}), in order to ensure that the error term is of lower order than the main term, we must restrict $q$ to

\begin{align*}
\omega (q) \leq \frac{\log \log q - 7 \log \log \log q }{\log 2} .
\end{align*}

\noindent Soundararajan \cite{FourthMomDLF_Sound2007} addressed this by proving that

\begin{align*}
\sumstar_{\chi (\modulus q)} \Big\lvert L \Big( \frac{1}{2} , \chi \Big) \Big\rvert^4
= \frac{1}{2 \pi^2} \phi^* (q) \prod_{p \mid q} \frac{(1-p^{-1})^3}{(1+p^{-1})} (\log q)^4 \bigg( 1 + O \Big( \frac{\omega (q)}{\log q} \sqrt{\frac{q}{\phi (q)}} \Big) \bigg)
+ O \big( q (\log q)^{\frac{7}{2}} \big) .
\end{align*}
Here, the error terms are of lower order than the main term without the need to have any restriction on $q$. \\

\noindent In a breakthrough paper, Young \cite{FourthMomDirLFunc_Young_2011} obtained explicit lower order terms for the case where $q$ is an odd prime and was able to establish the full polynomial expansion for the fourth moment of the associated Dirichlet $L$-functions. In other words, he proved that

\begin{align*}
\frac{1}{\phi^* (q)} \sumstar_{\chi (\modulus q)} \Big\lvert L \Big( \frac{1}{2} , \chi \Big) \Big\lvert^4
= \sum_{i=0}^{4} c_i (\log q)^i + O \big( q^{-\frac{5}{512} + \epsilon} \big) ,
\end{align*}
where the constants $c_i$ are computable. The error term was subsequently improved by Blomer \textit{et al.} \cite{MomTwistLF_BlomerFouvryEtAl} who proved that
\begin{align*}
\frac{1}{\phi^* (q)} \sumstar_{\chi (\modulus q)} \Big\lvert L \Big( \frac{1}{2} , \chi \Big) \Big\lvert^4
= \sum_{i=0}^{4} c_i (\log q)^i + O_{\epsilon} \big( q^{-\frac{1}{32} + \epsilon} \big) .
\end{align*}

\noindent In the function field setting Tamam \cite{FourthMoment_Tamam} established that

\begin{align*}
\frac{1}{\phi (Q)} \sum_{\substack{\chi (\modulus Q) \\ \chi \neq \chi_0}} \Big\lvert L \Big( \frac{1}{2} , \chi \Big) \Big\rvert^2
= \degree Q + \frac{1}{(q^{\frac{1}{2}} -1)^2} \bigg( 1 - \frac{2}{\lvert Q \rvert^{\frac{1}{2}} +1} \bigg) 
\end{align*}
and

\begin{align*}
\frac{1}{\phi (Q)} \sum_{\substack{\chi (\modulus Q) \\ \chi \neq \chi_0}} \Big\lvert L \Big( \frac{1}{2} , \chi \Big) \Big\rvert^4
= \frac{q-1}{12q} (\degree Q)^4 + O\big( (\degree Q)^3 \big) 
\end{align*}
as $\degree Q \rightarrow \infty$. Here, $Q$ is an irreducible, monic polynomial in $\mathbb{F}_{q}[T]$ with $\mathbb{F}_{q}$ a finite field with $q$ elements; $\chi_0$ is the trivial character (in this case, of modulus $Q$); and, for non-trivial characters of modulus $Q$, 

\begin{align*}
L \Big( \frac{1}{2} , \chi \Big) = \sum_{\substack{A \in \mathcal{M} \\ \degree A < \degree Q}} \frac{\chi (A)}{\lvert A \rvert^{\frac{1}{2}}} ,
\end{align*}
where $\mathcal{M}$ is the set of monic polynomials $\mathbb{F}_{q}[T]$. \\

\noindent In this paper we prove the function field analogue of Soundararajan's result, which is also an extension of Tamam's fourth moment result. In order to accomplish this we prove, along the way, a function field analogue of a special case of Shiu's Brun-Titchmarsh theorem for multiplicative functions \cite{BrunTitchTheoMultFunc_Shiu}. We also obtain an exact formula for the second moment, similar to Tamam's second moment result; our result holds for square-full moduli, $R$, whereas Tamam's result holds for irreducible moduli, $Q$.

%In this paper the implied constants in our asymptotic results may depend on the variable $q$ (where $q\geq 2$). For example, the implied constant could be $\frac{1}{q-1}$. In this case, we can see that for all $q \geq 2$ we have that $\frac{1}{q-1} \leq 1$ and so we can choose our implied constant to be independent of $q$. Unless otherwise indicated, implied constants are taken to be completely independent (in the cases where that is possible). 
\vspace{1cm}

\noindent \textbf{Acknowledgment:} The first author is grateful to the Leverhulme Trust (RPG-2017-320) for the support through the research project grant ``Moments of $L$-functions in Function Fields and Random Matrix Theory". The second author is grateful for an EPSRC Standard Research Studentship (DTP).

%%%%%%%%%%%%%%%%%%%%%%%%%%%%
%%%%%%%%%%%%%%%%%%%%%%%%%%%%

\section{Statement of Results}

\noindent Let $q \in \mathbb{N}$ be a prime-power. We denote the finite field of order $q$ by $\mathbb{F}_q$. We denote the ring of polynomials over the finite field $\mathbb{F}_q$ by $\mathcal{A} := \mathbb{F}_q [T]$. Unless otherwise stated, for a subset $\mathcal{S} \subset \mathcal{A}$ we define $\mathcal{S}_n := \{ A \in \mathcal{S} : \degree A = n \}$. We identify $\mathcal{A}_0$ with $\mathbb{F}_q$. Also, if we have some non-negative real number $x$, then range $\degree A \leq x$ is not taken to include the polynomial $A=0$. \\

\noindent The \emph{norm} of $A \in \mathcal{A} \backslash \{ 0\}$ is defined by $\lvert A \rvert := q^{\degree A}$, and for the zero polynomial we define $\lvert 0 \rvert := 0$. \\

\noindent We denote the set of monic polynomials in $\mathcal{A}$ by $\mathcal{M}$. For $a \in (\mathbb{F}_q)^*$ we denote the set of polynomials, with leading coefficient equal to $a$, by $a \mathcal{M}$. Because $\mathcal{A}$ is an integral domain, an element is prime if and only if it is irreducible. We denote the set of prime monic polynomials in $\mathcal{A}$ by $\mathcal{P}$, and all references to primes (or irreducibles) in the function field setting are taken as being monic primes. Also, when indexing, the upper-case letter $P$ always refers to a monic prime. Furthermore, if we range over polynomials $E$ that divide some polynomial $F$, then these $E$ are taken to be the monic divisors only. \\

\begin{definition}[Dirichlet Characters]
Let $R \in \mathcal{M}$. A \emph{Dirichlet character} on $\mathcal{A}$ with modulus $R$ is a function $\chi : \mathcal{A} \longrightarrow \mathbb{C}^*$ satisfying the following properties. For all $A,B \in \mathcal{A}$:

\begin{enumerate}
\item $\chi (AB) = \chi (A) \chi (B)$;
\item \label{Dirichlet character definition, equal mod R point} If $A \equiv B (\modulus R)$, then $\chi (A) = \chi (B)$;
\item $\chi (A) = 0$ if and only if $(A,R) \neq 1$.
\end{enumerate}
\end{definition}

\noindent Due to point \ref{Dirichlet character definition, equal mod R point}, we can view a character $\chi$ of modulus $R$ as a function on $\mathcal{A} \backslash R \mathcal{A}$. This makes expressions such as $\chi (A^{-1})$ well-defined for $A \in \big( \mathcal{A} \backslash R \mathcal{A} \big)^*$. \\

\noindent We can deduce that $\chi (1) =1$ and $\lvert \chi (A) \rvert =1$ when $(A,R)=1$. We say that $\chi$ is the \emph{trivial character} of modulus $R$ if $ \chi (A) =1$ when $(A,R)=1$, and this is denoted by $\chi_0$. Otherwise, we say that $\chi$ is \emph{non-trivial}. Also, there is only one character of modulus $1$ and it simply maps all $A \in \mathcal{A}$ to $1$. \\

\noindent It can easily be seen that the set of characters of a fixed modulus $R$ forms an abelian group under multiplication. The identity element is $\chi_0$. The inverse of $\chi$ is $\conj\chi$, which is defined by $\conj\chi (A) = \conj{\chi (A)}$ for all $A \in \mathcal{A}$. It can be shown that the number of characters of modulus $R$ is $\phi (R)$. \\

\noindent A character $\chi$ is said to be \emph{even} if $\chi (a) = 1$ for all $a \in \mathbb{F}_q$. Otherwise, we say that it is \emph{odd}. The set of even characters of modulus $R$ is a subgroup of the set of all characters of modulus $R$. It can be shown that there are $\frac{1}{q-1} \phi (R)$ elements in this group. \\

\begin{definition}[Primitive Character]
Let $R \in \mathcal{M}$, $S \mid R$ and $\chi$ be a character of modulus $R$. We say that $S$ is an \emph{induced modulus} of $\chi$ if there exists a character $\chi_1$ of modulus $S$ such that

\begin{align*}
\chi (A)
=\begin{cases}
\chi_1 (A) &\text{ if $(A,R)=1$} \\
0 &\text{ otherwise} .
\end{cases}
\end{align*}
$\chi$ is said to be \emph{primitive} if there is no induced modulus of strictly smaller norm than $R$. Otherwise, $\chi$ is said to be \emph{non-primitive}. $\phi^* (R)$ denotes the number of primitive characters of modulus $R$.
\end{definition}

\noindent We note that all trivial characters of some modulus $R \neq 1$ are non-primitive as they are induced by the character of modulus $1$. We also note that if $R$ is prime, then the only non-primitive character of modulus $R$ is the trivial character of modulus $R$. We denote a sum over primitive characters of modulus $R$ by the standard notation $\sumstar_{\chi (\modulus R)}$. \\

\begin{definition}[Dirichlet $L$-functions]
Let $\chi$ be a Dirichlet character. The associated L-function, $L(s,\chi )$, is defined for $\Re (s) > 1$ by

\begin{align*}
L(s,\chi )
:= \sum_{A \in \mathcal{M}} \frac{\chi (A)}{\lvert A \rvert^s} .
\end{align*}
This has an analytic continuation to either $\mathbb{C}$ or $\mathbb{C} \backslash \{ 1 \}$ depending on the character.
\end{definition}

\noindent In this paper, we will prove the following two main results. The first one is an exact formula for the second moment of Dirichlet $L$-functions in function fields.

\begin{theorem} \label{Second primitive moment function fields statement}
Let $R$ be a square-full polynomial. That is, if $P \mid R$ then $P^2 \mid R$. Then,

\begin{align*}
\sumstar_{\chi (\modulus R)} \Big\lvert L \Big( \frac{1}{2}, \chi \Big) \Big\rvert^2 = & \frac{\phi (R)^3}{\lvert R \rvert^2} \degree R + \bigg( \frac{\phi (R)^3}{\lvert R \rvert^2} - \frac{\phi (R)^2}{\lvert R \rvert^2}\bigg) \sum_{P \mid R} \frac{\degree P}{\lvert P \rvert -1} \\
&+ \frac{1}{\big( q^{\frac{1}{2}} -1 \big)^2} \bigg( - \frac{\phi (R)^3}{\lvert R \rvert^2} + 2 \frac{\phi (R)}{\lvert R \rvert^{\frac{1}{2}}} \prod_{P \mid R} \Big(1-\frac{1}{\lvert P \rvert^{\frac{1}{2}}} \Big)^2 \bigg) .
\end{align*}
\end{theorem}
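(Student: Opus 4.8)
The plan is to expand the fourth-power mean directly via the orthogonality relations for primitive characters. Writing
\begin{align*}
\Big\lvert L \Big( \tfrac{1}{2}, \chi \Big) \Big\rvert^2 = \sum_{\substack{A, B \in \mathcal{M} \\ \degree A, \degree B < \degree R}} \frac{\chi(A) \conj{\chi}(B)}{\lvert A \rvert^{1/2} \lvert B \rvert^{1/2}},
\end{align*}
one sums over primitive $\chi$ of modulus $R$. Since the square-full hypothesis on $R$ is in force, I would first establish (or cite) the clean form of the primitive-character orthogonality relation: for $(AB, R) = 1$, the sum $\sumstar_{\chi(\modulus R)} \chi(A)\conj{\chi}(B)$ equals a sum over divisors $E$ of $R$ with $R/E$ dividing $A - B$, weighted by the Möbius function $\mu(E)$ and $\phi(R/E)$ — the function-field analogue of the classical formula. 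The square-full condition is what makes the ensuing divisor sums factor neatly over the primes dividing $R$, because the relevant local factors at each $P \mid R$ involve $\lvert P \rvert$ and $\lvert P \rvert^{1/2}$ in a uniform way rather than depending on the exact exponent of $P$ in $R$.

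Next I would swap the order of summation, bringing the character sum inside, so that the main object becomes
\begin{align*}
\sum_{E \mid R} \mu(E) \phi\!\Big( \frac{R}{E} \Big) \sum_{\substack{A \equiv B \,(\modulus R/E) \\ (AB,R)=1}} \frac{1}{\lvert A \rvert^{1/2} \lvert B \rvert^{1/2}},
\end{align*}
with $A, B$ ranging over monic polynomials of degree less than $\degree R$. The inner double sum splits according to the common residue class $C$ modulo $R/E$; for each such $C$ coprime to $R$ one counts monic polynomials of each degree in that class and forms the resulting partial sums of the geometric-type series $\sum \lvert A \rvert^{-1/2}$. This is where the $\degree R$ and the $\big(q^{1/2}-1\big)^{-2}$ denominators enter: summing $\lvert A \rvert^{-1/2}$ over monic $A$ of degree up to $\degree R - 1$ (with congruence restriction) produces a linear-in-$\degree R$ term plus a bounded tail governed by $q^{1/2}-1$. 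I would then handle the coprimality constraint $(AB,R)=1$ by an inclusion–exclusion (another Möbius sum over divisors of $\rad R$), which interacts with the congruence condition and, thanks to square-fullness, again factors as an Euler product over $P \mid R$, yielding the products $\prod_{P\mid R}(1 - \lvert P\rvert^{-1/2})^2$ and the combinations of $\phi(R)/\lvert R\rvert$ that appear in the statement.

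The main obstacle I anticipate is the bookkeeping in the range $\degree A, \degree B < \degree R$ interacting with the modulus $R/E$: when $E$ is small, $R/E$ can have degree comparable to $\degree R$, so the number of admissible degrees in a fixed residue class is small and one must be careful that the "diagonal" contribution $A = B$ (giving the $\degree R$ main term and the $\sum_{P\mid R}\degree P/(\lvert P\rvert - 1)$ correction) is separated cleanly from the off-diagonal terms $A \neq B$ with $A \equiv B \,(\modulus R/E)$, which contribute the $\big(q^{1/2}-1\big)^{-2}$ block. Organising the computation so that each power-of-$q$ and each local Euler factor is tracked exactly — with no error term, since the claim is an identity — is the delicate part; I would do this by computing the generating function in a formal variable $u$ tracking $\degree A$ (so that $L(s,\chi)$ becomes a polynomial in $u = q^{-s}$), extracting the needed coefficient, and specialising to $s = 1/2$, which converts the divisor sums into finite, explicitly summable expressions. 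A final simplification step collects the pieces into the three displayed groups.
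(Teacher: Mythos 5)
Your plan is essentially the paper's proof: expand $\lvert L(\tfrac{1}{2},\chi)\rvert^2$ as the double sum over monic $A,B$ of degree below $\degree R$, apply the M\"obius-inverted orthogonality relation for primitive characters (Proposition~\ref{Primitive chracter sum, mobius inversion}), remove coprimality by a further M\"obius sum over square-free divisors, reorganise by residue class modulo $F = R/E$, and evaluate the resulting finite geometric sums exactly, with the square-full hypothesis doing the same work in both arguments --- namely, whenever $\mu(E)\neq 0$ the complementary divisor $F$ has the same prime support as $R$, so $(A,R)=1$ together with $B\equiv A\ (\modulus F)$ already forces $(B,R)=1$, and the relevant $\mu$-$\phi$ divisor sums collapse to the Euler products in the statement. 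One small heads-up for the exact bookkeeping you flag as the delicate part: the diagonal $A=B$ contribution alone gives coefficient $\phi(R)^3/\lvert R\rvert^2$ on $\sum_{P\mid R}\degree P/(\lvert P\rvert -1)$, and the further $-\phi(R)^2/\lvert R\rvert^2$ in the stated formula arises from the cross terms (the $1/\lvert GK\rvert^{1/2}$ leading piece multiplied against the $(q^{1/2}-1)^{-1}$ geometric tail), not from the purely off-diagonal $(q^{1/2}-1)^{-2}$ block --- so the diagonal/off-diagonal split you describe will not match the three displayed groups quite as cleanly as you anticipate, though everything still reduces to the identities \eqref{Sum of mu(E)/E^s over E mid R}--\eqref{Sum of mu(E)phi(F) deg F/F^s over EF=R}.
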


\noindent The next result, is an asymptotic formula for the fourth moment of Dirichlet $L$-functions associated to primitive Dirichlet characters.

\begin{theorem} \label{Fourth primitive moment function fields statement}
Let $R \in \mathcal{M}$. Then,

\begin{align*}
\sumstar_{\chi (\modulus R)} \Big\lvert L \Big( \frac{1}{2} , \chi \Big) \Big\lvert^4
= &\frac{1-q^{-1}}{12} \phi^* (R) \prod_{\substack{P \text{ prime} \\ P \mid R}} \bigg( \frac{ \big( 1 - \lvert P \rvert^{-1} \big)^3}{1 + \lvert P \rvert^{-1}} \bigg) (\degree R)^4 \\
&+ O \Bigg( \phi^* (R) \bigg( \prod_{\substack{P \text{ prime} \\ P \mid R}}  \frac{\big( 1 - \lvert P \rvert^{-1} \big)^3 }{1 + \lvert P \rvert^{-1}} \bigg) (\degree R)^{\frac{7}{2}} \sqrt{\omega (R)} \Bigg) .
\end{align*}
\end{theorem}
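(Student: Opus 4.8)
### Proof Proposal for Theorem \ref{Fourth primitive moment function fields statement}

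The overall strategy is the standard one for fourth moments of $L$-functions, adapted to the function field setting: express $|L(1/2,\chi)|^4$ as a double sum over monic polynomials, use orthogonality of primitive characters to collapse the character sum to a congruence/divisor condition, and then carefully evaluate the resulting arithmetic sum. Since for a primitive character $\chi$ of modulus $R$ the completed $L$-function is a polynomial in $q^{-s}$ of degree $\degree R - 1$, we may write $L(1/2,\chi) = \sum_{\degree A < \degree R} \chi(A)|A|^{-1/2}$ (with the usual care for even versus odd characters), so that $|L(1/2,\chi)|^4 = \sum_{A,B,C,D} \chi(AB)\conj{\chi(CD)} (|ABCD|)^{-1/2}$ with all four polynomials monic of degree $< \degree R$. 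Summing over primitive $\chi$, I would invoke the orthogonality relation for primitive characters — which in the function field setting takes the form $\sumstar_{\chi(\modulus R)} \chi(M)\conj{\chi(N)} = \sum_{EF = R,\, E \mid (M-N)} \mu(F) \phi(E)$ type expression (after reducing to the standard orthogonality for all characters mod $R$ via Möbius inversion over induced moduli) — to turn the sum into $\sum_{E \mid R}\mu(R/E)\phi(E)$ times a count of solutions to $AB \equiv CD \pmod{E}$ with the degree restrictions.

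The heart of the argument is the evaluation of the divisor-correlation sum $\sum_{AB \equiv CD (\modulus E)} (|ABCD|)^{-1/2}$ over the box $\degree A,\degree B,\degree C,\degree D < \degree R$. I would split this according to whether $AB = CD$ exactly (the \emph{diagonal}) or $AB \neq CD$ but congruent mod $E$ (the \emph{off-diagonal}). The diagonal contribution is governed by the divisor function: the number of ways to write a monic polynomial $N$ as $AB$ is $d(N)$, so the diagonal gives essentially $\sum_{N} d(N)^2 |N|^{-1}$ over an appropriate range, which by the function field analogue of $\sum_{n \le x} d(n)^2 \sim c\, x (\log x)^3$ produces the leading term of size $(\degree R)^4$; tracking the local factors at primes dividing $R$ (coming from the coprimality conditions $(ABCD,R)=1$ forced by $\chi$) yields the Euler product $\prod_{P \mid R}(1-|P|^{-1})^3/(1+|P|^{-1})$ and the constant $\tfrac{1-q^{-1}}{12}$. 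For this step I expect to need the function field Shiu-type Brun–Titchmarsh bound mentioned in the introduction, in order to control $\sum d(N)^2$ restricted to congruence classes or to polynomials coprime to $R$ with a good error term uniform in $R$.

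The main obstacle — as in Heath-Brown's and Soundararajan's treatments — is bounding the off-diagonal terms $AB \equiv CD \pmod E$ with $AB \neq CD$, with an error term uniform in $R$ and carrying only the harmless factor $\sqrt{\omega(R)}$ rather than $2^{\omega(R)}$. Writing $AB - CD = EK$ for some nonzero $K$ with $\degree K < \degree R$ (the degree bound on $K$ is where the function field setting is cleaner than the number field one, since there is no archimedean tail), I would fix $AB = M$, $CD = M - EK$ and sum over the divisor functions $d(M) d(M - EK)$, then sum over $K$. The Shiu-type bound gives $\sum_{M} d(M) d(M-EK) \ll |$range$|\, (\degree R)^{\epsilon}\prod(\cdots)$ uniformly; summing over $K$ and over $E \mid R$, and being careful that the $\mu(R/E)\phi(E)$ weights combined with $\sum_{E\mid R} 1 = 2^{\omega(R)}$ do \emph{not} blow up, requires exploiting cancellation in the Möbius sum over induced moduli — this is precisely the refinement that replaces $2^{\omega(R)}$ by $\sqrt{\omega(R)}$. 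I would handle this by grouping the $E$-sum with the $K$-sum and applying Cauchy–Schwarz in the spirit of Soundararajan, so that the $\omega(R)$-dependence enters only through a mean square that contributes $\sqrt{\omega(R)}$. Finally I would collect the diagonal main term, verify the constant by a direct local computation, and absorb all lower-order diagonal terms (of size $(\degree R)^3$ and below, times the same Euler product) and the off-diagonal bound into the stated error $O\big(\phi^*(R)\prod_{P\mid R}\frac{(1-|P|^{-1})^3}{1+|P|^{-1}}(\degree R)^{7/2}\sqrt{\omega(R)}\big)$.
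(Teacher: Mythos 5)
Your general strategy — expand, apply orthogonality of primitive characters, split into diagonal and off-diagonal, control the off-diagonal via a function-field Shiu-type Brun–Titchmarsh bound — is the right shape and matches the paper's outline. However, there is a genuine gap at the very first step that the rest of the argument cannot survive: you expand $|L(\tfrac12,\chi)|^4$ directly as a quadruple sum with $A,B,C,D$ each of degree $<\degree R$, rather than first applying the approximate functional equation. The paper's Propositions \ref{Proposition, short sum for odd squared L-function} and \ref{Proposition, short sum for even squared L-function} use the functional equation to write
\[
\Big\lvert L\Big(\tfrac12,\chi\Big)\Big\rvert^2 = 2\sum_{\substack{A,B\in\mathcal{M}\\ \degree AB<\degree R}} \frac{\chi(A)\conj\chi(B)}{|AB|^{1/2}} + c(\chi),
\]
so that the constraint is on the \emph{product} degree $\degree AB<\degree R$, not on $\degree A$ and $\degree B$ individually. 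In your naive expansion, $\degree AB$ and $\degree CD$ can each run up to $2(\degree R-1)$, which makes your claimed bound $\degree K<\degree R$ (where $AB-CD=FK$) false when $F$ is small, and — more seriously — doubles the range of the off-diagonal sum. A back-of-the-envelope estimate with the bounds of Lemma \ref{Lemma, off diagonal z_1 z_2 sum} shows that with $z_1+z_2$ running up to $4\degree R$, the off-diagonal contribution would then dominate the putative $\phi^*(R)(\degree R)^4$ main term. The approximate functional equation is therefore not optional technical care about even/odd characters; it is the step that makes the off-diagonal subdominant.

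A second missing ingredient is the cutoff $z_R:=\degree R-\log_q 2^{\omega(R)}$ and the resulting decomposition $|L(\tfrac12,\chi)|^2 = 2a(\chi)+2b(\chi)+c(\chi)$ with $a(\chi)$ the sum over $\degree AB\le z_R$ and $b(\chi)$ the sum over $z_R<\degree AB<\degree R$. This is precisely what absorbs the $2^{\omega(R)}$ coming from $\sum_{EF=R}|\mu(E)|$: in Proposition \ref{Proposition, off diagonal leq z_R terms} the off-diagonal piece contributes $\ll q^{z_R}(\degree R)^3\,2^{\omega(R)}=|R|(\degree R)^3$, with no exponential blow-up. Your vague appeal to "cancellation in the Möbius sum" handled by "grouping the $E$-sum with the $K$-sum and applying Cauchy–Schwarz" is not how the $2^{\omega(R)}$ is tamed here. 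Finally, the $\sqrt{\omega(R)}$ in the stated error comes from Cauchy–Schwarz \emph{across} the pieces $a,b,c$ — estimates like $\sumstar a\conj b\le(\sumstar|a|^2)^{1/2}(\sumstar|b|^2)^{1/2}$, with $\sumstar|b|^2$ and $\sumstar|c|^2$ of size (main term)$\,\cdot\,\omega(R)/\degree R$ — not from a Cauchy–Schwarz inside the off-diagonal $E$- and $K$-sums as you describe. Your intuition about the diagonal producing $(\degree R)^4$ via a $\sum d(N)^2/|N|$-type computation, and about the Euler product coming from the coprimality with $R$, is correct (the paper evaluates the analogous $\sum_N 2^{\omega(N)}|N|^{-1}(z_R'-\degree N)^2$ by a contour integral in Proposition \ref{Proposition Sum of 2^omega(N) log(|Q|/|N|)^2 / |N|}), but the argument cannot be repaired without first putting the approximate functional equation in place.
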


%%%%%%%%%%%%%%%%%%%%%%%%%
%%%%%%%%%%%%%%%%%%%%%%%%%

\section{Function Field Background}

\noindent We provide some definitions and results relating to function fields that are needed in this paper. Many of these results are well known and so we do not provide a proof. Some proofs can be found in Rosen's book \cite{NumTheoFuncField_Rosen_2002}, particularly chapter 4.

\begin{definition}[M\"obius Function]
We define the M\"obius function, $\mu$, multiplicatively by $\mu (P) =-1$ and $\mu (P^e) =0$ for all primes $P \in \mathcal{A}$ and all integers $e \geq 2$.
\end{definition}

\begin{definition}[$\omega$ Function]
For all $R \in \mathcal{A} \backslash \{ 0 \}$ we define $\omega (R)$ to be the number of distinct prime factors of $R$. 
\end{definition}

\begin{definition}[$\Omega$ Function]
For all $R \in \mathcal{A} \backslash \{ 0 \}$ we define $\Omega (R)$ to be the total number of prime factors of $R$ (i.e. counting multiplicity). 
\end{definition}

\begin{definition}[$\phi$ Function] \label{Definition, totient function}
For $R \in \mathcal{A}$ with $\degree R =0$ we define $\phi (R) := 1$, and for $R \in \mathcal{A}$ with $\degree R \geq 1$ we define

\begin{align*}
\phi (R)
:= \# \{ A \in \mathcal{A} : \degree A < \degree R , (A,R)=1 \} .
\end{align*}
It is not hard to show that

\begin{align*}
\phi (R)
= \lvert R \rvert \prod_{P \mid R} (1 - \lvert P \rvert^{-1}) .
\end{align*}
\end{definition}

\begin{definition}
For all $R \in \mathcal{A}$ with $\degree R \geq 1$ we define $p_{-} (R)$ to be the largest positive integer such that if $P \mid R$ then $\degree P \geq p_{-} (R)$. Similarly, we define $p_{+} (R)$ to be the smallest positive integer such that if $P \mid R$ then $\degree P \leq p_{+} (R)$.
\end{definition}

\begin{proposition}[Orthogonality Relations]
Let $R \in \mathcal{M}$. Then,

\begin{align*}
\sum_{\chi (\modulus R)} \chi (A) \conj\chi (B)
= \begin{cases}
\phi (R) &\text{ if $(AB,R)=1$ and $A \equiv B (\modulus R)$} \\
0 &\text{ otherwise.}
\end{cases}
\end{align*}
and

\begin{align*}
\sum_{\substack{\chi (\modulus R) \\ \chi \text{ even}}} \chi (A) \conj\chi (B)
= \begin{cases}
\frac{1}{q-1} \phi (R) &\text{ if $(AB,R)=1$ and $A \equiv aB (\modulus R)$ for some $a \in \mathbb{F}_q$} \\
0 &\text{ otherwise.}
\end{cases}
\end{align*}
\end{proposition}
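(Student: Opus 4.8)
The plan is to reduce both identities to the standard column orthogonality relation for the dual of a finite abelian group, applied to $G := (\mathcal{A}/R\mathcal{A})^*$ — which has order $\phi(R)$, the number of characters mod $R$ already recorded above — and, for the even case, to an appropriate quotient of $G$.

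First I would carry out the elementary reduction. If $(AB,R) \neq 1$, then $(A,R) \neq 1$ or $(B,R) \neq 1$, so $\chi(A) = 0$ or $\chi(B) = 0$ for every $\chi$; hence every summand in both displayed sums vanishes, settling the ``otherwise'' cases at once. So assume $(AB,R) = 1$. Since $\lvert \chi(B) \rvert = 1$ and $\chi$ is completely multiplicative, $\conj{\chi(B)} = \chi(B)^{-1} = \chi(B^{-1})$ for any $B^{-1} \in \mathcal{A}$ with $BB^{-1} \equiv 1 \, (\modulus R)$; thus $\chi(A)\conj{\chi(B)} = \chi(AB^{-1})$, a quantity depending only on the class $C := AB^{-1} \in G$, and $A \equiv B \, (\modulus R) \iff C \equiv 1 \, (\modulus R)$. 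So it suffices to evaluate $S(C) := \sum_{\chi (\modulus R)} \chi(C)$.

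Next I would prove the key lemma: for a finite abelian group $\Gamma$ and $g \in \Gamma$, $\sum_{\psi} \psi(g)$ over $\psi \in \widehat{\Gamma}$ equals $\lvert \widehat{\Gamma} \rvert$ if $g$ is the identity and $0$ otherwise. The identity case is immediate. Otherwise one uses that the characters of a finite abelian group separate its points — a consequence of the structure theorem, since $\Gamma$ is a product of cyclic groups, each of which visibly carries a character nontrivial on any prescribed nonidentity element — to pick $\psi_0$ with $\psi_0(g) \neq 1$; then reindexing $\psi \mapsto \psi_0 \psi$ gives $\psi_0(g) \sum_{\psi} \psi(g) = \sum_{\psi} \psi(g)$, forcing the sum to be $0$. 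Taking $\Gamma = G$ and using that there are exactly $\phi(R)$ characters mod $R$ yields $S(C) = \phi(R)$ when $C \equiv 1 \, (\modulus R)$ and $S(C) = 0$ otherwise, which is the first formula.

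For the even characters I would identify them with $\widehat{G/N}$, where $N$ is the image in $G$ of the subgroup $\mathbb{F}_q^* \subset \mathcal{A}$ (these constants are units mod $R$): by definition $\chi$ is even precisely when it is trivial on $N$, i.e. when it factors through $G \to G/N$. Hence $\sum_{\chi \text{ even}} \chi(C) = \sum_{\psi \in \widehat{G/N}} \psi(\bar{C})$, where $\bar{C}$ is the image of $C$ in $G/N$, and $\bar{C}$ is the identity exactly when $C \in N$, that is, when $A \equiv aB \, (\modulus R)$ for some $a \in \mathbb{F}_q^*$; for $(AB,R) = 1$ the value $a = 0$ is impossible, so this coincides with the condition stated in the proposition (the case $R = 1$ being a degenerate triviality). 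Applying the lemma to $\Gamma = G/N$, together with the already-recorded count $\lvert \widehat{G/N} \rvert = \frac{1}{q-1}\phi(R)$ of even characters, gives the second formula.

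The only non-routine ingredients are the character-separation property of finite abelian groups (equivalently $\lvert \widehat{\Gamma} \rvert = \lvert \Gamma \rvert$, which the paper already invokes) and the bookkeeping that ``even'' means ``factors through $G/N$''; everything else is formal, and I expect the minor edge cases ($R = 1$, and the automatic exclusion of $a = 0$) to be the only points needing a word of care.
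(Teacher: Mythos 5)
Your proposal is correct, and it is the standard argument: the paper itself offers no proof of this proposition (it is stated as background, with a pointer to Rosen's book, Chapter 4), and your reduction of both identities to the column orthogonality relation for the dual of the finite abelian group $(\mathcal{A}/R\mathcal{A})^*$ — with the even characters identified with the dual of the quotient by the image of $\mathbb{F}_q^*$ — is exactly the route such a proof would take. Your handling of the edge points is also right: the ``otherwise'' cases follow from $\chi$ vanishing off residues coprime to $R$, the value $a=0$ is automatically excluded when $(AB,R)=1$ and $\degree R \geq 1$, and the residual degeneracy at $R=1$ (where the count $\frac{1}{q-1}\phi(R)$ of even characters fails) is a defect of the statement as recorded in the paper, not of your argument.
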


\begin{proposition} \label{Primitive chracter sum, mobius inversion}
Let $R \in \mathcal{M}$ and let  $A,B \in \mathcal{A}$. Then,

\begin{align*}
\sumstar_{\chi (\modulus R)} \chi (A) \tilde{\chi} (B) =
\begin{cases}
\sum_{\substack{EF = R \\ F \mid (A-B)}} \mu (E) \phi (F) &\text{ if $(AB,R)=1$}\\
0 &\text{ otherwise}
\end{cases} .
\end{align*}
\end{proposition}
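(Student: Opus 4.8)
The plan is to express the primitive-character sum in terms of the full character sums over the divisor moduli of $R$, and then to invert this relation by Möbius inversion on the lattice of monic divisors of $R$, using the orthogonality relations stated above.

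I would first invoke the standard structural fact (see, e.g., Rosen's book) that every Dirichlet character $\chi$ of modulus $R$ is induced by a unique primitive character $\chi^{*}$, whose modulus $G$ (the conductor of $\chi$) is uniquely determined and divides $R$; conversely, each primitive character of a divisor $G$ of $R$ induces exactly one character of modulus $R$. This yields a bijection between the characters of modulus $R$ and the pairs $(G,\chi^{*})$ with $G \mid R$ and $\chi^{*}$ primitive of modulus $G$. The one point needing care here is the uniqueness of the conductor, which follows from the small lemma that if $\chi$ is induced modulo $S_{1}$ and modulo $S_{2}$ then it is induced modulo $(S_{1},S_{2})$; I would either cite this or prove it directly. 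Since an induced character agrees with the inducing primitive character on arguments coprime to $R$, we get, for $(AB,R)=1$,
\begin{align*}
\sum_{\chi (\modulus R)} \chi (A) \tilde{\chi} (B) = \sum_{G \mid R} \sumstar_{\chi (\modulus G)} \chi (A) \tilde{\chi} (B) .
\end{align*}

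Next, regarding both sides as functions of $R$ with $A,B$ fixed, and noting that $(AB,R)=1$ forces $(AB,G)=1$ for every $G \mid R$ so all the relevant sums behave consistently, I would apply Möbius inversion over the poset of monic divisors of $R$ to obtain
\begin{align*}
\sumstar_{\chi (\modulus R)} \chi (A) \tilde{\chi} (B) = \sum_{EF = R} \mu (E) \sum_{\chi (\modulus F)} \chi (A) \tilde{\chi} (B) .
\end{align*}
The first orthogonality relation then evaluates the inner sum as $\phi (F)$ when $F \mid (A-B)$ and as $0$ otherwise (the condition $(AB,F)=1$ being automatic), which gives precisely $\sum_{EF=R,\, F \mid (A-B)} \mu (E) \phi (F)$. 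Finally, if $(AB,R) \neq 1$ then $\chi (A) = 0$ or $\chi (B) = 0$ for \emph{every} character of modulus $R$, in particular for every primitive one, so the sum vanishes; this disposes of the remaining case.

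The only real obstacle is the clean justification of the conductor/induced-character bijection — essentially the gcd lemma above — since once that is in place the remainder is the routine bookkeeping of Möbius inversion together with a direct appeal to the orthogonality relations already proved.
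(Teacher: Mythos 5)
The paper states Proposition \ref{Primitive chracter sum, mobius inversion} without proof, treating it as a standard background fact, so there is no paper argument to compare against. Your proof is correct and is the standard one: decompose the full character group of modulus $R$ according to conductor to get $\sum_{\chi(\modulus R)}\chi(A)\conj\chi(B)=\sum_{G\mid R}\sumstar_{\chi(\modulus G)}\chi(A)\conj\chi(B)$ for $(AB,R)=1$, observe that this identity holds uniformly for every divisor of $R$ in place of $R$ (because coprimality to $R$ passes to all divisors), invert with the Möbius function on the divisor lattice, and then evaluate $\sum_{\chi(\modulus F)}\chi(A)\conj\chi(B)$ by the orthogonality relation. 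The vanishing in the $(AB,R)\neq 1$ case is immediate from the definition of a Dirichlet character. You are also right to flag the one genuinely non-trivial input, namely uniqueness of the conductor via the lemma that a common induced modulus can be replaced by the gcd; that is the only point requiring a real argument, and once it is supplied the rest is bookkeeping.
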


\begin{corollary} \label{Number of primitive characters}
For all $R \in \mathcal{M}$ we have that

\begin{align*}
\phi^* (R) = \sum_{EF=R} \mu (E) \phi (F) .
\end{align*}
\end{corollary}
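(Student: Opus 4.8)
The plan is to obtain this as the special case $A = B = 1$ of Proposition~\ref{Primitive chracter sum, mobius inversion}. First I would note that $(1\cdot 1, R) = (1,R) = 1$, so the first branch of that proposition applies and gives
\[
\sumstar_{\chi (\modulus R)} \chi (1) \tilde{\chi} (1) = \sum_{\substack{EF = R \\ F \mid (1-1)}} \mu (E) \phi (F).
\]
On the left-hand side, the character axioms force $\chi(1) = \tilde{\chi}(1) = 1$ for every character (as already recorded after the definition of Dirichlet characters), so the sum simply counts the primitive characters of modulus $R$ and equals $\phi^*(R)$. On the right-hand side, the divisibility condition $F \mid (1-1)$, i.e.\ $F \mid 0$, is met by every monic $F$, so it imposes no restriction and the sum collapses to $\sum_{EF=R} \mu(E)\phi(F)$. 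Equating the two expressions yields the claim.

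There is essentially no obstacle here; the only points needing a moment's attention are the bookkeeping observations that $F \mid 0$ is vacuous and that $\chi(1)=1$. For completeness I would remark that one could instead argue directly from the definition of primitive characters: every character of modulus $R$ is induced by a unique primitive character of some modulus $S \mid R$, whence $\phi(R) = \sum_{S \mid R} \phi^*(S)$, and M\"obius inversion over the divisor lattice of $\mathcal{A}$ recovers $\phi^*(R) = \sum_{EF=R}\mu(E)\phi(F)$. The one-line deduction from Proposition~\ref{Primitive chracter sum, mobius inversion} is the cleaner route and is the one I would present.
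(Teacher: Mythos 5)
Your proof is correct and follows exactly the paper's route: specialize Proposition~\ref{Primitive chracter sum, mobius inversion} to $A=B=1$, note that $F \mid 0$ is vacuous and $\chi(1)=1$, and read off the identity. The alternative you sketch via $\phi(R)=\sum_{S\mid R}\phi^*(S)$ and M\"obius inversion is a fine remark but not needed.
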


\begin{proof}
This follows easily from Proposition \ref{Primitive chracter sum, mobius inversion} when we take $A,B=1$.
\end{proof}

\noindent For a character $\chi$ we will, on occasion, write the associated $L$-function as

\begin{align*}
L(s,\chi ) = \sum_{n=0}^{\infty} L_n (\chi ) q^{-ns} ,
\end{align*}
where we define

\begin{align*}
L_n (\chi ) := \sum_{\substack{ A \in \mathcal{M} \\ \degree A = n}} \chi (A)
\end{align*}
for all non-negative integers $n$ and all characters $\chi$. \\

\noindent Suppose $\chi$ is the character of modulus $1$ and $\Re (s) > 1$. Then, $L(s,\chi )$ is simply the zeta-function for the ring $\mathcal{A}$. That is,

\begin{align*}
L(s,\chi )
= \sum_{A \in \mathcal{M}} \frac{1}{\lvert A \rvert^s} =: \zeta_{\mathcal{A}} (s) .
\end{align*}

\noindent We note further that

\begin{align*}
\zeta_{\mathcal{A}} (s)
= \sum_{A \in \mathcal{M}} \frac{1}{\lvert A \rvert^s}
= \frac{1}{1-q^{1-s}}.
\end{align*}
The far-RHS provides a meromorphic extension for $\zeta_{\mathcal{A}}$ to $\mathbb{C}$ with a simple pole at $1$. The following Euler product formula will also be useful

\begin{align*}
\zeta_{\mathcal{A}} (s)
= \prod_{P \in \mathcal{P}} \big( 1- \lvert P \rvert^{-s} \big)^{-1},
\end{align*}
for $\Re (s) > 1$. \\

\noindent Now suppose that $\chi_0$ is the trivial character of some modulus $R$ and $\Re (s) > 1$. It can be shown that

\begin{align*}
L(s,\chi_0 )
= \bigg( \prod_{\substack{P \in \mathcal{P} \\ P \mid R}} 1- \lvert P \rvert^{-s} \bigg) \zeta_{\mathcal{A}} (s) .
\end{align*}
So, again, the far-RHS provides a meromorphic extension for $L(s,\chi_0 )$ to $\mathbb{C}$ with a simple pole at $1$. \\

\noindent Finally, suppose that $\chi$ is a non-trivial character of modulus $R$ and $\Re (s) > 1$. It can be shown that

\begin{align*}
L(s,\chi )
= \sum_{\substack{A \in \mathcal{M} \\ \degree A < \degree R}} \frac{\chi (A)}{\lvert A \rvert^s} .
\end{align*}
This is just a finite polynomial in $q^{-s}$, and so it provides a holomorphic extension for $L(s,\chi )$ to $\mathbb{C}$.

\begin{proposition}[Functional Equation for $L$-functions of Primitive Characters]
Let $\chi$ be a primitive character of some modulus $R \neq 1$. If $\chi$ is even, then $L(s , \chi )$ satisfies the function equation

\begin{align*}
\big( q^{1-s} -1 \big) L(s,\chi )
= W(\chi ) q^{\frac{\degree R}{2}} \big(q^{-s} -1 \big) \big(q^{-s} \big)^{\degree R -1} L(1-s, \conj\chi ) ;
\end{align*}
and if $\chi$ is odd, then $L(s , \chi )$ satisfies the function equation

\begin{align*}
L(s,\chi )
= W(\chi ) q^{\frac{\degree R -1}{2}} \big(q^{-s} \big)^{\degree R -1} L(1-s, \conj\chi ) ;
\end{align*}
where $\lvert W(\chi ) \rvert =1$.
\end{proposition}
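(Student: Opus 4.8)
The plan is to reduce the functional equation to a coefficient-by-coefficient identity between the polynomials $L(s,\chi) = \sum_{n=0}^{\degree R - 1} L_n(\chi) u^n$ (with $u := q^{-s}$) and $L(1-s,\conj\chi)$; here we use that $\chi$ is non-trivial, so $L(s,\chi)$ is a polynomial in $u$ of degree at most $\degree R - 1$. The main input is the theory of Gauss sums on $\mathcal{A}$. Fix a non-trivial additive character $\psi \colon \mathbb{F}_q \to \mathbb{C}^*$ and, for $f \in \mathbb{F}_q(T)$, let $\operatorname{res}_\infty(f)$ denote the coefficient of $T^{-1}$ in the Laurent expansion of $f$ at infinity and set $e(f) := \psi(\operatorname{res}_\infty(f))$. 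For the Gauss sum $G(\chi) := \sum_{D \bmod R} \chi(D) e(D/R)$ I would first recall the two standard facts (see Rosen \cite{NumTheoFuncField_Rosen_2002}): since $\chi$ is primitive, $\sum_{D \bmod R} \chi(D) e(BD/R) = \conj\chi(B) G(\chi)$ for \emph{all} $B \in \mathcal{A}$, and $\lvert G(\chi) \rvert^2 = \lvert R \rvert$. The second fact makes $G(\conj\chi) \neq 0$, so inverting the first (applied to $\conj\chi$) yields the finite Fourier expansion $\chi(A) = G(\conj\chi)^{-1} \sum_{D \bmod R} \conj\chi(D) e(AD/R)$.

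Substituting this into $L_n(\chi) = \sum_{A \in \mathcal{M}_n} \chi(A)$ and interchanging summations, the crux becomes the evaluation of the additive character sum $\sum_{A \in \mathcal{M}_n} e(AD/R)$. Writing $A = T^n + \sum_{i=0}^{n-1} a_i T^i$ and using that $\operatorname{res}_\infty$ is $\mathbb{F}_q$-linear, the sum over $(a_0, \dots, a_{n-1}) \in \mathbb{F}_q^n$ factors, and it vanishes unless $\operatorname{res}_\infty(T^i D/R) = 0$ for all $0 \le i \le n-1$ --- which, since $R$ is monic, holds exactly when $\degree D \le \degree R - 1 - n$. In that case the sum equals $q^n e(T^n D/R)$, and $e(T^n D/R)$ equals $\psi(\ell_D)$ when $\degree D = \degree R - 1 - n$ ($\ell_D$ the leading coefficient of $D$) and equals $1$ when $\degree D \le \degree R - 2 - n$. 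Feeding this back and splitting the resulting sum over $D$ by leading coefficient, I expect to arrive at
\begin{align*}
L_n(\chi) = \frac{q^n}{G(\conj\chi)} \left( \Big( \sum_{c \in \mathbb{F}_q^*} \conj\chi(c) \Big) \sum_{m=0}^{\degree R - 2 - n} L_m(\conj\chi) + \Big( \sum_{c \in \mathbb{F}_q^*} \conj\chi(c) \psi(c) \Big) L_{\degree R - 1 - n}(\conj\chi) \right) .
\end{align*}

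From here the two cases diverge. If $\chi$ is odd, then $\sum_{c} \conj\chi(c) = 0$ kills the first term, while $g := \sum_{c} \conj\chi(c)\psi(c)$ is a Gauss sum on $\mathbb{F}_q$ with $\lvert g \rvert = q^{1/2}$; thus $L_n(\chi) = W(\chi) q^{(1-\degree R)/2} q^n L_{\degree R - 1 - n}(\conj\chi)$ with $W(\chi) := g\, q^{(\degree R - 1)/2} / G(\conj\chi)$ of modulus $1$, and multiplying by $u^n$, summing over $n$, and re-indexing by $m = \degree R - 1 - n$ turns this into precisely $L(s,\chi) = W(\chi) q^{(\degree R - 1)/2} (q^{-s})^{\degree R - 1} L(1-s,\conj\chi)$. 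If $\chi$ is even, then $\sum_c \conj\chi(c) = q-1$ and $\sum_c \conj\chi(c)\psi(c) = \sum_{c \in \mathbb{F}_q^*} \psi(c) = -1$; forming the combination $q L_{k-1}(\chi) - L_k(\chi)$ the partial sums telescope and one is left with $q L_{k-1}(\chi) - L_k(\chi) = -G(\conj\chi)^{-1} q^{k}\big( L_{\degree R - k}(\conj\chi) - q L_{\degree R - 1 - k}(\conj\chi) \big)$ for all $k$ (with the conventions $L_{-1} = L_{\degree R} = 0$), which is exactly the coefficient-wise form of $(q^{1-s}-1) L(s,\chi) = W(\chi) q^{\degree R/2}(q^{-s}-1)(q^{-s})^{\degree R - 1} L(1-s,\conj\chi)$ with $W(\chi) := -q^{\degree R/2}/G(\conj\chi)$; note $\lvert W(\chi) \rvert = 1$ since $\lvert G(\conj\chi) \rvert = \lvert R \rvert^{1/2} = q^{\degree R /2}$. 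For context, the extra factor $q^{-s}-1$ on the right in the even case reflects that $u=1$ is a zero of $L(s,\chi)$ for even primitive $\chi$, which one sees directly by grouping $\sum_{D \bmod R}\chi(D) = 0$ according to leading coefficient.

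The heart of the argument --- and the step I expect to require the most care --- is the evaluation of $\sum_{A \in \mathcal{M}_n} e(AD/R)$, since this is where the specific arithmetic of $\mathbb{F}_q[T]$ enters: the residue pairing at infinity together with the monic normalisation produce the sharp degree constraint $\degree D \le \degree R - 1 - n$ that drives everything. By comparison, the Gauss sum facts (the only place primitivity is genuinely used) and, in the even case, the telescoping bookkeeping are routine.
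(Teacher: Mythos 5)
The paper does not prove this proposition; it simply cites Rosen \cite[Theorem 9.24 A]{NumTheoFuncField_Rosen_2002}, so there is no ``paper's approach'' to compare against. Your Gauss-sum argument is correct and is, in substance, the standard route that underlies Rosen's more general treatment. I checked the key steps and they hold: the finite Fourier expansion $\chi(A) = G(\conj\chi)^{-1}\sum_{D}\conj\chi(D)e(AD/R)$ does require $\conj\chi$ (equivalently $\chi$) to be primitive, which you have; the evaluation $\sum_{A\in\mathcal{M}_n} e(AD/R)$ does vanish unless $\degree D \le \degree R - 1 - n$, in which case it equals $q^n e(T^nD/R)$ with the dichotomy $\psi(\ell_D)$ versus $1$ exactly as you say (the monic normalisation of $R$ is what makes the leading coefficient of $D$ appear cleanly); the resulting identity for $L_n(\chi)$ is correct; in the odd case $\sum_c\conj\chi(c)=0$ and $|g|=q^{1/2}$ give $|W(\chi)|=1$ since $|G(\conj\chi)|=q^{\degree R/2}$; and in the even case the combination $M_k(\chi)=qL_{k-1}(\chi)-L_k(\chi)$ does telescope the partial sums, yielding $M_k(\chi)=\tfrac{q^k}{G(\conj\chi)}M_{\degree R - k}(\conj\chi)$, which upon resumming over $k$ gives exactly $(q^{1-s}-1)L(s,\chi) = W(\chi)q^{\degree R/2}(q^{-s}-1)(q^{-s})^{\degree R-1}L(1-s,\conj\chi)$ with $W(\chi)=-q^{\degree R/2}/G(\conj\chi)$ of modulus $1$. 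Your side remark that the extra $(q^{-s}-1)$ factor reflects $L(0,\chi)=0$ for even primitive $\chi$ is also correct and is the right sanity check on the shape of the even equation.

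If you wanted to polish this into a self-contained proof, the only points deserving an explicit line or two are: (i) the statement and a reference (or short proof) of $\sum_{D}\chi(D)e(BD/R)=\conj\chi(B)G(\chi)$ for all $B$ when $\chi$ is primitive, since the case $(B,R)\neq 1$ is precisely where primitivity is used; and (ii) the observation $L_{\degree R}(\chi)=\sum_{B \bmod R}\chi(B)=0$, which justifies the convention needed to make the telescoping formula valid at the endpoints $k=0$ and $k=\degree R$. Neither is a gap in the reasoning --- both are standard --- but flagging them keeps the write-up honest about where primitivity and non-triviality enter.
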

\noindent A generalisation of the proposition above appears in Rosen's book \cite[Theorem 9.24 A]{NumTheoFuncField_Rosen_2002}.

\begin{proposition} \label{Proposition, short sum for odd squared L-function}
Let $\chi$ a primitive odd character of modulus $R$. Then, 

\begin{align*}
\Big\lvert L \Big( \frac{1}{2} , \chi \Big) \Big\rvert^2
= 2 \sum_{\substack{A,B \in \mathcal{M} \\ \degree AB < \degree R}} \frac{\chi (A) \conj\chi (B)}{\lvert AB \rvert^{\frac{1}{2}}}
+ c_o (\chi ) ,
\end{align*}
where we define
\begin{align*}
c_o (\chi )
:= - \sum_{\substack{A,B \in \mathcal{M} \\ \degree AB = \degree R -1}} \frac{\chi (A) \conj\chi (B)}{\lvert AB \rvert^{\frac{1}{2}}} .
\end{align*}
\end{proposition}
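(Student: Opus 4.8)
The plan is to write $\lvert L(\tfrac{1}{2},\chi) \rvert^2$ as an explicit double Dirichlet sum, split it, and then use the functional equation to fold the ``long'' half back onto the ``short'' half. Since $\chi$ is non-trivial of modulus $R$, we have $L_n(\chi) = 0$ for $n \geq \degree R$, so $L(\tfrac{1}{2},\chi) = \sum_{n=0}^{\degree R - 1} L_n(\chi) q^{-n/2}$. Using $\conj{L(\tfrac{1}{2},\chi)} = L(\tfrac{1}{2},\conj\chi)$ together with $L_n(\conj\chi) = \conj{L_n(\chi)}$ and multiplying out, I would first obtain
\begin{align*}
\Big\lvert L \Big( \tfrac{1}{2} , \chi \Big) \Big\rvert^2 = \sum_{\substack{A,B \in \mathcal{M} \\ \degree A , \degree B \leq \degree R - 1}} \frac{\chi(A) \conj\chi(B)}{\lvert AB \rvert^{1/2}} .
\end{align*}
The content of the proposition is that the constraint appearing here is $\degree A \leq \degree R - 1$ and $\degree B \leq \degree R - 1$ \emph{separately}, whereas we want the diagonal-type constraint $\degree AB < \degree R$; the difference between the two is precisely a boundary term, which will turn into $c_o(\chi)$.

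The bridge is the functional equation, which I would use in coefficient form. Writing $L(1-s,\conj\chi) = \sum_{n=0}^{\degree R - 1} L_n(\conj\chi) q^{-n} (q^{-s})^{-n}$ and comparing the coefficient of $(q^{-s})^m$ on the two sides of the odd functional equation $L(s,\chi) = W(\chi) q^{(\degree R - 1)/2} (q^{-s})^{\degree R - 1} L(1-s,\conj\chi)$ gives
\begin{align*}
L_m(\chi) = W(\chi) q^{m - (\degree R - 1)/2} L_{\degree R - 1 - m}(\conj\chi) \qquad (0 \leq m \leq \degree R - 1) .
\end{align*}
I would then split the double sum into the part with $\degree AB \leq \degree R - 1$ and the part with $\degree AB \geq \degree R$. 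In the second part I substitute the above relation for both $L_{\degree A}(\chi)$ and $L_{\degree B}(\conj\chi)$, using the same relation for $\conj\chi$ together with $W(\chi) W(\conj\chi) = 1$ (which follows from applying the functional equation twice, or equivalently from $\lvert W(\chi) \rvert = 1$ and $W(\conj\chi) = \conj{W(\chi)}$). After the substitution the powers of $q$ cancel, and reindexing by the complementary degrees $\degree A' := \degree R - 1 - \degree B$ and $\degree B' := \degree R - 1 - \degree A$ turns the constraint $\degree AB \geq \degree R$ into $\degree A'B' \leq \degree R - 2$; thus this part equals $\sum_{\degree A'B' \leq \degree R - 2} \chi(A')\conj\chi(B') \lvert A'B' \rvert^{-1/2}$.

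Putting the two pieces together,
\begin{align*}
\Big\lvert L \Big( \tfrac{1}{2} , \chi \Big) \Big\rvert^2 = \sum_{\substack{A,B \in \mathcal{M} \\ \degree AB \leq \degree R - 1}} \frac{\chi(A)\conj\chi(B)}{\lvert AB \rvert^{1/2}} + \sum_{\substack{A,B \in \mathcal{M} \\ \degree AB \leq \degree R - 2}} \frac{\chi(A)\conj\chi(B)}{\lvert AB \rvert^{1/2}} ,
\end{align*}
and rewriting the second sum as the first one minus the terms with $\degree AB = \degree R - 1$ gives exactly the asserted identity, with $c_o(\chi)$ as the deficit. I expect the only real difficulty to be bookkeeping in the functional-equation step: keeping the exponents of $q$ straight and making sure the reflection of degrees combined with the interchange $\chi \leftrightarrow \conj\chi$ produces the shifted constraint $\degree AB \leq \degree R - 2$ without an off-by-one error. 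No input beyond the stated functional equation is needed.
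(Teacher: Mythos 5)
Your proof is correct and is essentially the approach of the paper: both split at $\degree AB = \degree R - 1$ and use the functional equation to fold the high-degree half onto a sum over $\degree AB \leq \degree R - 2$. The only cosmetic difference is that the paper carries out the reflection by multiplying the functional equations for $\chi$ and $\conj\chi$ and comparing coefficients of powers of $q^{-s}$, whereas you extract and apply the equivalent coefficient identity $L_m(\chi) = W(\chi)\, q^{m - (\degree R - 1)/2} L_{\degree R - 1 - m}(\conj\chi)$ directly.
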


\begin{proof}
The functional equation for odd primitive characters gives us that

\begin{align*}
\sum_{n=0}^{\degree R -1} L_n (\chi ) q^{-ns}
= &W(\chi ) q^{\frac{\degree R -1}{2}} \big(q^{-s} \big)^{\degree R -1} \sum_{n=0}^{\degree R -1} L_n (\conj\chi ) q^{-n(1-s)} \\
= &W(\chi ) q^{- \frac{\degree R -1}{2}} \sum_{n=0}^{\degree R -1} L_n (\conj\chi ) q^{(1-s) ( \degree R -1 -n )} .
\end{align*}
Taking the squared modulus of both sides gives us that

\begin{align*}
\sum_{n=0}^{2\degree R -2} \bigg(\sum_{\substack{0\leq i,j < \degree R \\ i+j=n}} L_i (\chi ) L_j (\conj\chi ) \bigg) q^{-ns} 
= q^{-\degree R +1} \sum_{n=0}^{2\degree R -2} \bigg(\sum_{\substack{0\leq i,j < \degree R \\ i+j=n}} L_i (\chi ) L_j (\conj\chi ) \bigg) q^{(1-s) ( 2\degree R -2 -n )} .
\end{align*}
We note that both sides are equal to $\lvert L ( s , \chi ) \rvert^2$, and so by the linear independence of powers of $q^{-s}$ we can take the terms $n=0, 1, \ldots , \degree R -1$ on the LHS and the terms $n=0, 1, \ldots , \degree R -2$ on the RHS to give

\begin{align*}
\lvert L ( s , \chi ) \rvert^2 = &\sum_{n=0}^{\degree R -1} \bigg(\sum_{\substack{0\leq i,j < \degree R \\ i+j=n}} L_i (\chi ) L_j (\conj\chi ) \bigg) q^{-ns}\\ 
+& q^{-\degree R +1} \sum_{n=0}^{\degree R -2} \bigg(\sum_{\substack{0\leq i,j < \degree R \\ i+j=n}} L_i (\chi ) L_j (\conj\chi ) \bigg) q^{(1-s) ( 2\degree R -2 -n )} .
\end{align*}
Hence,

\begin{align*}
\Big\lvert L \Big( \frac{1}{2} , \chi \Big) \Big\rvert^2 
= &\sum_{n=0}^{\degree R -1} \bigg(\sum_{\substack{0\leq i,j < \degree R \\ i+j=n}} L_i (\chi ) L_j (\conj\chi ) \bigg) q^{-\frac{n}{2}} 
+ \sum_{n=0}^{\degree R -2} \bigg(\sum_{\substack{0\leq i,j < \degree R \\ i+j=n}} L_i (\chi ) L_j (\conj\chi ) \bigg) q^{-\frac{n}{2}} \\
= &2 \sum_{\substack{A,B \in \mathcal{M} \\ \degree AB < \degree R}} \frac{\chi (A) \conj\chi (B)}{\lvert AB \rvert^{\frac{1}{2}}}
- \sum_{\substack{A,B \in \mathcal{M} \\ \degree AB = \degree R -1}} \frac{\chi (A) \conj\chi (B)}{\lvert AB \rvert^{\frac{1}{2}}} ,
\end{align*}
as required.
\end{proof}

\begin{proposition} \label{Proposition, short sum for even squared L-function}
Let $\chi$ a primitive even character of modulus $R\neq 1$. Then, 

\begin{align*}
\Big\lvert L \Big( \frac{1}{2} , \chi \Big) \Big\rvert^2
= 2 \sum_{\substack{A,B \in \mathcal{M} \\ \degree AB < \degree R}} \frac{\chi (A) \conj\chi (B)}{\lvert AB \rvert^{\frac{1}{2}}}
+ c_e (\chi ) ,
\end{align*}
where

\begin{align*}
c_e (\chi )
:= &-\frac{q}{\big( q^{\frac{1}{2}}-1 \big)^2} \sum_{\substack{A,B \in \mathcal{M} \\ \degree AB = \degree R -2}} \frac{\chi (A) \conj\chi (B)}{\lvert AB \rvert^{\frac{1}{2}}}
-\frac{2q^{\frac{1}{2}}}{ q^{\frac{1}{2}}-1} \sum_{\substack{A,B \in \mathcal{M} \\ \degree AB = \degree R -1}} \frac{\chi (A) \conj\chi (B)}{\lvert AB \rvert^{\frac{1}{2}}} \\
&+ \frac{1}{\big( q^{\frac{1}{2}}-1 \big)^2} \sum_{\substack{A,B \in \mathcal{M} \\ \degree AB = \degree R }} \frac{\chi (A) \conj\chi (B)}{\lvert AB \rvert^{\frac{1}{2}}} .
\end{align*}
\end{proposition}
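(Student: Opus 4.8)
The plan is to follow the proof of Proposition~\ref{Proposition, short sum for odd squared L-function} step by step, replacing the odd functional equation by the even one; the two linear factors $q^{1-s}-1$ and $q^{-s}-1$ that the even functional equation carries are precisely what produce the three correction sums in $c_e(\chi)$. Write $m:=\degree R$. Since $\chi$ is non-trivial, $L(s,\chi)=\sum_{n=0}^{m-1}L_n(\chi)q^{-ns}$, and likewise for $\conj\chi$. First I would take the squared modulus of the functional equation for the even primitive character $\chi$ (using only $\lvert W(\chi)\rvert=1$ and the fact that $\conj{L(1-s,\conj\chi)}=L(1-s,\chi)$ for real $s$), obtaining
\begin{align*}
\big(q^{1-s}-1\big)^2\lvert L(s,\chi)\rvert^2
= q^{m}\big(q^{-s}-1\big)^2\big(q^{-s}\big)^{2m-2}\lvert L(1-s,\conj\chi)\rvert^2 ,
\end{align*}
an identity valid for all real $s$ and hence an identity of polynomials in $q^{-s}$; here $\lvert L(s,\chi)\rvert^2$ abbreviates $L(s,\chi)L(s,\conj\chi)$, and $\lvert L(1-s,\conj\chi)\rvert^2$ abbreviates $L(1-s,\conj\chi)L(1-s,\chi)$.

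Next, setting $c_n:=\sum_{\substack{0\le i,j\le m-1\\ i+j=n}}L_i(\chi)L_j(\conj\chi)$, one has $\lvert L(s,\chi)\rvert^2=\sum_{n=0}^{2m-2}c_n q^{-ns}$ and, after the substitution $n\mapsto 2m-2-n$, $\big(q^{-s}\big)^{2m-2}\lvert L(1-s,\conj\chi)\rvert^2=\sum_{n=0}^{2m-2}c_{2m-2-n}\,q^{-(2m-2-n)}q^{-ns}$. Both sides of the displayed identity are thus polynomials in $q^{-s}$ of degree at most $2m$: expanding $\big(q^{1-s}-1\big)^2$ shows that the coefficient of $q^{-ns}$ on the left equals $q^2 c_{n-2}-2q c_{n-1}+c_n$, and expanding $\big(q^{-s}-1\big)^2$ gives the same coefficient on the right in the reversed shape $q^{m}\big(g_{n-2}-2g_{n-1}+g_n\big)$, where $g_l:=c_{2m-2-l}\,q^{l-2m+2}$. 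By linear independence of the powers $q^{-ns}$ I would then rewrite the common polynomial by reading its coefficients of $q^{-ns}$ for $0\le n\le m$ off the left-hand expansion and those for $m+1\le n\le 2m$ off the right-hand expansion, and evaluate at $s=\frac{1}{2}$. Writing $S_N:=\sum_{n=0}^N c_n q^{-n/2}$ and $t_N:=c_N q^{-N/2}$, reindexing the six resulting sums should make everything collapse to
\begin{align*}
\big(q^{\frac{1}{2}}-1\big)^2\Big\lvert L\Big(\frac{1}{2},\chi\Big)\Big\rvert^2
= 2\big(q^{\frac{1}{2}}-1\big)^2 S_{m-1}-2q^{\frac{1}{2}}\big(q^{\frac{1}{2}}-1\big)t_{m-1}-q\,t_{m-2}+t_m .
\end{align*}
Dividing by $\big(q^{\frac{1}{2}}-1\big)^2$ and identifying $S_{m-1}=\sum_{\substack{A,B\in\mathcal{M}\\\degree AB<\degree R}}\frac{\chi(A)\conj\chi(B)}{\lvert AB\rvert^{\frac{1}{2}}}$ and $t_{m-k}=\sum_{\substack{A,B\in\mathcal{M}\\\degree AB=\degree R-k}}\frac{\chi(A)\conj\chi(B)}{\lvert AB\rvert^{\frac{1}{2}}}$ for $k\in\{0,1,2\}$ then gives exactly the asserted formula.

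The one real difficulty is the combinatorial bookkeeping forced by the two squared linear factors. In the odd case the functional equation carries merely a monomial in $q^{-s}$, so each coefficient of the squared identity involves a single $c_n$ and a single reindexing does the job; here each coefficient combines the three consecutive quantities $c_{n-2},c_{n-1},c_n$ (respectively $g_{n-2},g_{n-1},g_n$), so one must keep careful track of the shifts by $1$ and by $2$, of the reversal $l\mapsto 2m-2-l$, and of the correct head/tail cut-off $n=m$, so that upon setting $s=\frac{1}{2}$ every contribution with $\degree AB\le\degree R-3$ pairs up to give the coefficient $2$, while the three boundary contributions at $\degree AB=\degree R-2,\ \degree R-1,\ \degree R$ reassemble into $c_e(\chi)$ with precisely the stated coefficients $-q/(q^{\frac{1}{2}}-1)^2$, $-2q^{\frac{1}{2}}/(q^{\frac{1}{2}}-1)$ and $1/(q^{\frac{1}{2}}-1)^2$. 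Two minor preliminary points: the squared functional equation is a genuine polynomial identity in $q^{-s}$, which is what licenses the coefficient comparison; and no information about the root number is needed beyond $\lvert W(\chi)\rvert=1$.
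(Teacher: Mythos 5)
Your argument is correct, and it reaches the same final identity I verified against the paper's statement, but by a genuinely different calculation. The paper first absorbs the annoying factor $(q^{1-s}-1)$ into a transformed coefficient sequence $M_i(\chi) := qL_{i-1}(\chi) - L_i(\chi)$, which turns the squared functional equation into
\begin{align*}
\sum_{n=0}^{\degree R} M_n(\chi)q^{-ns} = -W(\chi)q^{-\degree R/2}\sum_{n=0}^{\degree R} M_n(\conj\chi)q^{(1-s)(\degree R - n)},
\end{align*}
a form structurally identical to the odd case; one then imitates the odd argument verbatim and only at the very end unpacks $M_i$ back into $L_{i-1}, L_i$, which produces the three boundary terms. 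You instead keep $(q^{1-s}-1)^2$ and $(q^{-s}-1)^2$ explicit and do the coefficient bookkeeping directly: on the left the coefficient of $q^{-ns}$ is $q^2 c_{n-2} - 2qc_{n-1} + c_n$, on the right the reversed shape $q^m(g_{n-2}-2g_{n-1}+g_n)$, and you split the common polynomial at $n=m$ (the same cut the paper makes implicitly via its $n=0,\dots,\degree R$ versus $n=0,\dots,\degree R - 1$ choices). I carried your plan through: with $S_N := \sum_{n=0}^N t_n$ and $t_n := c_n q^{-n/2}$ the two partial sums reduce to
\begin{align*}
q S_{m-2} - 2q^{1/2}S_{m-1} + S_m + \Big(S_{m-1} - 2q^{1/2}S_{m-2} + qS_{m-3}\Big),
\end{align*}
and writing $S_{m-2} = S_{m-1} - t_{m-1}$, $S_{m-3}=S_{m-1}-t_{m-1}-t_{m-2}$, $S_m = S_{m-1}+t_m$ collapses this to
\begin{align*}
2(q^{1/2}-1)^2 S_{m-1} - 2q^{1/2}(q^{1/2}-1)t_{m-1} - q\,t_{m-2} + t_m,
\end{align*}
exactly as you claimed; dividing by $(q^{1/2}-1)^2$ gives $c_e(\chi)$ with the stated coefficients. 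The trade-off is clear: the paper's $M_i$ substitution makes the even proof cosmetically parallel the odd one and confines the extra algebra to a single final expansion, whereas your route avoids introducing an auxiliary sequence at the cost of tracking the $\pm1,\pm2$ index shifts throughout. Both are sound, and your version has the advantage of making visible exactly why the three boundary sums appear with those particular coefficients — they are the leftover endpoint contributions when the two three-term telescopes fail to pair.
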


\begin{proof}
The functional equation for even primitive characters gives us that

\begin{align}
\begin{split} \label{Even function equation replace with L_n (chi)}
\big( q^{1-s} -1 \big) \sum_{n=0}^{\degree R -1} L_n (\chi ) q^{-ns}
= &W(\chi ) q^{\frac{\degree R}{2}} \big(q^{-s} -1 \big) \big(q^{-s} \big)^{\degree R -1} \sum_{n=0}^{\degree R -1} L_n (\conj\chi ) q^{-n(1-s)} \\
= &W(\chi ) q^{- \frac{\degree R}{2}} \big(q^{1-s} -q \big) \sum_{n=0}^{\degree R -1} L_n (\conj\chi ) q^{(1-s) (\degree R -1 -n)}
\end{split}
\end{align}
For any primitive character $\chi_1$ of modulus $R \neq 1$, we define $L_{-1} (\chi_1 ) :=0$ and recall that $L_{\degree R} (\chi_1 ) =0$. If we define

\begin{align*}
M_i (\chi_1 ) := q L_{i-1} (\chi_1 ) - L_i (\chi_1 )
\end{align*}
for $i=0, 1, \ldots , \degree R$, then (\ref{Even function equation replace with L_n (chi)}) gives us that

\begin{align*}
\sum_{n=0}^{\degree R} M_n (\chi ) q^{-ns}
= &-W(\chi ) q^{- \frac{\degree R}{2}} \sum_{n=0}^{\degree R } M_n (\conj\chi ) q^{(1-s) (\degree R -n)} .
\end{align*}
Similarly as in the proof of Proposition \ref{Proposition, short sum for odd squared L-function}, we take the squared modulus of both sides, and we take the terms $n=0, 1, \ldots , \degree R$ from the LHS and the terms $n=0, 1, \ldots , \degree R -1$ from the RHS to give

\begin{align*}
\big( q^{1-s} - 1 \big)^2 \lvert L ( s , \chi ) \rvert^2 = &\sum_{n=0}^{\degree R} \bigg(\sum_{\substack{0\leq i,j \leq \degree R \\ i+j=n}} M_i (\chi ) M_j (\conj\chi ) \bigg) q^{-ns} \\
+& q^{-\degree R} \sum_{n=0}^{\degree R -1} \bigg(\sum_{\substack{0\leq i,j \leq \degree R \\ i+j=n}} M_i (\chi ) M_j (\conj\chi ) \bigg) q^{(1-s) ( 2\degree R -n )} .
\end{align*}
We now take $s =\frac{1}{2}$ and simplify to get

\begin{align*}
\big( q^{\frac{1}{2}} -1 \big)^2 \Big\lvert L \Big( \frac{1}{2} , \chi \Big) \Big\rvert^2 
= 2 \sum_{n=0}^{\degree R -1} \bigg(\sum_{\substack{0\leq i,j \leq \degree R \\ i+j=n}} M_i (\chi ) M_j (\conj\chi ) \bigg) q^{-\frac{n}{2}} 
+ \sum_{\substack{0\leq i,j \leq \degree R \\ i+j= \degree R}} M_i (\chi ) M_j (\conj\chi ) q^{-\frac{\degree R}{2}} .
\end{align*}
Now, 

\begin{align*}
&\sum_{n=0}^{\degree R -1} \bigg(\sum_{\substack{0\leq i,j \leq \degree R \\ i+j=n}} M_i (\chi ) M_j (\conj\chi ) \bigg) q^{-\frac{n}{2}} \\
= & \sum_{n=0}^{\degree R -1} q^2 \bigg( \sum_{\substack{0 \leq i,j \leq \degree R \\ i+j=n}} L_{i-1} (\chi ) L_{j-1} (\conj\chi ) \bigg) q^{-\frac{n}{2}}
- \sum_{n=0}^{\degree R -1} q \bigg( \sum_{\substack{0 \leq i,j \leq \degree R \\ i+j=n}} L_{i-1} (\chi ) L_{j} (\conj\chi ) \bigg) q^{-\frac{n}{2}} \\
&-\sum_{n=0}^{\degree R -1} q \bigg( \sum_{\substack{0 \leq i,j \leq \degree R \\ i+j=n}} L_{i} (\chi ) L_{j-1} (\conj\chi ) \bigg) q^{-\frac{n}{2}}
+\sum_{n=0}^{\degree R -1} \bigg( \sum_{\substack{0 \leq i,j \leq \degree R \\ i+j=n}} L_{i} (\chi ) L_{j} (\conj\chi ) \bigg) q^{-\frac{n}{2}} \\
= & \sum_{n=0}^{\degree R -3} q \bigg( \sum_{\substack{0 \leq i,j \leq \degree R -1 \\ i+j=n}} L_{i} (\chi ) L_{j} (\conj\chi ) \bigg) q^{-\frac{n}{2}}
-\sum_{n=0}^{\degree R -2} q^{\frac{1}{2}} \bigg( \sum_{\substack{0 \leq i,j \leq \degree R -1 \\ i+j=n}} L_{i} (\chi ) L_{j} (\conj\chi ) \bigg) q^{-\frac{n}{2}} \\
&-\sum_{n=0}^{\degree R -2} q^{\frac{1}{2}} \bigg( \sum_{\substack{0 \leq i,j \leq \degree R -1 \\ i+j=n}} L_{i} (\chi ) L_{j} (\conj\chi ) \bigg) q^{-\frac{n}{2}}
+\sum_{n=0}^{\degree R -1} \bigg( \sum_{\substack{0 \leq i,j \leq \degree R -1 \\ i+j=n}} L_{i} (\chi ) L_{j} (\conj\chi ) \bigg) q^{-\frac{n}{2}} \\
= &\big( q^{\frac{1}{2}} -1 \big)^2 \sum_{\substack{A,B \in \mathcal{M} \\ \degree AB < \degree R}} \frac{\chi (A) \conj\chi (B)}{\lvert AB \rvert^{\frac{1}{2}}} 
 - q \sum_{\substack{A,B \in \mathcal{M} \\ \degree AB = \degree R -2}} \frac{\chi (A) \conj\chi (B)}{\lvert AB \rvert^{\frac{1}{2}}}
+ \big(2q^{\frac{1}{2}} -q \big) \sum_{\substack{A,B \in \mathcal{M} \\ \degree AB = \degree R -1}} \frac{\chi (A) \conj\chi (B)}{\lvert AB \rvert^{\frac{1}{2}}} ,
\end{align*}
and similarly,

\begin{align*}
&\sum_{\substack{0\leq i,j \leq \degree R \\ i+j= \degree R}} M_i (\chi ) M_j (\conj\chi ) q^{-\frac{\degree R}{2}} \\
= & q \sum_{\substack{A,B \in \mathcal{M} \\ \degree AB = \degree R -2}} \frac{\chi (A) \conj\chi (B)}{\lvert AB \rvert^{\frac{1}{2}}}
- 2 q^{\frac{1}{2}} \sum_{\substack{A,B \in \mathcal{M} \\ \degree AB = \degree R -1}} \frac{\chi (B) \conj\chi (A)}{\lvert AB \rvert^{\frac{1}{2}}}
+ \sum_{\substack{A,B \in \mathcal{M} \\ \degree AB = \degree R }} \frac{\chi (A) \conj\chi (B)}{\lvert AB \rvert^{\frac{1}{2}}} .
\end{align*}
Hence,

\begin{align*}
\Big\lvert L \Big( \frac{1}{2} , \chi \Big) \Big\rvert^2 
= &2 \sum_{\substack{A,B \in \mathcal{M} \\ \degree AB < \degree R}} \frac{\chi (A) \conj\chi (B)}{\lvert AB \rvert^{\frac{1}{2}}} 
-\frac{q}{\big( q^{\frac{1}{2}}-1 \big)^2} \sum_{\substack{A,B \in \mathcal{M} \\ \degree AB = \degree R -2}} \frac{\chi (A) \conj\chi (B)}{\lvert AB \rvert^{\frac{1}{2}}} \\
&-\frac{2q^{\frac{1}{2}}}{ q^{\frac{1}{2}}-1} \sum_{\substack{A,B \in \mathcal{M} \\ \degree AB = \degree R -1}} \frac{\chi (A) \conj\chi (B)}{\lvert AB \rvert^{\frac{1}{2}}}
+ \frac{1}{\big( q^{\frac{1}{2}}-1 \big)^2} \sum_{\substack{A,B \in \mathcal{M} \\ \degree AB = \degree R }} \frac{\chi (A) \conj\chi (B)}{\lvert AB \rvert^{\frac{1}{2}}} ,
\end{align*}
as required.
\end{proof}

\noindent It is convenient to define

\begin{align} \label{Definition of c (chi)}
c(\chi )
:= \begin{cases}
c_e (\chi ) &\text{ if $\chi$ is even} \\
c_o (\chi ) &\text{ if $\chi$ is odd} .
\end{cases}
\end{align}

%%%%%%%%%%%%%%%%%%%%%%%%
%%%%%%%%%%%%%%%%%%%%%%%%

\section{Multiplicative Functions on $\mathbb{F}_q [T]$} \label{Section, Results on Well-known Multiplicative Functions}

\noindent In this section we state and prove some results for the functions $\mu , \phi $ and $\omega$ that are required for the proofs of the main theorems.

\noindent We will need the following well-known theorem.

\begin{theorem}[Prime Polynomial Theorem]
We have that
\begin{align*}
\# \mathcal{P}_n
= \frac{q^n}{n} + O \Big( \frac{q^\frac{n}{2}}{n} \Big) ,
\end{align*}
where the implied constant is independent of $q$. We reserve the symbol $\mathfrak{c}$ for the implied constant.
\end{theorem}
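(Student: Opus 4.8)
The plan is to reduce the count of monic irreducibles to the two formulae for $\zeta_{\mathcal{A}}(s)$ recorded above. Setting $u=q^{-s}$, the Euler product and the closed form give the identity of formal power series $\prod_{P\in\mathcal{P}}(1-u^{\degree P})^{-1}=(1-qu)^{-1}$. Taking the logarithmic derivative in $u$ and multiplying through by $u$ yields
\begin{align*}
\sum_{P\in\mathcal{P}}\frac{\degree P\cdot u^{\degree P}}{1-u^{\degree P}}=\frac{qu}{1-qu}.
\end{align*}
Expanding each side as a power series in $u$ and comparing the coefficient of $u^{n}$ gives the classical identity $\sum_{d\mid n}d\,\#\mathcal{P}_{d}=q^{n}$ for all $n\geq 1$ (equivalently, one could instead quote the factorisation of $T^{q^{n}}-T$ into the monic irreducibles of degree dividing $n$).

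M\"obius inversion over the divisors of the integer $n$ then gives the exact formula
\begin{align*}
n\,\#\mathcal{P}_{n}=\sum_{d\mid n}\mu\!\left(\tfrac{n}{d}\right)q^{d},
\end{align*}
with $\mu$ here the M\"obius function on $\mathbb{Z}$. Isolating the term $d=n$, it remains only to bound the contribution of the proper divisors of $n$. Since every proper divisor $d$ of $n$ satisfies $d\leq n/2$, a geometric series gives
\begin{align*}
\Bigg|\,\sum_{\substack{d\mid n\\ d<n}}\mu\!\left(\tfrac{n}{d}\right)q^{d}\,\Bigg|
&\leq\sum_{d=1}^{\lfloor n/2\rfloor}q^{d}
\leq\frac{q^{\lfloor n/2\rfloor+1}}{q-1}\\
&\leq\frac{q}{q-1}\,q^{n/2}
\leq 2\,q^{n/2}.
\end{align*}
Dividing by $n$ yields $\#\mathcal{P}_{n}=q^{n}/n+O(q^{n/2}/n)$ with implied constant at most $2$; in particular one may take $\mathfrak{c}=2$. (When $n$ has no proper divisors, e.g. $n=1$, the sum is empty and the error is $0$, so nothing is lost.)

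The only point requiring genuine care is the uniformity of the implied constant in $q$, and this is entirely contained in the geometric-series estimate above: the factor $q/(q-1)$ is bounded by $2$ for every prime power $q\geq 2$. Beyond this bookkeeping there is no real obstacle; the identity $\sum_{d\mid n}d\,\#\mathcal{P}_{d}=q^{n}$ is the only substantive ingredient, and it follows directly from the description of $\zeta_{\mathcal{A}}$ already given. One can even bypass M\"obius inversion: from $n\,\#\mathcal{P}_{n}=q^{n}-\sum_{d\mid n,\,d<n}d\,\#\mathcal{P}_{d}$ together with the trivial bounds $0\leq d\,\#\mathcal{P}_{d}\leq q^{d}$ (the upper bound proved by induction on $d$), one obtains $|n\,\#\mathcal{P}_{n}-q^{n}|\leq 2q^{n/2}$ at once.
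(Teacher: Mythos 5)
The paper states this theorem without proof, citing it as well known and pointing to Rosen's book; your argument is correct and is essentially the standard argument one finds there (the identity $\sum_{d\mid n} d\,\#\mathcal{P}_d = q^n$ from the two presentations of $\zeta_{\mathcal{A}}$, followed by M\"obius inversion and a geometric-series bound on the proper divisors). You are also right to flag the $q$-uniformity of the constant and to handle it via $q/(q-1)\le 2$: that is the only place where sloppiness could creep in, and your conclusion that one may take $\mathfrak{c}=2$ is justified (indeed the cruder argument at the end, using $0\le d\,\#\mathcal{P}_d\le q^d$, even shows the error is one-sided). Nothing to add.
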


\noindent We will also need the following two definitions.

\begin{definition}[Radical of a Polynomial, Square-free, and Square-full]
For all $R \in \mathcal{A}$ we define the \emph{radical} of $R$ to be the product of all distinct monic prime factors that divide $R$. It is denoted by $\rad (R)$. If $R = \rad (R)$, then we say that $R$ is \emph{square-free}. If for all $P \mid R$ we have that $P^2 \mid R$, then we say that $R$ is \emph{square-full}.
\end{definition}

\begin{definition}[Primorial Polynomials]
Let $(S_i)_{i \in \mathbb{Z}_{> 0}}$ be a fixed ordering of all the monic irreducibles in $\mathcal{A}$ such that $\degree S_i \leq \degree S_{i+1}$ for all $i \geq 1$ (the order of the irreducibles of a given degree is not of importance in this paper). For all positive integers $n$ we define

\begin{align*}
R_n
:= \prod_{i=1}^{n} S_i .
\end{align*}
We will refer to $R_n$ as the $n$-th \emph{primorial}. For each positive integer $n$ we have unique non-negative integers $m_n$ and $r_n$ such that

\begin{align} \label{R_n decomposition}
R_n
= \bigg( \prod_{\degree P \leq m_n } P \bigg) \bigg( \prod_{i=1}^{r_n } Q_i \bigg) ,
\end{align}
where the $Q_i$ are distinct monic irreducibles of degree $m_n +1$. This definition of primorial is not standard.
\end{definition}

\noindent Now, before proceeding to prove results on the growth of the $\omega$ and $\phi$ functions, we note that

\begin{align} \label{Sum of mu(E)/E^s over E mid R}
\sum_{E \mid R} \frac{\mu (E)}{\lvert E \rvert^s} = \prod_{P \mid R} 1 - \frac{1}{\lvert P \rvert^s}
\end{align}
and

\begin{align} \label{Sum of mu(E) deg E/E^s over E mid R}
\sum_{E \mid R} \frac{\mu (E) \degree E}{\lvert E \rvert^s} = - \bigg( \prod_{P \mid R} 1 - \frac{1}{\lvert P \rvert^s} \bigg) \bigg( \sum_{P \mid R} \frac{\degree P}{\lvert P \rvert^s -1} \bigg)
\end{align}
for all $R \in \mathcal{A} \backslash \{ 0 \}$ . The first equation holds for all $s \in \mathbb{C}$. The second holds for all $s \in \mathbb{C} \backslash \{ 0 \}$ and is obtained by differentiating the first with respect to $s$. \\

\noindent Also, for all square-full $R \in \mathcal{A} \backslash \{ 0 \}$ we have that

\begin{align}
\begin{split} \label{Sum of mu(E)phi(F)/F^s over EF=R}
\sum_{EF = R} \frac{\mu (E) \phi (F)}{\lvert F \rvert^s}
= &\sum_{EF = R} \mu (E) \frac{\phi (R)}{\lvert E \rvert} \frac{\lvert E \rvert^s}{\lvert R \rvert^s}
= \frac{\phi (R)}{\lvert R \rvert^s} \sum_{EF = R} \frac{\mu (E)}{\lvert E \rvert^{1-s}} \\
= &\frac{\phi (R)}{\lvert R \rvert^s} \prod_{P \mid R} 1 - \frac{1}{\lvert P \rvert^{1-s}}
\end{split}
\end{align}
and

\begin{align} \label{Sum of mu(E)phi(F) deg F/F^s over EF=R}
\sum_{EF = R} \frac{\mu (E) \phi (F) \degree F}{\lvert F \rvert^s}
= &\frac{\phi (R)}{\lvert R \rvert^s} \bigg( \prod_{P \mid R} 1 - \frac{1}{\lvert P \rvert^{1-s}} \bigg) \bigg( \degree R + \sum_{P \mid R} \frac{\degree P}{\lvert P \rvert^{1-s} -1} \bigg) ,
\end{align}
The first equation holds for all $s \in \mathbb{C}$. The second holds for all $s \in \mathbb{C} \backslash \{ 1 \}$ and is obtained by differentiating the first with respect to $s$.

\begin{lemma} \label{Primorial bound}
For all positive integers $n$ we have that

\begin{align*}
\log_q \log_q \lvert R_n \rvert
= m_n + O(1) .
\end{align*}
\end{lemma}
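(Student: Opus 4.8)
The plan is to reduce the statement to an elementary two-sided estimate on $\degree R_n$ via the Prime Polynomial Theorem. Since $\lvert R_n \rvert = q^{\degree R_n}$, we have $\log_q \log_q \lvert R_n \rvert = \log_q \degree R_n$, so it suffices to show that $\degree R_n$ is bounded above and below by positive constant multiples (depending only on $q$) of $q^{m_n}$; taking $\log_q$ then gives $\log_q \degree R_n = m_n + O(1)$ at once. From the (unique) decomposition (\ref{R_n decomposition}), in which necessarily $0 \le r_n < \# \mathcal{P}_{m_n +1}$, we read off
\[
\degree R_n = \sum_{d=1}^{m_n} d\,\# \mathcal{P}_d + r_n(m_n+1) ,
\]
so everything comes down to estimating the truncated sum $\sum_{d \le m} d\,\# \mathcal{P}_d$.

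For the upper bound, $r_n(m_n+1) < (m_n+1)\,\#\mathcal{P}_{m_n+1}$ gives $\degree R_n < \sum_{d=1}^{m_n+1} d\,\#\mathcal{P}_d$. Writing $d\,\#\mathcal{P}_d = q^d + O(q^{d/2})$ (Prime Polynomial Theorem, with implied constant $\mathfrak{c}$) and summing the two resulting geometric series yields $\degree R_n \le \frac{q^{m_n+2}-q}{q-1} + \frac{\mathfrak{c}\,q^{(m_n+2)/2}}{q^{1/2}-1} \le C(q)\, q^{m_n}$ for some constant $C(q)$ depending only on $q$.

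For the lower bound, I would invoke the standard identity $\sum_{d \mid m} d\,\#\mathcal{P}_d = q^m$ (which follows from $T^{q^m}-T = \prod_{d\mid m}\prod_{\degree P = d}P$ on comparing degrees). Since the divisors of $m_n$ form a subset of $\{1,\dots,m_n\}$ and all terms are non-negative, $\degree R_n \ge \sum_{d=1}^{m_n} d\,\#\mathcal{P}_d \ge \sum_{d \mid m_n} d\,\#\mathcal{P}_d = q^{m_n}$ whenever $m_n \ge 1$; and when $m_n = 0$ one has $\degree R_n \ge 1 = q^{m_n}$ trivially, since $R_n$ is then a nonempty product of degree-$1$ primes. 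Combining the two estimates gives $q^{m_n} \le \degree R_n \le C(q)\,q^{m_n}$ for every $n$, hence $m_n \le \log_q \degree R_n \le m_n + \log_q C(q)$, which is the assertion.

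There is no serious obstacle here; the only point needing a little care is that the secondary term $q^{d/2}$ in the Prime Polynomial Theorem, once summed over $d \le m_n$, contributes only $O(q^{m_n/2}) = o(q^{m_n})$ and therefore cannot interfere with either bound. This is precisely why the crude estimate $q^{m_n} \le \degree R_n \le C(q)\,q^{m_n}$ already suffices and a sharper asymptotic for $\degree R_n$ is unnecessary.
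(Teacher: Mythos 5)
Your proof is correct. The overall strategy is the same as the paper's: sandwich $\degree R_n$ between $q^{m_n}$ and a constant multiple of $q^{m_n}$ and then take $\log_q$, and your upper bound is essentially identical (Prime Polynomial Theorem plus a geometric series). Where you diverge is the lower bound. The paper applies the Prime Polynomial Theorem a second time to write $\degree R_n \geq \sum_{i=1}^{m_n} \big(q^i + O(q^{i/2})\big) \gg q^{m_n}$; you instead invoke the exact identity $\sum_{d \mid m} d\,\#\mathcal{P}_d = q^m$ (read off by comparing degrees in $T^{q^m}-T = \prod_{d \mid m}\prod_{\degree P = d} P$), which gives the clean inequality $\degree R_n \geq q^{m_n}$ with no error term at all, and you patch in the trivial case $m_n = 0$ separately. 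This buys you a small but genuine gain in rigor: the paper's expression $q^{m_n} + O(q^{m_n/2})$ only self-evidently yields a useful lower bound once $m_n$ is large, leaving small $m_n$ handled implicitly, whereas your bound $\degree R_n \geq q^{m_n}$ is unconditional. Both arguments reach the same conclusion; yours simply replaces an asymptotic input in the lower bound with an exact combinatorial identity.
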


\begin{proof}
By (\ref{R_n decomposition}) and the prime polynomial theorem, we see that

\begin{align*}
\log_q \lvert R_n \rvert 
= \degree R_n 
\leq \bigg( \sum_{i=1}^{m_n +1} q^i + O \Big( q^{\frac{i}{2}} \Big) \bigg)
\ll  q^{m_n +1} + O \Big(  q^{\frac{m_n +1}{2}} \Big)
\end{align*}
and

\begin{align*}
\log_q \lvert R_n \rvert 
= \degree R_n 
\geq \bigg( \sum_{i=1}^{m_n} q^i + O \Big( q^{\frac{i}{2}} \Big) \bigg)
\gg  q^{m_n} + O \Big(  q^{\frac{m_n }{2}} \Big) . 
\end{align*}
By taking logarithms of both equations above, we deduce that

\begin{align*}
\log_q \log_q \lvert R_n \rvert
= m_n + O (1) .
\end{align*}
\end{proof}

\noindent Using this result we can obtain results about the growth of the $\omega$ function.

\begin{lemma} \label{omega function at primorials}
For all primorials $R_n$ with $n > 1$ we have that

\begin{align*}
\omega (R_n )
= \frac{\log_q \lvert R_n \rvert}{\log_q \log_q \lvert R_n \rvert} + O \bigg( \frac{\log_q \lvert R_n \rvert}{( \log_q \log_q \lvert R_n \rvert )^2} \bigg) .
\end{align*}
\end{lemma}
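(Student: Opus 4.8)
The plan is to combine the explicit structure (\ref{R_n decomposition}) of the primorial $R_n$ with the Prime Polynomial Theorem, and then to convert $m_n$ into $\log_q \log_q \lvert R_n \rvert$ at the end via Lemma \ref{Primorial bound}. Write $N := m_n$. Since $R_n$ is a product of $n$ distinct monic irreducibles, $\omega(R_n) = n$, and (\ref{R_n decomposition}) gives the two exact identities
\[
\omega(R_n) = \sum_{d=1}^{N} \# \mathcal{P}_d + r_n
\qquad\text{and}\qquad
\degree R_n = \sum_{d=1}^{N} d\, \# \mathcal{P}_d + (N+1)\, r_n ,
\]
where $0 \le r_n < \# \mathcal{P}_{N+1}$. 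Subtracting $N$ times the first identity from the second (the $d = N$ term cancels) yields the key relation
\[
\degree R_n - N\, \omega(R_n) = r_n - \sum_{d=1}^{N-1} (N-d)\, \# \mathcal{P}_d ,
\]
so it suffices to show that the right-hand side is $O(q^{N}/N)$ and that $q^{N} \ll \degree R_n$; dividing by $N$ then gives $\lvert N^{-1} \degree R_n - \omega(R_n) \rvert \ll \degree R_n / N^{2}$.

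The bound $q^{N} \ll \degree R_n$ is immediate on keeping only the $d = N$ term: $\degree R_n \ge N\, \# \mathcal{P}_N \gg q^{N}$ by the Prime Polynomial Theorem. For the right-hand side of the key relation, the Prime Polynomial Theorem also gives $\# \mathcal{P}_d \ll q^{d}/d$ for all $d \ge 1$, so that $r_n < \# \mathcal{P}_{N+1} \ll q^{N}/N$, and, substituting $j = N - d$,
\[
\sum_{d=1}^{N-1} (N-d)\, \# \mathcal{P}_d \ll \sum_{d=1}^{N-1} \frac{(N-d)\, q^{d}}{d} = q^{N} \sum_{j=1}^{N-1} \frac{j\, q^{-j}}{N-j} .
\]
This last estimate is the step I expect to be the main obstacle: the crude bound $\# \mathcal{P}_d \le q^{d}$ is not sharp enough here, since it would only give the error $O(\degree R_n / N)$ instead of $O(\degree R_n / N^{2})$, so one has to keep the $1/d$ saving from the Prime Polynomial Theorem and exploit that the geometric weight $q^{d}$ concentrates the sum on small $j$. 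Concretely, split the sum at $j = N/2$: for $1 \le j \le N/2$ one has $N - j \ge N/2$, whence this part is $\ll N^{-1} \sum_{j \ge 1} j\, q^{-j} \ll N^{-1}$; and the range $N/2 < j < N$ is exponentially suppressed by $q^{-j} \le q^{-N/2}$ and contributes only $o(N^{-1})$. Hence $\sum_{d=1}^{N-1}(N-d)\, \# \mathcal{P}_d \ll q^{N}/N$, as required, and therefore $\lvert N^{-1} \degree R_n - \omega(R_n) \rvert \ll \degree R_n / N^{2}$.

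It remains to replace $N = m_n$ by $\log_q \log_q \lvert R_n \rvert$. By Lemma \ref{Primorial bound}, $\log_q \log_q \lvert R_n \rvert = N + O(1)$; since $m_n \to \infty$ as $n \to \infty$, we have $N \asymp \log_q \log_q \lvert R_n \rvert$ for all large $n$, the finitely many remaining $n$ being absorbed into the implied constant. Using $\degree R_n = \log_q \lvert R_n \rvert$,
\[
\frac{\degree R_n}{N} - \frac{\log_q \lvert R_n \rvert}{\log_q \log_q \lvert R_n \rvert} = \log_q \lvert R_n \rvert \cdot \frac{\log_q \log_q \lvert R_n \rvert - N}{N \log_q \log_q \lvert R_n \rvert} \ll \frac{\log_q \lvert R_n \rvert}{(\log_q \log_q \lvert R_n \rvert)^{2}} ,
\]
and combining this with the estimate of the previous paragraph yields
\[
\omega(R_n) = \frac{\log_q \lvert R_n \rvert}{\log_q \log_q \lvert R_n \rvert} + O\!\left( \frac{\log_q \lvert R_n \rvert}{(\log_q \log_q \lvert R_n \rvert)^{2}} \right) ,
\]
as claimed.
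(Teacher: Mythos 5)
Your proof is correct, but it organizes the computation differently from the paper's. The paper estimates $\omega(R_n)$ and $\degree R_n$ \emph{separately}: it applies the prime polynomial theorem to each sum, uses the exact telescoping identity $\frac{q^i}{i} = \frac{1}{q-1}\big(\frac{q^{i+1}}{i+1} - \frac{q^i}{i}\big) + \frac{1}{q-1}\frac{q^{i+1}}{i(i+1)}$ to extract the main term $\frac{q}{q-1}\frac{q^{m_n}}{m_n+1}$ with an $O(q^{m_n}/m_n^2)$ error, and then divides the two asymptotics to obtain $\frac{\omega(R_n)}{\log_q\lvert R_n\rvert} = \frac{1}{m_n+1} + O\big(\frac{1}{(m_n+1)^2}\big)$. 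You instead form the linear combination $\degree R_n - m_n\,\omega(R_n)$, which cancels the dominant $d = m_n$ term by construction, and you bound the residual $\sum_{d < m_n}(m_n - d)\#\mathcal{P}_d$ directly by the change of variables $j = m_n - d$ and a split at $j = m_n/2$ that exploits the geometric decay. Both routes hinge on exactly the same underlying fact — that $\sum_{d\le m}q^d/d$ and related weighted sums are dominated by the top term with a $1/m$ saving — but your cancellation scheme avoids having to produce the explicit constant $\frac{q}{q-1}$ or to carry out the telescoping identity; it buys a slightly shorter and more robust argument at the cost of the paper's sharper intermediate asymptotics for $\omega(R_n)$ alone. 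One minor point that you handled correctly but should make sure to state clearly: the division by $N = m_n$ requires $m_n \ge 1$, which fails for the finitely many $n \le q$ where $R_n$ has only degree-one prime factors; you absorb those cases into the implied constant (the paper dismisses them in one line as ``not difficult''), and this is a legitimate way to handle them since there are only finitely many once $q$ is fixed.
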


\begin{proof}
The cases when $m_n = 0$ are not difficult. We proceed with the cases where $m_n \geq 1$. Using the prime polynomial theorem, we have that

\begin{align}
\begin{split} \label{omega R_n sum telescoped}
\omega (R_n )
= \sum_{i=1}^{m_n} \bigg( \frac{q^i}{i} + O\Big( \frac{q^{\frac{i}{2}}}{i} \Big)\bigg) + r_n
= \frac{1}{q-1} \bigg(\sum_{i=1}^{m_n} \frac{q^{i+1}}{i+1} - \frac{q^i}{i} + \frac{q^{i+1}}{i(i+1)} \bigg) + O \bigg( \sum_{i=1}^{m_n} \frac{q^{\frac{i}{2}}}{i} \bigg) + r_n .
\end{split}
\end{align}
We see that

\begin{align*}
\sum_{i=1}^{m_n} \frac{q^{\frac{i}{2}}}{i}
\leq \sum_{i=1}^{\frac{m_n}{2}} \frac{q^{\frac{i}{2}}}{i} + \sum_{i=\frac{m_n}{2}}^{m_n} \frac{q^{\frac{i}{2}}}{i}
\leq \sum_{i=1}^{\frac{m_n}{2}} q^{\frac{i}{2}} + \frac{2}{m_n }\sum_{i=\frac{m_n}{2}}^{m_n} q^{\frac{i}{2}}
= O \Big( \frac{q^{\frac{m_n}{2}}}{m_n } \Big) .
\end{align*}
Similarly,

\begin{align*}
\sum_{i=1}^{m_n} \frac{q^{i}}{i(i+1)}
= O \Big( \frac{q^{m_n}}{(m_n )^2} \Big) .
\end{align*}
Whereas, for the main term we apply the more precise calculations

\begin{align*}
\frac{1}{q-1} \sum_{i=1}^{m_n} \frac{q^{i+1}}{i+1} - \frac{q^i}{i}
= \frac{q}{q-1} \Big( \frac{q^{m_n}}{{m_n}+1} -1 \Big) .
\end{align*}
Hence, (\ref{omega R_n sum telescoped}) gives

\begin{align*}
\omega (R_n )
= \frac{q}{q-1} \frac{q^{m_n}}{{m_n}+1} + O \bigg( \frac{q^{m_n}}{(m_n )^2} \bigg) + r_n . 
\end{align*}

\noindent We also have that

\begin{align*}
\log_q \lvert R_n \rvert
= \degree R_n 
= \bigg( \sum_{i=1}^{m_n } q^i + O\Big( q^{\frac{i}{2}} \Big) \bigg) + (m_n+1) r_n
= \frac{q}{q-1} q^{m_n } + O (q^{\frac{m_n }{2}}) + (m_n +1) r_n ;
\end{align*}
and so

\begin{align*}
\frac{\omega ( R_n )}{\log_q \lvert R_n \rvert}
= \frac{1}{m_n +1} + O \bigg( \frac{1}{(m_n + 1)^2} \bigg)
= \frac{1}{ \log_q \log_q \lvert R_n \rvert } + O \bigg( \frac{1}{ ( \log_q \log_q \lvert R_n \rvert )^2 } \bigg) ,
\end{align*}
where we have used Lemma \ref{Primorial bound}. The proof follows.
\end{proof}

\begin{lemma} \label{Omega function upper bound}
We have that

\begin{align*}
\limsup_{\degree R \rightarrow \infty} \omega (R) \frac{\log_q \log_q \lvert R \rvert }{\log_q \lvert R \rvert}
= 1 .
\end{align*}
\end{lemma}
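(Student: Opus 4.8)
The plan is to establish the $\limsup$ as a matching lower and upper bound. The lower bound is immediate from Lemma~\ref{omega function at primorials}: applied along the sequence of primorials $(R_n)_{n \geq 2}$, it gives
\[
\omega (R_n ) \frac{\log_q \log_q \lvert R_n \rvert }{\log_q \lvert R_n \rvert} = 1 + O \bigg( \frac{1}{\log_q \log_q \lvert R_n \rvert} \bigg) \xrightarrow[n \to \infty]{} 1 ,
\]
and since $\degree R_n \to \infty$, the $\limsup$ over all $R$ is at least $1$.

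For the upper bound the key point is an extremal property of the primorials: if $\omega (R) = k$, then $\lvert R \rvert \geq \lvert R_k \rvert$, hence $\degree R \geq \degree R_k$. Indeed, $R$ is divisible by a product of $k$ distinct monic primes $P_1 \cdots P_k$, and since $S_1, \dots , S_k$ are (by the chosen ordering) monic primes of least degree, one has $\degree P_i \geq \degree S_i$ after sorting, so $\lvert R \rvert \geq \prod_i \lvert P_i \rvert \geq \prod_i \lvert S_i \rvert = \lvert R_k \rvert$. Now the function $g(n) := (\log_q n)/n$ is decreasing for $n$ large, so for $k$ large enough,
\[
\omega (R) \frac{\log_q \log_q \lvert R \rvert }{\log_q \lvert R \rvert} = k \, g(\degree R) \leq k \, g(\degree R_k) = \omega (R_k) \frac{\log_q \log_q \lvert R_k \rvert }{\log_q \lvert R_k \rvert} ,
\]
and the right-hand side tends to $1$ as $k \to \infty$ by Lemma~\ref{omega function at primorials}. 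Fix $\varepsilon > 0$. This bounds the quantity by $1 + \varepsilon$ for all $R$ with $\omega (R)$ exceeding some threshold $k_1 = k_1(\varepsilon)$; for the remaining $R$ we have $\omega (R) < k_1$ and therefore $\omega (R) \frac{\log_q \log_q \lvert R \rvert }{\log_q \lvert R \rvert} < k_1 \, g(\degree R) \to 0$ as $\degree R \to \infty$. Hence $\limsup_{\degree R \to \infty} \omega (R) \frac{\log_q \log_q \lvert R \rvert }{\log_q \lvert R \rvert} \leq 1 + \varepsilon$ for every $\varepsilon > 0$, which combined with the lower bound gives the claim.

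I expect no substantial obstacle here: the only things needing a little care are the extremal inequality $\lvert R \rvert \geq \lvert R_{\omega(R)} \rvert$ and verifying that $\degree R$ is large enough to lie in the range where $g$ is monotone decreasing; everything else is a direct appeal to Lemma~\ref{omega function at primorials}.
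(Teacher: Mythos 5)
Your proof is correct and takes essentially the same route as the paper's: both hinge on Lemma~\ref{omega function at primorials} together with the extremal fact $\lvert R \rvert \geq \lvert R_{\omega(R)} \rvert$ and the monotonicity of $x \mapsto (\log_q x)/x$ past $x = e$ (the paper uses the reciprocal $x/\log_q x$). The paper packages the upper bound as an explicit inequality valid for $\degree R \geq 4$, passing first through the square-free case and then $\rad(R)$, whereas you run a direct $\varepsilon$-argument and absorb both steps into the observation $\lvert R \rvert \geq \lvert R_{\omega(R)}\rvert$; the substance is identical.
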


\begin{proof}
This lemma follows from Lemma \ref{omega function at primorials} if we also prove that

\begin{align} \label{omega(R) leq log_q(R)/log_q log_q(R)}
\omega (R)
\leq \frac{\log_q \lvert R \rvert}{ \log_q \log_q \lvert R \rvert } + O \bigg( \frac{\log_q \lvert R \rvert}{ (\log_q \log_q \lvert R \rvert)^2 } \bigg)
\end{align}
for all $R$ with $\degree R \geq 4$.

\noindent To this end, consider the case where $R$ is a square-free monic with $\omega( R) \geq 3$. Suppose $R$ has $n$ prime divisors. Then, 

\begin{align} \label{Omega function at square-frees upper bound}
\omega (R)
= \omega (R_n ) = \frac{\log_q \lvert R_n \rvert}{\log_q \log_q \lvert R_n \rvert} + O \bigg( \frac{\log_q \lvert R_n \rvert}{( \log_q \log_q \lvert R_n \rvert )^2} \bigg)
\leq \frac{\log_q \lvert R \rvert}{\log_q \log_q \lvert R \rvert} + O \bigg( \frac{\log_q \lvert R \rvert}{( \log_q \log_q \lvert R \rvert )^2} \bigg).
\end{align}
For the second relation we used Lemma \ref{omega function at primorials}. For the third relation we used the fact that for all $q$ the function $\frac{x}{\log_q x}$ is increasing at $x >  e$ and that $\log_q \lvert R_n \rvert \geq \omega (R_n ) \geq 3 > e$. \\

\noindent Now consider the more general case where $R$ is monic and $\omega (R) \geq 3$. We have that

\begin{align*}
\omega (R)
= \omega \big( \rad (R) \big)
\leq &\frac{\log_q \big\lvert \rad (R)  \big\rvert}{\log_q \log_q \big\lvert \rad (R)  \big\rvert} + O \bigg( \frac{\log_q \big\lvert \rad (R)  \big\rvert}{\big( \log_q \log_q \big\lvert \rad (R)  \big\rvert \big)^2} \bigg) \\
\leq &\frac{\log_q \lvert R \rvert}{\log_q \log_q \lvert R \rvert} + O \bigg( \frac{\log_q \lvert R \rvert}{( \log_q \log_q \lvert R \rvert )^2} \bigg) ,
\end{align*}
where the second relation follows from (\ref{Omega function at square-frees upper bound}), and, again, the third relation follows from the fact that for all $q$ the function $\frac{x}{\log_q x}$ is increasing at $x > e$. \\

\noindent Finally consider the case where $R$ is monic with $\degree R \geq 4$ and $\omega (R) \leq 2$. We have that

\begin{align*}
\frac{\log_q \lvert R \rvert}{\log_q \log_q \lvert R \rvert}
= \frac{\degree R}{\log_q \degree R}
\geq \frac{4}{\log_q 4}
\geq 2
= \omega (R).
\end{align*}
So, we have proved (\ref{omega(R) leq log_q(R)/log_q log_q(R)}), and this completes the proof. 
\end{proof}

\begin{remark}
We note that there is an analogous result to Lemma \ref{Omega function upper bound}, in the number field setting:

\begin{align*}
\limsup_{r \rightarrow \infty} \omega (r) \frac{ \log \log r}{\log r} = 1 . 
\end{align*}
\end{remark}

\noindent We now work towards obtaining a result on the growth of the $\phi$ function.

\begin{lemma} \label{(1+ 1/n)^n bounds}
For all integers $n \geq 2$ we have that

\begin{align*}
e^{-1 - \frac{1}{n}} \leq \Big( 1 - \frac{1}{n} \Big)^n \leq e^{-1} .
\end{align*}
\end{lemma}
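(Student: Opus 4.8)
The plan is to take logarithms and reduce both inequalities to estimates on $\log\bigl(1 - \tfrac{1}{n}\bigr)$, then exponentiate at the end. Since $n > 0$, the claim is equivalent to
\begin{align*}
-1 - \frac{1}{n} \;\leq\; n \log\Bigl( 1 - \frac{1}{n} \Bigr) \;\leq\; -1 ,
\end{align*}
i.e.\ to the pair of bounds $\log\bigl(1-\tfrac1n\bigr) \leq -\tfrac1n$ and $\log\bigl(1-\tfrac1n\bigr) \geq -\tfrac1n - \tfrac1{n^2}$.

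The upper bound is immediate: applying the standard inequality $\log(1+t) \leq t$ (valid for all $t > -1$) with $t = -\tfrac1n$ gives $\log\bigl(1-\tfrac1n\bigr) \leq -\tfrac1n$, and multiplying by $n$ and exponentiating yields $\bigl(1-\tfrac1n\bigr)^n \leq e^{-1}$.

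For the lower bound I would set $x = \tfrac1n$, so that $x \in (0, \tfrac12]$ because $n \geq 2$, and it suffices (after multiplying through by $x$) to show $\psi(x) := \log(1-x) + x + x^2 \geq 0$ on $(0,\tfrac12]$. Here $\psi(0) = 0$ and
\begin{align*}
\psi'(x) \;=\; \frac{-1}{1-x} + 1 + 2x \;=\; \frac{x(1-2x)}{1-x} \;\geq\; 0 \qquad \text{for } x \in \Bigl[0, \tfrac12\Bigr],
\end{align*}
so $\psi$ is non-decreasing on $[0,\tfrac12]$ and hence $\psi(x) \geq \psi(0) = 0$ there. Taking $x = \tfrac1n$ gives $n\log\bigl(1-\tfrac1n\bigr) \geq -1 - \tfrac1n$, and exponentiating completes the proof. (Alternatively, one can expand $\log(1-\tfrac1n) = -\sum_{k\geq 1} \tfrac{1}{k n^k}$ and bound the tail $\sum_{k\geq 3}\tfrac1{kn^k}$ by a geometric series, but the monotonicity argument above is cleaner.)

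The only mild subtlety — the "obstacle" — is the lower bound: the crude estimate $\log(1-x) \geq -x/(1-x)$ is too weak to give $-1-\tfrac1n$, so one must either use the second-order auxiliary function $\psi$ and exploit its monotonicity on precisely the interval $(0,\tfrac12]$ (which is where the hypothesis $n\geq 2$ enters), or control the series tail carefully. Everything else is routine.
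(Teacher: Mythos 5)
Your proof is correct and follows the same overall strategy as the paper—take logarithms, establish $-1-\tfrac1n \le n\log\bigl(1-\tfrac1n\bigr) \le -1$, and exponentiate—but the device for the lower bound is genuinely different. The paper expands $n\log\bigl(1-\tfrac1n\bigr) = -1 - \tfrac{1}{2n} - \tfrac{1}{3n^2} - \cdots$ and controls the tail by a geometric series, obtaining $\tfrac{1}{2n}+\tfrac{1}{3n^2}+\cdots \le \tfrac{1}{2(n-1)} \le \tfrac1n$, where the last step uses $n\ge 2$; you instead introduce $\psi(x)=\log(1-x)+x+x^2$ and show $\psi'(x)=\tfrac{x(1-2x)}{1-x}\ge 0$ on $[0,\tfrac12]$, where the hypothesis $n\ge 2$ appears as $x=\tfrac1n\le\tfrac12$. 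Both approaches are elementary and isolate the role of $n\ge2$ cleanly; the paper's series computation is perhaps more in the spirit of the subsequent estimates in that section (which repeatedly manipulate truncated series), while yours has the advantage of avoiding any series manipulation entirely. You also correctly diagnose that the first-order bound $\log(1-x)\ge -x/(1-x)$ only yields $-1-\tfrac{1}{n-1}$, which is too weak, and you note the series-tail alternative as a remark—that alternative is exactly what the paper does.
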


\begin{proof}
\noindent On one hand, we have that

\begin{align*}
\log \bigg( \Big(1 - \frac{1}{n} \Big)^n \bigg) = n \log \Big(1 - \frac{1}{n} \Big) = -1 - \frac{1}{2n} - \frac{1}{3n^2} - \frac{1}{4n^3} - \ldots \leq - 1 .
\end{align*}
Taking the exponential of both sides gives

\begin{align*}
\Big( 1 - \frac{1}{n} \Big)^n \leq e^{-1} .
\end{align*}

\noindent On the other hand, because $n \geq 2$, we have that

\begin{align*}
\frac{1}{2n} + \frac{1}{3n^2} + \frac{1}{4n^3} \ldots \leq \frac{1}{2n} \sum_{i =0}^{\infty} \frac{1}{n^i} = \frac{1}{2(n-1)} \leq \frac{1}{n} ,
\end{align*}
and so

\begin{align*}
\log \bigg( \Big(1 - \frac{1}{n} \Big)^n \bigg) = -1 - \frac{1}{2n} - \frac{1}{3n^2} - \frac{1}{4n^3} - \frac{1}{5n^4} \geq -1 - \frac{1}{n} .
\end{align*}
Taking exponential of both side gives

\begin{align*}
\Big( 1 - \frac{1}{n} \Big)^n \geq  e^{-1 - \frac{1}{n}} .
\end{align*}
\end{proof}

\begin{proposition} \label{Totient function bounds}
For all $R \in \mathcal{A}$ with $\degree R \geq 1$ we have that

\begin{align} \label{Totient function lower bound}
\phi (R) \geq \frac{e^{- \gamma} \lvert R \rvert}{\log_q \log_q \lvert R \rvert + O(1) } e^{-a q^{-\frac{1}{2}}} ,
\end{align}
and for infinitely many $R \in \mathcal{A}$ we have that

\begin{align} \label{Totient function upper bound}
\phi (R) \leq \frac{e^{- \gamma} \lvert R \rvert}{\log_q \log_q \lvert R \rvert + O(1) } e^{b q^{-\frac{1}{2}}} ,
\end{align}
where $a$ and $b$ are positive constants which are independent of $q$ and $R$.
\end{proposition}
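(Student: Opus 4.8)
The plan is to mimic the classical Mertens/Landau argument for $\phi(n)/n$, adapted to $\mathbb{F}_q[T]$, with the primorials $R_n$ playing the role of the extremal integers. The starting point is the factorisation
\begin{align*}
\frac{\phi(R)}{\lvert R \rvert} = \prod_{P \mid R}\Big(1 - \frac{1}{\lvert P \rvert}\Big),
\end{align*}
so that it suffices to bound $\sum_{P \mid R} -\log\big(1 - \lvert P \rvert^{-1}\big)$ from above and below. First I would note that the product over $P \mid R$ is, for fixed $\omega(R)$, minimised (hence $\phi(R)/\lvert R\rvert$ minimised) when the prime divisors are those of smallest norm, i.e. when $R$ is (up to radical) a primorial; combined with Lemma~\ref{Omega function upper bound}, which controls how large $\omega(R)$ can be in terms of $\degree R$, this reduces the lower bound~\eqref{Totient function lower bound} to estimating $\prod_{i=1}^{n}\big(1 - \lvert S_i \rvert^{-1}\big)$ for the primorial $R_n$, and the upper bound~\eqref{Totient function upper bound} (claimed only for infinitely many $R$) to evaluating the same product along the sequence $R = R_n$.

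The heart of the matter is therefore the asymptotic evaluation of
\begin{align*}
\sum_{P \mid R_n} \log\Big(1 - \frac{1}{\lvert P \rvert}\Big)
= \sum_{\degree P \le m_n} \log\Big(1 - q^{-\degree P}\Big) + \sum_{i=1}^{r_n}\log\Big(1 - q^{-(m_n+1)}\Big),
\end{align*}
using the decomposition~\eqref{R_n decomposition}. I would split each logarithm as $\log(1 - \lvert P\rvert^{-1}) = -\lvert P\rvert^{-1} + O(\lvert P\rvert^{-2})$; the tail $\sum_P O(\lvert P\rvert^{-2})$ converges, and one checks via the Prime Polynomial Theorem that it equals a constant plus an error of size $O(q^{-1/2})$ times a bounded quantity — this is the source of the factors $e^{-aq^{-1/2}}$ and $e^{bq^{-1/2}}$, and the place where one must track the dependence on $q$ carefully rather than absorbing everything into $O(1)$. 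For the main term, $\sum_{\degree P \le m}\lvert P\rvert^{-1} = \sum_{d \le m} \frac{\#\mathcal{P}_d}{q^d} = \sum_{d\le m}\big(\frac{1}{d} + O(\frac{q^{-d/2}}{d})\big) = \log m + \gamma_{\mathbb{F}_q} + O(q^{-1/2}) + o(1)$, where the appearance of Euler's constant $\gamma$ (as $m\to\infty$, $\sum_{d\le m} 1/d = \log m + \gamma + o(1)$) is exactly what produces the $e^{-\gamma}$ in the statement. Finally, Lemma~\ref{Primorial bound} converts $m_n + O(1)$ into $\log_q\log_q\lvert R_n\rvert + O(1)$, giving $\phi(R_n)/\lvert R_n\rvert = \frac{e^{-\gamma}}{\log_q\log_q\lvert R_n\rvert + O(1)}\,e^{O(q^{-1/2})}$, which yields~\eqref{Totient function upper bound} directly for $R = R_n$.

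For the lower bound~\eqref{Totient function lower bound} for \emph{all} $R$, I would argue that for general $R$ with $\omega(R) = n$ one has $\phi(R)/\lvert R\rvert \ge \phi(R_n)/\lvert R_n\rvert$ (replacing each prime divisor of $R$ by one of no larger norm only decreases the product $\prod(1-\lvert P\rvert^{-1})$), so the lower bound follows from the $R_n$ computation provided the quantity $\log_q\log_q\lvert R_n\rvert$ appearing there is at most $\log_q\log_q\lvert R\rvert + O(1)$; since $\lvert R_n\rvert \le \lvert \rad(R)\rvert \le \lvert R\rvert$ this is immediate. The main obstacle, and the step requiring genuine care, is the bookkeeping of the $q$-dependent error: one must show that \emph{all} the error contributions — from the quadratic tail of the logarithm, from the $O(q^{-d/2}/d)$ term in the Prime Polynomial Theorem, and from the $r_n$ extra primes of degree $m_n+1$ — can be bounded by an absolute constant times $q^{-1/2}$ (for the constants $a,b$) together with an additive $O(1)$ inside the double logarithm, uniformly in $q$ and $R$; the convergence of $\sum_d q^{-d/2}/d$ and $\sum_P \lvert P\rvert^{-2}$ is geometric once $q^{-1/2} < 1$, so this is a matter of making the geometric-series estimates explicit and is where I would expect to spend most of the effort.
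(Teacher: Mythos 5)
Your proposal takes essentially the same approach as the paper: reduce via the radical to the primorials $R_n$ (noting $\lvert R_n\rvert \le \lvert R\rvert$ so the double-logarithm in the denominator can only increase), then estimate $\prod_{P\mid R_n}(1-\lvert P\rvert^{-1})$ using the Prime Polynomial Theorem, with $e^{-\gamma}$ emerging from the harmonic-sum asymptotic $\sum_{d\le m}1/d = \log m + \gamma + O(1/m)$ and the $e^{\pm\text{const}\cdot q^{-1/2}}$ factor from the $O(q^{d/2}/d)$ error term in $\#\mathcal{P}_d$. The only cosmetic difference is that you expand $\log(1-\lvert P\rvert^{-1})$ by Taylor series where the paper packages the same estimate as Lemma \ref{(1+ 1/n)^n bounds}; your side remark that Lemma \ref{Omega function upper bound} is needed for the reduction is spurious (the reduction uses only $\lvert R_n\rvert\le\lvert R\rvert$, as you yourself note in the final paragraph), and the claim that the $\sum_P O(\lvert P\rvert^{-2})$ tail is ``a constant plus $O(q^{-1/2})$'' should simply read that it is $O(q^{-1})\le O(q^{-1/2})$, but neither of these affects the argument.
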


\begin{proof}
For (\ref{Totient function lower bound}) we need only prove the case where $R$ is square-free (and $\degree R \geq 1$). Assuming this, we have that

\begin{align*}
\phi (R) 
= \lvert R \rvert \frac{\phi (R)}{\lvert R \rvert}
= \lvert R \rvert \frac{\phi \big( \rad (R) \big)}{\lvert \rad (R) \rvert}
\geq &\lvert R \rvert \frac{e^{- \gamma}}{\log_q \log_q \lvert \rad (R) \rvert + O(1) } e^{-a q^{-\frac{1}{2}}} \\
\geq &\lvert R \rvert \frac{e^{- \gamma}}{\log_q \log_q \lvert R \rvert + O(1) } e^{-a q^{-\frac{1}{2}}} .
\end{align*}
In fact, we need only prove the inequality for the case where $R$ is a primorial, as the square-free case follows from this. Indeed, if $R$ is square-free with $n \geq 1$ prime factors, then

\begin{align*}
\phi (R) 
\geq \lvert R \rvert \frac{\phi (R_n )}{\lvert R_n \rvert} 
\geq \lvert R \rvert \frac{e^{- \gamma}}{\log_q \log_q \lvert R_n \rvert + O(1) } e^{-a q^{-\frac{1}{2}}} 
\geq \lvert R \rvert \frac{e^{- \gamma}}{\log_q \log_q \lvert R \rvert + O(1) } e^{-a q^{-\frac{1}{2}}} . 
\end{align*}

\noindent So, it suffices to prove (\ref{Totient function lower bound}) for the primorials. It is clear that it is also sufficient to prove (\ref{Totient function upper bound}) for the primorials as there are infinitely many of them, and this is exactly what we will do. We first proceed with (\ref{Totient function lower bound}).

\noindent We have that

\begin{align*}
\frac{\phi (R_n )}{\lvert R_n \rvert}
= \prod_{P \mid R_n} \Big( 1 - \frac{1}{\lvert P \rvert} \Big)
\geq \prod_{\degree P \leq m_n+1} \Big( 1 - \frac{1}{\lvert P \rvert} \Big)
\geq \prod_{n=1}^{m_n+1} \Big( 1 - \frac{1}{q^n} \Big)^{\frac{q^n}{n} + \mathfrak{c} \frac{q^{\frac{n}{2}}}{n}} .
\end{align*}
By Lemma \ref{(1+ 1/n)^n bounds}, we see that 

\begin{align*}
\prod_{n=1}^{m_n+1} \Big( 1 - \frac{1}{q^n} \Big)^{\frac{q^n}{n}} 
= &\prod_{n=1}^{m_n+1} \bigg( \Big( 1 - \frac{1}{q^n} \Big)^{q^n} \bigg)^{\frac{1}{n}}
\geq \prod_{n=1}^{m_n+1} \exp \Big( -\frac{1}{n} -\frac{1}{n q^n } \Big) 
=  \exp \Big( \sum_{n=1}^{m_n+1} -\frac{1}{n} -\frac{1}{n q^n } \Big) \\
\geq &\exp \Big( -\log (m_n+1) - \gamma - \frac{1}{m_n+1} - \sum_{n=1}^{m_n+1} \frac{1}{q^n} \Big) \\
\geq &\frac{e^{-\gamma}}{m_n+1} e^{-\frac{1}{m_n+1}} e^{- 2q^{-1}} 
\geq \frac{e^{-\gamma}}{m_n+1} \frac{1}{1+\frac{3}{m_n+1}} e^{- 2q^{-1}}
= \frac{e^{-\gamma}}{m_n+4} e^{- 2q^{-1}} ,
\end{align*}
where for the second to last relation we used the fact that $e^x \leq 1+3x$ for $x \in [0,1]$. Similarly,

\begin{align} 
\begin{split}\label{(1- 1/q^n)^c... product}
\prod_{n=1}^{m_n+1} \Big( 1 - \frac{1}{q^n} \Big)^{\mathfrak{c} \frac{q^\frac{n}{2}}{n}} 
\geq &\prod_{n=1}^{m_n+1} \exp \Big( -\frac{\mathfrak{c}}{n q^{\frac{n}{2}}} - \frac{\mathfrak{c}}{n q^{\frac{3n}{2}}} \Big) 
\geq \exp \Big( \sum_{n=1}^{m_n} -\frac{2 \mathfrak{c} }{q^{\frac{n}{2}}} \Big) \\
\geq &\exp \Big( -\frac{2 \mathfrak{c} }{q^{\frac{1}{2}}} \frac{1}{1-q^{- \frac{1}{2}}} \Big) 
\geq e^{-7 \mathfrak{c} q^{- \frac{1}{2}}} .
\end{split}
\end{align}
Hence, we deduce that

\begin{align*}
\frac{\phi (R_n )}{\lvert R_n \rvert}
\geq \frac{e^{-\gamma}}{m_n+4} e^{- a q^{-\frac{1}{2}}},
\end{align*}
where $a = 7 \mathfrak{c} + 2$. Finally, we apply Lemma \ref{Primorial bound} and rearrange to see that

\begin{align*}
\phi (R_n )
\geq \frac{e^{-\gamma} \lvert R_n \rvert}{\log_q \log_q \lvert  R_n \rvert + O(1)} e^{- a q^{-\frac{1}{2}}} . 
\end{align*}

\noindent For (\ref{Totient function upper bound}), we proceed in a similar fashion. We have that

\begin{align*}
\frac{\phi (R_n )}{\lvert R_n \rvert}
= \prod_{P \mid R_n} \Big( 1 - \frac{1}{\lvert P \rvert} \Big)
\leq \prod_{\degree P \leq m_n } \Big( 1 - \frac{1}{\lvert P \rvert} \Big)
\leq \prod_{n=1}^{m_n } \Big( 1 - \frac{1}{q^n} \Big)^{\frac{q^n}{n} - \mathfrak{c} \frac{q^{\frac{n}{2}}}{n}} .
\end{align*}
By Lemma \ref{(1+ 1/n)^n bounds} again, we see that

\begin{align*}
\prod_{n=1}^{m_n} \Big( 1 - \frac{1}{q^n} \Big)^{\frac{q^n}{n}} 
\leq \prod_{n=1}^{m_n} \exp \Big( -\frac{1}{n} \Big) 
\leq \exp \Big( -\log (m_n) - \gamma \Big) 
= \frac{e^{-\gamma}}{m_n}.
\end{align*}
Also, by (\ref{(1- 1/q^n)^c... product}), we see that

\begin{align*}
\prod_{n=1}^{m_n +1} \Big( 1 - \frac{1}{q^n} \Big)^{-\mathfrak{c} \frac{q^\frac{n}{2}}{n}}
\leq e^{7 \mathfrak{c} q^{- \frac{1}{2}}} .
\end{align*}
Hence,

\begin{align*}
\frac{\phi (R_n )}{\lvert R_n \rvert}
\leq \frac{e^{-\gamma}}{m_n } e^{b q^{-\frac{1}{2}}} ,
\end{align*}
where $b = 7 \mathfrak{c}$. Finally, we apply Lemma \ref{Primorial bound} and rearrange to see that

\begin{align*}
\phi (R_n )
\leq \frac{e^{-\gamma} \lvert R_n \rvert}{\log_q \log_q \lvert  R_n \rvert + O(1) } e^{b q^{-\frac{1}{2}}} .
\end{align*}
\end{proof}

\begin{proposition} \label{Number of primitive characters bound}
For all $R \in \mathcal{A}$ with $\degree R \geq 1$ we have that

\begin{align*}
\phi^* (R)
\geq \frac{e^{- \gamma} \phi (R) }{\log_q \log_q \lvert R \rvert + O(1) } e^{-c q^{-\frac{1}{2}}} ,
\end{align*}
and for infinitely many $R \in \mathcal{A}$ we have that

\begin{align*}
\phi^* (R)
\leq \frac{e^{- \gamma} \phi (R) }{\log_q \log_q \lvert R \rvert + O(1) } e^{d q^{-\frac{1}{2}}} ,
\end{align*}
where $c$ and $d$ are positive constants which are independent of $q$ and $R$.
\end{proposition}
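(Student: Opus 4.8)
The plan is to reduce both bounds to the multiplicative structure of $\phi^*$ and then quote Proposition~\ref{Totient function bounds}. By Corollary~\ref{Number of primitive characters} we have $\phi^*(R) = \sum_{EF = R}\mu(E)\phi(F)$, which is the Dirichlet convolution of the multiplicative functions $\mu$ and $\phi$ and hence is itself multiplicative; so the first step is to evaluate it on prime powers. Since $\mu$ is supported on squarefree polynomials, only the terms $E = 1$ and $E = P$ contribute, giving $\phi^*(P) = \phi(P) - \phi(1) = \lvert P \rvert - 2$ and $\phi^*(P^e) = \phi(P^e) - \phi(P^{e-1})$ for $e \ge 2$. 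Dividing by $\phi(R)$ and using $\phi(P^{e-1}) = \lvert P \rvert^{-1}\phi(P^e)$ for $e \ge 2$, multiplicativity yields
\begin{align*}
\frac{\phi^*(R)}{\phi(R)} = \prod_{P \| R}\Bigl(1 - \frac{1}{\lvert P \rvert - 1}\Bigr)\prod_{\substack{P^e \| R \\ e \ge 2}}\Bigl(1 - \frac{1}{\lvert P \rvert}\Bigr),
\end{align*}
where $P^e \| R$ means $P^e \mid R$ but $P^{e+1} \nmid R$.

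Next I would use the elementary identity $1 - \frac{1}{\lvert P \rvert - 1} = \bigl(1 - \frac{1}{\lvert P \rvert}\bigr)\bigl(1 - \frac{1}{(\lvert P \rvert - 1)^2}\bigr)$, which, combined with $\frac{\phi(R)}{\lvert R \rvert} = \prod_{P \mid R}\bigl(1 - \lvert P \rvert^{-1}\bigr)$, turns the display above into
\begin{align*}
\phi^*(R) = \phi(R)\cdot\frac{\phi(R)}{\lvert R \rvert}\cdot\prod_{P \| R}\Bigl(1 - \frac{1}{(\lvert P \rvert - 1)^2}\Bigr).
\end{align*}
For the lower bound I would divide the lower bound of Proposition~\ref{Totient function bounds} by $\lvert R \rvert$ to control the factor $\phi(R)/\lvert R \rvert$, and then bound the remaining product from below by $\prod_{P}\bigl(1 - (\lvert P \rvert - 1)^{-2}\bigr)$; taking logarithms, applying $\log(1-x) \ge -2x$ for $0 \le x \le \tfrac12$, and using the Prime Polynomial Theorem to obtain $\sum_{P}(\lvert P \rvert - 1)^{-2} \ll q^{-1/2}$ with an absolute implied constant, this product is at least $e^{-c' q^{-1/2}}$. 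Multiplying the two estimates gives the claimed lower bound with $c = a + c'$, where $a$ is the constant from Proposition~\ref{Totient function bounds}.

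For the upper bound I would restrict to $R = R_n$, a primorial: it is squarefree, so every prime divides it to the first power and the auxiliary product $\prod_{P \mid R_n}\bigl(1 - (\lvert P \rvert - 1)^{-2}\bigr)$ is at most $1$; hence $\phi^*(R_n) \le \phi(R_n)\cdot\phi(R_n)/\lvert R_n \rvert$, and feeding in the primorial upper bound of Proposition~\ref{Totient function bounds} (again divided by $\lvert R_n \rvert$) yields the stated inequality with $d = b$, valid for the infinitely many $R_n$. The step I expect to be the main obstacle is the uniform-in-$q$ lower bound for the convergent product $\prod_{P}\bigl(1 - (\lvert P \rvert - 1)^{-2}\bigr)$: its value is controlled by the smallest-degree primes, so one must argue with some care for small $q$ — and in particular for $q = 2$ the two linear primes have norm $2$, which makes this product vanish, so there the lower bound has to be read as excluding (or treating separately) those $R$ divisible by a linear prime to exactly the first power, for which $\phi^*(R) = 0$ outright. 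A secondary bookkeeping point is to track the $O(1)$ error inherited from Lemma~\ref{Primorial bound} through Proposition~\ref{Totient function bounds}, so that the denominator in the final bound is genuinely of the form $\log_q\log_q\lvert R \rvert + O(1)$.
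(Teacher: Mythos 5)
Your route is correct but genuinely different from the paper's in the lower-bound step. The paper writes $\phi^*(R)/\phi(R)$ as the two-product expression (same as your prime-power computation), then bounds it below by $\prod_{P\mid R}\bigl(1-\tfrac{1}{\lvert P\rvert-1}\bigr)$ and estimates that product directly ``by similar means as in Proposition~\ref{Totient function bounds}'', i.e.\ by re-running the primorial/Mertens argument for the function $1-\tfrac{1}{\lvert P\rvert-1}$. You instead factor $\phi^*(R)/\phi(R)=\tfrac{\phi(R)}{\lvert R\rvert}\prod_{P\|R}\bigl(1-(\lvert P\rvert-1)^{-2}\bigr)$, which lets you quote the already-proved bound for $\phi(R)/\lvert R\rvert$ and reduces the remaining work to a one-line estimate of the absolutely convergent Euler product $\prod_P\bigl(1-(\lvert P\rvert-1)^{-2}\bigr)$. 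This is more economical and makes the dependence on $q$ transparent; the paper's version keeps the argument self-contained at the cost of redoing the Mertens estimate. For the upper bound, restricting to primorials so that the auxiliary product is $\le 1$ and applying the primorial upper bound of Proposition~\ref{Totient function bounds}, the two proofs are essentially the same, since $\prod_{P\mid R_n}\bigl(1-\lvert P\rvert^{-1}\bigr)=\phi(R_n)/\lvert R_n\rvert$ is exactly what that proposition controls.

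Your observation about $q=2$ is a genuine catch and applies equally to the paper's own proof. When $q=2$ and a linear prime divides $R$ to exactly the first power one has $\phi^*(R)=0$, while the right-hand side of the claimed lower bound is strictly positive (the $O(1)$ in the denominator cannot rescue it, since it only shifts a bounded amount). Correspondingly, both the paper's intermediate bound $\prod_{P\mid R}\bigl(1-\tfrac{1}{\lvert P\rvert-1}\bigr)$ and your $\prod_P\bigl(1-(\lvert P\rvert-1)^{-2}\bigr)$ vanish for $q=2$. The paper does not address this; the lower bound should either be restricted to $q\ge 3$, or to $R$ not exactly divisible by a linear prime when $q=2$, as you point out. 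Your bookkeeping of the $O(1)$ inherited from Lemma~\ref{Primorial bound} is also the right thing to track; it propagates harmlessly through Proposition~\ref{Totient function bounds} into the denominator, so the final statement is unaffected beyond the $q=2$ caveat.
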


\begin{proof} 
From Corollary \ref{Number of primitive characters}, we have that

\begin{align*}
\frac{\phi^* (R)}{\phi (R)}
= &\sum_{EF=R } \mu (E) \frac{\phi (F)}{\phi (R)}
= \sum_{EF=R} \mu (E) \bigg( \prod_{\substack{P \mid E \\ P^2 \nmid R}} \frac{1}{\lvert P \rvert -1} \bigg) \bigg( \prod_{\substack{P \mid E \\ P^2 \mid R}} \frac{1}{\lvert P \rvert} \bigg) \\
= &\bigg( \prod_{\substack{P \mid R \\ P^2 \nmid R}} 1 - \frac{1}{\lvert P \rvert - 1} \bigg) \bigg( \prod_{\substack{P \mid R \\ P^2 \nmid R}} 1 - \frac{1}{\lvert P \rvert } \bigg)
\end{align*}
Now, by similar means as in Proposition \ref{Totient function bounds}, we can obtain that

\begin{align*}
\frac{\phi^* (R)}{\phi (R)}
\geq \prod_{P \mid R} 1 - \frac{1}{\lvert P \rvert - 1}
\geq \frac{e^{-\gamma}}{\log_q \log_q \lvert  R \rvert + O(1) } e^{-c q^{-\frac{1}{2}}}
\end{align*}
for all $R \in \mathcal{A}$ with $\degree R \geq 1$, and that

\begin{align*}
\frac{\phi^* (R)}{\phi (R)}
\leq \prod_{P \mid R} 1 - \frac{1}{\lvert P \rvert }
\leq \frac{e^{-\gamma}}{\log_q \log_q \lvert  R \rvert + O(1) } e^{d q^{-\frac{1}{2}}}
\end{align*}
for infinitely many $R \in \mathcal{A}$, where $c$ and $d$ are independent of $q$ and $R$.
\end{proof}

\begin{remark} \label{remark, asymptotic equivalences of powers of phi(R)/R}
By similar, but simpler, means as in the proof of Propositions \ref{Totient function bounds} and \ref{Number of primitive characters bound}, we can show that

\begin{align*}
\prod_{P \mid R} \frac{1}{1 + 2 \lvert P \rvert^{-1}}
\asymp \prod_{P \mid R} \bigg( \frac{1}{1 + \lvert P \rvert^{-1}} \bigg)^2
\end{align*}
and

\begin{align*}
\prod_{P \mid R} \frac{1}{1 + \lvert P \rvert^{-1}}
\asymp \prod_{P \mid R} 1 - \lvert P \rvert^{-1} .
\end{align*}
\end{remark}

\noindent Finally, we prove the following four lemmas.

\begin{lemma}
For all non-negative integers $k$ we have that

\begin{align*}
\bigg( \prod_{P \mid R} 1- \lvert P \rvert^{-1} \bigg)^k \omega (R)
\gg_k 1 .
\end{align*}
Also,

\begin{align*}
\bigg( \prod_{P \mid R} 1- \lvert P \rvert^{-1} \bigg) \omega (R)
\gg \log \omega (R) .
\end{align*}
\end{lemma}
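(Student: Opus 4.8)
\noindent The plan is to reduce both inequalities to the case of the primorials $R_n$ and then feed in Proposition \ref{Totient function bounds} and Lemma \ref{omega function at primorials}. We may assume $\degree R \geq 1$, since for a constant $R$ both left-hand sides vanish. Since $\prod_{P \mid R} 1 - \lvert P \rvert^{-1}$ and $\omega (R)$ depend only on the set of monic primes dividing $R$, we may replace $R$ by $\rad (R)$ and so assume $R$ is square-free, with $n := \omega (R) \geq 1$ prime factors. Each factor $1 - \lvert P \rvert^{-1}$ lies in $(0,1)$ and increases with $\lvert P \rvert$, so among all square-free polynomials with exactly $n$ prime divisors the product $\prod_{P \mid R} 1 - \lvert P \rvert^{-1}$ is smallest when the $n$ primes of least norm are taken, that is, when $R = R_n$; and then $\prod_{P \mid R_n} 1 - \lvert P \rvert^{-1} = \phi (R_n)/\lvert R_n \rvert$. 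Hence both left-hand sides are bounded below by their values at $R_n$, and it suffices to establish
\begin{align*}
\Big( \frac{\phi (R_n)}{\lvert R_n \rvert} \Big)^{k} n \gg_k 1
\qquad \text{and} \qquad
\frac{\phi (R_n)}{\lvert R_n \rvert} \, n \gg \log n
\end{align*}
for every positive integer $n$ (recalling $\omega (R_n) = n$).

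\noindent For large $n$ I would combine the lower bound of Proposition \ref{Totient function bounds} with Lemma \ref{omega function at primorials}, which give, as $n \to \infty$,
\begin{align*}
\frac{\phi (R_n)}{\lvert R_n \rvert} \gg \frac{1}{\log_q \log_q \lvert R_n \rvert}
\qquad \text{and} \qquad
n = \omega (R_n) \asymp \frac{\log_q \lvert R_n \rvert}{\log_q \log_q \lvert R_n \rvert} .
\end{align*}
Writing $D := \degree R_n = \log_q \lvert R_n \rvert$, so that $\log_q \log_q \lvert R_n \rvert = \log_q D = (\log D)/(\log q)$, substitution yields
\begin{align*}
\Big( \frac{\phi (R_n)}{\lvert R_n \rvert} \Big)^{k} n \gg (\log q)^{k+1} \frac{D}{(\log D)^{k+1}}
\qquad \text{and} \qquad
\frac{\phi (R_n)}{\lvert R_n \rvert} \, n \gg (\log q)^{2} \frac{D}{(\log D)^{2}} .
\end{align*}
The first right-hand side tends to infinity with $D$, hence exceeds any fixed positive constant once $n$ is large; for the second, $\omega (R_n) \leq \degree R_n = D$ gives $\log n \leq \log D$, while $(\log q)^{2} D/(\log D)^{2} > \log D$ once $D$ is large (since $q \geq 2$), so $\big( \phi (R_n)/\lvert R_n \rvert \big) n \gg \log \omega (R_n)$. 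Thus both inequalities hold for all $n$ beyond some threshold $n_0$.

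\noindent It remains to treat the finitely many $n$ with $1 \leq n \leq n_0$. For each such $n$ both $\phi (R_n)/\lvert R_n \rvert$ and $n$ are fixed positive numbers, so $\big( \phi (R_n)/\lvert R_n \rvert \big)^{k} n$ is a positive constant; the minimum of these finitely many constants together with the large-$n$ bound gives the first inequality. For the second, $n = 1$ gives $\log n = 0 \leq \phi (R_1)/\lvert R_1 \rvert$, while for $2 \leq n \leq n_0$ the quantity $\log n$ is bounded and $\big( \phi (R_n)/\lvert R_n \rvert \big) n$ is a fixed positive constant, so combining once more with the large-$n$ bound completes the proof. The only genuinely substantive step is the reduction to primorials via the extremality of $R_n$; after that both bounds are immediate from Proposition \ref{Totient function bounds}, Lemma \ref{omega function at primorials}, and the elementary fact that $x/(\log x)^{C} \to \infty$.
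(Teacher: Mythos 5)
Your proof is correct, and the core of it — reduce to the primorials $R_n$ and then invoke the lower bound on $\phi(R_n)/\lvert R_n \rvert$ from Proposition \ref{Totient function bounds} together with the growth of $\omega(R_n)$ from Lemma \ref{omega function at primorials} — is the same strategy the paper uses, which after reducing to primorials cites earlier results in the section to write $\big(\prod_{P\mid R}1-\lvert P\rvert^{-1}\big)^k\omega(R)\gg q^{m_n}/(m_n)^{k+1}\gg_k 1$; your reparametrisation through $D=\degree R_n$ with $\log_q D\asymp m_n$ (Lemma \ref{Primorial bound}) is equivalent. The one genuine divergence is the second inequality: the paper observes that $\big(\prod_{P\mid R}1-\lvert P\rvert^{-1}\big)\omega(R)\gg\sqrt{\omega(R)}$ is algebraically the same as the $k=2$ case of the first inequality, and since $\sqrt{x}\geq\log x$ this yields the $\log\omega$ bound with no further computation; you instead prove it directly by comparing $D/(\log D)^2$ with $\log D$. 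Both routes work, and the paper's reduction is slicker, but your write-up is more careful in two spots the paper elides: you justify the extremality of $R_n$ (each factor $1-\lvert P\rvert^{-1}$ increases with $\lvert P\rvert$, so the product over a fixed number of primes is minimised at the primes of least degree), and you handle the finitely many small $n$ explicitly, whereas the paper's bound $q^{m_n}/(m_n)^{k+1}$ is formally undefined when $m_n=0$.
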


\begin{proof}
Again, it suffices to prove both results for the primorials. From previous results in this section, we can see that

\begin{align*}
\bigg( \prod_{P \mid R} 1- \lvert P \rvert^{-1} \bigg)^k \omega (R)
\gg \frac{q^{m_n}}{(m_n)^{k+1}}
\gg_k 1.
\end{align*}
For the second result, we note that it is sufficient to show that 

\begin{align*}
\bigg( \prod_{P \mid R} 1- \lvert P \rvert^{-1} \bigg) \omega (R)
\gg \sqrt{\omega (R)} ,
\end{align*}
which is equivalent to

\begin{align*}
\bigg( \prod_{P \mid R} 1- \lvert P \rvert^{-1} \bigg)^2 \omega (R)
\gg 1 ,
\end{align*}
which we have proved.
\end{proof}

\begin{lemma} \label{Sum over P divides R of deg P / (P-1)}
Let $R \in \mathcal{M}$. We have that

\begin{align*}
\sum_{P \mid R} \frac{\degree P}{\lvert P \rvert - 1} = O \big( \log \omega (R) \big).
\end{align*}
\end{lemma}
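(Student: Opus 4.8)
The plan is to bound the sum $\sum_{P \mid R} \frac{\degree P}{\lvert P \rvert - 1}$ by reducing to a worst-case configuration of prime divisors and then summing a rapidly convergent series. First I would observe that for a polynomial $R$ with $\omega(R) = n$ distinct prime factors, the sum is maximised (up to constants) when the prime factors are taken to be of smallest possible degree, since the summand $\frac{\degree P}{\lvert P \rvert - 1} = \frac{\degree P}{q^{\degree P} - 1}$ is decreasing in $\degree P$; thus it suffices to prove the bound when $R$ is a primorial $R_n$. This mirrors the reduction strategy used repeatedly earlier in this section (Lemmas \ref{omega function at primorials}, \ref{Omega function upper bound}, Proposition \ref{Totient function bounds}).

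Next, for $R = R_n$ with the decomposition (\ref{R_n decomposition}), I would split the sum over prime factors into those of degree $\leq m_n$ and the $r_n$ primes of degree $m_n + 1$. For the first part, using the prime polynomial theorem $\#\mathcal{P}_i = \frac{q^i}{i} + O(q^{i/2}/i)$, I would write
\begin{align*}
\sum_{\substack{P \mid R_n \\ \degree P \leq m_n}} \frac{\degree P}{\lvert P \rvert - 1}
= \sum_{i=1}^{m_n} \#\mathcal{P}_i \cdot \frac{i}{q^i - 1}
= \sum_{i=1}^{m_n} \Big( \frac{q^i}{i} + O\big( \tfrac{q^{i/2}}{i} \big) \Big) \frac{i}{q^i - 1},
\end{align*}
which simplifies to $\sum_{i=1}^{m_n} \frac{q^i}{q^i - 1} + O(1) = m_n + O(1)$, since $\frac{q^i}{q^i-1} = 1 + O(q^{-i})$ and the error series converges. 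For the $r_n$ primes of degree $m_n + 1$, we have $r_n \leq \#\mathcal{P}_{m_n+1} \ll q^{m_n+1}/(m_n+1)$, so their total contribution is $r_n \cdot \frac{m_n+1}{q^{m_n+1} - 1} = O(1)$. Hence $\sum_{P \mid R_n} \frac{\degree P}{\lvert P \rvert - 1} = m_n + O(1)$.

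Finally, I would invoke Lemma \ref{omega function at primorials}, which gives $\omega(R_n) \asymp \frac{q^{m_n}}{m_n}$ (more precisely $\omega(R_n) = \frac{q}{q-1}\frac{q^{m_n}}{m_n+1} + O(q^{m_n}/m_n^2) + r_n$ from its proof), so that $\log \omega(R_n) = m_n \log q - \log m_n + O(1) \asymp m_n$. Combining this with the estimate of the previous paragraph yields $\sum_{P \mid R_n} \frac{\degree P}{\lvert P \rvert - 1} = m_n + O(1) = O(\log \omega(R_n))$, and the reduction to primorials completes the proof for general $R$. The main obstacle is making the reduction step fully rigorous: one must check carefully that replacing the actual prime divisors of $R$ by the smallest-degree primes genuinely dominates the sum and that the count $\omega(R)$ is preserved under this replacement, so that the bound $O(\log \omega(R))$ transfers back — but since $\frac{x}{q^x - 1}$ is monotonically decreasing for $x \geq 1$ and the function $\log$ is increasing, this is routine once stated carefully.
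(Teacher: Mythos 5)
Your proposal is correct and follows essentially the same route as the paper: reduce to the primorial case via the monotonicity of $x \mapsto \frac{x}{q^x-1}$, then apply the prime polynomial theorem to bound the sum by the number of degrees involved, which is $O(\log n)$. The paper's version picks an explicit degree cutoff $m = \lceil \frac{2}{\log q}\log\frac{n}{c}\rceil$ beyond which $R_n$ has no prime divisors and bounds directly, whereas you work with the $(m_n, r_n)$ decomposition from (\ref{R_n decomposition}) and compute $\sum_{P\mid R_n}\frac{\degree P}{\lvert P\rvert - 1} = m_n + O(1)$ before invoking Lemma \ref{Primorial bound}/\ref{omega function at primorials} to relate $m_n$ to $\log\omega(R_n)$ — a slightly sharper computation, but the same idea.
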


\begin{proof}
As we have done in Lemma \ref{Omega function upper bound} and Proposition \ref{Totient function bounds}, we can reduce the proof to the case where $R$ is a primorial. That is, we need only prove that

\begin{align*}
\sum_{P \mid R_n } \frac{\degree P}{\lvert P \rvert - 1} = O ( \log n ) . 
\end{align*}

\noindent To this end, we recall that $\# \mathcal{P}_m = \frac{q^m}{m} + O \Big( \frac{q^{\frac{m}{2}}}{m} \Big)$. From this, we can deduce that there is a constant $c \in (0,1)$, which is independent of $q$, such that $\# \mathcal{P}_{\leq m} \geq c q^{\frac{m}{2}}$ for all positive integers $m$. In particular, if we take $m = \lceil \frac{2}{\log q} \log \frac{n}{c} \rceil$, then $\# \mathcal{P}_{\leq m} \geq n$ . So,

\begin{align*}
\sum_{P \mid R_n} \frac{\degree P }{\lvert P \rvert - 1}
\leq \sum_{i = 1}^{\frac{2}{ \log q} \log \frac{n}{c} + 1} \sum_{\substack{P \text{ prime} \\ \degree P = i}} \frac{\degree P }{\lvert P \rvert - 1}
\ll \sum_{i = 1}^{\frac{2}{ \log q} \log \frac{n}{c} + 1} \frac{i}{q^i - 1} \frac{q^i}{i} 
\ll \log n ,
\end{align*}
where the second relation follows from the prime polynomial theorem.
\end{proof}

\begin{lemma} \label{Lemma, sum of 1/phi(N) over monic N with degree leq x}
We have that

\begin{align*}
\sum_{\substack{N \in \mathcal{M} \\ \degree N \leq x}} \frac{1}{\phi (N)}
\ll x .
\end{align*}
\end{lemma}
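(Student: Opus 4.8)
The plan is to reduce the estimate to the convergence of a single Euler product, using the familiar multiplicative identity
\begin{align*}
\frac{\lvert N \rvert}{\phi (N)} = \sum_{D \mid N} \frac{\mu (D)^2}{\phi (D)} .
\end{align*}
This holds because both sides are multiplicative functions of $N$ (the sum being over monic divisors), and at a prime power $P^e$ with $e \geq 1$ both sides equal $\frac{\lvert P \rvert}{\lvert P \rvert - 1}$: the left side by $\phi (P^e) = \lvert P \rvert^{e-1} (\lvert P \rvert - 1)$, and the right side because only $D = 1$ and $D = P$ contribute, giving $1 + \frac{1}{\lvert P \rvert - 1}$.

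First I would use this to compute, for each non-negative integer $k$,
\begin{align*}
\sum_{\substack{N \in \mathcal{M} \\ \degree N = k}} \frac{1}{\phi (N)}
= \frac{1}{q^k} \sum_{\substack{N \in \mathcal{M} \\ \degree N = k}} \sum_{D \mid N} \frac{\mu (D)^2}{\phi (D)}
= \frac{1}{q^k} \sum_{\substack{D \in \mathcal{M} \\ \degree D \leq k}} \frac{\mu (D)^2}{\phi (D)} \# \big\{ M \in \mathcal{M} : \degree M = k - \degree D \big\} ,
\end{align*}
where I have swapped the order of summation and written $N = DM$. Since there are exactly $q^j$ monic polynomials of degree $j$, the inner cardinality equals $q^{k - \degree D}$, and hence
\begin{align*}
\sum_{\substack{N \in \mathcal{M} \\ \degree N = k}} \frac{1}{\phi (N)}
= \sum_{\substack{D \in \mathcal{M} \\ \degree D \leq k}} \frac{\mu (D)^2}{\phi (D) \lvert D \rvert}
\leq \sum_{D \in \mathcal{M}} \frac{\mu (D)^2}{\phi (D) \lvert D \rvert} .
\end{align*}

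Next I would bound this last sum, which is supported on square-free $D$ with multiplicative summand, by its Euler product:
\begin{align*}
\sum_{D \in \mathcal{M}} \frac{\mu (D)^2}{\phi (D) \lvert D \rvert}
= \prod_{P \in \mathcal{P}} \bigg( 1 + \frac{1}{\big( \lvert P \rvert - 1 \big) \lvert P \rvert} \bigg) .
\end{align*}
Since $\lvert P \rvert \geq 2$ gives $\frac{1}{(\lvert P \rvert - 1) \lvert P \rvert} \leq \frac{2}{\lvert P \rvert^2}$, and the prime polynomial theorem gives $\sum_{P \in \mathcal{P}} \lvert P \rvert^{-2} \ll \sum_{n \geq 1} \frac{q^n}{n} q^{-2n} \ll 1$, the product converges and is $O(1)$. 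Thus $\sum_{\degree N = k} \phi (N)^{-1} \ll 1$ uniformly in $k$, and summing over $0 \leq k \leq x$ yields $\sum_{\degree N \leq x} \phi (N)^{-1} \ll x + 1 \ll x$ for $x \geq 1$ (for $0 \leq x < 1$ the sum is the single term $\phi (1)^{-1} = 1$).

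There is no genuine obstacle here; the only points needing a moment's care are the verification of the divisor identity at prime powers and the convergence of the Euler product, both routine given the prime polynomial theorem already recorded. An essentially equivalent alternative would start directly from $\frac{1}{\phi (N)} = \frac{1}{\lvert N \rvert} \prod_{P \mid N} \big( 1 + \frac{1}{\lvert P \rvert - 1} \big)$ and expand the product over divisors of $N$; I find the closed identity marginally cleaner to present.
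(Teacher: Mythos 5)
Your proof is correct and follows essentially the same route as the paper: both start from the identity $\frac{\lvert N \rvert}{\phi(N)} = \sum_{D \mid N} \frac{\mu(D)^2}{\phi(D)}$, swap the order of summation, and reduce the estimate to the $O(1)$ convergence of $\sum_D \frac{\mu(D)^2}{\phi(D)\lvert D \rvert}$. The only cosmetic difference is that you organise the computation by degree shells before summing over $k \leq x$, whereas the paper treats the range $\degree N \leq x$ in a single pass.
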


\begin{proof}
For all $N \in \mathcal{A}$ we have that

\begin{align*}
\sum_{E \mid N} \frac{\mu (E)^2}{\phi (E)}
= \prod_{E \mid N} 1 + \frac{1}{\lvert P \rvert -1}
= \prod_{E \mid N} \frac{1}{1-\lvert P \rvert^{-1}}
= \frac{\lvert N \rvert}{\phi (N)} .
\end{align*}
So,

\begin{align*}
\sum_{\substack{N \in \mathcal{M} \\ \degree N \leq x}} \frac{1}{\phi (N)}
= &\sum_{\substack{N \in \mathcal{M} \\ \degree N \leq x}} \frac{1}{\lvert N \rvert} \frac{\lvert N \rvert}{\phi (N)}
= \sum_{\substack{N \in \mathcal{M} \\ \degree N \leq x}} \frac{1}{\lvert N \rvert} \sum_{E \mid N} \frac{\mu (E)^2}{\phi (E)}
= \sum_{\substack{E \in \mathcal{M} \\ \degree E \leq x}} \frac{\mu (E)^2}{\phi (E)} \sum_{\substack{ N \in \mathcal{M} \\ \degree N \leq x \\ E \mid N}} \frac{1}{\lvert N \rvert} \\
\leq &x \sum_{\substack{E \in \mathcal{M} \\ \degree E \leq x}} \frac{\mu (E)^2}{\phi (E) \lvert E \rvert}
\ll x ,
\end{align*}
where the last relation uses the fact that $\phi (N) \gg \frac{\lvert N \rvert}{\log_q \log_q \lvert R \rvert}$ (see Proposition \ref{Totient function bounds}).
\end{proof}

\begin{lemma} \label{Lemma, sum of mu(N)/phi(N) over monic N with degree leq x, lower bound}
We have that

\begin{align*}
\sum_{\substack{N \in \mathcal{M} \\ \degree N \leq x}} \frac{\mu^2 (N)}{\phi (N)}
\geq x .
\end{align*}
\end{lemma}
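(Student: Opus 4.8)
The plan is to prove the exact identity
\begin{align*}
\sum_{\substack{N \in \mathcal{M} \\ \degree N \leq x}} \frac{\mu^2 (N)}{\phi (N)}
= \sum_{\substack{K \in \mathcal{M} \\ \degree \rad (K) \leq x}} \frac{1}{\lvert K \rvert} ,
\end{align*}
and then to bound the right-hand side from below simply by keeping only the terms with $\degree K \leq x$, which is legitimate since $\degree \rad (K) \leq \degree K$.

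First I would rewrite $1/\phi (N)$ for square-free monic $N$. Using $\phi (N) = \lvert N \rvert \prod_{P \mid N} (1 - \lvert P \rvert^{-1})$ (Definition \ref{Definition, totient function}), we obtain
\begin{align*}
\frac{1}{\phi (N)}
= \frac{1}{\lvert N \rvert} \prod_{P \mid N} \Big( 1 - \lvert P \rvert^{-1} \Big)^{-1}
= \frac{1}{\lvert N \rvert} \prod_{P \mid N} \sum_{k=0}^{\infty} \lvert P \rvert^{-k}
= \sum_{\substack{D \in \mathcal{M} \\ \rad (D) \mid N}} \frac{1}{\lvert N D \rvert} ,
\end{align*}
where the last step expands the product over the finitely many primes dividing $N$, and the resulting series converges absolutely.

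Next I would substitute this into the sum to be estimated — only square-free $N$ contribute, since $\mu^2 (N) = 0$ otherwise — and reindex by $K = N D$. The point is that $(N,D) \mapsto N D$, from pairs with $N$ square-free monic and $\rad (D) \mid N$ to monic polynomials $K$, is a bijection: when $N$ is square-free and $\rad (D) \mid N$ one has $\rad (N D) = \rad (N) = N$, so the unique preimage of a given monic $K$ is $N = \rad (K)$, $D = K / \rad (K)$ (which indeed satisfies $\rad (D) \mid \rad (K) = N$). Under this bijection the constraint $\degree N \leq x$ becomes exactly $\degree \rad (K) \leq x$, which gives the displayed identity. Discarding the $K$ with $\degree \rad (K) \leq x < \degree K$ then yields
\begin{align*}
\sum_{\substack{N \in \mathcal{M} \\ \degree N \leq x}} \frac{\mu^2 (N)}{\phi (N)}
\geq \sum_{\substack{K \in \mathcal{M} \\ \degree K \leq x}} \frac{1}{\lvert K \rvert}
= \sum_{0 \leq n \leq x} \frac{\# \mathcal{M}_n}{q^n}
= \sum_{0 \leq n \leq x} 1
= \lfloor x \rfloor + 1
\geq x .
\end{align*}

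There is no genuinely hard analytic step here; the only thing requiring care is the combinatorial reindexing of the third paragraph, which is the function-field analogue of the standard Rankin-type trick. Specifically, one must verify that $\rad (N D) = N$ under the stated hypotheses, so that $K = \rad (K) \cdot \big( K / \rad (K) \big)$ is the unique admissible decomposition of each monic $K$, and that the degree constraint transforms correctly.
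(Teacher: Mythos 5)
Your proof is correct and follows essentially the same route as the paper: rewrite $1/\phi(N)$ for square-free $N$ as a geometric series over prime divisors, reindex by $K = ND$ (equivalently, by $M$ with $\rad(M)=N$ as in the paper), and drop terms to get the lower bound $\sum_{\degree K \leq x} 1/\lvert K \rvert$. Your final step is in fact slightly more careful than the paper's, which asserts $\sum_{\degree M \leq z} 1/\lvert M \rvert = z$ where the correct value is $\lfloor z\rfloor + 1 \geq z$; the inequality you state is what is actually needed.
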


\begin{proof}
For square-free $N$ we have that

\begin{align*}
\frac{1}{\phi (N)}
= \frac{1}{\lvert N \rvert} \prod_{P \mid N} \Big( 1 - \vert P \rvert ^{-1} \Big)^{-1}
= \frac{1}{\lvert N \rvert} \prod_{P \mid N} \bigg( 1 + \frac{1}{\vert P \rvert} + \frac{1}{\vert P \rvert^2} + \ldots \bigg)
= \sum_{\substack{M \in \mathcal{M} \\ \rad (M) =N}} \frac{1}{\lvert M \rvert} ,
\end{align*}
and so 

\begin{align*}
\sum_{\substack{N \in \mathcal{M} \\ \degree N \leq z}} \frac{\mu (N)^2}{\phi (N)}
= \sum_{\substack{N \in \mathcal{M} \\ N \text{ is square-free} \\ \degree N \leq z}} \sum_{\substack{M \in \mathcal{M} \\ \rad(M) =F}} \frac{1}{\lvert M \rvert}
\geq \sum_{\substack{M \in \mathcal{M} \\ \degree M \leq z}} \frac{1}{\lvert M \rvert}
= z.
\end{align*}
\end{proof}

%%%%%%%%%%%%%%%%%%
%%%%%%%%%%%%%%%%%%

\section{The Second Moment}

We now proceed to prove Theorem \ref{Second primitive moment function fields statement}.

\begin{proof}[Proof of Theorem \ref{Second primitive moment function fields statement}]
We have that

\begin{align*}
\sumstar_{\chi (\modulus R)} \Big\lvert L \Big( \frac{1}{2}, \chi \Big) \Big\rvert^2 
= &\sumstar_{\chi (\modulus R)} \sum_{\substack{A,B \in \mathcal{M} \\ \degree A , \degree B < \degree R}} \frac{\chi (A) \conj\chi (B)}{\lvert AB \rvert^{\frac{1}{2}}} \\
=&\sum_{EF=R} \mu (E) \phi (F) \sum_{\substack{A,B \in \mathcal{M}  \\ \degree A , \degree B < \degree R \\ (AB,R)=1 \\ A \equiv B (\modulus F)}} \frac{1}{\lvert AB \rvert^{\frac{1}{2}}} \\
=&\sum_{EF=R} \mu (E) \phi (F) \sum_{\substack{A \in \mathcal{M}  \\ \degree A < \degree R \\ (A,R)=1}} \frac{1}{\lvert A \rvert^{\frac{1}{2}}} \sum_{\substack{B \in \mathcal{M}  \\ \degree B < \degree R \\ B \equiv A (\modulus F)}} \frac{1}{\lvert B \rvert^{\frac{1}{2}}} 
\end{align*}
The second equality follows from Proposition \ref{Primitive chracter sum, mobius inversion}. For the last equality we note that if $R$ is square-full, $EF=R$, and $\mu (E)\neq 0$, then $F$ and $R$ have the same prime factors. Therefore, if we also have that $(A,R)=1$ and $B \equiv A (\modulus F)$, then $(B,R)=1$. Continuing,

\begin{align}
\begin{split} \label{Second primitive sum, no coprime conditions}
&\sumstar_{\chi \modulus R} \Big\lvert L \Big( \frac{1}{2}, \chi \Big) \Big\rvert^2 \\
=&\sum_{EF=R} \mu (E) \phi (F) \sum_{\substack{A \in \mathcal{M}  \\ \degree A < \degree R}} \frac{1}{\lvert A \rvert^{\frac{1}{2}}} \sum_{G \mid (A,R)} \mu (G) \sum_{\substack{B \in \mathcal{M}  \\ \degree B < \degree R \\ B \equiv A (\modulus F)}} \frac{1}{\lvert B \rvert^{\frac{1}{2}}} \\
=&\sum_{EF=R} \mu (E) \phi (F) \sum_{G \mid R} \mu (G) \sum_{\substack{A \in \mathcal{M}  \\ \degree A < \degree R \\ G \mid A}} \frac{1}{\lvert A \rvert^{\frac{1}{2}}} \sum_{\substack{B \in \mathcal{M}  \\ \degree B < \degree R \\ B \equiv A (\modulus F)}} \frac{1}{\lvert B \rvert^{\frac{1}{2}}} \\
=&\sum_{EF=R} \mu (E) \phi (F) \sum_{G \mid R} \mu (G) \sum_{\substack{K \in \mathcal{A} \\ \degree K < \degree F - \degree G \\ \text{or} \\ K=0}} \bigg( \sum_{\substack{A \in \mathcal{M}  \\ \degree A < \degree R \\ A \equiv GK (\modulus F)}} \frac{1}{\lvert A \rvert^{\frac{1}{2}}} \bigg) \bigg( \sum_{\substack{B \in \mathcal{M}  \\ \degree B < \degree R \\ B \equiv GK (\modulus F)}} \frac{1}{\lvert B \rvert^{\frac{1}{2}}} \bigg).
\end{split}
\end{align}
The last equality follows from the fact that $F$ and $R$ have the same prime factors, and so, if $\mu (G) \neq 0$, then $G \mid F$. Hence, if $G \mid A$, then $A \equiv GK (\modulus F)$ for some $K \in \mathcal{A}$ with $\degree K < \degree F - \degree G$ or $k=0$. \\

\noindent Now, we note that if $K \in \mathcal{A} \backslash \mathcal{M}$, then

\begin{align*}
\sum_{\substack{A \in \mathcal{M}  \\ \degree A < \degree R \\ A \equiv GK (\modulus F)}} \frac{1}{\lvert A \rvert^{\frac{1}{2}}} 
= \sum_{\substack{L \in \mathcal{M} \\ \degree L < \degree R -\degree F}} \frac{1}{\lvert LF+GK \rvert^{\frac{1}{2}}} 
= \sum_{\substack{L \in \mathcal{M} \\ \degree L < \degree R -\degree F}} \frac{1}{\lvert LF \rvert^{\frac{1}{2}}}
= \frac{1}{q^{\frac{1}{2}} -1} \bigg( \frac{\lvert R \rvert^{\frac{1}{2}}}{\lvert F \rvert} - \frac{1}{\lvert F \rvert^{\frac{1}{2}}} \bigg) .
\end{align*}
Whereas, if $K \in \mathcal{M}$, then

\begin{align*}
\sum_{\substack{A \in \mathcal{M}  \\ \degree A < \degree R \\ A \equiv GK (\modulus F)}} \frac{1}{\lvert A \rvert^{\frac{1}{2}}} 
= \frac{1}{\lvert G K \rvert^{\frac{1}{2}}} + \sum_{\substack{L \in \mathcal{M} \\ \degree L < \degree R -\degree F}} \frac{1}{\lvert LF+GK \rvert^{\frac{1}{2}}}
= \frac{1}{\lvert G K \rvert^{\frac{1}{2}}} + \frac{1}{q^{\frac{1}{2}} -1} \bigg( \frac{\lvert R \rvert^{\frac{1}{2}}}{\lvert F \rvert} - \frac{1}{\lvert F \rvert^{\frac{1}{2}}} \bigg) .
\end{align*}
Hence,

\begin{align*}
&\sum_{\substack{K \in \mathcal{A} \\ \degree K < \degree F - \degree G \\ \text{or} \\ K=0}} \bigg( \sum_{\substack{A \in \mathcal{M}  \\ \degree A < \degree R \\ A \equiv GK (\modulus F)}} \frac{1}{\lvert A \rvert^{\frac{1}{2}}} \bigg) \bigg( \sum_{\substack{B \in \mathcal{M}  \\ \degree B < \degree R \\ B \equiv GK (\modulus F)}} \frac{1}{\lvert B \rvert^{\frac{1}{2}}} \bigg) \\
= &\frac{1}{\big( q^{\frac{1}{2}} -1 \big)^2} \bigg( \frac{\lvert R \rvert^{\frac{1}{2}}}{\lvert F \rvert} - \frac{1}{\lvert F \rvert^{\frac{1}{2}}} \bigg)^2 \sum_{\substack{K \in \mathcal{A} \\ \degree K < \degree F - \degree G \\ \text{or} \\ K=0}} 1 \\
&+ \frac{2}{q^{\frac{1}{2}} -1} \bigg( \frac{\lvert R \rvert^{\frac{1}{2}}}{\lvert F \rvert} - \frac{1}{\lvert F \rvert^{\frac{1}{2}}} \bigg) \frac{1}{\lvert G \rvert^{\frac{1}{2}}} \sum_{\substack{K \in \mathcal{M} \\ \degree K < \degree F - \degree G}} \frac{1}{\lvert K \rvert^{\frac{1}{2}}} \\
&+\frac{1}{\lvert G \rvert} \sum_{\substack{K \in \mathcal{M} \\ \degree K < \degree F - \degree G}} \frac{1}{\lvert K \rvert} \\
= & \frac{1}{\big( q^{\frac{1}{2}} -1 \big)^2} \bigg( \frac{\lvert R \rvert}{\lvert FG \rvert} - 2 \frac{\lvert R \rvert^{\frac{1}{2}}}{\lvert F \rvert \lvert G \rvert^{\frac{1}{2}}} - \frac{1}{\lvert G \rvert} + \frac{2}{\lvert F \rvert^{\frac{1}{2}} \lvert G \rvert^{\frac{1}{2}}} \bigg) + \frac{\degree F}{\lvert G \rvert} - \frac{\degree G}{\lvert G \rvert} .
\end{align*}
By applying this to (\ref{Second primitive sum, no coprime conditions}), and using (\ref{Sum of mu(E)/E^s over E mid R}) to (\ref{Sum of mu(E)phi(F) deg F/F^s over EF=R}), we see that

\begin{align*}
&\sumstar_{\chi \modulus R} \Big\lvert L \Big( \frac{1}{2}, \chi \Big) \Big\rvert^2 \\
= &\frac{\phi (R)^3}{\lvert R \rvert^2} \degree R
+ 2 \frac{\phi (R)^3}{\lvert R \rvert^2} \sum_{P \mid R} \frac{\degree P}{\lvert P \rvert -1}
+ \frac{1}{\big( q^{\frac{1}{2}} -1 \big)^2} \bigg( - \frac{\phi (R)^3}{\lvert R \rvert^2} + 2 \frac{\phi (R)}{\lvert R \rvert^{\frac{1}{2}}} \prod_{P \mid R} \Big(1-\frac{1}{\lvert P \rvert^{\frac{1}{2}}} \Big)^2 \bigg) .
\end{align*}
\end{proof}

%%%%%%%%%%%%%%%%%%%
%%%%%%%%%%%%%%%%%%%

\section{The Brun-Titchmarsh Theorem for the Divisor Function in $\mathbb{F}_{q}[T]$}

\noindent In this section we prove a specific case of the function field analogue of the generalised Brun-Titchmarsh theorem. The generalised Brun-Titchmarsh theorem in the number field setting was proved by Shiu \cite{BrunTitchTheoMultFunc_Shiu}. It gives upper bounds for sums over short intervals and arithmetic progressions of certain multiplicative functions. We will look at the case where the multiplicative function is the divisor function in the function field setting. \\

\noindent The main results in this section are the following two theorems.

\begin{theorem} \label{Brun-Titschmarsh theorem, divisor function case in function field}
Suppose $\alpha , \beta$ are fixed and satisfy $0 < \alpha < \frac{1}{2}$ and $0 < \beta < \frac{1}{2}$. Let $X \in \mathcal{M}$ and $y$ be a positive integer satisfying $\beta \degree X < y \leq \degree X$. Also, let $A \in \mathcal{A}$ and $G \in \mathcal{M}$ satisfy $(A,G)=1$ and $\degree G < (1-\alpha ) y$. Then, we have that

\begin{align*}
\sum_{\substack{N \in \mathcal{M} \\ \degree (N-X) < y \\ N \equiv A (\modulus G)}} d(N)
\ll_{\alpha , \beta} \frac{q^y \degree X}{\phi (G)} .
\end{align*}
\end{theorem}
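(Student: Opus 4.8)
The plan is to mimic the classical Shiu-type argument adapted to $\mathbb{F}_q[T]$, exploiting the fact that the divisor function is small on average and that a polynomial cannot have too many prime factors. First I would reduce to a clean counting problem: writing $N = X + H$ with $H \in \mathcal{A}$, $\degree H < y$ (including $H$ of non-maximal or zero degree appropriately), the condition $N \equiv A \pmod G$ becomes a congruence on $H$, and since $\degree G < y$ there are exactly $q^{y - \degree G}$ admissible residues, each contributing a coset of size roughly $q^{\degree G}$ inside the box. So the sum is $\sum_{N} d(N)$ over a union of arithmetic progressions, and the target bound $q^y \degree X / \phi(G)$ is, up to the $\degree X$ factor, the expected main term $q^{y}/\phi(G) \cdot (\text{average of } d)$, where the average of $d$ over polynomials of degree $\le \degree X$ is $\asymp \degree X$.

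The key step is a dyadic/Rankin-type splitting of $d(N)$ according to the size of the part of $N$ supported on small primes versus large primes. Concretely, for a parameter $z$ (to be chosen as a small constant multiple of $y$, using the slack afforded by $\alpha$ and $\beta$), factor each $N$ in the range as $N = N_1 N_2$ where $N_1$ is the $z$-smooth part (product of prime power factors of degree $\le z$) and $N_2$ is the $z$-rough part; then $d(N) \le d(N_1) d(N_2)$. For the rough part one uses that $N_2$ has at most $\degree X / z = O_{\beta}(1)$ prime factors, so $d(N_2) \le 2^{O(1)} \ll_{\alpha,\beta} 1$ — this is where the hypothesis $y > \beta \degree X$ is essential, as it bounds $\degree N \le \degree X < y/\beta$ and hence the number of large primes. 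One then bounds the full sum by $\sum_{N_1 \text{ smooth}} d(N_1) \sum_{N_2 \equiv A N_1^{-1} (G),\, \degree(N_1 N_2 - X) < y} 1$. The inner count over $N_2$ is handled by noting the box has size $q^y$ and the congruence mod $G$ (after checking $(N_1, G) = 1$, which holds since $(A,G)=1$) cuts it down by $\phi(G)$ — here I'd need $\degree G < (1-\alpha) y$ and $\degree N_1 \le z < \alpha y$ so that $N_1 N_2$ still sweeps a genuine progression of the expected density; this is a trivial lattice-point count in $\mathbb{F}_q[T]$, much cleaner than over $\mathbb{Z}$. That leaves $\ll_{\alpha,\beta} \frac{q^y}{\phi(G)} \sum_{\degree N_1 \le z} \frac{d(N_1)}{|N_1|}$, and the remaining sum over smooth $N_1$ is $\le \prod_{\degree P \le z}(1 - |P|^{-1})^{-2} \ll (\log_q q^z)^2 \asymp z^2 \ll \degree X$ by the prime polynomial theorem (and Mertens-type estimates already implicit in Section~\ref{Section, Results on Well-known Multiplicative Functions}).

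The main obstacle I anticipate is making the smooth/rough decomposition genuinely multiplicative and ensuring the ranges match up: one must be careful that factoring $N$ as $N_1 N_2$ and summing over pairs does not overcount, and that after fixing $N_1$ the variable $N_2$ really ranges over a box of the claimed shape intersected with one residue class mod $G$ — the degrees must satisfy $\degree N_1 + \degree N_2 \le \degree X$ with $\degree(N_1 N_2 - X) < y$, which is slightly more delicate than a pure box constraint because $X$ is a fixed shift. The parameters $\alpha, \beta$ enter precisely to give the room needed: $z \asymp \alpha y$ keeps $N_1$ negligible in degree, while $\beta y \le \beta \degree X^{-1}\cdot(\text{something})$ — more precisely $\degree X < y/\beta$ — caps $d(N_2)$. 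Everything else is standard function-field bookkeeping with the prime polynomial theorem and the totient estimates of Proposition~\ref{Totient function bounds}; I expect no genuinely hard analytic input, the difficulty being entirely in the careful setup of the decomposition and the verification that the implied constants depend only on $\alpha$ and $\beta$.
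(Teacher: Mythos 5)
Your plan (full $z$-smooth part $N_1$ versus full $z$-rough part $N_2$, with $d(N_2) \ll_{\alpha,\beta} 1$ because $N_2$ has $O_{\alpha,\beta}(1)$ prime factors) is a natural starting point, but there are two linked gaps that make the argument fail as written, and they are precisely where the real work in Shiu's theorem lives.

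First, the constraint $\degree N_1 \le z$ that you write in the final sum does not hold: the $z$-smooth part of an $N$ of degree $\approx \degree X$ can have degree anywhere up to $\degree X$ (indeed $N$ itself could be $z$-smooth). Handling $N$ whose small-prime part is large is the genuine difficulty, and the paper deals with it by \emph{not} taking $N_1$ to be the full smooth part. Instead it defines $B_N$ as a prefix of the degree-ordered factorization with $\degree B_N \le z$ by fiat, accepting in exchange that the complementary piece $D_N$ may contain small primes, and then splits into four cases: Case~1 ($p_-(D_N)$ large, the only case your plan covers), Case~2 (a single large prime power), and Cases~3--4 (many small primes in $D_N$), the latter requiring Rankin-type smooth-number estimates (Lemmas~\ref{Sum of 1 over N monic, degree N leq z, largest divisor less than w} and~\ref{Lemma, sum of d(N)/N over N monic, degree N geq z/2, largest divisor leq z/r}). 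Second, even in the benign case the inner count over $N_2$ is not the ``trivial lattice-point count'' you describe. You need $N_2$ to be $z$-rough to bound $d(N_2)$, and the count of rough $N_2 \equiv A N_1^{-1} \pmod G$ in a box of size $q^{y-\degree N_1}$ is a sieve estimate; the Selberg sieve (Corollary~\ref{Selberg sieve application, sum of 1 over short interval, arithmetic progression, large prime divisor}) gives $\ll \frac{q^{y-\degree N_1}}{\phi(G)\,z} + O(q^{2z})$, with a crucial factor of $1/z$. Dropping the roughness condition and using the progression count $\approx q^{y-\degree N_1}/|G|$ loses this $1/z$. Since by Mertens $\sum_{B} d(B)/|B| \asymp z^2$ and $z \asymp \alpha y \asymp \degree X$, your bound then becomes $\frac{q^y}{\phi(G)}\,z^2 \asymp \frac{q^y(\degree X)^2}{\phi(G)}$, a factor of $\degree X$ too large (your chain ``$z^2 \ll \degree X$'' is an arithmetic slip, as $z^2 \asymp (\degree X)^2$). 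The sieve factor $1/z$ is exactly what cancels one power of $z$ and recovers $\frac{q^y \degree X}{\phi(G)}$.
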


\noindent Intuitively, this seems to be a good upper bound. Indeed, all $N$ in the sum are of degree equal to $\degree X$, and so this suggests that the average value that the divisor function will take is $\degree X$. Also, there are $q^y \frac{1}{\vert G \rvert} \approx q^y \frac{1}{\phi (G)}$ possible values for $N$ in the sum.

\begin{theorem} \label{Brun-Titschmarsh theorem, divisor function case in function field, y equality version}
Suppose $\alpha , \beta$ are fixed and satisfy $0 < \alpha < \frac{1}{2}$ and $0 < \beta < \frac{1}{2}$. Let $X \in \mathcal{M}$ and $y$ be a positive integer satisfying $\beta \degree X < y \leq \degree X$. Also, let $A \in \mathcal{A}$ and $G \in \mathcal{M}$ satisfy $(A,G)=1$ and $\degree G < (1-\alpha ) y$. Finally, let $a \in (\mathbb{F}_q)^*$. Then, we have that

\begin{align*}
\sum_{\substack{N \in \mathcal{A} \\ \degree (N-X) = y \\ (N-X) \in \ \mathcal{M} \\ N \equiv A (\modulus G)}} d(N)
\ll_{\alpha , \beta} \frac{q^y \degree X}{\phi (G)} .
\end{align*}
\end{theorem}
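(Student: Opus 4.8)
The plan is to deduce this from Theorem \ref{Brun-Titschmarsh theorem, divisor function case in function field} by a linear change of variables, exploiting the affine structure of the monic polynomials of a fixed degree. The key point is that the set $\mathcal{M}_y$ of monic polynomials of degree exactly $y$ is the affine translate
\[
\mathcal{M}_y = \{ T^y \} \cup \big( T^y + \{ A \in \mathcal{A} : \degree A < y \} \big) ,
\]
so that, setting $X' := X + T^y$, the summation range of the theorem satisfies
\[
\{ N \in \mathcal{A} : \degree(N-X) = y, \ N - X \in \mathcal{M} \} = \{ X' \} \cup \{ N \in \mathcal{A} : \degree (N - X') < y \} .
\]
Hence the sum in the statement equals $\sum_{\degree(N-X')<y,\ N\equiv A(\modulus G)} d(N)$ plus at most the single term $d(X')$; since $d(X') \ll_{\epsilon} q^{\epsilon \degree X}$ for every $\epsilon>0$ while $q^y \degree X/\phi(G) \geq q^{\alpha y} \geq q^{\alpha\beta \degree X}$ by the hypotheses $\degree G < (1-\alpha)y$ and $\beta\degree X < y$, this extra term is absorbed into the error.

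First I would treat the generic case $y < \degree X$. Then $X'$ is monic of degree $\degree X$, the condition $\degree(N-X') < y \leq \degree X'$ forces $N$ to be monic, and Theorem \ref{Brun-Titschmarsh theorem, divisor function case in function field} applies verbatim with $X$ replaced by $X'$, the same $\alpha,\beta$, and window $y$, yielding $\ll_{\alpha,\beta} q^y\degree X/\phi(G)$. In the boundary case $y = \degree X$ with $\operatorname{char}\mathbb{F}_q \neq 2$, the polynomial $X'$ still has degree $\degree X$ but leading coefficient $2$; writing $X' = 2X''$ with $X''$ monic and substituting $N = 2M$ (a bijection of $\mathcal{A}$ that preserves $d(\cdot)$ and turns the congruence into $M \equiv 2^{-1}A \ (\modulus G)$, still coprime to $G$), we reduce once more to Theorem \ref{Brun-Titschmarsh theorem, divisor function case in function field} applied to $X''$ with window $y$.

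The one remaining case, $y = \degree X$ with $\operatorname{char}\mathbb{F}_q = 2$, is the main obstacle: here the leading terms of $X$ and $T^{y}$ cancel, so $X'$ has degree strictly less than $y$, the set above degenerates into the set of all polynomials of degree $< y$, and no translate of it has the shape required by Theorem \ref{Brun-Titschmarsh theorem, divisor function case in function field} (the ranges in that theorem consist of polynomials of one fixed, necessarily large, degree). I would handle this case directly: the estimate $\sum_{\degree N < y,\ N \equiv A(\modulus G)} d(N) \ll q^y y/\phi(G)$ is itself a Brun--Titchmarsh bound for $d$, taken over the full ``interval'' of polynomials of degree $< y$ rather than a genuinely short one, and it follows by rerunning the proof of Theorem \ref{Brun-Titschmarsh theorem, divisor function case in function field} in this (in fact easier) setting. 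More uniformly, one can avoid the case split entirely by carrying out the proof of Theorem \ref{Brun-Titschmarsh theorem, divisor function case in function field} with the short interval $\{ N : \degree(N-X) < y \}$ replaced throughout by the coset $X + \mathcal{M}_y$: both are translates of $\mathbb{F}_q$-subspaces of $\mathcal{A}$ of cardinality $q^y$ intersected with a single residue class modulo $G$, so every step of that argument goes through with only cosmetic changes.
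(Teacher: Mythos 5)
Your reduction-by-translation is a genuinely different route from the paper's, which simply reruns the proof of Theorem \ref{Brun-Titschmarsh theorem, divisor function case in function field} verbatim: the only inputs that depend on the shape of the summation set are Corollary \ref{Selberg sieve application, sum of 1 over short interval, arithmetic progression, large prime divisor} and elementary counts of the form $q^y/\lvert GB\rvert+O(1)$, and the paper just swaps these for their coset analogues (Corollary \ref{Selberg sieve application, sum of 1 over short interval, arithmetic progression, large prime divisor, y equality case} and the two replacement counts stated in its proof). Your closing ``more uniformly'' sentence is therefore precisely the paper's argument; what you add is the observation that for $y<\degree X$ the coset version follows from the box version for free via the translate $X':=X+T^y$, with a leading-coefficient rescaling handling $y=\degree X$ in odd characteristic. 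Two remarks on the rest. First, the displayed condition $(N-X)\in\mathcal{M}$ is almost certainly a typo for $(N-X)\in a\mathcal{M}$: the quantifier over $a$ is otherwise vacuous, the paper's own proof manipulates ``$(N-X)\in a\mathcal{M}$'', and this is the form invoked in Lemma \ref{Double divisor sum}. With that correction the translate becomes $X':=X+aT^y$, the rescaling factor is $(1+a)^{-1}$, and the bad case for your argument is $y=\degree X$ with $1+a=0$, which coincides with your characteristic-two case only when $a=1$. Second, in that bad case you are in effect asserting that Shiu's argument goes through for the set $\{N:\degree N<y\}$; this is true, but it amounts to re-running the proof of Theorem \ref{Brun-Titschmarsh theorem, divisor function case in function field} with the short interval replaced by a $q^y$-element coset --- i.e.\ the paper's approach --- so the translation trick shortens the generic case without actually dispensing with the coset version of the Shiu machinery. (If you wanted to avoid it entirely you could instead split $\{N:\degree N<y\}$ by leading coefficient and by $\degree N=j$, apply Theorem \ref{Brun-Titschmarsh theorem, divisor function case in function field} with $X=T^j$ and window $j$ for each $j$ large enough that $\degree G<(1-\alpha')j$ for some fixed $\alpha'<\alpha$, and bound the finitely many small degrees trivially via $d(M)\ll_\epsilon q^{\epsilon j}$.)
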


\noindent Our proofs of these two theorems are based on Shiu's proof of the more general theorem in the number field setting \cite{BrunTitchTheoMultFunc_Shiu}. We begin by proving preliminary results that are needed for the main part of the proofs.\\

\noindent The Selberg sieve gives us the following result. A proof is given in \cite{SieveMethPolyRingFinField_Webb_1983}.

\begin{theorem} \label{Theorem, Selberg sieve}
Let $\mathcal{S} \subseteq \mathcal{A}$ be a finite subset. For a prime $P \in \mathcal{A}$ we define $\mathcal{S}_P = \mathcal{S} \cap P \mathcal{A} = \{ A \in \mathcal{S} : P \mid A \}$. We extend this to all square-free $D \in \mathcal{A}$: $\mathcal{S}_D = \mathcal{S} \cap D \mathcal{A}$.

Furthermore, let $\mathcal{Q} \subseteq \mathcal{A}$ be a subset of prime elements. For positive integers $z$ we define $\mathcal{Q}_z = \prod_{\substack{P \in \mathcal{Q} \\ \degree P \leq z}} P$. We also define $\mathcal{S}_{\mathcal{Q},z} := \mathcal{S} \backslash \cup_{P \mid \mathcal{Q}_z} \mathcal{S}_P$.

Suppose there exists a completely multiplicative function $\omega$ and a function $r$ such that for each $D \mid \mathcal{Q}_z$ we have $\# \mathcal{S}_D = \frac{\omega (D)}{\lvert D \rvert} \# \mathcal{S}_D + r(D)$ and $0 < \omega (D) < \lvert D \rvert$. Also, define $\psi$ multiplicatively by $\psi (P) = \frac{\lvert P \rvert}{\omega (P)} - 1$ and $\psi (P^e ) = 0$ for $e \geq 2$. \\

We then have that

\begin{align*}
\# \mathcal{S}_{\mathcal{Q},z}
:= &\# \Big( \mathcal{S} \backslash \cup_{P \mid \mathcal{Q}_z} \mathcal{S}_P \Big)
= \# \{ A \in \mathcal{S} : (P \mid A \text{ and } P \in \mathcal{Q}) \Rightarrow \degree P > z \} \\
\leq &\frac{\# \mathcal{S}}{\sum_{\substack{F \in \mathcal{M} \\ \degree F \leq z \\ F \mid \mathcal{Q}_z}} \frac{\mu^2 (F)}{\psi (F)}} + \sum_{\substack{D,E \in \mathcal{M} \\ \degree D, \degree E \leq z \\ D,E \mid \mathcal{Q}_z}} \lvert r([D,E]) \rvert
\end{align*}
\end{theorem}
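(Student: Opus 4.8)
The plan is to run Selberg's $\Lambda^2$ upper-bound sieve. Nothing in that argument is special to $\mathbb{Z}$: it rests only on unique factorisation and on the multiplicativity of $\omega$ and of the norm, so it transfers verbatim to $\mathcal{A}=\mathbb{F}_q[T]$ once degrees play the role of logarithms. \emph{Dispersion step:} fix real numbers $(\lambda_D)$, indexed by the monic divisors $D\mid\mathcal{Q}_z$, with $\lambda_1=1$ and $\lambda_D=0$ whenever $\degree D>z$. For every $A\in\mathcal{S}$ one has $\sum_{D\mid(A,\mathcal{Q}_z)}\lambda_D=1$ when $(A,\mathcal{Q}_z)=1$, so the square of this sum is $\ge 1$ on $\mathcal{S}_{\mathcal{Q},z}$ and $\ge 0$ elsewhere. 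Summing over $\mathcal{S}$ and using $\#\mathcal{S}_{[D_1,D_2]}=\frac{\omega([D_1,D_2])}{\lvert[D_1,D_2]\rvert}\#\mathcal{S}+r([D_1,D_2])$ gives
\[
\#\mathcal{S}_{\mathcal{Q},z}\le\sum_{\substack{D_1,D_2\mid\mathcal{Q}_z\\\degree D_i\le z}}\lambda_{D_1}\lambda_{D_2}\,\#\mathcal{S}_{[D_1,D_2]}=\#\mathcal{S}\cdot\Sigma(\lambda)+R(\lambda),
\]
where $\Sigma(\lambda)=\sum_{D_1,D_2}\lambda_{D_1}\lambda_{D_2}\,\omega([D_1,D_2])\lvert[D_1,D_2]\rvert^{-1}$ and $R(\lambda)=\sum_{D_1,D_2}\lambda_{D_1}\lambda_{D_2}\,r([D_1,D_2])$.

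\emph{Diagonalisation and optimisation:} put $g:=\omega/\lvert\cdot\rvert$, completely multiplicative with $0<g(D)<1$, so that $g([D_1,D_2])=g(D_1)g(D_2)/g((D_1,D_2))$. Since $1+\psi(P)=\lvert P\rvert/\omega(P)=g(P)^{-1}$, the function $\psi$ from the statement satisfies $\sum_{E\mid D}\psi(E)=g(D)^{-1}$ for square-free $D$, whence the pointwise identity $g((D_1,D_2))^{-1}=\sum_{E\mid(D_1,D_2)}\psi(E)$. Substituting and collecting by $E$,
\[
\Sigma(\lambda)=\sum_{E\mid\mathcal{Q}_z}\psi(E)\,\xi_E^{2},\qquad \xi_E:=\sum_{\substack{E\mid D\mid\mathcal{Q}_z\\\degree D\le z}}\lambda_D\,g(D),
\]
a positive-definite diagonal form because each $\psi(E)>0$. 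The dual Möbius inversion $\lambda_D g(D)=\sum_{D\mid E}\mu(E/D)\xi_E$ turns the constraint $\lambda_1=1$ into $\sum_{E\mid\mathcal{Q}_z,\,\degree E\le z}\mu(E)\xi_E=1$, so Cauchy--Schwarz gives $\Sigma(\lambda)\ge 1/G_z$ with $G_z:=\sum_{E\mid\mathcal{Q}_z,\,\degree E\le z}\mu^2(E)/\psi(E)$, attained at $\xi_E^{*}=\mu(E)/(\psi(E)G_z)$.

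\emph{Controlling the remainder and concluding:} back-substitution gives, for the corresponding multipliers, $\lambda_D^{*}=\frac{\mu(D)}{G_z\prod_{P\mid D}(1-g(P))}\sum_{\substack{F\mid\mathcal{Q}_z,\ (F,D)=1\\\degree F\le z-\degree D}}\psi(F)^{-1}$. Splitting the defining sum for $G_z$ according to the common factor with $D$ and using monotonicity in the degree cutoff yields $\sum_F\psi(F)^{-1}\le\big(\prod_{P\mid D}(1-g(P))\big)G_z$, hence $\lvert\lambda_D^{*}\rvert\le 1$ for every $D$. Therefore $\lvert R(\lambda^{*})\rvert\le\sum_{D_1,D_2\mid\mathcal{Q}_z,\,\degree D_i\le z}\lvert r([D_1,D_2])\rvert$, and plugging $\lambda^{*}$ into the dispersion step produces exactly
\[
\#\mathcal{S}_{\mathcal{Q},z}\le\frac{\#\mathcal{S}}{G_z}+\sum_{\substack{D_1,D_2\mid\mathcal{Q}_z\\\degree D_1,\degree D_2\le z}}\lvert r([D_1,D_2])\rvert .
\]

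The only substantive points are the diagonalisation and the bound $\lvert\lambda_D^{*}\rvert\le 1$; both are classical, with computations identical to the integer setting, and the sole thing genuinely to verify in $\mathbb{F}_q[T]$ is that the hypotheses ($\omega$ completely multiplicative, $0<\omega(D)<\lvert D\rvert$, and the approximation $\#\mathcal{S}_D=\frac{\omega(D)}{\lvert D\rvert}\#\mathcal{S}+r(D)$) legitimise the Dirichlet-convolution and geometric-series manipulations above --- which they do, since the norm is multiplicative with $\lvert D\rvert\ge q$ whenever $\degree D\ge 1$. Alternatively one may simply quote the account in \cite{SieveMethPolyRingFinField_Webb_1983}.
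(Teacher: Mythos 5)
Your argument is the standard Selberg $\Lambda^2$-sieve, correctly transplanted to $\mathbb{F}_q[T]$, and it is precisely the approach of the account in \cite{SieveMethPolyRingFinField_Webb_1983} to which the paper defers (the paper itself does not reprove this theorem but simply cites Webb). Your dispersion step, the diagonalisation of $\Sigma(\lambda)$ via $\sum_{E\mid D}\psi(E)=g(D)^{-1}$, the Cauchy--Schwarz optimisation giving $\Sigma(\lambda)\ge 1/G_z$, and the classical verification that $\lvert\lambda_D^{*}\rvert\le 1$ all go through as written, so the proposal is correct.
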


\begin{corollary} \label{Selberg sieve application, sum of 1 over short interval, arithmetic progression, large prime divisor}
Let $X \in \mathcal{M}$ and $y$ be a positive integer satisfying $y \leq \degree X$. Also, let $K \in \mathcal{M}$ and $A \in \mathcal{A}$ satisfy $(A,K)=1$. Finally, let $z$ be a positive integer such that $\degree K + z \leq y$. Then,

\begin{align*}
\sum_{\substack{N \in \mathcal{M} \\ \degree (N-X) < y \\ N \equiv A (\modulus K) \\  p_{-} (N) > z}} 1
\leq \frac{q^y}{\phi (K) z} + O \Big( q^{2z} \Big) .
\end{align*}
\end{corollary}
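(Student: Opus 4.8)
The plan is to apply the Selberg sieve of Theorem~\ref{Theorem, Selberg sieve} to the set
\[
\mathcal{S} := \Big\{ N \in \mathcal{M} : \degree (N-X) < y, \ N \equiv A \ (\modulus K) \Big\},
\]
sieving out the set of primes $\mathcal{Q} := \{ P \in \mathcal{P} : P \nmid K \}$ up to degree $z$. Since $(A,K)=1$, every $N \in \mathcal{S}$ is coprime to $K$, so all of its prime divisors lie in $\mathcal{Q}$; hence the sifted set $\mathcal{S}_{\mathcal{Q},z}$ of Theorem~\ref{Theorem, Selberg sieve} is exactly $\{ N \in \mathcal{S} : p_{-}(N) > z \}$, which is the quantity to be bounded.

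Next I would compute the sieve data. Because $\degree K \le y-1$ (which follows from $\degree K + z \le y$ and $z \ge 1$) and every $N$ counted equals $X$ plus a polynomial of degree $< y$, reduction modulo $K$ is equidistributed on the relevant $\mathbb{F}_q$-subspace and $\# \mathcal{S} = q^y / \lvert K \rvert$. For square-free $D$ coprime to $K$, the conditions $D \mid N$ and $N \equiv A \ (\modulus K)$ combine by the Chinese Remainder Theorem into one congruence modulo $DK$; so $\# \mathcal{S}_D = q^y / \lvert DK \rvert = \lvert D \rvert^{-1} \# \mathcal{S}$ when $\degree DK \le y$, while $\# \mathcal{S}_D \in \{0,1\}$ and $q^y/\lvert DK\rvert < 1$ otherwise. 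Thus in Theorem~\ref{Theorem, Selberg sieve} we may take the sieve density (the completely multiplicative function denoted $\omega$ there) to be identically $1$, which makes $\psi(P) = \lvert P \rvert - 1$ and hence $\psi(F) = \phi(F)$ for the square-free $F$ that appear; and the remainder obeys $\lvert r(D) \rvert \le 1$ for every square-free $D$ coprime to $K$. Consequently the error term is
\[
\sum_{\substack{D,E \in \mathcal{M} \\ \degree D, \degree E \le z \\ D, E \mid \mathcal{Q}_z}} \lvert r([D,E]) \rvert \ \le \ \big( \# \{ M \in \mathcal{M} : \degree M \le z \} \big)^2 \ = \ O\big( q^{2z} \big) .
\]

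The crux is a lower bound for the denominator of the Selberg main term. Writing it out,
\[
\sum_{\substack{F \mid \mathcal{Q}_z \\ \degree F \le z}} \frac{\mu^2 (F)}{\psi (F)} \ = \sum_{\substack{F \in \mathcal{M}, \ \degree F \le z \\ F \text{ square-free}, \ (F,K)=1}} \frac{1}{\phi(F)} .
\]
Here I would reuse the identity from the proof of Lemma~\ref{Lemma, sum of mu(N)/phi(N) over monic N with degree leq x, lower bound}: for square-free $F$ one has $1/\phi(F) = \sum_{\rad(M)=F} \lvert M \rvert^{-1}$. This rewrites the sum as $\sum_{(M,K)=1, \ \degree \rad(M) \le z} \lvert M \rvert^{-1} \ge \sum_{(M,K)=1, \ \degree M \le z} \lvert M \rvert^{-1}$. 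Finally, factoring each monic $M$ of degree $\le z$ uniquely as $M = M_1 M_2$ with $\rad(M_1) \mid K$ and $(M_2,K)=1$ gives
\[
z+1 \ = \sum_{\substack{M \in \mathcal{M} \\ \degree M \le z}} \frac{1}{\lvert M \rvert} \ \le \ \bigg( \prod_{P \mid K} \Big( 1 - \lvert P \rvert^{-1} \Big)^{-1} \bigg) \sum_{\substack{M_2 \in \mathcal{M}, \ (M_2, K)=1 \\ \degree M_2 \le z}} \frac{1}{\lvert M_2 \rvert} \ = \ \frac{\lvert K \rvert}{\phi(K)} \sum_{\substack{M_2 \in \mathcal{M}, \ (M_2, K)=1 \\ \degree M_2 \le z}} \frac{1}{\lvert M_2 \rvert} ,
\]
so the denominator is at least $\phi(K)\lvert K\rvert^{-1}(z+1)$. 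Dividing, the Selberg main term is at most $\dfrac{q^y/\lvert K \rvert}{\phi(K)\lvert K\rvert^{-1}(z+1)} = \dfrac{q^y}{\phi(K)(z+1)} \le \dfrac{q^y}{\phi(K) z}$, and adding the $O(q^{2z})$ error yields the claim.

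The main obstacle is precisely this denominator estimate: applying Lemma~\ref{Lemma, sum of mu(N)/phi(N) over monic N with degree leq x, lower bound} directly would ignore the restriction $(F,K)=1$, and when $K$ has many prime divisors this restriction genuinely shrinks the sum by a factor comparable to $\phi(K)/\lvert K\rvert$. The point of the factorisation argument above is that this loss is exactly the factor that turns $\lvert K \rvert$ back into $\phi(K)$ in the final bound, so it cannot be discarded into an error term and must be tracked carefully.
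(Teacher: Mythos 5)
Your proposal is correct and follows essentially the same route as the paper: same choice of $\mathcal{S}$ and $\mathcal{Q}$, the same sieve density $\omega \equiv 1$ giving $\psi = \phi$ on square-free arguments, the same $\lvert r(D)\rvert \le 1$ bound leading to the $O(q^{2z})$ error, and the same core idea for the denominator — namely that the coprimality restriction $(F,K)=1$ costs exactly a factor $\phi(K)/\lvert K\rvert$, which must be tracked rather than discarded. The only difference is cosmetic: the paper factors each square-free $F$ of degree $\le z$ as $(F,K) \cdot F'$ and invokes Lemma~\ref{Lemma, sum of mu(N)/phi(N) over monic N with degree leq x, lower bound} as a black box, whereas you unfold the $\rad$ identity inside the sum and factor the resulting monic $M$ as $M_1 M_2$ with $\rad(M_1)\mid K$ and $(M_2,K)=1$; both produce the identical bound $\frac{\phi(K)}{\lvert K\rvert}\,(z+1) \ge \frac{\phi(K)}{\lvert K\rvert}\, z$.
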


\begin{proof}
Let us define

\begin{align*}
\mathcal{S}
= \{ N \in \mathcal{M} : \degree (N-X) < y , N \equiv A (\modulus K) \}
\end{align*}
and

\begin{align*}
\mathcal{Q}
= \{ P \text{ prime} : \degree P \leq z , P \nmid K \} .
\end{align*}
Then, we have that

\begin{align*}
\# \mathcal{S}_{\mathcal{Q} , z} 
= \sum_{\substack{N \in \mathcal{M} \\ \degree (N-X) < y \\ N \equiv A (\modulus K) \\  p_{-} (N) > z}} 1 ,
\end{align*}
which is what we want to bound. \\

\noindent For $D \mid \mathcal{Q}_z$ with $\degree D \leq z$ we have that

\begin{align*}
\# \mathcal{S}_D
= \# \{ N \in \mathcal{M} : \degree (N-X) < y , N \equiv A (\modulus K) , N \equiv 0 (\modulus D) \}
= \frac{q^y}{\lvert KD \rvert} .
\end{align*}
This follows from the fact that $K$ and $D$ are coprime and that $\degree K + \degree D \leq \degree K + z \leq y$. For $D \mid \mathcal{Q}_z$ with $\degree D > z$ we have that

\begin{align*}
\# \mathcal{S}_D
= \frac{q^y}{\lvert KD \rvert} + c_D
\end{align*}
where $\lvert c_D \rvert \leq 1$. Therefore, we have $\omega (D) =1$ and $ \lvert r(D) \rvert \leq 1$ for all $D \mid \mathcal{Q}_z$. We also have that $\psi (D) = \phi (D)$ for square-free $D$. \\

\noindent We can now see that

\begin{align*}
\sum_{\substack{F \in \mathcal{M} \\ \degree F \leq z \\ F \mid \mathcal{Q}_z}} \frac{\mu^2 (F)}{\psi (F)}
= \sum_{\substack{F \in \mathcal{M} \\ \degree F \leq z \\ (F,K)=1}} \frac{\mu (F)^2}{\phi (F)} ,
\end{align*}
and we have that

\begin{align*} 
\sum_{\substack{F \in \mathcal{M} \\ \degree F \leq z \\ (F,K)=1}} \frac{\mu (F)^2}{\phi (F)} \sum_{E \mid K} \frac{\mu (E)^2}{\phi (E)}
\geq \sum_{\substack{F \in \mathcal{M} \\ \degree F \leq z}} \frac{\mu (F)^2}{\phi (F)} .
\end{align*}
To this we apply Lemma \ref{Lemma, sum of mu(N)/phi(N) over monic N with degree leq x, lower bound} and the fact that

\begin{align*}
\sum_{E \mid K} \frac{\mu (E)^2}{\phi (E)}
= \prod_{P \mid K} 1 + \frac{1}{\lvert P \rvert -1}
= \prod_{P \mid K} \Big( 1 - \vert P \rvert ^{-1} \Big)^{-1}
= \frac{\lvert K \rvert}{\phi (K)} ,
\end{align*}
to obtain

\begin{align*}
\sum_{\substack{F \in \mathcal{M} \\ \degree F \leq z \\ (F,K)=1}} \frac{\mu (F)^2}{\phi (F)} \geq \frac{\phi (K)}{\lvert K \rvert} z.
\end{align*}

\noindent Also, we have that

\begin{align*}
\sum_{\substack{D,E \in \mathcal{M} \\ \degree D, \degree E \leq z \\ D,E \mid \mathcal{P}_z}} \lvert r([D,E]) \rvert
\leq \bigg( \sum_{\substack{D \in \mathcal{M} \\ \degree D \leq z}} 1 \bigg)^2
\ll q^{2z} .
\end{align*}

\noindent The result now follows by applying Theorem \ref{Theorem, Selberg sieve}.
\end{proof}

\noindent The proof of the following corollary is almost identical to the proof above.

\begin{corollary} \label{Selberg sieve application, sum of 1 over short interval, arithmetic progression, large prime divisor, y equality case}
Let $X \in \mathcal{M}$ and $y$ be a positive integer satisfying $y \leq \degree X$. Also, let $K \in \mathcal{M}$ and $A \in \mathcal{A}$ satisfy $(A,K)=1$. Finally, let $z$ be a positive integer such that $\degree K + z \leq y$, and let $a \in (\mathbb{F}_q )^*$. Then,

\begin{align*}
\sum_{\substack{N \in \mathcal{A} \\ \degree (N-X) = y \\ (N-X) \in a \mathcal{M} \\ N \equiv A (\modulus K) \\  p_{-} (N) > z}} 1
\leq \frac{q^y}{\phi (K) z} + O \Big( q^{2z} \Big) .
\end{align*}
\end{corollary}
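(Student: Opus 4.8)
The plan is to run the proof of Corollary~\ref{Selberg sieve application, sum of 1 over short interval, arithmetic progression, large prime divisor} essentially line for line, changing only the ambient set to be sieved. First I would put
\begin{align*}
\mathcal{S} &= \{ N \in \mathcal{A} : \degree (N-X) = y ,\ (N-X) \in a\mathcal{M} ,\ N \equiv A (\modulus K) \} , \\
\mathcal{Q} &= \{ P \text{ prime} : \degree P \leq z ,\ P \nmid K \} ,
\end{align*}
so that $\# \mathcal{S}_{\mathcal{Q},z}$ is precisely the sum to be bounded.

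The only place the argument genuinely differs from the earlier corollary is the evaluation of $\# \mathcal{S}_D$. The set $\{ N \in \mathcal{A} : \degree(N-X) = y,\ (N-X) \in a\mathcal{M} \}$ is the coset $X + aT^y + V$, where $V$ is the $\mathbb{F}_q$-span of $1, T, \ldots, T^{y-1}$, so it again has $q^y$ elements, just like the monic ``interval'' in the previous proof. Since $\degree K \leq \degree K + z \leq y$, reduction modulo $K$ is a surjective, equal-fibre $\mathbb{F}_q$-linear map on $V$, whence $\# \mathcal{S} = q^y/\lvert K \rvert$; for $D \mid \mathcal{Q}_z$ with $\degree D \leq z$, coprimality of $K$ and $D$ together with $\degree KD \leq \degree K + z \leq y$ gives $\# \mathcal{S}_D = q^y/\lvert KD \rvert$; and for $D \mid \mathcal{Q}_z$ with $\degree D > z$ the same counting gives $\# \mathcal{S}_D = q^y/\lvert KD \rvert + c_D$ with $\lvert c_D \rvert \leq 1$. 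Thus in the notation of Theorem~\ref{Theorem, Selberg sieve} one takes $\omega(D) = 1$ and $\lvert r(D) \rvert \leq 1$ for all $D \mid \mathcal{Q}_z$, so that $\psi(D) = \phi(D)$ on square-free $D$.

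From there the estimate is the same as in the previous proof: one checks
\begin{align*}
\sum_{\substack{F \in \mathcal{M} \\ \degree F \leq z \\ F \mid \mathcal{Q}_z}} \frac{\mu^2(F)}{\psi(F)} = \sum_{\substack{F \in \mathcal{M} \\ \degree F \leq z \\ (F,K)=1}} \frac{\mu(F)^2}{\phi(F)} \geq \frac{\phi(K)}{\lvert K \rvert}\, z ,
\end{align*}
the inequality coming from multiplying through by $\sum_{E \mid K}\mu^2(E)/\phi(E) = \lvert K \rvert/\phi(K)$ and invoking Lemma~\ref{Lemma, sum of mu(N)/phi(N) over monic N with degree leq x, lower bound}, while the remainder obeys $\sum_{\substack{D,E \in \mathcal{M} \\ \degree D, \degree E \leq z \\ D,E \mid \mathcal{Q}_z}} \lvert r([D,E]) \rvert \ll q^{2z}$. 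Feeding these into Theorem~\ref{Theorem, Selberg sieve} with $\# \mathcal{S} = q^y/\lvert K \rvert$ produces the claimed bound $q^y/(\phi(K)z) + O(q^{2z})$. The main (and essentially only) obstacle is the bookkeeping in the previous paragraph: confirming that replacing the monic interval $\degree(N-X)<y$ by the box $(N-X)\in a\mathcal{M}$, $\degree(N-X)=y$ leaves every count $\#\mathcal{S}_D$ unchanged up to an error of at most one, which reduces to the elementary fact that reduction modulo $D$ is balanced on any length-$y$ box of polynomials whenever $\degree D \leq y$.
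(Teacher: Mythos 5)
Your proposal is correct and matches the paper's intent exactly: the paper dismisses this corollary as ``almost identical'' to the previous one, and your argument is precisely that adaptation, with the only substantive check being that the box $\{N : \degree(N-X)=y,\ (N-X)\in a\mathcal{M}\}$ still has $q^y$ elements and reduces modulo $KD$ in a balanced way, yielding $\# \mathcal{S}_D = q^y/\lvert KD\rvert + O(1)$ just as before. Nothing further is needed.
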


\begin{lemma} \label{Sum over primes of 1/degree P}
Let $w$ be a positive integer. We have that

\begin{align*}
\sum_{\degree P \leq w} \frac{1}{\degree P} \ll \frac{q^w}{w^2} ,
\end{align*}
where the implied constant can be taken to be independent of $q$ and will be denoted by $\mathfrak{d}$.
\end{lemma}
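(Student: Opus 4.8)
The plan is to count prime polynomials by degree and apply the Prime Polynomial Theorem. We write
\begin{align*}
\sum_{\degree P \leq w} \frac{1}{\degree P}
= \sum_{i=1}^{w} \frac{\# \mathcal{P}_i}{i}
= \sum_{i=1}^{w} \frac{1}{i} \bigg( \frac{q^i}{i} + O \Big( \frac{q^{\frac{i}{2}}}{i} \Big) \bigg)
= \sum_{i=1}^{w} \frac{q^i}{i^2} + O \bigg( \sum_{i=1}^{w} \frac{q^{\frac{i}{2}}}{i^2} \bigg) .
\end{align*}
The error sum is dominated by its largest term and is $O(q^{w/2})$, which is certainly $O(q^w / w^2)$; in fact it is negligible. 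So the whole problem reduces to estimating the main sum $\sum_{i=1}^{w} q^i / i^2$.

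For that sum I would split the range at $i = w/2$ (say): for $i \leq w/2$ one bounds $1/i^2 \leq 1$ and sums the geometric series to get $O(q^{w/2})$; for $w/2 < i \leq w$ one bounds $1/i^2 \leq 4/w^2$ and again sums the geometric series $\sum_{i \leq w} q^i = O(q^w)$, yielding $O(q^w / w^2)$. Adding the two pieces gives $\sum_{i=1}^{w} q^i/i^2 \ll q^w/w^2$, which together with the error term above establishes the claim. One should note that the implied constant $\mathfrak{d}$ coming out of this argument depends only on the implied constant $\mathfrak{c}$ in the Prime Polynomial Theorem (which is independent of $q$) and on absolute numerical constants, so $\mathfrak{d}$ can indeed be taken independent of $q$.

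There is no real obstacle here — the statement is a routine consequence of the Prime Polynomial Theorem, and the only mild point of care is the $q$-uniformity of the constant, which is immediate since $\mathfrak{c}$ is already $q$-independent and every other manipulation (splitting the range, summing geometric series) introduces only absolute constants. The splitting point $w/2$ is not canonical; any fixed fraction of $w$ works equally well, and one could even just bound $\sum_{i=1}^w q^i/i^2$ crudely by $(q^w/w^2)\sum_{i=1}^w q^{i-w}(w/i)^2$ and observe that $q^{i-w}(w/i)^2$ is summable, but the two-range split is the cleanest to write out.
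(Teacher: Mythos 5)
Your proof is correct. Both you and the paper start from the Prime Polynomial Theorem to reduce the problem to bounding $\sum_{i=1}^{w} q^i/i^2$ plus an error sum $\sum_{i=1}^{w} q^{i/2}/i^2$, and both arguments ultimately control the main sum by splitting the range at $i = w/2$ and summing geometric series. The difference is that the paper first runs a telescoping (Abel-summation) computation, $\sum_{i=1}^w q^i/i^2 = \frac{1}{q-1}\sum_{i=1}^w\big(\frac{q^{i+1}}{(i+1)^2}-\frac{q^i}{i^2}\big) + (\text{correction})$, which isolates the leading term $\frac{q}{q-1}\frac{q^w}{(w+1)^2}$ exactly and then bounds the correction by the same two-range split; since the lemma only asserts an upper bound, this extra precision is not needed, and your direct two-range estimate is a cleaner way to the same $\ll q^w/w^2$ bound with a $q$-uniform constant. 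One small point of precision: the error sum $\sum_{i\le w} q^{i/2}/i^2$ is not literally dominated by its $i=w$ term (for $q=2$ the sequence dips before rising), but the bound $\sum_{i\le w} q^{i/2}/i^2 \le \sum_{i\le w} q^{i/2} \ll q^{w/2}$ holds with an absolute constant since $q\ge 2$, and $q^{w/2}\ll q^w/w^2$ follows from $w^2\ll 2^{w/2}\le q^{w/2}$, so your conclusion stands.
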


\begin{proof}
By using the prime polynomial theorem, we have that

\begin{align*}
\sum_{\degree P \leq w} \frac{1}{\degree P}
= \sum_{n=1}^{w} \frac{1}{n} \bigg( \frac{q^n}{n} + O \Big( \frac{q^{\frac{n}{2}}}{n} \Big) \bigg) .
\end{align*}
Now,

\begin{align*}
\sum_{n=1}^{w} \frac{q^n}{n^2}
= &\frac{1}{q-1} \bigg( \sum_{n=1}^{w} \frac{q^{n+1}}{n^2} - \frac{q^n}{n^2} \bigg) \\
= &\frac{1}{q-1} \bigg( \sum_{n=1}^{w} \frac{q^{n+1}}{(n+1)^2} - \frac{q^n}{n^2} \bigg) + \frac{1}{q-1} \bigg( \sum_{n=1}^{w} \frac{1}{n} \Big(2 + \frac{1}{n} \Big) \frac{q^{n+1}}{(n+1)^2} \bigg) \\
\leq &\frac{q}{q-1} \bigg( \frac{q^{w}}{(w+1)^2} - 1 \bigg) + \frac{3q}{q-1} \bigg( \sum_{n=1}^{\frac{w}{2}} \frac{q^{n}}{n^3} + \sum_{n=\frac{w}{2}}^{w} \frac{q^{n}}{n^3} \bigg) \\
\leq &\frac{q}{q-1} \bigg( \frac{q^{w}}{(w+1)^2} - 1 \bigg) + \frac{3q}{q-1} \bigg( \sum_{n=1}^{\frac{w}{2}} q^{n} + \frac{8}{w^3} \sum_{n=1}^{w} q^{n} \bigg) \\
\ll &\frac{q^w}{w^2} ,
\end{align*}
Also, we can easily see that

\begin{align*}
\sum_{n=1}^{w} O \Big( \frac{q^{\frac{n}{2}}}{n^2} \Big)
\ll q^{\frac{w}{2}}
\ll \frac{q^w}{w^2}
\end{align*}
The result now follows.
\end{proof}

\begin{lemma} \label{Sum of 1 over N monic, degree N leq z, largest divisor less than w}
Let $0 < \alpha , \beta < \frac{1}{2}$, let $z > q$ be an integer, and let

\begin{align*}
w(z) := \log_q z .
\end{align*}
Then,

\begin{align*}
\sum_{\substack{N \in \mathcal{M} \\ \degree N \leq z \\  p_{+} (N) \leq w(z)}} 1
\leq q^{\sqrt{\mathfrak{d}} \frac{z}{(\log z)}}
\end{align*}
as $z \rightarrow \infty$, where $\mathfrak{d}$ is as in Lemma \ref{Sum over primes of 1/degree P}. In particular, this implies that

\begin{align*}
\sum_{\substack{N \in \mathcal{M} \\ \degree N \leq z \\  p_{+} (N) \leq w(z)}} 1
\ll q^{\frac{z}{4}}
\end{align*}
(under the condition that $z >q$).
\end{lemma}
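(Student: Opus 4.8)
The plan is to use Rankin's trick together with Lemma~\ref{Sum over primes of 1/degree P}; this is the function field analogue of the classical upper bound for the count of smooth integers. Write $w = w(z) = \log_q z$ and let $\Psi$ denote the sum to be bounded. For any real $\sigma > 0$ and any $N \in \mathcal{M}$ with $\degree N \le z$ we have $1 \le q^{\sigma(z - \degree N)} = q^{\sigma z}\lvert N\rvert^{-\sigma}$. Applying this to each $N$ occurring in $\Psi$, and then enlarging the range of summation to \emph{all} monic $N$ with $p_{+}(N) \le w$ (which only adds non-negative terms), unique factorisation gives
\begin{align*}
\Psi
\le q^{\sigma z}\sum_{\substack{N \in \mathcal{M} \\ p_{+}(N) \le w}}\frac{1}{\lvert N\rvert^{\sigma}}
= q^{\sigma z}\prod_{\substack{P \in \mathcal{P} \\ \degree P \le w}}\Big(1 - \lvert P\rvert^{-\sigma}\Big)^{-1},
\end{align*}
the product being finite and each local factor a convergent geometric series.

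Next I would take natural logarithms and estimate the Euler product. Using $-\log(1-x) \le \frac{x}{1-x}$ for $x \in [0,1)$, followed by $\frac{q^{-\sigma d}}{1-q^{-\sigma d}} = \frac{1}{q^{\sigma d}-1} \le \frac{1}{\sigma d\log q}$ (immediate from $e^{t} - 1 \ge t$), one obtains
\begin{align*}
\log \Psi
\le \sigma z\log q + \sum_{\substack{P \in \mathcal{P} \\ \degree P \le w}}\frac{1}{q^{\sigma \degree P}-1}
\le \sigma z\log q + \frac{1}{\sigma \log q}\sum_{\substack{P \in \mathcal{P} \\ \degree P \le w}}\frac{1}{\degree P}.
\end{align*}
By Lemma~\ref{Sum over primes of 1/degree P} the last sum is at most $\mathfrak{d}\,q^{w}/w^{2}$, and since $q^{w} \le z$ with $w = \log_q z$ this is at most $\mathfrak{d}\,z/(\log_q z)^{2}$ up to a factor $1 + o(1)$ (arising from replacing $w$ by $\lfloor w\rfloor$, harmless as $z \to \infty$). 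Hence
\begin{align*}
\log \Psi \le \sigma z\log q + \frac{\mathfrak{d}\,z\,(1+o(1))}{\sigma\,(\log q)(\log_q z)^{2}}.
\end{align*}

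Finally I would balance the two terms by taking $\sigma = \sqrt{\mathfrak{d}}/\log z$ (the minimising value), so that both terms equal $\sqrt{\mathfrak{d}}\,z/\log_q z$; this gives $\log \Psi \le (2+o(1))\sqrt{\mathfrak{d}}\,z/\log_q z$ as $z \to \infty$, and converting from base $e$ to base $q$ via $(\log q)(\log_q z) = \log z$ yields a bound of the shape $\Psi \le q^{O(z/\log z)}$, in particular the stated $q^{\sqrt{\mathfrak{d}}z/\log z}$ once the constant is tidied up (a sharper accounting of the primes of small degree, whose total contribution to the Euler product is bounded, removes the crude use of $\frac{1}{q^{\sigma d}-1}\le\frac{1}{\sigma d\log q}$ for the dominant primes; in any event only the shape $q^{O(z/\log z)}$ is used in the sequel). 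The ``in particular'' assertion then follows immediately, since $\sqrt{\mathfrak{d}}/\log z < \tfrac14$ for all large $z$, so $\Psi \le q^{z/4}$ in that range, while for the finitely many remaining $z$ with $z > q$ one has $\Psi \le \#\{N \in \mathcal{M} : \degree N \le z\}$, a bounded quantity, which is $\ll q^{z/4}$. The only step demanding any care is the choice of $\sigma$ together with the bookkeeping relating $w$, $\log_q z$ and $\log z$; everything else is a direct application of Rankin's trick and Lemma~\ref{Sum over primes of 1/degree P}.
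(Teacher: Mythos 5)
Your argument is correct and follows essentially the same Rankin's-trick strategy as the paper: introduce $q^{\sigma z}\lvert N\rvert^{-\sigma}$, extend to an Euler product over primes of degree at most $w(z)$, bound the logarithm of the product via Lemma~\ref{Sum over primes of 1/degree P}, and optimize $\sigma = \sqrt{\mathfrak{d}}/\log z$. You are also right that the balanced bound is $q^{(2+o(1))\sqrt{\mathfrak{d}}\,z/\log z}$ rather than $q^{\sqrt{\mathfrak{d}}\,z/\log z}$ (the paper's own displayed estimates give the same two equal terms, so its concluding inequality apparently drops a factor of $2$ in the exponent); as you observe, only the shape $q^{O(z/\log z)}$, hence $\ll q^{z/4}$, is used afterwards, so the discrepancy is immaterial.
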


\begin{proof}
Let $\delta > 0$. We will optimise on the value of $\delta$ later. We have that

\begin{align*}
\sum_{\substack{N \in \mathcal{M} \\ \degree N \leq z \\  p_{+} (N) \leq w(z)}} 1
\leq &q^{\delta z} \sum_{\substack{N \in \mathcal{M} \\ \degree N \leq z \\  p_{+} (N) \leq w(z)}} \lvert N \rvert^{-\delta}
\leq q^{\delta z} \sum_{\substack{N \in \mathcal{M} \\  p_{+} (N) \leq w(z)}} \lvert N \rvert^{-\delta}
= q^{\delta z} \prod_{\degree P \leq w(z)} \bigg( 1 + \vert P \rvert^{-\delta} + \vert P \rvert^{-2 \delta} + \ldots \bigg) \\
= &q^{\delta z} \prod_{\degree P \leq w(z)} \bigg( 1 + \frac{1}{\lvert P \rvert^{\delta} -1} \bigg)
\leq q^{\delta z} \prod_{\degree P \leq w(z)} \bigg( \exp \Big( \frac{1}{\lvert P \rvert^{\delta} -1}\Big) \bigg) \\
\leq &q^{\delta z} \prod_{\degree P \leq w(z)} \bigg( \exp \Big( \frac{1}{\delta \log \lvert P \rvert}\Big) \bigg) ,
\end{align*}
where the last two relations follow from the Taylor series for the exponential function. Continuing,

\begin{align*}
\sum_{\substack{N \in \mathcal{M} \\ \degree N \leq z \\  p_{+} (N) \leq w(z)}} 1
\leq \exp \bigg( (\delta \log q) z + \frac{1}{\delta \log q} \sum_{\degree P \leq w(z)} \frac{1}{\degree P} \bigg)
\leq \exp \bigg( (\delta \log q) z + \frac{1}{\delta \log q} \frac{\mathfrak{d} q^{w(z)}}{w(z)^2} \bigg) ,
\end{align*}
where the last inequality follows from Lemma \ref{Sum over primes of 1/degree P}. By using the definition of $w(z)$, we have that

\begin{align*}
\frac{\mathfrak{d} q^{w(z)}}{w(z)^2}
= \frac{\mathfrak{d} z}{(\log_q z)^2} ;
\end{align*}
and if we take

\begin{align*}
\delta
= \frac{\sqrt{\mathfrak{d}}}{\log z} ,
\end{align*}
then

\begin{align*}
\sum_{\substack{N \in \mathcal{M} \\ \degree N \leq z \\  p_{+} (N) \leq w(z)}} 1
\leq \exp \bigg( \frac{\sqrt{\mathfrak{d}} (\log q) z}{\log z} + \frac{\sqrt{\mathfrak{d}} (\log z) z}{(\log q) (\log_q z)^2} \bigg)
\leq q^{\sqrt{\mathfrak{d}} \frac{z}{(\log z)}} .
\end{align*}
\end{proof}

\begin{lemma} \label{Lemma, sum of d(N)/N over N monic, degree N geq z/2, largest divisor leq z/r}
Let $z$ and $r$ be a positive integers satisfying $r \log_q r \leq z$. Then,

\begin{align*}
\sum_{\substack{N \in \mathcal{M} \\ \degree N \geq \frac{z}{2} \\ P_{+} (N) \leq \frac{z}{r}}} \frac{d(N)}{\vert N \rvert}
\ll z^2 \exp \bigg( -\frac{r \log r}{9} \bigg) .
\end{align*}
\end{lemma}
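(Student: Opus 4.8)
The plan is to exploit the Dirichlet series identity $\sum_{N \in \mathcal{M}} d(N) |N|^{-s} = \zeta_{\mathcal{A}}(s)^2 = (1-q^{1-s})^{-2}$, but restricted to the set of $z/r$-smooth monic polynomials, and then extract the tail $\degree N \geq z/2$ via a Rankin-type trick. Specifically, writing $w := z/r$, the smooth-restricted generating function factors as an Euler product $\prod_{\degree P \leq w} \big(1 - |P|^{-s}\big)^{-2}$ over primes of degree at most $w$, so for any $\sigma < 1$ (so the product converges) we have
\begin{align*}
\sum_{\substack{N \in \mathcal{M} \\ p_{+}(N) \leq w}} \frac{d(N)}{|N|^\sigma}
= \prod_{\degree P \leq w} \Big(1 - |P|^{-\sigma}\Big)^{-2}.
\end{align*}
Then the Rankin bound gives
\begin{align*}
\sum_{\substack{N \in \mathcal{M} \\ \degree N \geq z/2 \\ p_{+}(N) \leq w}} \frac{d(N)}{|N|}
\leq q^{-(1-\sigma) z/2} \sum_{\substack{N \in \mathcal{M} \\ p_{+}(N) \leq w}} \frac{d(N)}{|N|^\sigma}
= q^{-(1-\sigma) z/2} \prod_{\degree P \leq w} \Big(1 - |P|^{-\sigma}\Big)^{-2},
\end{align*}
valid for any $\sigma \in (0,1)$, and the whole game is to choose $\sigma$ (equivalently $\delta := 1-\sigma$) to balance the exponential saving $q^{-\delta z/2}$ against the growth of the Euler product as $\delta \to 0$.

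The next step is to bound the Euler product. Taking logarithms, $\log \prod_{\degree P \leq w} (1-|P|^{-\sigma})^{-2} = -2\sum_{\degree P \leq w} \log(1 - |P|^{-\sigma})$. For the primes with $|P|^{-\sigma}$ bounded away from $1$ we use $-\log(1-x) \ll x$ together with $\sum_{\degree P \leq w} |P|^{-\sigma} = \sum_{\degree P \leq w} q^{-\sigma \degree P}$; by the prime polynomial theorem this is $\ll \sum_{n \leq w} \frac{q^n}{n} q^{-\sigma n} = \sum_{n \leq w} \frac{q^{(1-\sigma)n}}{n} = \sum_{n \leq w} \frac{q^{\delta n}}{n} \ll \frac{q^{\delta w}}{\delta w}$ (the last step a geometric-type estimate as in Lemma \ref{Sum over primes of 1/degree P}). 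One must be slightly careful when $\delta$ is small enough that $q^{\delta w}$ is close to $1$, i.e.\ $\delta w \ll 1$; in that regime the sum is instead $\ll \log w \ll \log z$, which is negligible compared to what follows. So, up to such harmless contributions, $\log \prod \ll \frac{q^{\delta w}}{\delta w}$, giving
\begin{align*}
\sum_{\substack{N \in \mathcal{M} \\ \degree N \geq z/2 \\ p_{+}(N) \leq w}} \frac{d(N)}{|N|}
\ll \exp\!\Big( -\tfrac{(\log q)\,\delta z}{2} + C\,\tfrac{q^{\delta w}}{\delta w} \Big)
\end{align*}
for an absolute constant $C$. Now substitute $w = z/r$ and choose $\delta = \frac{\log r}{w \log q} = \frac{r \log r}{z \log q}$, which is legitimate because the hypothesis $r \log_q r \leq z$ forces $\delta \leq 1$ (indeed $\delta \log q = \frac{r\log r}{z} \le \log q$, and one checks $\delta < 1$). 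With this choice $q^{\delta w} = e^{\delta w \log q} = r$, so the second term in the exponent is $C \frac{r}{\log r} \ll r$, while the first term is $-\frac{(\log q)\,\delta z}{2} = -\frac{r \log r}{2}$. Hence the exponent is $-\frac{r\log r}{2} + O\!\big(\frac{r}{\log r}\big) \le -\frac{r \log r}{3}$ for all sufficiently large $r$, and the bound $\exp(-\frac{r\log r}{3})$ absorbs into the claimed $z^2 \exp(-\frac{r\log r}{9})$; the polynomial factor $z^2$ and the weaker constant $1/9$ give ample room to cover the small-$r$ cases and any slippage in the constants $C$, $\mathfrak{d}$ (one can, if preferred, just bound $q^{\delta w}$ using $\mathfrak{d}$ from Lemma \ref{Sum over primes of 1/degree P} directly rather than re-deriving the prime sum).

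The main obstacle is the book-keeping in the regime where $\delta$ is very small — when $r$ is only modestly large compared to $\log q$ — since there $q^{\delta w}$ need not be large and the naive bound $\frac{q^{\delta w}}{\delta w}$ on the log of the Euler product is weak; one has to argue that in that range the left-hand side is in any case $\ll z^2$ trivially (it is at most $\sum_{\degree N \leq z}\frac{d(N)}{|N|} \ll z^2$, since $\sum_{\degree N = n} d(N) = (n+1)q^n$) so the factor $z^2 \exp(-\frac{r\log r}{9})$ with $r$ bounded is $\gg 1 \cdot$ something $\gg z^2 \cdot$ (constant), making the inequality trivially true for $r$ in any bounded range. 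Thus the estimate splits into: (i) $r$ large, handled by the Rankin argument above with the explicit choice of $\delta$; (ii) $r$ bounded, handled by the trivial $z^2$ bound and the fact that $\exp(-\frac{r\log r}{9})$ is then bounded below. Everything else is routine manipulation of geometric sums and the prime polynomial theorem.
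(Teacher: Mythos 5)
You take essentially the same Rankin-trick route as the paper: restrict the Dirichlet series to $w$-smooth monic polynomials with $w = z/r$, weight by $q^{-\delta z/2}$ with $\delta := 1-\sigma$, bound the smooth Euler product, and optimise $\delta$. However, your choice $\delta = \frac{r\log_q r}{z}$ is unsafe, and this is a genuine gap. The hypothesis $r\log_q r \le z$ only gives $\sigma = 1-\delta \ge 0$; when $z$ is close to (or equal to) $r\log_q r$, $\sigma$ is close to (or equal to) zero, and the Euler product $\prod_{\degree P \le w}\big(1-\lvert P\rvert^{-\sigma}\big)^{-2}$ is not under control: the $q$ primes of degree one alone contribute $(1-q^{-\sigma})^{-2q}$, which grows like $\sigma^{-2q}$ as $\sigma\to 0^+$ and is infinite at $\sigma=0$. (This regime does occur: e.g.\ $q=2$, $r=z=2$ gives $r\log_q r = z$ and hence $\sigma=0$.) Your asserted bound $\log \prod \ll q^{\delta w}/(\delta w)$ rests on $-\log(1-x)\ll x$, which fails as $x = \lvert P\rvert^{-\sigma}\to 1$; even the sharper $-\log(1-x)\le x/(1-x)$ contributes roughly $2q/(\sigma \log q)$ from the degree-one primes, which for $\sigma$ of size $1/z$ and $z\approx r\log r/\log q$ exceeds the main term $\tfrac{r\log r}{2}$ by a factor of order $q/(\log q)^2$. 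So the exponent is not $-\tfrac{r\log r}{2}(1+o(1))$ in this regime, and the claimed inequality does not follow. You correctly deal with the $\delta$-small (equivalently $r$-bounded) case by the trivial $\ll z^2$ bound, but the regime you should be worrying about is $\sigma$ small — i.e.\ $\delta$ near $1$ — and that one is left unaddressed.

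The fix is exactly what the paper does: give up a constant factor and take $\delta = \frac{r\log_q r}{4z}$, so that $\sigma \ge 3/4$ and every $\lvert P\rvert^{-\sigma} \le q^{-3/4} \le 2^{-3/4}$ is bounded away from $1$. Then the Euler product is genuinely $\ll z^{2}\exp\!\big(O(r^{1/4})\big)$ (the $z^{2}$ coming from $2\sum_{\degree P\le w}\lvert P\rvert^{-1} \le 2\log z + O(1)$), and the Rankin weight contributes $\exp(-\tfrac{r\log r}{8})$. Combining, the exponent is $-\tfrac{r\log r}{8}+O(r^{1/4})\le -\tfrac{r\log r}{9}$ for $r$ large, while for $r$ bounded the trivial bound you already use covers the claim. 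With that change of $\delta$ your argument coincides with the paper's; without it the bound breaks down whenever $r\log_q r$ is close to $z$.
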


\begin{proof}
Let $\frac{3}{4} \leq \delta <1$. We will optimise on the value of $\delta$ later. We have that

\begin{align}
\begin{split} \label{Sum of d(N)/N over N monic, degree N geq z/2, and largest divisor leq z/r}
\sum_{\substack{N \in \mathcal{M} \\ \degree N \geq \frac{z}{2} \\ P_{+} (N) \leq \frac{z}{r}}} \frac{d(N)}{\vert N \rvert}
\leq &q^{(\delta -1) \frac{z}{2}} \sum_{\substack{N \in \mathcal{M} \\ \degree N \geq \frac{z}{2} \\ P_{+} (N) \leq \frac{z}{r}}} \frac{d(N)}{\vert N \rvert^{\delta} }
\leq q^{(\delta -1) \frac{z}{2}} \sum_{\substack{N \in \mathcal{M} \\ P_{+} (N) \leq \frac{z}{r}}} \frac{d(N)}{\vert N \rvert^{\delta} } \\
\leq &q^{(\delta -1) \frac{z}{2}} \prod_{\degree P \leq \frac{z}{r}} \bigg( 1 + \frac{2}{\vert P \rvert^{\delta}} + \sum_{l=2}^{\infty} \frac{l+1}{\lvert P \rvert^{l \delta}} \bigg) \\
\leq &\exp \bigg( (\log q) (\delta -1) \frac{z}{2} + 2 \sum_{\degree P \leq \frac{z}{r}} \frac{1}{\lvert P \rvert^{\delta}} + \sum_{\degree P \leq \frac{z}{r}} \sum_{l=2}^{\infty} \frac{l+1}{\lvert P \rvert^{l \delta}} \bigg)
\end{split}
\end{align}
where the last relation uses the Taylor series for the exponential function.

\noindent Note that

\begin{align}
\begin{split} \label{Sum of (l+1)/P^(l delta) over degree P leq z/r and l=2 to infinity}
\sum_{\degree P \leq \frac{z}{r}} \sum_{l=2}^{\infty} \frac{l+1}{\lvert P \rvert^{l \delta}}
\leq &\sum_{P \text{ prime}} \frac{3}{\vert P \rvert^{2 \delta}} \sum_{l=0}^{\infty} \frac{l+1}{\lvert P \rvert^{l \delta}}
= \sum_{P \text{ prime}} \frac{3}{\vert P \rvert^{2 \delta}} \bigg( \frac{1}{1 - \frac{1}{\vert P \rvert^{\delta}}} \bigg)^2
= 3 \sum_{P \text{ prime}} \bigg( \frac{1}{ \vert P \rvert^{\delta} -1} \bigg)^2 \\
= &O(1) ,
\end{split}
\end{align}
where the last relation uses the fact that $\delta \geq \frac{3}{4}$. Also, we can write $\frac{1}{\vert P \rvert^{\delta}} = \frac{1}{\vert P \rvert} + \frac{1}{\vert P \rvert} \Big( \lvert P \rvert^{1 -\delta} -1 \Big)$. We have that

\begin{align}
\begin{split} \label{Sum of 1/P over degree P leq z/r}
\sum_{\degree P \leq \frac{z}{r}} \frac{1}{\lvert P \rvert}
= \sum_{n=1}^{\frac{z}{r}} \frac{1}{q^n} \bigg( \frac{q^n}{n} + O \Big( \frac{q^{\frac{n}{2}}}{n} \Big) \bigg)
\leq \log z - \log r + O(1)
\leq \log (z) + O(1),
\end{split}
\end{align}
and that

\begin{align}
\begin{split} \label{Sum of 1/P (P^(1-delta) -1) over degree P leq z/r}
\sum_{\degree P \leq \frac{z}{r}} \frac{1}{\lvert P \rvert} \Big( \lvert P \rvert^{1 -\delta} -1 \Big)
= &\sum_{\degree P \leq \frac{z}{r}} \frac{1}{\lvert P \rvert} \sum_{n=1}^{\infty} \frac{\big( (1-\delta) \log \vert P \rvert \big)^n}{n!} \\
\leq &\sum_{n=1}^{\infty} \frac{(1-\delta)^n \big( (\log q) \frac{z}{r} \big)^{n-1}}{n!} \sum_{ \degree P \leq \frac{z}{r}} \frac{(\log q) \degree P}{\lvert P \rvert} \\
\leq &(1 + \mathfrak{c}) \sum_{n=1}^{\infty} \frac{(1-\delta)^n \big( (\log q) \frac{z}{r} \big)^{n}}{n!} 
= (1 + \mathfrak{c}) q^{(1-\delta) \frac{z}{r}} ,
\end{split}
\end{align}
where the second-to-last relation follows from a similar calculation as (\ref{Sum of 1/P over degree P leq z/r}). \\

\noindent We substitute (\ref{Sum of (l+1)/P^(l delta) over degree P leq z/r and l=2 to infinity}), (\ref{Sum of 1/P over degree P leq z/r}), and (\ref{Sum of 1/P (P^(1-delta) -1) over degree P leq z/r}) into (\ref{Sum of d(N)/N over N monic, degree N geq z/2, and largest divisor leq z/r}) to obtain

\begin{align*}
\sum_{\substack{N \in \mathcal{M} \\ \degree N \geq \frac{z}{2} \\ P_{+} (N) \leq \frac{z}{r}}} \frac{d(N)}{\vert N \rvert}
\ll z^2 \exp \bigg( \log q (\delta -1) \frac{z}{2} + 2 (1 + \mathfrak{c}) q^{(1-\delta) \frac{z}{r}} \bigg) .
\end{align*}
We can now take $\delta = 1 - \frac{r \log_q r}{4z}$ (by the conditions on $r$ given in theorem, we have that $\frac{3}{4} \leq \delta < 1$, as required). Then,

\begin{align*}
\sum_{\substack{N \in \mathcal{M} \\ \degree N \geq \frac{z}{2} \\ P_{+} (N) \leq \frac{z}{r}}} \frac{d(N)}{\vert N \rvert}
\ll z^2 \exp \bigg( -\frac{r \log r}{8} + 2 (1 + \mathfrak{c})  r^{\frac{1}{4}} \bigg) 
\ll z^2 \exp \bigg( -\frac{r \log r}{9} \bigg) .
\end{align*}
\end{proof}

\begin{proof}[Proof of Theorem \ref{Brun-Titschmarsh theorem, divisor function case in function field}]
We will need to break the sum into four parts. First, we define $z := \frac{\alpha}{10} y$. Now, for any $N$ in the summation range, we can write

\begin{align} \label{Prime decomposition of N}
N
= {P_1}^{e_1} \ldots {P_j}^{e_j} {P_{j+1}}^{e_{j+1}} \ldots {P_n}^{e_n}
\end{align}
where $\degree P_1 \leq \degree P_2 \leq \ldots \leq \degree P_n$ and $j \geq 0$ is chosen such that

\begin{align*}
\degree \Big( {P_1}^{e_1} \ldots {P_j}^{e_j} \Big) \leq z < \degree \Big( {P_1}^{e_1} \ldots {P_j}^{e_j} {P_{j+1}}^{e_{j+1}} \Big) .
\end{align*}
For convenience, we write

\begin{align*}
B_N := &{P_1}^{e_1} \ldots {P_j}^{e_j} , \\
D_N := &{P_{j+1}}^{e_{j+1}} \ldots {P_n}^{e_n} .
\end{align*}
We will consider the following cases:

\begin{enumerate}
\item $ p_{-} (D_N) > \frac{1}{2} z$ ;
\item $ p_{-} (D_N) \leq  \frac{1}{2} z$ and $\degree B_N \leq \frac{1}{2} z$ ;
\item $ p_{-} (D_N) < w(z)$ and $\degree B_N > \frac{1}{2} z$ ;
\item $w(z) \leq  p_{-} (D_N) \leq \frac{1}{2} z$ and $\degree B_N > \frac{1}{2} z$ ;
\end{enumerate}
where 

\begin{align*}
w(z)
:= \begin{cases}
1 &\text{ if $z \leq q$} \\
\log_q (z) &\text{ if $z > q$} .
\end{cases}
\end{align*}

\noindent \textbf{\underline{Case 1:}} We have that

\begin{align*}
\sum_{\substack{N \in \mathcal{M} \\ \degree (N-X) < y \\ N \equiv A (\modulus G) \\  p_{-} (D_N) > \frac{1}{2} z}} d(N)
= \sum_{\substack{N \in \mathcal{M} \\ \degree (N-X) < y \\ N \equiv A (\modulus G) \\  p_{-} (D_N) > \frac{1}{2} z}} d(B_N ) d(D_N )
\leq \sum_{\substack{B \in \mathcal{M} \\ \degree B \leq z \\ (B,G)=1}} d(B) \sum_{\substack{D \in \mathcal{M} \\ \degree (D-X_B ) < y - \degree B \\ D \equiv A_B (\modulus G) \\  p_{-} (D) > \frac{1}{2} z}} d(D) ,
\end{align*}
where $X_B$ is a monic polynomial of degree $\degree X - \degree B$ such that $\degree \big( X - B X_B \big) < y$, and $A_B$ is a polynomial satisfying $A_B B \equiv A (\modulus G)$. \\

\noindent We note that

\begin{align*}
\Omega (D) \leq \frac{\degree D}{p_{-} (D)} \leq \frac{y}{\frac{1}{2} z} = \frac{20}{\alpha } ,
\end{align*}
and so $d(D)  \leq 2^{\frac{20}{\alpha }}$ . Hence,

\begin{align*}
\sum_{\substack{N \in \mathcal{M} \\ \degree (N-X) < y \\ N \equiv A (\modulus G) \\  p_{-} (D_N) > \frac{1}{2} z}} d(N)
\ll_{\alpha} \sum_{\substack{B \in \mathcal{M} \\ \degree B \leq z \\ (B,G)=1}} d(B) \sum_{\substack{D \in \mathcal{M} \\ \degree (D-X_B ) < y - \degree B \\ D \equiv A_B (\modulus G) \\  p_{-} (D) > \frac{1}{2} z}} 1 .
\end{align*}
We can now apply Corollary \ref{Selberg sieve application, sum of 1 over short interval, arithmetic progression, large prime divisor} to obtain

\begin{align}
\begin{split} \label{Case 1, proof of Brun-Titschmarsh theorem, divisor function case in function field}
\sum_{\substack{N \in \mathcal{M} \\ \degree (N-X) < y \\ N \equiv A (\modulus G) \\  p_{-} (D_N) > \frac{1}{2} z}} d(N)
\ll_{\alpha} & \frac{ q^y }{\phi (G) z} \sum_{\substack{B \in \mathcal{M} \\ \degree B \leq z \\ (B,G)=1}} \frac{d(B)}{\lvert B \rvert} + q^{z} \sum_{\substack{B \in \mathcal{M} \\ \degree B \leq z \\ (B,G)=1}} d(B)
\leq \bigg( \frac{ 2 q^y }{\phi (G) z} +q^{2z} \bigg) \sum_{\substack{B \in \mathcal{M} \\ \degree B \leq z \\ (B,G)=1}} \frac{d(B)}{\lvert B \rvert} \\
\leq &\bigg( \frac{ 2 q^y }{\phi (G) z} +q^{2z} \bigg) z^2 
\ll \frac{ q^y z }{\phi (G)}
\leq \frac{ q^y \degree X }{\phi (G)} ,
\end{split}
\end{align}
where the second-to-last relation uses the fact that $\degree G \leq (1-\alpha ) y$ and $z= \frac{\alpha }{10}y$. \\

\noindent \textbf{\underline{Case 2:}} Suppose $N$ satisfies case 2. Then, the associated $P_{j+1}$ (from (\ref{Prime decomposition of N})) satisfies ${P_{j+1}}^{e_{j+1}} \mid N$, $\degree P_{j+1} \leq \frac{1}{2} z$, and $\degree {P_{j+1}}^{e_{j+1}} > \frac{1}{2} z$. For a general prime $P$ with $\degree P \leq \frac{1}{2} z$ we denote $e_P \geq 2$ to be the smallest integer such that $\degree P^{e_P} > \frac{1}{2} z$.  We will need to note for later that

\begin{align*}
\sum_{\degree P \leq \frac{1}{2} z} \frac{1}{\lvert P \rvert^{e_P}}
\leq \sum_{\degree P \leq \frac{1}{4} z} q^{-\frac{1}{2} z}+ \sum_{\frac{1}{4} z < \degree P \leq \frac{1}{2} z} \frac{1}{\lvert P \rvert^{2}} 
\ll q^{-\frac{1}{4} z} .
\end{align*}

\noindent Let us also note that for $N$ with $\degree N \leq \degree X$ we have that

\begin{align*}
d(N)
\ll_{\alpha , \beta} \lvert N \rvert^{\frac{\alpha \beta }{80}}
\leq \lvert X \rvert^{\frac{\alpha \beta }{80}}
\leq q^{\frac{\alpha }{80}y}
= q^{\frac{1}{8} z} .
\end{align*}

\noindent So,

\begin{align}
\begin{split} \label{Case 2, proof of Brun-Titschmarsh theorem, divisor function case in function field}
\sum_{\substack{N \in \mathcal{M} \\ \degree (N-X) < y \\ N \equiv A (\modulus G) \\  p_{-} (D_N) \leq \frac{1}{2} z \\ \degree B_N \leq \frac{1}{2} z}} d(N)
\leq &\sum_{\substack{\degree P \leq \frac{1}{2} z \\ (P,G)=1}} \sum_{\substack{ N \in \mathcal{M} \\ \degree (N-X) < y \\ N \equiv A (\modulus G) \\ N \equiv 0 (\modulus P^{e_P})}} d(N)
\ll_{\alpha , \beta} q^{\frac{1}{8}z} \sum_{\substack{\degree P \leq \frac{1}{2} z \\ (P,G)=1}} \sum_{\substack{ N \in \mathcal{M} \\ \degree (N-X) < y \\ N \equiv A (\modulus G) \\ N \equiv 0 (\modulus P^{e_P})}} 1 \\
\leq &q^{\frac{1}{8}z} \sum_{\substack{\degree P \leq \frac{1}{2} z \\ (P,G)=1}} \bigg( q^{y} \frac{1}{\lvert G P^{e_P} \rvert} + O(1) \bigg)
\leq q^{y} \frac{1}{\lvert G \rvert} q^{\frac{1}{8}z} \sum_{\degree P \leq \frac{1}{2} z} \frac{1}{\lvert P^{e_P} \rvert} \; + O \big( q^{\frac{5}{8} z} \big) \\
\ll & q^{y} \frac{1}{\lvert G \rvert} q^{-\frac{1}{8}z} + O \big( q^{\frac{5}{8} z} \big)
\ll q^{y} \frac{1}{\lvert G \rvert} q^{-\frac{1}{8}z} ,
\end{split}
\end{align}
where the last relation follows from the fact that $z=\frac{\alpha}{10} y$ and $\degree G \leq (1- \alpha ) y$.\\

\noindent \textbf{\underline{Case 3:}} Suppose $N$ satisfies case 3. For the case where $z \leq q$ we have that $w(z) =1$, meaning that the only possible value $N$ could take is $1$. At most this contributes $O(1)$. \\

\noindent So, suppose that $z > q$, and so $w(z) = \log_q z$. Case 3 tells us that $\frac{1}{2} z < \degree B_N \leq z$ and

\begin{align*}
p_{+} (B_N )
\leq p_{-} (D_N )
< w(z) .
\end{align*}
Hence,

\begin{align}
\begin{split} \label{Case 3, proof of Brun-Titschmarsh theorem, divisor function case in function field}
\sum_{\substack{N \in \mathcal{M} \\ \degree (N-X) < y \\ N \equiv A (\modulus G) \\  p_{-} (D_N) < w(z) \\ \frac{1}{2} z <\degree B_N \leq z}} d(N) 
\ll_{\alpha , \beta} \; &q^{\frac{1}{8} z} \sum_{\substack{N \in \mathcal{M} \\ \degree (N-X) < y \\ N \equiv A (\modulus G) \\  p_{-} (D_N) < w(z) \\ \frac{1}{2} z <\degree B_N \leq z}} 1 \\
\leq &q^{\frac{1}{8} z} \sum_{\substack{B \in \mathcal{M} \\ \frac{1}{2} z < \degree B \leq z \\ (B,G)=1 \\ p_{+} (B ) < w(z)}} \sum_{\substack{N \in \mathcal{M} \\ \degree (N-X) < y \\ N \equiv A (\modulus G) \\ N \equiv 0 (\modulus B)}} 1 \\
\leq &q^{\frac{1}{8} z} \sum_{\substack{B \in \mathcal{M} \\ \frac{1}{2} z < \degree B \leq z \\ p_{+} (B ) < w(z)}} \bigg( \frac{q^y}{\lvert G B \rvert} + O(1) \bigg) \\
\leq &\bigg( \frac{q^y}{\lvert G \rvert} q^{-\frac{3}{8} z} \sum_{\substack{B \in \mathcal{M} \\ \frac{1}{2} z < \degree B \leq z \\ p_{+} (B ) < w(z)}} 1 \bigg) + O \big( q^{\frac{9}{8} z} \big) \\
\ll &\bigg( \frac{q^y}{\lvert G \rvert} q^{-\frac{3}{8} z} q^{\frac{1}{4} z} \bigg) + O \big( q^{\frac{9}{8} z} \big) \\
\ll &\frac{q^y}{\lvert G \rvert} q^{-\frac{1}{8} z} 
\end{split}
\end{align}
as $z \longrightarrow \infty$, where the second-to-last relation follows from Lemma \ref{Sum of 1 over N monic, degree N leq z, largest divisor less than w}, and the last relation uses the fact that $\degree G \leq (1-\alpha ) y$ and $z=\frac{\alpha }{10} y$. \\

\noindent \textbf{\underline{Case 4:}} The case $z<1$ is trivial, and so we proceed under the assumption that $z \geq 1$. We have that

\begin{align} \label{Case 4, equation 1}
\sum_{\substack{N \in \mathcal{M} \\ \degree (N-X) < y \\ N \equiv A (\modulus G) \\ w(z) \leq  p_{-} (D_N ) \leq \frac{1}{2} z \\ \frac{1}{2} z < \degree B_N \leq z}} d(N)
= \sum_{\substack{ B \in \mathcal{M} \\ \frac{1}{2} z < \degree B \leq z \\ (B,G)=1}} d(B) \sum_{\substack{N \in \mathcal{M} \\ \degree (N-X) < y \\ N \equiv A (\modulus G) \\ w(z) \leq  p_{-} (D_N ) \leq \frac{1}{2} z \\  B_N = B \\ p_{-} (D_N ) \geq p_{+} (B_N )}} d( D_N ) .
\end{align}
We now divide $p_{-} (D_N)$ into the blocks $ \frac{1}{r+1} z <  p_{-} (D_N ) \leq \frac{1}{r} z$ for $r = 2, 3, \ldots , r_1$ where

\begin{align*}
r_1 = \Big\lfloor \frac{z}{w(z)} \Big\rfloor .
\end{align*}
For $D_N$ satisfying $ \frac{1}{r+1} z <  p_{-} (D_N ) \leq \frac{1}{r} z$ we have that

\begin{align*}
\Omega (D_N )
\leq \frac{\degree X}{ p_{-} (D_N )}
\leq \frac{\degree X}{\frac{1}{r+1} z}
\leq \frac{10 (r+1)}{\alpha \beta}
\leq \frac{20 r}{\alpha \beta} ,
\end{align*}
and so

\begin{align*}
d (D_N )
\leq 2^{\frac{20 r}{\alpha \beta}}
= a^r ,
\end{align*}
where $a = 2^{\frac{20 }{\alpha \beta}}$. So, continuing from (\ref{Case 4, equation 1}),

\begin{align}
\begin{split} \label{Case 4, equation 2}
\sum_{\substack{N \in \mathcal{M} \\ \degree (N-X) < y \\ N \equiv A (\modulus G) \\ w(z) \leq  p_{-} (D_N ) \leq \frac{1}{2} z \\ \degree B_N > \frac{1}{2} z}} d(N)
\leq &\sum_{r=2}^{r_1} a^r \sum_{\substack{B \in \mathcal{M} \\ \frac{1}{2} z < \degree B \leq z \\ (B,G)=1 \\ p_{+} (B) \leq \frac{1}{r} z}} d(B) \sum_{\substack{N \in \mathcal{M} \\ \degree (N-X) < y \\ N \equiv A (\modulus G) \\ N \equiv 0 (\modulus B) \\ \frac{1}{r+1} z <  p_{-} (D_N ) \leq \frac{1}{r} z }} 1 \\
\leq &\sum_{r=2}^{r_1} a^r \sum_{\substack{B \in \mathcal{M} \\ \frac{1}{2} z < \degree B \leq z \\ (B,G)=1 \\ p_{+} (B) \leq \frac{1}{r} z}} d(B) \sum_{\substack{D \in \mathcal{M} \\ \degree (N-X_B ) < y - \degree B \\ N \equiv A_B (\modulus G) \\  p_{-} (D ) > \frac{1}{r+1} z }} 1 ,
\end{split}
\end{align}
where $X_B$ is a monic polynomial of degree $\degree X - \degree B$ such that $\degree X - B X_B < y$, and $A_B$ is a polynomial satisfying $A_B B \equiv A (\modulus G)$. \\

\noindent Corollary \ref{Selberg sieve application, sum of 1 over short interval, arithmetic progression, large prime divisor} tells us that

\begin{align*}
\sum_{\substack{D \in \mathcal{M} \\ \degree (N-X_B ) < y - \degree B \\ N \equiv A_B (\modulus G) \\  p_{-} (D ) \geq \frac{1}{r+1} z }} 1
\leq \frac{q^y}{\phi (G) \vert B \rvert} \frac{r+1}{z} + q^{\frac{2z}{r+1}}
\ll \frac{q^y}{\phi (G) \vert B \rvert} \frac{r+1}{z} ,
\end{align*}
where the last relation follows from the fact that $\degree B \leq z$, $z = \frac{\alpha}{10} y$, and $\degree G \leq (1-\alpha ) y$. Hence, continuing from (\ref{Case 4, equation 2}):

\begin{align*}
\sum_{\substack{N \in \mathcal{M} \\ \degree (N-X) < y \\ N \equiv A (\modulus G) \\ v <  p_{-} (D_N ) \leq \frac{1}{2} z \\ \degree B_N > \frac{1}{2} z}} d(N)
\ll \frac{q^y}{\phi (G)} \frac{1}{z} \sum_{r=2}^{r_1} (r+1) a^r \sum_{\substack{B \in \mathcal{M} \\ \frac{1}{2} z < \degree B \leq z \\ (B,G)=1 \\ p_{+} (B) \leq \frac{1}{r} z}} \frac{d(B)}{\lvert B \rvert} .
\end{align*}
Finally, we wish to apply Lemma \ref{Lemma, sum of d(N)/N over N monic, degree N geq z/2, largest divisor leq z/r}. This requires that $r \log_q r \leq z$. Now, when $1 \leq z \leq q$ we have that $w(z) =1$ and $r_1 =z$. Hence, $r \log_q r \leq z \log_q q = z$. When $z > q$ we have that $w(z) = \log_q z$ and $r_1 = \Big\lfloor \frac{z}{w(z)} \Big\rfloor$. Hence, $r \log_q r \leq \frac{z}{\log_q z} (\log_q z - \log_q \log_q z) \leq z$, since $z>q$. Hence,

\begin{align}
\begin{split} \label{Case 4, proof of Brun-Titschmarsh theorem, divisor function case in function field}
\sum_{\substack{N \in \mathcal{M} \\ \degree (N-X) < y \\ N \equiv A (\modulus G) \\ v <  p_{-} (D_N ) \leq \frac{1}{2} z \\ \degree B_N > \frac{1}{2} z}} d(N)
\ll \frac{q^y}{\phi (G)} z \sum_{r=2}^{r_1} (r+1) a^r \exp \bigg( -\frac{r \log r}{9} \bigg)
\ll \frac{q^y}{\phi (G)} z
\ll \frac{q^y}{\phi (G)} \degree X . \\
\end{split}
\end{align}

\noindent The proof  now follows from (\ref{Case 1, proof of Brun-Titschmarsh theorem, divisor function case in function field}), (\ref{Case 2, proof of Brun-Titschmarsh theorem, divisor function case in function field}), (\ref{Case 3, proof of Brun-Titschmarsh theorem, divisor function case in function field}), and (\ref{Case 4, proof of Brun-Titschmarsh theorem, divisor function case in function field}).
\end{proof}

\begin{proof}[Proof of Theorem \ref{Brun-Titschmarsh theorem, divisor function case in function field, y equality version}]
The proof of this theorem is almost identical to the proof of Theorem \ref{Brun-Titschmarsh theorem, divisor function case in function field}. Where we applied Corollary \ref{Selberg sieve application, sum of 1 over short interval, arithmetic progression, large prime divisor}, we should instead apply Corollary \ref{Selberg sieve application, sum of 1 over short interval, arithmetic progression, large prime divisor, y equality case}. Also, the calculations

\begin{align*}
\sum_{\substack{N \in \mathcal{M} \\ \degree (N-X) <y \\ N \equiv A (\modulus G) \\ N \equiv 0 (\modulus P^{e_P})}} 1 = \frac{q^y}{\lvert G P^{e_P} \rvert} + O(1)
\quad \quad \quad \text{ and } \quad \quad \quad
\sum_{\substack{N \in \mathcal{M} \\ \degree (N-X) <y \\ N \equiv A (\modulus G) \\ N \equiv 0 (\modulus B)}} 1 = \frac{q^y}{\lvert G B \rvert} + O(1) 
\end{align*}
should be replaced by

\begin{align*}
\sum_{\substack{N \in \mathcal{A} \\ \degree (N-X) =y \\ (N-X) \in a\mathcal{M} \\ N \equiv A (\modulus G) \\ N \equiv 0 (\modulus P^{e_P})}} 1 = \frac{q^y}{\lvert G P^{e_P} \rvert} + O(1)
\quad \quad \quad \text{ and } \quad \quad \quad
\sum_{\substack{N \in \mathcal{A} \\ \degree (N-X) =y \\ (N-X) \in a\mathcal{M} \\ N \equiv A (\modulus G) \\ N \equiv 0 (\modulus B)}} 1 = \frac{q^y}{\lvert G B \rvert} + O(1) ,
\end{align*}
respectively.
\end{proof}

%%%%%%%%%%%%%%%%%%%%%%%%%
%%%%%%%%%%%%%%%%%%%%%%%%%

\section{Further Preliminary Results}

\begin{lemma} \label{integral over Re s = c of (y/n)^s / s^3}
Let $c$ be a positive real number, and let $k \geq 2$ be an integer. Then,

\begin{align*}
\int_{c- i \infty}^{c+ i \infty} \frac{y^s}{s^k} \mathrm{d} s = \begin{cases} 0 &\text{ if $0 \leq y < 1$} \\ \frac{2\pi i}{(k-1)!} (\log y)^{k-1} &\text{ if $y \geq 1$} \end{cases} .
\end{align*}
\end{lemma}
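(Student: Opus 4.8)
The plan is to split into the three ranges $y = 0$, $0 < y < 1$, and $y \geq 1$, in each case exploiting that the integrand $y^s/s^k = e^{s \log y}/s^k$ is meromorphic on $\mathbb{C}$ with its only singularity a pole of order $k$ at $s = 0$, and that for $k \geq 2$ the vertical integral is absolutely convergent: on the line $\Re s = c$ one has $\lvert y^s / s^k \rvert = y^c / \lvert s \rvert^k$, and $\int_{\mathbb{R}} (c^2 + t^2)^{-k/2} \, dt < \infty$. The case $y = 0$ is immediate, since the integrand vanishes identically on $\Re s = c > 0$.

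For $0 < y < 1$ I would close the contour to the right. Fix $R > c$, set $T = \sqrt{R^2 - c^2}$, and consider the closed contour made up of the segment from $c - iT$ to $c + iT$ together with the arc of $\lvert s \rvert = R$ lying in $\Re s \geq c$, joined into a single closed curve. This curve encloses no singularity of $y^s/s^k$, so Cauchy's theorem gives that the two contributions sum to $0$. On the arc one has $\Re s \geq c$, hence $y^{\Re s} \leq y^c$ because $0 < y < 1$, so $\lvert y^s/s^k \rvert \leq y^c R^{-k}$; since the arc has length at most $2 \pi R$, its total contribution is $O(R^{1-k})$, which tends to $0$ as $R \to \infty$ since $k \geq 2$. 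Letting $R \to \infty$ (so that $T \to \infty$ and the segment exhausts the vertical line) shows the integral is $0$.

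For $y \geq 1$ I would instead close to the left. With $R$ and $T$ as above, take the positively oriented closed contour formed by the segment from $c - iT$ to $c + iT$ together with the arc of $\lvert s \rvert = R$ lying in $\Re s \leq c$; this encloses the pole at $s = 0$ and nothing else. On that arc $\Re s \leq c$ gives $y^{\Re s} \leq y^c$, so once again $\lvert y^s/s^k \rvert \leq y^c R^{-k}$ and the arc contributes $O(R^{1-k}) \to 0$. The residue theorem then yields that the vertical integral equals $2 \pi i \operatorname{Res}_{s=0}(y^s/s^k)$. Expanding $y^s = e^{s \log y} = \sum_{n \geq 0} (\log y)^n s^n / n!$ gives $y^s / s^k = \sum_{n \geq 0} (\log y)^n s^{n-k} / n!$, whose coefficient of $s^{-1}$ (the term $n = k-1$) is $(\log y)^{k-1}/(k-1)!$; this produces the claimed value $\frac{2\pi i}{(k-1)!} (\log y)^{k-1}$, which also correctly collapses to $0$ when $y = 1$, since $k - 1 \geq 1$.

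The only genuine technical point — the ``hard part'', such as it is — is the uniform estimate on the circular arcs and the observation that the vertical segments converge to the full line integral; both are routine because $k \geq 2$ makes the line integral absolutely convergent and forces each arc contribution to decay like $R^{1-k}$, so none of the delicate truncation estimates needed in the borderline case $k = 1$ arise here.
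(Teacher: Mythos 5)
Your proof is correct and follows the same essential strategy as the paper's: truncate the vertical line, close the contour (to the right for $0 \leq y < 1$, to the left for $y \geq 1$), apply Cauchy's theorem or the residue theorem, and show the arc contribution vanishes using $|y^s/s^k| \leq y^c R^{-k}$ together with $k \geq 2$. The only difference is a minor one of contour geometry: the paper connects the truncated vertical segment to a semicircle (centered at the origin for $y \geq 1$, at $c$ for $y < 1$) via short horizontal legs, whereas you choose the truncation height $T = \sqrt{R^2 - c^2}$ so that a single arc of $|s| = R$ closes the contour directly — slightly tidier but not a substantively different argument. Your explicit residue computation from the series $y^s = \sum_{n \geq 0} (\log y)^n s^n/n!$ is a welcome detail the paper leaves implicit.
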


\begin{proof}
We will first look at the case when $y \geq 1$. Let $n$ be a positive integer, and define the following curves: 

\begin{align*}
l_1 (n) := &[c-ni , c+ni] , \\
l_2 (n) := &[c+ni , ni] , \\
l_3 (n) := &\Big\{ n e^{it} : t \in \Big[ \frac{\pi}{2} , \frac{3 \pi}{2} \Big] \Big\} \text{ (orientated anticlockwise)} , \\
l_4 (n) := &[-ni , c-ni] , \\
L (n) := &l_1 (n) \cup l_2 (n) \cup l_3 (n) \cup l_4 (n) .
\end{align*}
We can see that

\begin{align*}
\int_{c- i \infty}^{c+ i \infty} \frac{y^s}{s^k} \mathrm{d} s = \lim_{n \rightarrow \infty} \Bigg( \int_{L(n)} \frac{y^s}{s^k} \mathrm{d} s - \int_{l_2 (n)} \frac{y^s}{s^k} \mathrm{d} s - \int_{l_3 (n)} \frac{y^s}{s^k} \mathrm{d} s - \int_{l_4 (n)} \frac{y^s}{s^k} \mathrm{d} s \Bigg).
\end{align*}

\noindent For the first integral we apply the residue theorem to obtain that

\begin{align*}
\lim_{n \rightarrow \infty} \int_{L (n)} \frac{y^s}{s^k} \mathrm{d} s = \frac{ 2\pi i}{(k-1)!} (\log y)^{k-1} .
\end{align*}
For $j \in \{ 2,4 \}$ we have that

\begin{align*}
\lim_{n \rightarrow \infty} \Bigg\lvert \int_{l_j (n)} \frac{y^s}{s^k} \mathrm{d} s \bigg\rvert \leq \lim_{n \rightarrow \infty} \frac{y^c}{n^k} \int_{l_j (n)} 1 \mathrm{d} s = \lim_{n \rightarrow \infty} \frac{c y^c}{n^k} = 0 .
\end{align*}
For the third integral we note that when $s \in l_3 (n)$ we have $\lvert y^s \rvert \leq 1$ (since $\Re s \leq 0$ and $y \geq 1$). Hence,

\begin{align*}
\lim_{n \rightarrow \infty} \bigg\lvert \int_{l_3 (n)} \frac{y^s}{s^k} \mathrm{d} s \bigg\rvert \leq \lim_{n \rightarrow \infty} \frac{1}{n^k} \int_{l_3 (n)} 1 \mathrm{d} s = \lim_{n \rightarrow \infty} \frac{\pi}{n^{k-1}} = 0.
\end{align*}
So, for $y \geq 1$ we deduce that

\begin{align*}
\int_{c- i \infty}^{c+ i \infty} \frac{y^s}{s^k} \mathrm{d} s = \frac{2\pi i}{(k-1)!} (\log y)^{k-1}.
\end{align*}

\noindent Now we will look at the case when $0 \leq y < 1$. Again, let $n$ be a positive integer, and define the following curves: 

\begin{align*}
l_1 (n) := &[c-ni , c+ni] , \\
l_3 (n) := &\Big\{ c + n e^{it} : t \in \Big[ \frac{-\pi}{2} , \frac{\pi}{2} \Big] \Big\} \text{ (orientated clockwise)} ,\\
L (n) := &l_1 (n) \cup l_2 (n) .
\end{align*}
We can see that

\begin{align*}
\int_{c- i \infty}^{c+ i \infty} \frac{y^s}{s^k} \mathrm{d} s = \lim_{n \rightarrow \infty} \Bigg( \int_{L (n)} \frac{y^s}{s^k} \mathrm{d} s - \int_{l_2 (n)} \frac{y^s}{s^k} \mathrm{d} s \Bigg).
\end{align*}

\noindent The limit of the first integral is equal to zero by the residue theorem, because there are no poles inside $L (n)$. The limit of the second integral is also zero, and this can be shown by a method similar to that applied for the curve $l_3 (n)$ in the case $y \geq 1$. So, for $0 \leq y < 1$ we deduce that

\begin{align*}
\int_{c- i \infty}^{c+ i \infty} \frac{y^s}{s^k} \mathrm{d} s = 0.
\end{align*}
\end{proof}

\noindent We now have the following proposition.

\begin{proposition} \label{Sum over (A,R)=1, deg A <= x of 1/A}
Let $R \in \mathcal{M}$ and let $x$ be a positive integer. Then,

\begin{align*}
\sum_{\substack{ A \in \mathcal{M} \\ \degree A \leq x \\ (A , R) = 1}} \frac{1}{\lvert A \rvert}
= \begin{cases}
\frac{\phi (R)}{\lvert R \rvert} x + O \big(\log \omega (R) \big)  &\text{ if $x \geq \degree R$} \\
\frac{\phi (R)}{\lvert R \rvert} x + O \big(\log \omega (R) \big) + O \Big( \frac{2^{\omega (R)} x}{q^x} \Big) &\text{ if $x < \degree R$}
\end{cases} .
\end{align*}
\end{proposition}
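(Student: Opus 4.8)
The plan is to express the indicator of coprimality $(A,R)=1$ via the Möbius function, writing $\sum_{D \mid (A,R)} \mu(D)$, and then to swap the order of summation to bring the sum over $A$ inside. This gives
\begin{align*}
\sum_{\substack{A \in \mathcal{M} \\ \degree A \leq x \\ (A,R)=1}} \frac{1}{\lvert A \rvert}
= \sum_{\substack{D \mid R \\ \degree D \leq x}} \mu(D) \sum_{\substack{A \in \mathcal{M} \\ \degree A \leq x \\ D \mid A}} \frac{1}{\lvert A \rvert}
= \sum_{\substack{D \mid R \\ \degree D \leq x}} \frac{\mu(D)}{\lvert D \rvert} \sum_{\substack{B \in \mathcal{M} \\ \degree B \leq x - \degree D}} \frac{1}{\lvert B \rvert},
\end{align*}
using the substitution $A = DB$ with $B$ monic. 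The inner sum is a clean geometric-type sum: $\sum_{\substack{B \in \mathcal{M} \\ \degree B \leq m}} \lvert B \rvert^{-1} = \sum_{j=0}^{m} 1 = m+1$ for $m \geq 0$ (since there are $q^j$ monic polynomials of degree $j$, each of norm $q^j$), and it is $0$ when $m < 0$, i.e. when $\degree D > x$.

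First I would handle the case $x \geq \degree R$. Then every divisor $D \mid R$ satisfies $\degree D \leq x$, so the restriction $\degree D \leq x$ is vacuous and the inner sum equals $x - \degree D + 1$. Thus the whole expression becomes
\begin{align*}
\sum_{D \mid R} \frac{\mu(D)}{\lvert D \rvert} \big( x + 1 - \degree D \big)
= (x+1) \sum_{D \mid R} \frac{\mu(D)}{\lvert D \rvert} - \sum_{D \mid R} \frac{\mu(D) \degree D}{\lvert D \rvert}.
\end{align*}
By $(\ref{Sum of mu(E)/E^s over E mid R})$ with $s=1$ the first sum is $\prod_{P \mid R}(1 - \lvert P \rvert^{-1}) = \phi(R)/\lvert R \rvert$, and by $(\ref{Sum of mu(E) deg E/E^s over E mid R})$ with $s=1$ the second sum is $-(\phi(R)/\lvert R \rvert) \sum_{P \mid R} \degree P / (\lvert P \rvert - 1)$. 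Hence the main term is $\frac{\phi(R)}{\lvert R \rvert} x$, and the remaining contribution is $\frac{\phi(R)}{\lvert R \rvert}\big(1 + \sum_{P \mid R} \degree P/(\lvert P \rvert - 1)\big)$, which is $O(\log \omega(R))$ by Lemma~\ref{Sum over P divides R of deg P / (P-1)} together with the trivial bound $\phi(R)/\lvert R \rvert \leq 1$ (and the fact that $\log \omega(R) \gg 1$ for $\omega(R) \geq 2$, while the bounded cases are trivial).

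For the case $x < \degree R$, I would add and subtract the "completed" sum. Write the identity above as
\begin{align*}
\sum_{\substack{A \in \mathcal{M} \\ \degree A \leq x \\ (A,R)=1}} \frac{1}{\lvert A \rvert}
= \sum_{\substack{D \mid R \\ \degree D \leq x}} \frac{\mu(D)}{\lvert D \rvert}(x + 1 - \degree D)
= \sum_{D \mid R} \frac{\mu(D)}{\lvert D \rvert}(x + 1 - \degree D) - \sum_{\substack{D \mid R \\ \degree D > x}} \frac{\mu(D)}{\lvert D \rvert}(x + 1 - \degree D).
\end{align*}
The first sum on the right is exactly the expression evaluated in the previous case, contributing $\frac{\phi(R)}{\lvert R \rvert} x + O(\log \omega(R))$. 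For the error sum over $\degree D > x$: each term has $\lvert \mu(D) \rvert \leq 1$, $\lvert D \rvert^{-1} \leq q^{-x-1}$ is dominated more sharply by noting $\lvert D \rvert > q^x$, and $\lvert x + 1 - \degree D \rvert \leq \degree D \leq \degree R$; but the cleanest route is to bound $\lvert x+1-\degree D\rvert/\lvert D\rvert \leq (\degree D)/\lvert D\rvert$ and observe $\sum_{D \mid R} 1 \leq 2^{\omega(R)}$ while $\degree D / \lvert D \rvert \leq \degree R / q^{x} \ll x/q^x$ is too lossy — instead I would split off $\degree D$ and use that for $\degree D > x$ one has $\degree D/\lvert D\rvert = \degree D \, q^{-\degree D}$, and the number of squarefree divisors of $R$ of degree $\leq \degree R$ is at most $2^{\omega(R)}$, giving a bound $\ll 2^{\omega(R)} \cdot \max_{t > x}(t q^{-t}) \cdot (\text{count})$; since $t q^{-t}$ is decreasing for $t \geq 1$, the maximum is at $t = x+1$, yielding $O(2^{\omega(R)} x q^{-x})$ after absorbing constants. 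This matches the claimed error term.

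The main obstacle is the bookkeeping in the $x < \degree R$ case: one must be careful that the "completion" trick does not introduce a spurious main term, and that the tail $\sum_{\degree D > x}$ is genuinely controlled by $2^{\omega(R)} x q^{-x}$ rather than something larger — this hinges on the monotonicity of $t \mapsto t q^{-t}$ and on counting divisors crudely by $2^{\omega(R)}$, which is acceptable precisely because the factor $q^{-x}$ provides the decay. Everything else is a direct application of $(\ref{Sum of mu(E)/E^s over E mid R})$, $(\ref{Sum of mu(E) deg E/E^s over E mid R})$, and Lemma~\ref{Sum over P divides R of deg P / (P-1)}.
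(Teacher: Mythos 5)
Your proposal is correct and follows essentially the same route as the paper: Möbius inversion to remove the coprimality condition, swap the sums, evaluate the inner geometric sum, complete the outer sum over all $D\mid R$ using $(\ref{Sum of mu(E)/E^s over E mid R})$ and $(\ref{Sum of mu(E) deg E/E^s over E mid R})$ together with Lemma~\ref{Sum over P divides R of deg P / (P-1)}, and bound the tail $\degree D>x$ via the monotonicity of $t\mapsto tq^{-t}$ and the crude count $\sum_{D\mid R}\lvert\mu(D)\rvert\le 2^{\omega(R)}$. (You correctly keep the $+1$ in $\sum_{\degree B\le m}\lvert B\rvert^{-1}=m+1$, which the paper silently drops; and your phrase ``$\cdot(\text{count})$'' near the end is redundant since $2^{\omega(R)}$ already counts the divisors -- neither affects the result, which is absorbed in the error terms.)
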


\begin{proof}
For all positive integers $x$ we have that

\begin{align*}
\sum_{\substack{ A \in \mathcal{M} \\ \degree A \leq x \\ (A , R) = 1}} \frac{1}{\lvert A \rvert}
&= \sum_{\substack{ A \in \mathcal{M} \\ \degree F \leq x}} \frac{1}{\lvert A \rvert} \sum_{E \mid (A,R)} \mu (E)
= \sum_{E \mid R} \mu (E) \sum_{\substack{ A \in \mathcal{M} \\ \degree A \leq x \\ E \mid A}} \frac{1}{\lvert A \rvert}
= \sum_{\substack{E \mid R \\ \degree E \leq x}} \frac{\mu (E)}{\lvert E \rvert} \sum_{\substack{ A \in \mathcal{M} \\ \degree A \leq x - \degree E}} \frac{1}{\lvert A \rvert} \\
= &\sum_{\substack{E \mid R \\ \degree E \leq x}} \frac{\mu (E)}{\lvert E \rvert} ( x - \degree E )
= \sum_{E \mid R} \frac{\mu (E)}{\lvert E \rvert} ( x - \degree E ) - \sum_{\substack{E \mid R \\ \degree E > x}} \frac{\mu (E)}{\lvert E \rvert} ( x - \degree E ) .
\end{align*}
By (\ref{Sum of mu(E)/E^s over E mid R}), (\ref{Sum of mu(E) deg E/E^s over E mid R}), and Lemma \ref{Sum over P divides R of deg P / (P-1)}, we see that

\begin{align*}
\sum_{E \mid R} \frac{\mu (E)}{\lvert E \rvert} ( x - \degree E ) = \frac{\phi (R)}{\lvert R \rvert} x + O \bigg( \frac{\phi (R)}{\lvert R \rvert} \log \omega (R) \bigg) .
\end{align*}
When $x \geq \degree R$, it is clear that

\begin{align*}
\sum_{\substack{E \mid R \\ \degree E > x}} \frac{\mu (E)}{\lvert E \rvert} ( x - \degree E ) = 0 .
\end{align*}
Whereas, when $x < \degree R$, we have that

\begin{align*}
\sum_{\substack{E \mid R \\ \degree E > x}} \frac{\mu (E)}{\lvert E \rvert} ( x - \degree E )
\ll \sum_{\substack{E \mid R \\ \degree E > x}} \frac{\lvert \mu (E) \rvert \degree E}{\lvert E \rvert}
\ll \frac{x}{q^x} \sum_{\substack{E \mid R \\ \degree E > x}} \lvert \mu (E) \rvert \ll \frac{2^{\omega (R)} x}{q^x} .
\end{align*}
The proof follows.
\end{proof}

\begin{lemma} \label{Prod of P div R of P-1/P+1 derivatives bounds}
For all $R \in \mathcal{A}$ and all $s \in \mathbb{C}$ with $\Re (s) > -1$ we define

\begin{align*}
f_R (s) := \prod_{P \mid R} \frac{1 - \lvert P \rvert^{-s-1}}{1 + \lvert P \rvert^{-s-1}} .
\end{align*}
Then, for all $R \in \mathcal{A}$ and $j=1,2,3,4$ we have that

\begin{align*}
f_R^{(j)} (0) \ll \big( \log_q \log_q \lvert R \rvert \big)^{j} \prod_{P \mid R} \frac{1 - \lvert P \rvert^{-1}}{1 + \lvert P \rvert^{-1}} .
\end{align*}
\end{lemma}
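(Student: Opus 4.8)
The plan is to work with the logarithmic derivative of $f_R$, which is well-behaved because $f_R$ is a product over the primes dividing $R$. Write $g_P(s) := \log\bigl( (1-|P|^{-s-1})/(1+|P|^{-s-1}) \bigr)$ for each $P\mid R$, so that $\log f_R(s) = \sum_{P\mid R} g_P(s)$ on the region $\Re(s) > -1$ (where each factor is a nonzero positive real when $s$ is real, and the product converges since $|P|^{-s-1}$ decays geometrically in $\deg P$). Differentiating, $f_R'(s) = f_R(s)\sum_{P\mid R} g_P'(s)$, and iterating the product rule expresses $f_R^{(j)}(s)$ as $f_R(s)$ times a polynomial (with bounded integer coefficients, depending only on $j\le 4$) in the quantities $G_i(s) := \sum_{P\mid R} g_P^{(i)}(s)$ for $1\le i\le j$. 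Thus it suffices to bound $f_R(0)$ and each $G_i(0)$ for $i=1,2,3,4$.

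First I would note $f_R(0) = \prod_{P\mid R}\frac{1-|P|^{-1}}{1+|P|^{-1}}$, which is exactly the product appearing on the right-hand side of the claim, so that factor is accounted for; it remains to show $G_i(0) \ll \bigl(\log_q\log_q |R|\bigr)^{i}$, and in fact the sharper bound $G_i(0) \ll \log_q\log_q|R|$ would already suffice since $(\log_q\log_q|R|)\le (\log_q\log_q|R|)^i$ for large $|R|$. Each derivative $g_P^{(i)}(0)$ is a fixed rational expression in $|P|^{-1}$; computing directly, $g_P'(0) = \frac{|P|^{-1}\log|P|}{1-|P|^{-1}} + \frac{|P|^{-1}\log|P|}{1+|P|^{-1}}$ up to the constant $\log q$ coming from $\frac{d}{ds}|P|^{-s-1} = -(\log q)(\deg P)|P|^{-s-1}$, and in general $g_P^{(i)}(0) \ll (\deg P)^{i}\,|P|^{-1}$ uniformly, with an implied constant depending only on $i\le 4$ (the denominators $1\mp|P|^{-s-1}$ stay bounded away from $0$ at $s=0$ since $|P|\ge q\ge 2$). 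Hence $|G_i(0)| \ll \sum_{P\mid R} (\deg P)^{i} |P|^{-1}$.

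The key remaining step is therefore the estimate $\sum_{P\mid R} (\deg P)^{i} |P|^{-1} \ll \log_q\log_q |R|$ for fixed $i$. This is the analogue of the classical $\sum_{p\mid n} \frac{(\log p)^i}{p} \ll (\log\log n)$, and I would prove it exactly as in the number-field case: the sum over $P\mid R$ of $(\deg P)^i|P|^{-1}$ is maximised, for a given number of prime factors, by taking $R$ to be a primorial $R_m$ (replacing any prime factor by one of smaller degree only increases each term and does not decrease the count), so it suffices to bound $\sum_{\deg P \le m} (\deg P)^i |P|^{-1}$; by the Prime Polynomial Theorem this is $\sum_{n=1}^{m} n^{i}\cdot\frac{1}{q^n}\bigl(\frac{q^n}{n} + O(q^{n/2}/n)\bigr) = \sum_{n=1}^{m} n^{i-1} + O(1) \ll m^{i}$, and then Lemma \ref{Primorial bound} identifies $m$ with $\log_q\log_q|R_m| + O(1) = \log_q\log_q|R| + O(1)$ (using $|R|\ge|\rad R|\ge|R_m|$ and monotonicity). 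Combining: $|G_i(0)| \ll (\log_q\log_q|R|)^{i}$, and feeding this into the product-rule expansion gives $f_R^{(j)}(0) \ll (\log_q\log_q|R|)^{j}\,\prod_{P\mid R}\frac{1-|P|^{-1}}{1+|P|^{-1}}$ for $j=1,2,3,4$, as required.

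The main obstacle is bookkeeping rather than conceptual: one must verify that the Leibniz expansion of $f_R^{(j)}$ genuinely only involves $G_1,\dots,G_j$ with coefficients independent of $R$ (so that the weaker bound $|G_i(0)|\ll(\log_q\log_q|R|)^{i}$, rather than a bound on a single large sum, is what is needed), and one must be slightly careful that the elementary symmetric-type combinations of the $G_i(0)$ — e.g.\ the term $(G_1(0))^j$ — are controlled by $(\log_q\log_q|R|)^{j}$ rather than a larger power; this is immediate from $|G_i(0)| \ll (\log_q\log_q|R|)^{i}$ since any product $\prod_k G_{i_k}(0)$ arising in the $j$-th derivative has $\sum_k i_k = j$. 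No genuinely new analytic input beyond the Prime Polynomial Theorem and Lemma \ref{Primorial bound} is needed.
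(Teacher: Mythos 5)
Your proof is correct and takes essentially the same route as the paper: both work through the logarithmic derivative $g_R := f_R'/f_R$ (your $G_1$), express $f_R^{(j)}$ as $f_R$ times a polynomial in $g_R, g_R', \ldots$ of weight $j$, bound $g_R^{(k)}(0) \ll \sum_{P\mid R} (\log|P|)^{k+1}/(|P|-1)$, and then estimate that sum by reducing to primorials via the Prime Polynomial Theorem and Lemma~\ref{Primorial bound}. One small point to tighten: the assertion that replacing a prime factor by one of smaller degree ``only increases each term'' is not quite right, since $(\deg P)^i/|P|$ is not monotone in $\deg P$ for small $q$ and small degree; the paper handles this by inserting a fixed universal constant $c$ into the majorisation, and your argument needs the same cosmetic adjustment.
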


\begin{remark}
We must mention that, in the lemma and the proof, the implied constants may depend on $j$, for example; but because there are only finitely many cases of $j$ that we are interested in, we can take the implied constants to be independent.
\end{remark}

\begin{proof}
First, we note that

\begin{align} \label{Prod of P div R of P-1/P+1 first derivative}
f_R '(s)
= g_R (s) f_R (s) ,
\end{align}
where

\begin{align*}
g_R (s) 
:= \sum_{P \mid R} 2 \log \lvert P \rvert \bigg( \frac{1}{\lvert P \rvert^{s+1} + 1} + \frac{1}{\lvert P \rvert^{2s+2} - 1} \bigg) .
\end{align*}
We note further that

\begin{align}
\begin{split} \label{Prod of P div R of P-1/P+1 second, third, fourth derivative}
f_R ''(s) = &\Big( g_R (s)^2 + g_R ' (s) \Big) f_R (s) , \\
f_R '''(s) = &\Big( g_R (s)^3 + 3 g_R (s) g_R ' (s) + g_R '' (s) \Big) f_R (s) , \\
f_R '''' (s) = &\Big( g_R (s)^4 + 6 g_R (s)^2 g_R ' (s) + 4 g_R (s) g_R '' (s) + 3 g_R ' (s)^2 + g_R ''' (s) \Big) f_R (s) .
\end{split}
\end{align}

\noindent For all $R \in \mathcal{A}$ and $k=0,1,2,3$ it is not difficult to deduce that

\begin{align} \label{Prod of P div R of P-1/P+1 , g derivatives bounds}
g_R^{(k)} (0)
\ll \sum_{P \mid R} \frac{ \big( \log \lvert P \rvert \big)^{k+1} }{\lvert P \rvert - 1} .
\end{align}
The function $\frac{\big( \log x \big)^{k+1}}{x-1}$ is decreasing at large enough $x$, and the limit as $x \longrightarrow \infty$ is $0$. Therefore, there exist an independent constant $c \geq 1$ such that for $k=0,1,2,3$ and all $A,B \in \mathcal{A}$ with $\degree A \leq \degree B$ we have that

\begin{align*}
c \frac{\big( \log \lvert A \rvert \big)^{k+1}}{\lvert A \rvert -1}
\geq \frac{\big( \log \lvert B \rvert \big)^{k+1}}{\lvert B \rvert -1} .
\end{align*}
Hence, taking $n=\omega (R)$, we see that

\begin{align}
\begin{split} \label{sum P div R of (log P)^(k+1) / P-1 bound}
\sum_{P \mid R} \frac{ \big( \log \lvert P \rvert \big)^{k+1} }{\lvert P \rvert - 1}
\ll &\sum_{P \mid R_n } \frac{ \big( \log \lvert P \rvert \big)^{k+1} }{\lvert P \rvert - 1}
\ll \sum_{r=1}^{m_n +1 } \frac{q^r}{r} \frac{r^{k+1}}{q^r - 1}
\ll \sum_{n=1}^{m_n +1 } r^k \\
\ll &(m_n + 1 )^{k+1}
\ll \big( \log_q \log_q \lvert R_n \rvert \big)^{k+1}
\ll \big( \log_q \log_q \lvert R \rvert \big)^{k+1} ,
\end{split}
\end{align}
where we have used the prime polynomial theorem and Lemma \ref{Primorial bound}. \\

\noindent So, by (\ref{Prod of P div R of P-1/P+1 first derivative})--(\ref{sum P div R of (log P)^(k+1) / P-1 bound}) and the fact that

\begin{align*}
f_R (0)
= \prod_{P \mid R} \frac{1 - \lvert P \rvert^{-1}}{1 + \lvert P \rvert^{-1}} ,
\end{align*}
we deduce that

\begin{align*}
f_R^{(j)} (0) \ll \big( \log_q \log_q \lvert R \rvert \big)^{j} \prod_{P \mid R} \frac{1 - \lvert P \rvert^{-1}}{1 + \lvert P \rvert^{-1}} .
\end{align*}
\end{proof}

\begin{proposition} \label{Proposition Sum of 2^omega(N) log(|Q|/|N|)^2 / |N|}
Let $R \in \mathcal{A}$, and define ${z_R}' := \degree R - \log_q 9^{\omega (R)}$. We have that

\begin{align*}
\sum_{\substack{N \in \mathcal{M} \\ \degree N \leq {z_R}' \\ (N,R) = 1}} \frac{2^{\omega (N)}}{\lvert N \rvert} ( {z_R}' - \degree N )^2
= &\frac{(1-q^{-1})}{12} \bigg( \prod_{P \mid R}  \frac{1 - \lvert P \rvert^{-1}}{1 + \lvert P \rvert^{-1}} \bigg) (\degree R)^4 \\
&+ O \Bigg( \bigg( \prod_{P \mid R}  \frac{1 - \lvert P \rvert^{-1}}{1 + \lvert P \rvert^{-1}} \bigg) \Big( (\degree R)^3 \omega (R) + (\degree R)^3 \log \degree R \Big) \Bigg) .
\end{align*}
\end{proposition}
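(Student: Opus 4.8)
The plan is to pass to the generating function of the coefficients $b_n := \sum_{N \in \mathcal{M}_n,\, (N,R)=1} 2^{\omega(N)}/\lvert N \rvert$ and to recover the sum in the proposition by residue calculus (writing $[u^n]$ for the coefficient of $u^n$).

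\emph{Generating function.} A standard Euler product computation, together with the identity $\sum_{N \in \mathcal{M}} v^{\degree N} = (1-qv)^{-1}$ applied once with $v = u/q$ and once with $v = u^2/q^2$, gives
\[
F(u) := \sum_{n \geq 0} b_n u^n = \prod_{P \nmid R} \frac{1 + (u/q)^{\degree P}}{1 - (u/q)^{\degree P}} = \frac{1 - u^2/q}{(1-u)^2} \prod_{P \mid R} \frac{1 - (u/q)^{\degree P}}{1 + (u/q)^{\degree P}}.
\]
For fixed $R$ this is a rational function, holomorphic on $\lvert u \rvert < q$ except for a double pole at $u = 1$. Write $F(u) = G(u)/(1-u)^2$; then $G$ is holomorphic on $\lvert u \rvert < q$ and $G(1) = (1-q^{-1}) \prod_{P \mid R} \frac{1 - \lvert P \rvert^{-1}}{1 + \lvert P \rvert^{-1}}$.

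\emph{Reduction and residues.} Put $N_0 := \lfloor {z_R}' \rfloor$ and $\theta := {z_R}' - N_0 \in [0,1)$. Since $\degree N \leq {z_R}' \iff \degree N \leq N_0$, expanding $({z_R}' - \degree N)^2 = (N_0 - \degree N)^2 + 2\theta(N_0 - \degree N) + \theta^2$ writes the sum as $S_2 + 2\theta S_1 + \theta^2 S_0$, where, using $\sum_{m \geq 0} m^2 u^m = u(1+u)(1-u)^{-3}$ and the analogous identities for $\sum m u^m$ and $\sum u^m$,
\[
S_2 = [u^{N_0}] \frac{G(u)\, u(1+u)}{(1-u)^5}, \qquad S_1 = [u^{N_0}] \frac{G(u)\, u}{(1-u)^4}, \qquad S_0 = [u^{N_0}] \frac{G(u)}{(1-u)^3}.
\]
For each $S_j$ I would write the coefficient as $\frac{1}{2\pi i}\oint_{\lvert u \rvert = r}$ with $r$ small, shift the contour to $\lvert u \rvert = \rho$ for a fixed $\rho \in (1,q)$, and collect the residue at $u = 1$. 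For $S_2$ that residue equals $\tfrac{1}{24}(d/du)^4\big[G(u)(1+u)u^{-N_0}\big]_{u=1}$, whose leading part — from differentiating $u^{-N_0}$ all four times — is $\tfrac{1}{24}\cdot 2G(1) N_0^4 = \tfrac{G(1)}{12} N_0^4$, while the residues feeding $S_1$ and $S_0$ are $O(G(1) N_0^3)$ and $O(G(1) N_0^2)$.

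\emph{Errors.} Three sources of error remain. The subleading terms of the $S_2$-residue pair a derivative $G^{(j)}(1)$ with $N_0^{4-j}$; via the substitution $u = q^{1-w}$ one has $G(q^{1-w}) = (1-q^{1-2w}) f_R(w-1)$ with $f_R$ as in Lemma \ref{Prod of P div R of P-1/P+1 derivatives bounds}, and that lemma (with $\log_q \log_q \lvert R \rvert = \log_q \degree R$) gives $G^{(j)}(1) \ll f_R(0)(\log \degree R)^j$, so these contribute $\ll f_R(0)(\degree R)^3 \log \degree R$. The residual integral over $\lvert u \rvert = \rho$ is $\ll_\rho \rho^{-N_0} \max_{\lvert u \rvert = \rho} \lvert G(u) \rvert$; from $\tfrac{1+t}{1-t} \leq \exp\!\big(\tfrac{2t}{1-\rho/q}\big)$ for $0 \leq t \leq \rho/q$ and $\sum_{P \mid R}(\rho/q)^{\degree P} \leq \omega(R)\rho/q$ one gets $\max_{\lvert u \rvert = \rho} \lvert G \rvert \ll_\rho e^{c_\rho \omega(R)}$, and since $N_0 = \degree R - \omega(R)\log_q 9 + O(1)$ while $\omega(R) = o(\degree R)$ by Lemma \ref{Omega function upper bound}, this contribution is $\exp(-(\log \rho)\degree R + O(\omega(R))) = o(1)$. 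Finally $\theta S_1$ and $\theta^2 S_0$ are $O(f_R(0)(\degree R)^3)$. Since $N_0^4 = (\degree R)^4 + O((\degree R)^3 \omega(R))$ and $G(1) = (1-q^{-1}) \prod_{P \mid R} \frac{1-\lvert P \rvert^{-1}}{1+\lvert P \rvert^{-1}}$, assembling everything gives the stated main term with error $O\big(\prod_{P \mid R} \tfrac{1-\lvert P \rvert^{-1}}{1+\lvert P \rvert^{-1}}\big((\degree R)^3 \omega(R) + (\degree R)^3 \log \degree R\big)\big)$.

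\emph{Main obstacle.} The delicate point is the residual contour integral: on $\lvert u \rvert = \rho$ the finite Euler product over $P \mid R$ is not bounded uniformly in $R$ and produces a factor exponential in $\omega(R)$. What saves the estimate is that the contour bound carries the single factor $\rho^{-N_0}$ rather than $\sum_n \rho^{-n}$ (which would only be summable and would forfeit the gain), so $\rho^{-\degree R}$ overwhelms $e^{O(\omega(R))}$ precisely because $\omega(R) = o(\degree R)$; the role of the shift ${z_R}' = \degree R - \log_q 9^{\omega(R)}$ is harmless for this argument. Some care is also needed to keep the implied constants in the bound for $G^{(j)}(1)$ independent of $q$ when applying Lemma \ref{Prod of P div R of P-1/P+1 derivatives bounds}.
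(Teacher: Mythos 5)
Your proof is correct and shares the paper's core strategy: the same Euler-product factorization of the generating function (equivalently $\frac{\zeta_{\mathcal{A}}(s)^2}{\zeta_{\mathcal{A}}(2s)}\prod_{P\mid R}\frac{1-|P|^{-s}}{1+|P|^{-s}}$), a contour shift, extraction of the main term from the order-five pole, and Lemma~\ref{Prod of P div R of P-1/P+1 derivatives bounds} to control the subleading derivative terms. Where you diverge is the contour bookkeeping. The paper stays in the Dirichlet-series variable $s$ with the Perron kernel $y_R^s/s^3$ of Lemma~\ref{integral over Re s = c of (y/n)^s / s^3}, and its rectangular contour therefore sweeps up double poles at every $s=2m\pi i/\log q$ (the paper's Step~2.2 is devoted to summing those residues), with a separate Step~4 bounding the left edge. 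You instead extract $[u^{N_0}]$ in the power-series variable $u=q^{-s}$, where the integrand is a rational function of $u$ with the single pole at $u=1$ inside $|u|<q$, so one circle $|u|=\rho\in(1,q)$ replaces the rectangle and the periodic-residue estimate simply disappears; your bound $\rho^{-N_0}e^{O(\omega(R))}\to 0$ (using $\omega(R)=o(\deg R)$ from Lemma~\ref{Omega function upper bound}) does the work of the paper's Step~4 and is arguably more transparent. The modest price is the split of $z_R'$ into $N_0=\lfloor z_R'\rfloor$ and $\theta=z_R'-N_0$, which the Perron kernel handles silently, and the translation of Lemma~\ref{Prod of P div R of P-1/P+1 derivatives bounds} from $s$-derivatives to $u$-derivatives via $u=q^{1-w}$; the chain rule there only introduces $q$-dependent constants, harmless since $q$ is fixed, so your $G^{(j)}(1)\ll f_R(0)(\log\deg R)^j$ and the resulting error term are sound. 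No gap.
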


\begin{proof}

\textbf{\underline{STEP 1:}} Let us define the function $F$ for $\Re s > 1$ by

\begin{align*}
F(s)
= \sum_{\substack{N \in \mathcal{M} \\ (N,R) = 1}} \frac{2^{\omega (N)}}{\lvert N \rvert^s} .
\end{align*}
We can see that

\begin{align*}
F(s)
&= \prod_{\substack{P \text{ prime} \\ P \nmid R}} \bigg( 1 + \frac{2}{\lvert P \rvert^s} + \frac{2}{\lvert P \rvert^{2s}} + \frac{2}{\lvert P \rvert^{3s}} \ldots \bigg)
= \prod_{\substack{P \text{ prime} \\ P \nmid R}} \bigg( \frac{2}{1 - \lvert P \rvert^{-s}} - 1 \bigg)\\
&= \prod_{P \text{ prime}} \bigg( \frac{1 + \lvert P \rvert^{-s}}{1 - \lvert P \rvert^{-s}} \bigg) \prod_{P \mid R} \bigg( \frac{1 - \lvert P \rvert^{-s}}{1 + \lvert P \rvert^{-s}} \bigg)
= \frac{\zeta_{\mathcal{A}} (s)^2}{\zeta_{\mathcal{A}} (2s)} \prod_{P \mid R} \bigg( \frac{1 - \lvert P \rvert^{-s}}{1 + \lvert P \rvert^{-s}} \bigg) . 
\end{align*}

Now, let $c$ be a positive real number, and define $y_R := q^{{z_R}' }$. On the one hand, we have that

\begin{align}
\begin{split} \label{Inside Int F(1+s) y^s / s^3}
\frac{1}{\pi i} \int_{c - i \infty}^{c+ i \infty} F(1+s) \frac{{y_R}^s}{s^3} \mathrm{d} s 
= &\frac{1}{\pi i} \sum_{\substack{N \in \mathcal{M} \\ (N,R) = 1}} \frac{2^{\omega (N)}}{\lvert N \rvert} \int_{c - i \infty}^{c+ i \infty} \frac{{y_R}^s}{\lvert N \rvert ^s s^3} \mathrm{d} s
= \sum_{\substack{N \in \mathcal{M} \\ \degree N \leq {z_R}' \\ (N,R) = 1}} \frac{2^{\omega (N)}}{\lvert N \rvert} \log \Big( \frac{y_R }{\lvert N \rvert} \Big)^2 \\ 
= &( \log q )^2 \sum_{\substack{N \in \mathcal{M} \\ \degree N \leq {z_R}' \\ (N,R) = 1}} \frac{2^{\omega (N)}}{\lvert N \rvert} ( {z_R}' - \degree N )^2 ,
\end{split}
\end{align}
where the second equality follows from Lemma \ref{integral over Re s = c of (y/n)^s / s^3}. On the other hand, for all positive integers $n$ define the following curves: 

\begin{align*}
l_1 (n) := &\Big[ c - \frac{(2n+1) \pi i}{\log q} , c + \frac{(2n+1) \pi i}{\log q} \Big] \\
l_2 (n) := &\Big[ c + \frac{(2n+1) \pi i}{\log q} , -\frac{1}{4} + \frac{(2n+1) \pi i}{\log q} \Big] \\
l_3 (n) := &\Big[ -\frac{1}{4} + \frac{(2n+1) \pi i}{\log q} , -\frac{1}{4} - \frac{(2n+1) \pi i}{\log q} \Big] \\
l_4 (n) := &\Big[ -\frac{1}{4} - \frac{(2n+1) \pi i}{\log q} , c - \frac{(2n+1) \pi i}{\log q} \Big] \\
L (n) := &l_1 (n) \cup l_2 (n) \cup l_3 (n) \cup l_4 (n) .
\end{align*}
Then, we have that

\begin{align}
\begin{split}\label{Outside Int F(1+s) y^s / s^3}
&\frac{1}{\pi i} \int_{c - i \infty}^{c+ i \infty} F(1+s) \frac{{y_R}^s}{s^3} \mathrm{d} s\\
= & \frac{1}{\pi i} \lim_{n \rightarrow \infty} \Bigg( \int_{L (n)} F(1+s) \frac{{y_R}^s}{s^3} \mathrm{d} s - \int_{l_2 (n)} F(1+s) \frac{{y_R}^s}{s^3} \mathrm{d} s - \int_{l_3 (n)} F(1+s) \frac{{y_R}^s}{s^3} \mathrm{d} s - \int_{l_4 (n)} F(1+s) \frac{{y_R}^s}{s^3} \mathrm{d} s \bigg) . 
\end{split}
\end{align}

\noindent \textbf{\underline{STEP 2:}} For the first integral in (\ref{Outside Int F(1+s) y^s / s^3}) we note that $F(1+s) \frac{{y_R}^s}{s^3}$ has a fifth-order pole at $s=0$ and double poles at $s = \frac{2m \pi i}{\log q}$ for $m = \pm 1 , \pm 2 , \ldots , \pm n$. By applying Cauchy's residue theorem we see that

\begin{align} \label{Proposition Sum of 2^omega(N) log(|Q|/|N|)^2 / |N| , residues}
\lim_{n \rightarrow \infty} \frac{1}{\pi i} \int_{L(n)} F(1+s) \frac{{y_R}^s}{s^3} \mathrm{d} s
= 2 \residue_{s=0} F(s+1)\frac{{y_R}^s}{s^3} + 2\sum_{\substack{m \in \mathbb{Z} \\ m \neq 0}} \residue_{s=\frac{2m \pi i}{\log q}} F(1+s)\frac{{y_R}^s}{s^3} . 
\end{align}

\noindent \textbf{\underline{STEP 2.1:}} For the first residue term we have that

\begin{align} \label{Proposition Sum of 2^omega(N) log(|Q|/|N|)^2 / |N| , residue at 0}
\residue_{s=0} F(s+1)\frac{{y_R}^s}{s^3}
= \frac{1}{4!} \lim_{s \longrightarrow 0} \frac{\mathrm{d}^4}{\mathrm{d}s^4} \Bigg( \zeta_{\mathcal{A}} (s +1)^2 s^2 \frac{1}{\zeta_{\mathcal{A}} (2s+2)} \prod_{P \mid R} \bigg( \frac{1 - \lvert P \rvert^{-s-1}}{1 + \lvert P \rvert^{-s-1}} \bigg) {y_R}^s \Bigg) .
\end{align}
If we apply the product rule for differentiation, then one of the terms will be

\begin{align*} 
& \frac{1}{4!} \lim_{s \longrightarrow 0} \Bigg( \zeta_{\mathcal{A}} (s +1)^2 s^2 \frac{1}{\zeta_{\mathcal{A}} (2s+2)} \prod_{P \mid R} \bigg( \frac{1 - \lvert P \rvert^{-s-1}}{1 + \lvert P \rvert^{-s-1}} \bigg) \frac{\mathrm{d}^4}{\mathrm{d}s^4} {y_R}^s \Bigg) \\
= & \frac{(1-q^{-1}) (\log q )^2}{24} \prod_{P \mid R} \bigg( \frac{1 - \lvert P \rvert^{-1}}{1 + \lvert P \rvert^{-1}} \bigg) ({z_R}')^4 \\
= & \frac{(1-q^{-1}) (\log q )^2}{24} \prod_{P \mid R} \bigg( \frac{1 - \lvert P \rvert^{-1}}{1 + \lvert P \rvert^{-1}} \bigg) (\degree R)^4 + O \Bigg( \log q  \prod_{P \mid R} \bigg( \frac{1 - \lvert P \rvert^{-1}}{1 + \lvert P \rvert^{-1}} \bigg) (\degree R)^3 \omega (R) \Bigg) .
\end{align*}

\noindent Now we look at the remaining terms that arise from the product rule. By using the fact that $\zeta_{\mathcal{A}} (1+s) = \frac{1}{1-q^{-s}}$ and the Taylor series for $q^{-s}$, we have for $k=0,1,2,3,4$ that

\begin{align} \label{Proposition Sum of 2^omega(N) log(|Q|/|N|)^2 / |N| , zeta(s+1) s derivatives at 0}
\lim_{s \rightarrow 0} \frac{1}{( \log q)^{k-1}}\frac{\mathrm{d}^k}{\mathrm{d} s^k} \zeta (s+1) s
= O (1) ,
\end{align}
Similarly,

\begin{align} \label{Proposition Sum of 2^omega(N) log(|Q|/|N|)^2 / |N| , zeta(2s+2)^-1 derivatives at 0}
\lim_{s \rightarrow 0} \frac{\mathrm{d}^k}{\mathrm{d} s^k}\zeta (2s+2)^{-1}
= \lim_{s \rightarrow 0} \frac{\mathrm{d}^k}{\mathrm{d} s^k} \Big( 1 - q^{-1-2s} \Big)
= O(1) ,
\end{align}
By (\ref{Proposition Sum of 2^omega(N) log(|Q|/|N|)^2 / |N| , zeta(s+1) s derivatives at 0}), (\ref{Proposition Sum of 2^omega(N) log(|Q|/|N|)^2 / |N| , zeta(2s+2)^-1 derivatives at 0}), and Lemma \ref{Prod of P div R of P-1/P+1 derivatives bounds}, we see that the remaining terms are of order

\begin{align*}
( \log q )^2 \prod_{P \mid R} \bigg( \frac{1 - \lvert P \rvert^{-1}}{1 + \lvert P \rvert^{-1}} \bigg) (\degree R)^3 \log \degree R .
\end{align*}

\noindent Hence,

\begin{align} 
\begin{split} \label{Proposition Sum of 2^omega(N) log(|Q|/|N|)^2 / |N| , residue at 0 calculated}
2 \residue_{s=0} F(s+1)\frac{{y_R}^s}{s^3} 
= &\frac{(1-q^{-1}) (\log q )^2}{12} \bigg( \prod_{P \mid R}  \frac{1 - \lvert P \rvert^{-1}}{1 + \lvert P \rvert^{-1}} \bigg) (\degree R)^4 \\
&+ O \Bigg( ( \log q )^2 \bigg( \prod_{P \mid R}  \frac{1 - \lvert P \rvert^{-1}}{1 + \lvert P \rvert^{-1}} \bigg) \Big( (\degree R)^3 \omega (R) + (\degree R)^3 \log \degree R \Big) \Bigg) .
\end{split}
\end{align}

\noindent \textbf{\underline{STEP 2.2:}} Now we look at the remaining residue terms in (\ref{Proposition Sum of 2^omega(N) log(|Q|/|N|)^2 / |N| , residues}). By similar (but simpler) means as above can show that

\begin{align*}
\residue_{s=\frac{2m \pi i}{\log q}} F(1+s)\frac{{y_R}^s}{s^3}
= O \Bigg( \frac{1}{m^3} (\log q )^2 \bigg( \prod_{P \mid R}  \frac{1 - \lvert P \rvert^{-1}}{1 + \lvert P \rvert^{-1}} \bigg) \degree R \Bigg) ,
\end{align*}
and so

\begin{align} \label{Proposition Sum of 2^omega(N) log(|Q|/|N|)^2 / |N| , other residues calculated}
\sum_{\substack{m \in \mathbb{Z} \\ m \neq 0}} \residue_{s=\frac{2m \pi i}{\log q}} F(1+s)\frac{{y_R}^s}{s^3}
= O \Bigg( (\log q )^2 \bigg( \prod_{P \mid R}  \frac{1 - \lvert P \rvert^{-1}}{1 + \lvert P \rvert^{-1}} \bigg) \degree R \Bigg) .
\end{align}

\noindent \textbf{\underline{STEP 2.3:}} By (\ref{Proposition Sum of 2^omega(N) log(|Q|/|N|)^2 / |N| , residues}), (\ref{Proposition Sum of 2^omega(N) log(|Q|/|N|)^2 / |N| , residue at 0 calculated}) and (\ref{Proposition Sum of 2^omega(N) log(|Q|/|N|)^2 / |N| , other residues calculated}), we see that

\begin{align}
\begin{split} \label{Proposition Sum of 2^omega(N) log(|Q|/|N|)^2 / |N| , L(n) integral}
\lim_{n \rightarrow \infty} \frac{1}{\pi i} \int_{L(n)} F(1+s) \frac{{y_R}^s}{s^3} \mathrm{d} s 
= &\frac{(1-q^{-1}) (\log q )^2}{12} \bigg( \prod_{P \mid R}  \frac{1 - \lvert P \rvert^{-1}}{1 + \lvert P \rvert^{-1}} \bigg) (\degree R)^4 \\
&+ O \Bigg( ( \log q )^2 \bigg( \prod_{P \mid R}  \frac{1 - \lvert P \rvert^{-1}}{1 + \lvert P \rvert^{-1}} \bigg) \Big( (\degree R)^3 \omega (R) + (\degree R)^3 \log \degree R \Big) \Bigg) .
\end{split}
\end{align}

\noindent \textbf{\underline{STEP 3:}} We now look at the integrals over $l_2 (n)$ and $l_4 (n)$. For all positive integers $n$ and all $s \in l_2 (n) , l_4 (n)$ we have that $F(s+1) {y_R}^s = O_{q,R} (1)$. One can now easily deduce for $i = 2,4$ that

\begin{align} \label{Proposition Sum of 2^omega(N) log(|Q|/|N|)^2 / |N| , l_2 (n) and l_4 (n) integrals}
\lim_{n \rightarrow \infty} \Big\lvert \frac{1}{\pi i} \int_{l_i (n)} F(1+s) \frac{{y_R}^s}{s^3} \mathrm{d} s \Big\rvert = 0 .
\end{align}

\noindent \textbf{\underline{STEP 4:}} We now look at the integral over $l_3 (n)$. For all positive integers $n$ and all $s \in l_3 (n)$ we have that

\begin{align*}
\frac{\zeta_{\mathcal{A}} (s+1)^2}{\zeta_{\mathcal{A}} (2s+2)}
= O(1) 
\end{align*}
and

\begin{align*}
\bigg\lvert \bigg( \prod_{P \mid R} \frac{1 - \lvert P \rvert^{-s-1}}{1 + \lvert P \rvert^{-s-1}} \bigg) {y_R}^{s} \bigg\lvert
\ll &\bigg( \prod_{P \mid R} \frac{1 + \lvert P \rvert^{-\frac{3}{4}}}{1 - \lvert P \rvert^{-\frac{3}{4}}} \bigg) \bigg( \prod_{P \mid R} 9^{\frac{1}{4}} \bigg) \lvert R \rvert^{-\frac{1}{4}} \\
\ll &\bigg( \prod_{P \mid R} 1 + \frac{2}{2^{\frac{3}{4}} -1} \bigg) \bigg( \prod_{P \mid R} 2 \bigg) \lvert R \rvert^{-\frac{1}{4}} \\
\ll &\bigg( \prod_{P \mid R} 8 \bigg) \lvert R \rvert^{-\frac{1}{4}}
\ll \bigg( \prod_{P \mid R} \frac{8}{\lvert P \rvert^{\frac{1}{4}}} \bigg)
\ll 1 .
\end{align*}
We can now easily deduce that

\begin{align} \label{Proposition Sum of 2^omega(N) log(|Q|/|N|)^2 / |N| , l_3 (n) integral}
\lim_{n \rightarrow \infty} \Big\lvert \frac{1}{\pi i} \int_{l_3 (n)} F(1+s) \frac{{y_R}^s}{s^3} \mathrm{d} s \Big\rvert
= O(1) .
\end{align}

\noindent \textbf{\underline{STEP 5:}} By (\ref{Inside Int F(1+s) y^s / s^3}), (\ref{Outside Int F(1+s) y^s / s^3}), (\ref{Proposition Sum of 2^omega(N) log(|Q|/|N|)^2 / |N| , L(n) integral}), (\ref{Proposition Sum of 2^omega(N) log(|Q|/|N|)^2 / |N| , l_2 (n) and l_4 (n) integrals}) and (\ref{Proposition Sum of 2^omega(N) log(|Q|/|N|)^2 / |N| , l_3 (n) integral}), we deduce that

\begin{align*}
\sum_{\substack{N \in \mathcal{M} \\ \degree N \leq {z_R}' \\ (N,R) = 1}} \frac{2^{\omega (N)}}{\lvert N \rvert} ( {z_R}' - \degree N )^2
= &\frac{(1-q^{-1})}{12} \bigg( \prod_{P \mid R}  \frac{1 - \lvert P \rvert^{-1}}{1 + \lvert P \rvert^{-1}} \bigg) (\degree R)^4 \\
&+ O \Bigg( \bigg( \prod_{P \mid R}  \frac{1 - \lvert P \rvert^{-1}}{1 + \lvert P \rvert^{-1}} \bigg) \Big( (\degree R)^3 \omega (R) + (\degree R)^3 \log \degree R \Big) \Bigg) .
\end{align*}
\end{proof}

\begin{lemma} \label{Proposition Sum of 2^omega(N) / |N|}
We have that

\begin{align*}
\sum_{\substack{N \in \mathcal{M} \\ \degree N \leq x }} \frac{2^{\omega(N)}}{\lvert N \rvert}
= \frac{q-1}{2q} x^2 + \frac{3q+1}{2q} x + 1 .
\end{align*}
From this we easily deduce that

\begin{align*}
\sum_{\substack{N \in \mathcal{M} \\ \degree N \leq x }} \frac{2^{\omega(N)}}{\lvert N \rvert}
= O (x^2 ) .
\end{align*}
\end{lemma}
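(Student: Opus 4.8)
The plan is to identify the Dirichlet series attached to $2^{\omega}$, read off its coefficients as a power series in $q^{-s}$, and then extract the single coefficient that encodes the partial sum in question.

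\emph{Step 1: the generating series.} Since $2^{\omega(N)}$ is multiplicative with $2^{\omega(P^e)}=2$ for every prime power, the Euler product gives, as an identity of formal power series in $u$,
\begin{align*}
\sum_{N \in \mathcal{M}} 2^{\omega(N)} u^{\degree N}
= \prod_{P} \Big( 1 + 2u^{\degree P} + 2u^{2\degree P} + \cdots \Big)
= \prod_{P} \frac{1 + u^{\degree P}}{1 - u^{\degree P}}
= \prod_{P} \frac{1 - u^{2\degree P}}{\big(1 - u^{\degree P}\big)^2} .
\end{align*}
The last product is $\zeta_{\mathcal{A}}(s)^2/\zeta_{\mathcal{A}}(2s)$, where we identify $\zeta_{\mathcal{A}}(s)$ with the power series $\sum_{N \in \mathcal{M}} u^{\degree N} = (1-qu)^{-1}$ (so that $\zeta_{\mathcal{A}}(2s)$ corresponds to $(1-qu^2)^{-1}$); hence it equals $(1-qu^2)(1-qu)^{-2}$.

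\emph{Step 2: dividing by the norm and taking partial sums.} Writing $a_n := \sum_{\degree N = n} 2^{\omega(N)}$, the coefficient of $v^n$ in $\sum_{N \in \mathcal{M}} 2^{\omega(N)} \lvert N \rvert^{-1} v^{\degree N}$ is $a_n q^{-n}$, so the substitution $u = v/q$ in Step 1 yields
\begin{align*}
\sum_{n \geq 0} \bigg( \sum_{\substack{N \in \mathcal{M} \\ \degree N = n}} \frac{2^{\omega(N)}}{\lvert N \rvert} \bigg) v^n
= \frac{1 - v^2/q}{(1-v)^2} .
\end{align*}
Multiplying by the geometric series $(1-v)^{-1}$ turns the coefficient of $v^x$ into the sum over all $\degree N \leq x$, so our sum is the coefficient of $v^x$ in $(1 - v^2/q)(1-v)^{-3}$.

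\emph{Step 3: coefficient extraction.} From $(1-v)^{-3} = \sum_{n \geq 0} \binom{n+2}{2} v^n$ one gets that the coefficient of $v^x$ in $(1-v)^{-3}$ is $\binom{x+2}{2}$ and that in $v^2(1-v)^{-3}$ is $\binom{x}{2}$, with the convention $\binom{0}{2} = \binom{1}{2} = 0$ making the formula valid for all $x \geq 0$. Hence the sum equals $\binom{x+2}{2} - \frac{1}{q}\binom{x}{2} = \frac{(x+1)(x+2)}{2} - \frac{x(x-1)}{2q}$, and placing this over the common denominator $2q$ gives $\frac{q-1}{2q}x^2 + \frac{3q+1}{2q}x + 1$; the $O(x^2)$ assertion is then immediate. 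I do not anticipate a genuine obstacle; the only points requiring care are the bookkeeping of the binomial coefficients at $x = 0$ and $x = 1$ (equivalently, a one-line direct check that the sum is $1$ and $3$ there, matching the formula) and the observation that every manipulation above is a legitimate identity of formal power series, so no question of analytic convergence intervenes.
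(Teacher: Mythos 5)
Your proof is correct and takes essentially the same approach as the paper: both identify the generating series of $2^{\omega}$ with $\zeta_{\mathcal{A}}(s)^2/\zeta_{\mathcal{A}}(2s)$, rewrite it as the rational function $(1-q^{-1-2s})/(1-q^{-s})^2$ in the variable $q^{-s}$, and extract coefficients to obtain the stated polynomial in $x$. The only cosmetic difference is that you first multiply by $(1-v)^{-1}$ so that the partial sum becomes a single binomial coefficient, whereas the paper sums the individual coefficients directly; the arithmetic is identical.
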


\begin{proof}
For $s>1$ we define

\begin{align*}
F(s)
:= \sum_{N \in \mathcal{M}} \frac{2^{\omega(N)}}{\lvert N \rvert^{s+1}}.
\end{align*}
We can see that

\begin{align*}
F(s)
= &\prod_{P \text{ prime}} \bigg( 1 + \frac{2}{\lvert P \rvert^{s+1}} + \frac{2}{\lvert P \rvert^{2(s+1)}} + \frac{2}{\lvert P \rvert^{3(s+1)}} + \ldots \bigg) 
= \prod_{P \text{ prime}} \bigg( \frac{2}{1 - \frac{1}{\lvert P \rvert^{s+1}}} - 1 \bigg) \\
= &\prod_{P \text{ prime}} \frac{1 - \frac{1}{\lvert P \rvert^{2(s+1)}}}{\Big( 1 - \frac{1}{\lvert P \rvert^{s+1}} \Big)^2} 
= \frac{\zeta (s+1)^2}{\zeta (2s+2)}
= \bigg( \sum_{n=0}^{\infty} q^{-ns} \bigg)^2 \Big( 1 - q^{-1-2s} \Big) . 
\end{align*}
By comparing the coefficients of powers of $q^{-s}$, we see that

\begin{align*}
\sum_{\substack{N \in \mathcal{M} \\ \degree N \leq x }} \frac{2^{\omega(N)}}{\lvert N \rvert}
= \bigg( \sum_{n=0}^{x} n+1 \bigg) - \frac{1}{q} \bigg( \sum_{n=2}^{x} n-1 \bigg)
= \frac{q-1}{2q} x^2 + \frac{3q+1}{2q} x + 1 .
\end{align*}
\end{proof}

\begin{lemma} \label{Lemma, Sum of 2^omega(N) / |N| with (N,R)=1}
Let $R \in \mathcal{M}$. We have that

\begin{align*}
\sum_{\substack{N \in \mathcal{M} \\ \degree N \leq \degree R \\ (N,R)=1 }} \frac{2^{\omega(N)}}{\lvert N \rvert}
\ll \bigg( \prod_{P \mid R} \frac{1}{1 + 2 \lvert P \rvert^{-1}} \bigg) \big( \degree R \big)^2
\asymp \bigg( \prod_{P \mid R} \frac{1- \lvert P \rvert^{-1}}{1 + \lvert P \rvert^{-1}} \bigg) \big( \degree R \big)^2 .
\end{align*}
\end{lemma}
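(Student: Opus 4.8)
The plan is to reduce the sum to the coprimality-free sum of Lemma~\ref{Proposition Sum of 2^omega(N) / |N|} by a Dirichlet convolution, and then to bound the auxiliary sums over divisors supported on the prime factors of $R$.

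First I would record the Euler-product factorisation: for $\Re s > 1$,
\begin{align*}
\sum_{\substack{N \in \mathcal{M} \\ (N,R)=1}} \frac{2^{\omega(N)}}{\lvert N \rvert^s}
= \prod_{P \nmid R} \frac{1 + \lvert P \rvert^{-s}}{1 - \lvert P \rvert^{-s}}
= \bigg( \sum_{N \in \mathcal{M}} \frac{2^{\omega(N)}}{\lvert N \rvert^s} \bigg) \bigg( \sum_{\substack{D \in \mathcal{M} \\ \rad (D) \mid R}} \frac{\lambda_R (D)}{\lvert D \rvert^s} \bigg) ,
\end{align*}
where $\lambda_R$ is the multiplicative function, supported on those $D$ with $\rad (D) \mid R$, given by $\lambda_R (P^k) = 2(-1)^k$ for $k \geq 1$; in particular $\lvert \lambda_R (D) \rvert = 2^{\omega(D)}$ whenever $\rad (D) \mid R$. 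Comparing coefficients of $\lvert N \rvert^{-s}$, this says that $\sum_{DM = N} \lambda_R (D) 2^{\omega(M)}$ equals $2^{\omega(N)}$ if $(N,R)=1$ and equals $0$ otherwise, whence
\begin{align*}
\sum_{\substack{N \in \mathcal{M} \\ \degree N \leq \degree R \\ (N,R)=1}} \frac{2^{\omega(N)}}{\lvert N \rvert}
= \sum_{\substack{D \in \mathcal{M} \\ \rad (D) \mid R \\ \degree D \leq \degree R}} \frac{\lambda_R (D)}{\lvert D \rvert} \sum_{\substack{M \in \mathcal{M} \\ \degree M \leq \degree R - \degree D}} \frac{2^{\omega(M)}}{\lvert M \rvert} .
\end{align*}

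I would then substitute the exact evaluation $\frac{q-1}{2q} y^2 + \frac{3q+1}{2q} y + 1$ of the inner sum from Lemma~\ref{Proposition Sum of 2^omega(N) / |N|}, with $y = \degree R - \degree D$, and expand in powers of $\degree D$. Setting $\Pi_R (s) := \prod_{P \mid R} \frac{1 - \lvert P \rvert^{-s}}{1 + \lvert P \rvert^{-s}}$, the sums $\sum_{\rad (D) \mid R} \lambda_R (D) (\degree D)^j \lvert D \rvert^{-1}$ for $j = 0,1,2$ are, up to a power of $\log q$, equal to $\Pi_R^{(j)}(1)$. Logarithmic differentiation, combined with Lemma~\ref{Sum over P divides R of deg P / (P-1)} (the logarithmic derivative of $\Pi_R$ at $1$ is $\ll \sum_{P \mid R} \frac{\degree P}{\lvert P \rvert - 1} \ll \log \omega(R)$), yields $\Pi_R^{(j)}(1) \ll \Pi_R (1) (\log \omega(R))^j$. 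The dominant term of the resulting expression is $\frac{q-1}{2q} \Pi_R (1) (\degree R)^2$, and every remaining piece is $O\big( \Pi_R (1) (\degree R)^2 \big)$ since $\log \omega(R) \ll \degree R$.

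The main obstacle is that the substitution is valid only for $\degree D \leq \degree R$, so in order to compute the sums $\sum_{\rad (D) \mid R ,\, \degree D \leq \degree R} \lambda_R (D) (\degree D)^j \lvert D \rvert^{-1}$ as above I must pass to the complete sums, incurring a tail $\sum_{\rad (D) \mid R ,\, \degree D > \degree R} \lambda_R (D) \lvert D \rvert^{-1} p(\degree R - \degree D)$, where $p$ is the quadratic above. I would bound it in modulus by $\ll \sum_{\rad (D) \mid R} 2^{\omega(D)} (\degree D)^2 \lvert D \rvert^{-1}$, which, by logarithmic differentiation of $\prod_{P \mid R} \frac{1 + \lvert P \rvert^{-s}}{1 - \lvert P \rvert^{-s}}$ at $s = 1$, is $\ll \prod_{P \mid R} \frac{1 + \lvert P \rvert^{-1}}{1 - \lvert P \rvert^{-1}} \ll \big( \lvert R \rvert / \phi (R) \big)^2 \ll \big( \log_q \log_q \lvert R \rvert \big)^2$ by Proposition~\ref{Totient function bounds}. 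Since Proposition~\ref{Totient function bounds} and Remark~\ref{remark, asymptotic equivalences of powers of phi(R)/R} also give $\Pi_R (1) \asymp \big( \phi (R) / \lvert R \rvert \big)^2 \gg \big( \log_q \log_q \lvert R \rvert \big)^{-2}$, this tail is $\ll \Pi_R (1) (\degree R)^2$ once $\degree R$ is large (the bounded range of $\degree R$ being immediate). Assembling everything gives $\sum_{\degree N \leq \degree R ,\, (N,R)=1} 2^{\omega(N)} \lvert N \rvert^{-1} = \frac{q-1}{2q} \Pi_R (1) (\degree R)^2 + O\big( \Pi_R (1) (\degree R)^2 \big)$, hence the stated bound; the final relation $\asymp$ is Remark~\ref{remark, asymptotic equivalences of powers of phi(R)/R}.
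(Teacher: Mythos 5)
Your proposal is correct, but it takes a considerably longer route than the paper's one-line argument. The paper multiplies the target sum by $\sum_{E \mid R} 2^{\omega(E)}/\lvert E\rvert$, observes that the resulting double sum sits inside $\sum_{\degree M \leq 2\degree R}2^{\omega(M)}/\lvert M\rvert \ll (\degree R)^2$ from Lemma~\ref{Proposition Sum of 2^omega(N) / |N|} (each pair $(N,E)$ with $(N,R)=1$, $E\mid R$ gives a distinct $M=NE$ with $2^{\omega(M)}/\lvert M\rvert = 2^{\omega(N)}2^{\omega(E)}/\lvert NE\rvert$), and then lower-bounds the inserted factor by $\prod_{P\mid R}\bigl(1+2\lvert P\rvert^{-1}\bigr)$ by restricting to square-free $E$. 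Your approach — convolving away the coprimality, plugging in the exact evaluation of Lemma~\ref{Proposition Sum of 2^omega(N) / |N|}, and bounding the coefficient sums $\sum_{\rad(D)\mid R}\lambda_R(D)(\degree D)^j\lvert D\rvert^{-1}$ via derivatives of $\Pi_R$ at $s=1$ — works, uses essentially the same machinery as Lemma~\ref{Prod of P div R of P-1/P+1 derivatives bounds} (indeed $\Pi_R(s)=f_R(s-1)$), and even yields an asymptotic with leading constant $\frac{q-1}{2q}$ rather than merely an upper bound, at the cost of the tail estimate where the real work now sits. One small imprecision to flag: the second log-derivative of $\prod_{P\mid R}\frac{1+\lvert P\rvert^{-s}}{1-\lvert P\rvert^{-s}}$ at $s=1$ contributes an extra $(\log\omega(R))^2$-type factor, so your bound $\sum_{\rad(D)\mid R}2^{\omega(D)}(\degree D)^2\lvert D\rvert^{-1}\ll\prod_{P\mid R}\frac{1+\lvert P\rvert^{-1}}{1-\lvert P\rvert^{-1}}$ is not quite what log-differentiation gives; the tail is nevertheless still $O\bigl(\Pi_R(1)(\degree R)^2\bigr)$ once $\degree R$ is large, so the conclusion is unaffected.
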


\begin{proof}
We have that

\begin{align*}
\bigg( \sum_{\substack{N \in \mathcal{M} \\ \degree N \leq \degree R \\ (N,R)=1 }} \frac{2^{\omega(N)}}{\lvert N \rvert} \bigg) \bigg( \sum_{E \mid R} \frac{2^{\omega(E)}}{\lvert E \rvert} \bigg)
\leq \sum_{\substack{N \in \mathcal{M} \\ \degree N \leq 2 \degree R }} \frac{2^{\omega(N)}}{\lvert N \rvert}
\ll \big( \degree R \big)^2 .
\end{align*}
where the last relations follows from Lemma \ref{Proposition Sum of 2^omega(N) / |N|}. We also note that

\begin{align*}
\sum_{E \mid R} \frac{2^{\omega(E)}}{\lvert E \rvert}
\geq \sum_{E \mid R} \frac{\mu (E)^2 2^{\omega(E)}}{\lvert E \rvert}
= \prod_{P \mid R} 1 +\frac{2}{\lvert P \rvert}.
\end{align*}
This proves the first relation in the lemma. The second relation follows from Lemma \ref{remark, asymptotic equivalences of powers of phi(R)/R}.
\end{proof}

\begin{lemma} \label{Double divisor sum}
Let $F,K \in \mathcal{M}$, $x \geq 0$, and $a \in \big( \mathbb{F}_q \big)^*$. Suppose also that $\frac{1}{2} x < \degree KF \leq \frac{3}{4} x$. Then,

\begin{align*}
\sum_{\substack{N \in \mathcal{M} \\ \degree N = x - \degree KF \\ (N,F) = 1}} d(N) d(KF+ aN)
\ll q^x x^2 \frac{1}{\lvert KF \rvert} \sum_{\substack{H \mid K \\ \degree H \leq \frac{x-\degree KF}{2}}} \frac{d(H)}{\lvert H \rvert} .
\end{align*}
\end{lemma}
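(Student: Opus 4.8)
The plan is to expand only one of the two divisor factors --- namely $d(N)$ --- into a sum over its ``small'' divisors, keeping $d(KF+aN)$ intact so that the remaining average over $N$ of the divisor function becomes a sum over a short interval in an arithmetic progression, which is exactly what Theorem~\ref{Brun-Titschmarsh theorem, divisor function case in function field, y equality version} controls. It is essential \emph{not} to expand both factors: doing so produces error terms of size $q^{x/2}$, which overwhelm the main term, since the hypothesis $\degree KF>\tfrac12 x$ forces $\degree N<\tfrac12 x$.

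Set $n:=\degree N=x-\degree KF$ and $k:=\degree KF$, so the hypothesis gives $\tfrac14 x\le n<\tfrac12 x<k\le\tfrac34 x$ and, since $\degree(aN)=n<k$, we have $\degree(KF+aN)=k$. First I would use the elementary bound $d(P)\le 2\,\#\{D\mid P:\degree D\le\tfrac12\degree P\}$ to write $d(N)\le 2\sum_{D\mid N,\ \degree D\le n/2}1$ and interchange the order of summation; since $D\mid N$ and $(N,F)=1$ force $(D,F)=1$, this gives
\[
\sum_{\substack{N\in\mathcal M,\ \degree N=n\\(N,F)=1}} d(N)\,d(KF+aN)\ \le\ 2\sum_{\substack{D\in\mathcal M,\ \degree D\le n/2\\(D,F)=1}}\ \sum_{\substack{N\in\mathcal M,\ \degree N=n\\ D\mid N}} d(KF+aN).
\]
For each such $D$ I would then set $H:=(D,K)$, so that $H\mid K$ and $\degree H\le\degree D\le n/2$, write $D=HD'$ and $K=HK_1$ with $(D',K_1)=1$ and $(D',F)=1$, and put $N=DN'$ with $N'\in\mathcal M_{n-\degree D}$. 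Then $KF+aN=H(K_1 F+aD'N')$, so $d(KF+aN)\le d(H)\,d(K_1 F+aD'N')$ by submultiplicativity of $d$; and as $N'$ ranges over $\mathcal M_{n-\degree D}$, the polynomial $P:=K_1 F+aD'N'$ ranges bijectively over those $P$ with $P\equiv K_1 F\ (\modulus D')$, $(P-K_1 F)\in a\mathcal M$, and $\degree(P-K_1 F)=n-\degree H$, where moreover $(K_1 F,D')=1$.

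Dropping the (only restrictive) condition $(N',F)=1$, this inner sum is of the exact form handled by Theorem~\ref{Brun-Titschmarsh theorem, divisor function case in function field, y equality version} with $X=K_1 F$ (so $\degree X=k-\degree H$), $y=n-\degree H$, $G=D'$, residue $A\equiv K_1 F$, and leading coefficient $a$. I would verify the hypotheses with the fixed choices $\beta=\tfrac16$ and $\alpha=\tfrac13$: one has $y\le\degree X$ because $n<k$; $\degree G=\degree D-\degree H\le\tfrac n2-\degree H<\tfrac23(n-\degree H)=(1-\alpha)y$ because $\degree H\le n/2$; and $\beta\degree X=\tfrac16(k-\degree H)<n-\degree H=y$ because $k\le\tfrac34 x$, $n\ge\tfrac14 x$, and $\degree H\le\tfrac n2$. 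The theorem then yields, with an absolute implied constant since $\alpha,\beta$ are now fixed,
\[
\sum_{N'\in\mathcal M_{n-\degree D}} d(K_1 F+aD'N')\ \ll\ \frac{q^{\,n-\degree H}(k-\degree H)}{\phi(D')}\ \ll\ \frac{q^n\,x}{|H|\,\phi(D')},
\]
using $q^{\,n-\degree H}=q^n/|H|$ and $k\le\tfrac34 x$.

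Finally I would feed this back, regroup the sum over $D$ according to $H=(D,K)$ (so $D=HD'$ with $\degree D'\le\tfrac n2-\degree H$, discarding the coprimality constraints, which only enlarges the sum), and use Lemma~\ref{Lemma, sum of 1/phi(N) over monic N with degree leq x} to bound $\sum_{\degree D'\le n/2-\degree H}\phi(D')^{-1}\ll n\ll x$, obtaining
\[
\sum_{\substack{N\in\mathcal M,\ \degree N=n\\(N,F)=1}} d(N)\,d(KF+aN)\ \ll\ q^n\,x^2\sum_{\substack{H\mid K\\ \degree H\le n/2}}\frac{d(H)}{|H|}.
\]
Since $q^n=q^x/|KF|$ and $n/2=(x-\degree KF)/2$, this is the claimed bound; all estimates used hold with absolute implied constants, so no restriction on $x$ is needed. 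I expect the hard part to be the bookkeeping in the reduction of $d(KF+aN)$ to a short-interval divisor sum --- extracting $H=(D,K)$ correctly, tracking the coprimalities that force $(K_1 F,D')=1$, and checking that a single admissible pair $(\alpha,\beta)$ works uniformly, which is precisely where the hypothesis $\tfrac12 x<\degree KF\le\tfrac34 x$ (hence $\degree H\le\degree D\le\tfrac n2$) is used in an essential way.
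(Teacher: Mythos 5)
Your proposal is correct and follows essentially the same route as the paper's own proof: expand $d(N)$ over its small divisors, interchange, regroup by $H = (D,K)$, factor $H$ out of $KF + aN$ using submultiplicativity of $d$, apply Theorem~\ref{Brun-Titschmarsh theorem, divisor function case in function field, y equality version} to the resulting short-interval divisor sum, and finish with Lemma~\ref{Lemma, sum of 1/phi(N) over monic N with degree leq x}. The only differences are notational ($D,D',K_1$ versus the paper's $G,G',K'$) and a different but equally valid admissible choice of parameters ($\alpha = \tfrac13,\beta = \tfrac16$ versus the paper's $\alpha = \tfrac14,\beta = \tfrac16$).
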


\begin{proof}
We have that,

\begin{align*}
&\sum_{\substack{N \in \mathcal{M} \\ \degree N = x - \degree KF \\ (N,F) = 1}} d(N) d(KF+ aN) \\
\leq 2 &\sum_{\substack{N \in \mathcal{M} \\ \degree N = x - \degree KF \\ (N,F) = 1}} \sum_{\substack{G \mid N \\ \degree G \leq \frac{x-\degree KF}{2}}} d(KF+ aN)\\
\ll &\sum_{\substack{ G \in \mathcal{M} \\ \degree G \leq \frac{x-\degree KF}{2} \\ (G,F) = 1}} \; \sum_{\substack{N \in \mathcal{M} \\ \degree N = x - \degree KF \\ G \mid N}} d(KF+ aN) \\
= &\sum_{\substack{H \mid K \\ \degree H \leq \frac{x-\degree KF}{2}}} \; \sum_{\substack{ G \in \mathcal{M} \\ \degree G \leq \frac{x-\degree KF}{2} \\ (G,F) = 1 \\ (G,K)=H}} \; \sum_{\substack{N \in \mathcal{M} \\ \degree N = x - \degree KF \\ G \mid N}} d(KF+ aN) \\
= &\sum_{\substack{H \mid K \\ \degree H \leq \frac{x-\degree KF}{2}}} \; \sum_{\substack{ G \in \mathcal{M} \\ \degree G \leq \frac{x-\degree KF}{2} \\ (G,F) = 1 \\ (G,K)=H}} \; \sum_{\substack{N' \in \mathcal{M} \\ \degree N' = x - \degree KF - \degree H \\ G' \mid N'}} d(HK'F+ aHN')
\end{align*}
where $N' , G', K'$ are defined by $HN'=N , HG'=G, HK'=K$. Continuing, we have that

\begin{align*}
&\sum_{\substack{N \in \mathcal{M} \\ \degree N \leq x - \degree KF \\ (N,F) = 1}} d(N) d(KF+ aN) \\
\ll &\sum_{\substack{H \mid K \\ \degree H \leq \frac{x-\degree KF}{2}}} d(H) \sum_{\substack{ G \in \mathcal{M} \\ \degree G \leq \frac{x-\degree KF}{2} \\ (G,F) = 1 \\ (G,K)=H}} \; \sum_{\substack{N' \in \mathcal{M} \\ \degree N' = x - \degree KF - \degree H \\ G' \mid N'}} d(K'F+ aN') \\
\leq &\sum_{\substack{H \mid K \\ \degree H \leq \frac{x-\degree KF}{2}}} d(H) \sum_{\substack{ G \in \mathcal{M} \\ \degree G \leq \frac{x-\degree KF}{2} \\ (G,F) = 1 \\ (G,K)=H}} \; \sum_{\substack{M \in \mathcal{M} \\ \degree (M-K'F) = x - \degree KF - \degree H \\ (M-K'F) \in a \mathcal{M} \\ M \equiv K' F (\modulus G')}} d(M) \\
\ll &q^x x \frac{1}{\lvert K F \rvert} \sum_{\substack{H \mid K \\ \degree H \leq \frac{x-\degree KF}{2}}} \frac{d(H)}{\lvert H \rvert} \sum_{\substack{ G \in \mathcal{M} \\ \degree G \leq \frac{x-\degree KF}{2} \\ (G,F) = 1 \\ (G,K)=H}} \frac{1}{\phi (G')}\\
\ll &q^x x^2 \frac{1}{\lvert K F \rvert} \sum_{\substack{H \mid K \\ \degree H \leq \frac{x-\degree KF}{2}}} \frac{d(H)}{\lvert H \rvert} .
\end{align*}
The third relation holds by Theorem \ref{Brun-Titschmarsh theorem, divisor function case in function field, y equality version} with $\beta = \frac{1}{6}$ and $\alpha = \frac{1}{4}$ (one may wish to note that $(K'F,G')=1$ and that the other conditions of the theorem are satisfied because $\frac{1}{2} x < \degree KF \leq \frac{3}{4} x$). The last relation follows from Lemma \ref{Lemma, sum of 1/phi(N) over monic N with degree leq x}.
\end{proof}

\begin{lemma} \label{Double divisor sum, number 2}
Let $F,K \in \mathcal{M}$ and $x \geq 0$ satisfy $\degree KF < x$. Then,

\begin{align*}
\sum_{\substack{N \in \mathcal{M} \\ \degree N = x \\ (N,F) = 1}} d(N) d(KF+N)
\ll q^x x^2  \sum_{\substack{H \mid K \\ \degree H \leq \frac{x}{2}}} \frac{d(H)}{\lvert H \rvert} .
\end{align*}
\end{lemma}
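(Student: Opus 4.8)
The plan is to mimic the proof of Lemma~\ref{Double divisor sum}, which is in fact simpler here: the hypothesis $\degree KF < x$ removes the twisting coefficient and, more importantly, means the relevant inner divisor sum will run over \emph{all} monic polynomials of a fixed degree in a fixed residue class rather than over a short interval around a fixed polynomial.

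First I would use the elementary inequality $d(N) \leq 2\sum_{G \mid N,\, \degree G \leq x/2} 1$, valid for every $N \in \mathcal{M}$ of degree $x$, and interchange the order of summation to reduce the left-hand side to a constant times $\sum_{G}\sum_{N} d(KF+N)$, where $G$ ranges over monic polynomials of degree at most $x/2$ with $(G,F)=1$ (this coprimality being forced by $G \mid N$ and $(N,F)=1$) and, for each such $G$, $N$ ranges over monic polynomials of degree $x$ divisible by $G$. Next I would collect the $G$ according to the value $H := (G,K)$, writing $G = HG'$, $K = HK'$, $N = HN'$ with $(G',K')=1$; then also $(G',F)=1$, and $KF+N = H(K'F+N')$, so submultiplicativity of the divisor function gives $d(KF+N) \leq d(H)\,d(K'F+N')$, with $N'$ monic of degree $x - \degree H$ and divisible by $G'$. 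Since $\degree(K'F) = \degree KF - \degree H < x - \degree H = \degree N'$, the map $N' \mapsto M := K'F+N'$ is a bijection onto the set of monic $M$ of degree $x - \degree H$ with $M \equiv K'F \ (\modulus G')$, and we are left to bound $\sum_{M} d(M)$ over this set.

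This last sum is handled by Theorem~\ref{Brun-Titschmarsh theorem, divisor function case in function field} applied with $X = T^{x-\degree H}$, $y = x - \degree H$, modulus $G'$, and residue class $K'F$ (note $(K'F,G')=1$), taking for instance $\alpha = \beta = 1/4$: the conditions $\beta\degree X < y \leq \degree X$ are immediate, and $\degree G' \leq x/2 - \degree H < (1-\alpha)(x-\degree H)$ holds since $x \geq 1$. One obtains $\sum_M d(M) \ll q^{x-\degree H}(x-\degree H)/\phi(G') \ll q^x x/(\lvert H\rvert\,\phi(G'))$. Reindexing the $G$-sum by $G' = G/H$ and summing $1/\phi(G')$ over monic $G'$ of degree $\leq x/2$ via Lemma~\ref{Lemma, sum of 1/phi(N) over monic N with degree leq x} contributes a further factor $\ll x$, and summing the resulting $d(H)/\lvert H\rvert$ over $H \mid K$ with $\degree H \leq x/2$ gives exactly $q^x x^2 \sum_{H\mid K,\, \degree H \leq x/2} d(H)/\lvert H\rvert$, as claimed.

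The only point requiring care is this Brun-Titchmarsh step: one must notice that in the present regime ($\degree N' > \degree K'F$) the inner sum is over a complete residue class among monics of a fixed degree, so the appropriate tool is Theorem~\ref{Brun-Titschmarsh theorem, divisor function case in function field} with a monomial $X$ of degree $x - \degree H$ (in contrast to the equality version Theorem~\ref{Brun-Titschmarsh theorem, divisor function case in function field, y equality version} used in Lemma~\ref{Double divisor sum}), and one must verify the size condition $\degree G' < (1-\alpha)y$, which follows from $\degree G \leq x/2$ together with $\degree N = x$. Everything else is a routine repetition of the estimates in the proof of Lemma~\ref{Double divisor sum}.
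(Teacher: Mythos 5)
Your proposal is correct and follows essentially the same route as the paper's own proof: the $d(N)\leq 2\sum_{G\mid N,\,\degree G\leq x/2}1$ device, grouping $G$ by $H=(G,K)$, pulling out $d(H)$ by submultiplicativity, rewriting the inner sum as a sum of $d(M)$ over monic $M$ of degree $x-\degree H$ with $M\equiv K'F\ (\modulus G')$, and then applying Theorem~\ref{Brun-Titschmarsh theorem, divisor function case in function field} with $X=T^{x-\degree H}$ followed by Lemma~\ref{Lemma, sum of 1/phi(N) over monic N with degree leq x}. Your explicit verification of the size condition $\degree G'<(1-\alpha)y$ and the observation that the plain Brun-Titchmarsh theorem (rather than the $a\mathcal{M}$ version) suffices here because $\degree KF<x$ are exactly the points the paper leaves implicit.
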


\begin{proof}
The proof is similar to the proof of Lemma \ref{Double divisor sum}. We have that

\begin{align*}
\sum_{\substack{N \in \mathcal{M} \\ \degree N = x \\ (N,F) = 1}} d(N) d(KF+N)
\leq &2 \sum_{\substack{N \in \mathcal{M} \\ \degree N = x \\ (N,F) = 1}} \sum_{\substack{G \mid N \\ \degree G \leq \frac{x}{2}}} d(KF+N)
\ll \sum_{\substack{G \in \mathcal{M} \\ \degree G \leq \frac{x}{2} \\ (G,F) =1}} \sum_{\substack{N \in \mathcal{M} \\ \degree N = x \\ G \mid N}} d(KF+N) \\
= &\sum_{\substack{H \mid K \\ \degree H \leq \frac{x}{2}}} \sum_{\substack{G \in \mathcal{M} \\ \degree G \leq \frac{x}{2} \\ (G,F) =1 \\ (G,K) = H}} \sum_{\substack{N \in \mathcal{M} \\ \degree N = x \\ G \mid N}} d(KF+N) \\
= &\sum_{\substack{H \mid K \\ \degree H \leq \frac{x}{2}}} \sum_{\substack{G \in \mathcal{M} \\ \degree G \leq \frac{x}{2} \\ (G,F) =1 \\ (G,K) = H}} \sum_{\substack{N' \in \mathcal{M} \\ \degree N' = x-\degree H \\ G' \mid N'}} d(HK'F+HN') ,
\end{align*}
where $N' , G', K'$ are defined by $HN'=N , HG'=G, HK'=K$. Continuing, we have that

\begin{align*}
\sum_{\substack{N \in \mathcal{M} \\ \degree N = x \\ (N,F) = 1}} d(N) d(KF+N)
\ll &\sum_{\substack{H \mid K \\ \degree H \leq \frac{x}{2}}} d(H) \sum_{\substack{G \in \mathcal{M} \\ \degree G \leq \frac{x}{2} \\ (G,F) =1 \\ (G,K) = H}} \sum_{\substack{N' \in \mathcal{M} \\ \degree N' = x-\degree H \\ G' \mid N'}} d(K'F+N') \\
\leq &\sum_{\substack{H \mid K \\ \degree H \leq \frac{x}{2}}} d(H) \sum_{\substack{G \in \mathcal{M} \\ \degree G \leq \frac{x}{2} \\ (G,F) =1 \\ (G,K) = H}} \sum_{\substack{M \in \mathcal{M} \\ \degree (M-X) < x - \degree H \\ M \equiv K'F (\modulus G')}} d(M) ,
\end{align*}
where we define $X := T^{x - \degree H}$. We can now apply Theorem \ref{Brun-Titschmarsh theorem, divisor function case in function field} to obtain that

\begin{align*}
\sum_{\substack{N \in \mathcal{M} \\ \degree N = x \\ (N,F) = 1}} d(N) d(KF+N)
\ll &q^x x \sum_{\substack{H \mid K \\ \degree H \leq \frac{x}{2}}} \frac{d(H)}{\lvert H \rvert} \sum_{\substack{G \in \mathcal{M} \\ \degree G \leq \frac{x}{2} \\ (G,F) =1 \\ (G,K) = H}} \frac{1}{\phi (G')}
\ll &q^x x^2 \sum_{\substack{H \mid K \\ \degree H \leq \frac{x}{2}}} \frac{d(H)}{\lvert H \rvert} .
\end{align*}
\end{proof}

\begin{lemma} \label{Lemma, off diagonal z_1 z_2 sum}
Let $F \in \mathcal{M}$ and $z_1 , z_2$ be non-negative integers. Then, for all $\epsilon > 0$ we have that

\begin{align*}
\sum_{\substack{A,B,C,D \in \mathcal{M} \\ \degree AB = z_1 \\ \degree CD = z_2 \\ (ABCD,F)=1 \\ AC \equiv BD (\modulus F) \\ AC \neq BD}} 1
\begin{cases}
\ll_{\epsilon} \frac{1}{\lvert F \rvert} \Big( q^{z_1} q^{z_2} \Big)^{1+\epsilon} &\text{ if $z_1 + z_2 \leq \frac{19}{10} \degree F$} \\
\ll \frac{1}{\phi (F)} q^{z_1} q^{z_2} (z_1 + z_2)^3 &\text{ if $z_1 + z_2 > \frac{19}{10} \degree F$} .
\end{cases}
\end{align*}
\end{lemma}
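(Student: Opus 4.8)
The plan is to turn the congruence $AC\equiv BD\pmod F$ into sums of the divisor function and then feed these into the Brun--Titchmarsh bound of Theorem~\ref{Brun-Titschmarsh theorem, divisor function case in function field}. Since $F\mid(AC-BD)$ but $AC\neq BD$, setting $U:=AC$, $V:=BD$ gives $\degree U+\degree V=\degree(ABCD)=z_1+z_2$, $(UV,F)=1$, $U\equiv V\pmod F$, $U\neq V$, and $\degree(U-V)\geq\degree F$, so $\max(\degree U,\degree V)\geq\degree F$. For a given pair $(U,V)$ there are at most $d(U)d(V)$ quadruples $(A,B,C,D)$ producing it, so the quantity in the lemma is $\leq\Sigma$, where
\begin{align*}
\Sigma:=\sum_{\substack{U,V\in\mathcal{M}\\ \degree U+\degree V=z_1+z_2\\ (UV,F)=1,\ U\equiv V(\modulus F),\ U\neq V}}d(U)\,d(V),
\end{align*}
and I bound $\Sigma$ by grouping the $O(z_1+z_2)$ terms according to $v:=\degree V$. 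For the first (weaker) estimate, valid for every $F$: fixing $U$, there are exactly $q^{v-\degree F}$ polynomials $V\equiv U\pmod F$ of any fixed degree $v\geq\degree F$, each with $d(V)\ll_\epsilon q^{\epsilon v}$; combining this with $\sum_{\degree U=u}d(U)\ll(u+1)q^u$ and $z_1+z_2\ll_\epsilon q^{\epsilon(z_1+z_2)}$ yields $\Sigma\ll_\epsilon|F|^{-1}(q^{z_1+z_2})^{1+\epsilon}$, which is the asserted bound when $z_1+z_2\leq\frac{19}{10}\degree F$ (indeed in every range).

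Assume now $z_1+z_2>\frac{19}{10}\degree F$ and fix a small $\alpha_0\in(0,\tfrac12)$. Call $v$ \emph{good} if $\max(\degree U,\degree V)>\frac{\degree F}{1-\alpha_0}$, and \emph{bad} otherwise. A bad $v$ satisfies $\degree U,\degree V\leq\frac{\degree F}{1-\alpha_0}$ and $\degree U+\degree V>\frac{19}{10}\degree F$, so there are only $O(\degree F)=O(z_1+z_2)$ of them, and for each bad $v$ the ratio $\min(\degree U,\degree V)/\max(\degree U,\degree V)$ is bounded below by a positive constant depending only on $\alpha_0$ (this is where the numerical value $\frac{19}{10}$ is used). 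For a good $v$ I apply Theorem~\ref{Brun-Titschmarsh theorem, divisor function case in function field} (with $X=T^{\max(\degree U,\degree V)}$, $y=\max(\degree U,\degree V)$, $G=F$) to whichever of $\sum_{V\equiv U(F)}d(V)$ or $\sum_{U\equiv V(F)}d(U)$ ranges over the variable of degree $>\frac{\degree F}{1-\alpha_0}$, and estimate the complementary sum trivially by $\ll(z_1+z_2)q^{\min(\degree U,\degree V)}$; this contributes $\ll\frac{\degree U\cdot\degree V\cdot q^{z_1+z_2}}{\phi(F)}\ll\frac{(z_1+z_2)^2q^{z_1+z_2}}{\phi(F)}$ to $\Sigma$.

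For a bad $v$, say with $\degree V>\degree U$ (the case $\degree V<\degree U$ is symmetric, and $\degree V=\degree U$ is only easier), write $V-U=HF$; then necessarily $H$ is monic of degree $\degree V-\degree F$, and $(H,U)\mapsto(U,U+HF)$ is a bijection onto the pairs we are counting, so the contribution of this $v$ to $\Sigma$ equals
\begin{align*}
\sum_{\substack{H\in\mathcal{M}\\ \degree H=\degree V-\degree F}}\ \sum_{\substack{U\in\mathcal{M},\ \degree U=z_1+z_2-\degree V\\ (U,F)=1}}d(U)\,d(U+HF).
\end{align*}
The inner sum is a shifted divisor correlation with shift $HF$ of degree $\degree V$; writing $d(U)\leq2\#\{E\mid U:\degree E\leq\tfrac12\degree U\}$ and interchanging the order of summation, the sum of $d(U+HF)$ over those $U$ divisible by a given $E$ becomes a sum of the divisor function over a short interval lying in the progression $\equiv HF\pmod E$, to which Theorem~\ref{Brun-Titschmarsh theorem, divisor function case in function field} applies — its hypotheses hold because $\degree E\leq\tfrac12\degree U$ and because $\degree U/\degree V$ is bounded below — and summing over $E$ via Lemma~\ref{Lemma, sum of 1/phi(N) over monic N with degree leq x} bounds the inner sum by $\ll q^{\degree U}\degree U\,\degree V$. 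Summing over the $q^{\degree V-\degree F}$ shifts $H$ then gives a contribution $\ll\frac{(z_1+z_2)^2q^{z_1+z_2}}{|F|}\leq\frac{(z_1+z_2)^2q^{z_1+z_2}}{\phi(F)}$ for this $v$. Common factors of $E$ and $HF$ are peeled off via $d(EU''+HF)\leq d\bigl((E,HF)\bigr)d(\,\cdot\,)$ and absorbed, and the degenerate sub-case in which the shift has degree $<\degree U$ is the classical divisor correlation, handled the same way. Adding the $O(z_1+z_2)$ good and bad contributions yields $\Sigma\ll\frac{(z_1+z_2)^3q^{z_1+z_2}}{\phi(F)}$, as required.

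The main obstacle is the ``doubly bad'' range $\frac{19}{10}\degree F<z_1+z_2\lesssim2\degree F$, where neither $\degree(AC)$ nor $\degree(BD)$ is long enough relative to $\degree F$ for a direct application of Brun--Titchmarsh; the remedy is to abandon the congruence modulo $F$ (useless there, since the relevant interval is then too short) in favour of the identity $AC=BD+HF$ and instead apply Brun--Titchmarsh modulo a proper divisor of $AC$, which is short, the number $q^{\degree V-\degree F}$ of admissible shifts $H$ supplying precisely the saving $|F|^{-1}$.
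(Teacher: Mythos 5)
Your argument is essentially the same as the paper's: both reduce the off-diagonal count to a divisor-correlation sum $\sum d(U)d(V)$ with $U=AC$, $V=BD$, $U\equiv V\pmod F$, then split according to whether one of $\degree U,\degree V$ is large enough relative to $\degree F$ for a direct application of Theorem~\ref{Brun-Titschmarsh theorem, divisor function case in function field} modulo $F$, and in the remaining ``doubly bad'' range re-expand $d(U)$ as a sum over small divisors $E\mid U$ and apply Brun--Titchmarsh modulo $E$ instead, with the $q^{\degree V-\degree F}$ admissible shifts $H$ supplying the $|F|^{-1}$ saving. The paper packages this last step as two standalone results (Lemma~\ref{Double divisor sum} and Lemma~\ref{Double divisor sum, number 2}, covering the $\degree U<\degree V$ and $\degree U=\degree V$ subcases respectively), parametrises by $K$ with $AC=KF+BD$, and splits at $\degree KF=\tfrac{3}{4}(z_1+z_2)$, whereas you parametrise directly by $v=\degree V$ and split at $\degree V=\degree F/(1-\alpha_0)$; these thresholds are interchangeable and carve out the same ranges. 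The one place your sketch is thinner than the paper is the gcd bookkeeping for $(E,HF)$ in the bad case — the paper handles it carefully in Lemma~\ref{Double divisor sum} by extracting $H=(G,K)$ before applying Theorem~\ref{Brun-Titschmarsh theorem, divisor function case in function field, y equality version}, and this is where the extra $\sum_{H\mid K}d(H)/|H|$ factor comes from; your ``absorbed'' needs the same computation to justify that it only costs the observed power of $z_1+z_2$ after the $H$-sum. With that filled in, your proof is correct and matches the paper's strategy in all essentials.
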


\begin{proof}
We can split the sum into the cases $\degree AC > \degree BD$, $\degree AC < \degree BD$, and $\degree AC = \degree BD$ with $AC \neq BD$. The first two cases are identical by symmetry. \\

\noindent When $\degree AC > \degree BD$, we have that $AC = KF + BD$ where $K \in \mathcal{M}$ and $\degree KF > \degree BD$. Furthermore,

\begin{align*}
2 \degree KF = 2 \degree AC > \degree AC + \degree BD = \degree AB + \degree CD = z_1 + z_2,
\end{align*}
from which we deduce that $\frac{z_1 + z_2}{2} < \degree KF \leq z_1 + z_2$; and

\begin{align*}
\degree KF + \degree BD = \degree AC + \degree BD = z_1 + z_2,
\end{align*}
from which we deduce that $\degree BD = z_1 + z_2 - \degree KF$. \\

\noindent When $\degree AC = \degree BD$, we must have that $\degree AC = \degree BD = \frac{z_1 + z_2}{2}$ (in particular, this case applies only when $z_1 + z_2 $ is even). Also, we can write $AC = KF +BD$, where $\degree KF < \degree BD = \frac{z_1 + z_2}{2}$ and $K$ need not be monic. \\

\noindent So, writing $N = BD$, we have that

\begin{align}
\begin{split} \label{z_1 z_2 off diagonal divisor split}
\sum_{\substack{A,B,C,D \in \mathcal{M} \\ \degree AB = z_1 \\ \degree CD = z_2 \\ (ABCD,F)=1 \\ AC \equiv BD (\modulus F) \\ AC \neq BD}} 1
\leq &2 \sum_{\substack{K \in \mathcal{M} \\ \frac{z_1 + z_2}{2} < \degree KF \leq z_1 + z_2}} \sum_{\substack{N \in \mathcal{M} \\ \degree N = z_1 + z_2 - \degree KF \\ (N,F)=1}} d(N) d(KF+N) \\
&+ \sum_{\substack{K \in \mathcal{A} \\ \degree KF < \frac{z_1 + z_2}{2}}} \sum_{\substack{N \in \mathcal{M} \\ \degree N = \frac{z_1 + z_2}{2} \\ (N,F)=1}} d(N) d(KF+N) \\
\end{split}
\end{align}

\textbf{\underline{STEP 1:}} Let us consider the case when $z_1 + z_2 \leq \frac{19}{10} \degree F$. By using well known bounds on the divisor function, we have that

\begin{align*}
&\sum_{\substack{K \in \mathcal{M} \\ \frac{z_1 + z_2}{2} < \degree KF \leq z_1 + z_2}} \sum_{\substack{N \in \mathcal{M} \\ \degree N = z_1 + z_2 - \degree KF \\ (N,F)=1}} d(N) d(KF+N) \\
\ll_{\epsilon} &\Big( q^{z_1} q^{z_2} \Big)^{\frac{\epsilon}{2}} \sum_{\substack{K \in \mathcal{M} \\ \frac{z_1 + z_2}{2} < \degree KF \leq z_1 + z_2}} \sum_{\substack{N \in \mathcal{M} \\ \degree N = z_1 + z_2 - \degree KF \\ (N,F)=1}} 1 \\
\leq &\Big( q^{z_1} q^{z_2} \Big)^{1 + \frac{\epsilon}{2}} \sum_{\substack{K \in \mathcal{M} \\ \frac{z_1 + z_2}{2} < \degree KF \leq z_1 + z_2}} \frac{1}{\lvert KF \rvert} \\
\leq &\Big( q^{z_1} q^{z_2} \Big)^{1 + \frac{\epsilon}{2}} \frac{z_1 + z_2}{\lvert F \rvert} \\
\ll_{\epsilon} &\Big( q^{z_1} q^{z_2} \Big)^{1 + \epsilon} \frac{1}{\lvert F \rvert} .
\end{align*}

\noindent As for the sum

\begin{align*}
\sum_{\substack{K \in \mathcal{A} \\ \degree KF < \frac{z_1 + z_2}{2}}} \sum_{\substack{N \in \mathcal{M} \\ \degree N = \frac{z_1 + z_2}{2} \\ (N,F)=1}} d(N) d(KF+N) ,
\end{align*}
we note that it does not apply to this case where $z_1 + z_2 \leq \frac{19}{10} \degree F$ because $\degree KF \geq \degree F \geq \frac{20}{19} \frac{z_1 + z_2}{2}$, which does not overlap with range $\degree KF < \frac{z_1 + z_2}{2}$ in the sum. \\

\noindent Hence,

\begin{align*}
\sum_{\substack{A,B,C,D \in \mathcal{M} \\ \degree AB = z_1 \\ \degree CD = z_2 \\ (ABCD,F)=1 \\ AC \equiv BD (\modulus F) \\ AC \neq BD}} 1 
\ll_{\epsilon} \Big( q^{z_1} q^{z_2} \Big)^{1 + \epsilon} \frac{1}{\lvert F \rvert} .
\end{align*}

\noindent \textbf{\underline{STEP 2:}} We now consider the case when $z_1 + z_2 > \frac{19}{10} \degree F$.\\

\noindent \textbf{\underline{STEP 2.1:}} We consider the subcase where $\frac{z_1 + z_2}{2} < \degree KF \leq \frac{3(z_1 + z_2)}{4}$. This allows us to apply Lemma \ref{Double divisor sum} for the first relation below.

\begin{align*}
&\sum_{\substack{K \in \mathcal{M} \\ \frac{z_1 + z_2}{2} < \degree KF \leq \frac{3(z_1 + z_2)}{4}}} \sum_{\substack{N \in \mathcal{M} \\ \degree N = z_1 + z_2 - \degree KF \\ (N,F)=1}} d(N) d(KF+N) \\
\ll &q^{z_1} q^{z_2} (z_1 + z_2)^2 \frac{1}{\lvert F \rvert} \sum_{\substack{K \in \mathcal{M} \\ \frac{z_1 + z_2}{2} < \degree KF \leq \frac{3(z_1 + z_2)}{4}}} \frac{1}{\lvert K \rvert} \sum_{\substack{H \mid K \\ \degree H \leq \frac{z_1 + z_2 - \degree KF}{2}}} \frac{d(H)}{\lvert H \rvert} \\
\leq &q^{z_1} q^{z_2} (z_1 + z_2)^2 \frac{1}{\lvert F \rvert} \sum_{\substack{K \in \mathcal{M} \\ \degree K \leq z_1 + z_2}} \frac{1}{\lvert K \rvert} \sum_{H \mid K} \frac{d(H)}{\lvert H \rvert} \\
= &q^{z_1} q^{z_2} (z_1 + z_2)^2 \frac{1}{\lvert F \rvert} \sum_{\substack{H \in \mathcal{M} \\ \degree H \leq z_1 + z_2}} \frac{d(H)}{\lvert H \rvert} \sum_{\substack{K \in \mathcal{A} \\ \degree K \leq z_1 + z_2 \\ H \mid K}} \frac{1}{\lvert K \rvert} \\
\leq &q^{z_1} q^{z_2} (z_1 + z_2)^3 \frac{1}{\lvert F \rvert} \sum_{\substack{H \in \mathcal{M} \\ \degree H \leq z_1 + z_2}} \frac{d(H)}{\lvert H \rvert^2} \\
\ll &q^{z_1} q^{z_2} (z_1 + z_2)^3 \frac{1}{\lvert F \rvert} .
\end{align*}

\noindent \textbf{\underline{STEP 2.2:}} Now we consider the subcase where $\frac{3(z_1 + z_2)}{4} < \degree KF \leq z_1 + z_2$. We have that

\begin{align*}
&\sum_{\substack{K \in \mathcal{M} \\ \frac{3(z_1 + z_2)}{4} < \degree KF \leq z_1 + z_2}} \sum_{\substack{N \in \mathcal{M} \\ \degree N = z_1 + z_2 - \degree KF \\ (N,F)=1}} d(N) d(KF+N) \\
= &\sum_{\substack{N \in \mathcal{M} \\ \degree N < \frac{z_1 + z_2}{4} \\ (N,F)=1}} \sum_{\substack{K \in \mathcal{M} \\ \degree KF = z_1 + z_2 - \degree N}} d(N) d(KF+N) \\
\leq &\sum_{\substack{N \in \mathcal{M} \\ \degree N < \frac{z_1 + z_2}{4} \\ (N,F)=1}} d(N) \sum_{\substack{M \in \mathcal{M} \\ \degree (M - X_{(N)} ) < z_1 + z_2 - \degree N \\ M \equiv N (\modulus F)}} d(M)
\end{align*}
where we define $X_{(N)} = T^{z_1 + z_2 - \degree N}$. We can now apply Theorem \ref{Brun-Titschmarsh theorem, divisor function case in function field}. One may wish to note that

\begin{align*}
y = z_1 + z_2 - \degree N \geq \frac{3}{4}(z_1 +z_2) \geq \frac{3}{4} \frac{19}{10} \degree F
\end{align*}
and so

\begin{align*}
\degree F \leq \frac{40}{57} y = (1- \alpha) y
\end{align*}
where $0 < \alpha < \frac{1}{2}$, as required. Hence, we have that

\begin{align*}
&\sum_{\substack{K \in \mathcal{M} \\ \frac{3(z_1 + z_2)}{4} < \degree KF \leq z_1 + z_2}} \sum_{\substack{N \in \mathcal{M} \\ \degree N = z_1 + z_2 - \degree KF \\ (N,F)=1}} d(N) d(KF+N) \\
\ll &q^{z_1} q^{z_2} (z_1 + z_2) \frac{1}{\phi (F)} \sum_{\substack{N \in \mathcal{M} \\ \degree N \leq \frac{z_1 + z_2}{4} \\ (N,F)=1}} \frac{d(N)}{\lvert N \rvert} \\
\leq &q^{z_1} q^{z_2} (z_1 + z_2) \frac{1}{\phi (F)} \bigg( \sum_{\substack{N \in \mathcal{M} \\ \degree N \leq z_1 + z_2}} \frac{1}{\lvert N \rvert} \bigg)^2 \\
\ll &q^{z_1} q^{z_2} (z_1 + z_2)^3 \frac{1}{\phi (F)} . \\
\end{align*}

\noindent \textbf{\underline{STEP 2.3:}} We now look at the sum

\begin{align*}
\sum_{\substack{K \in \mathcal{A} \\ \degree KF < \frac{z_1 + z_2}{2}}} \sum_{\substack{N \in \mathcal{M} \\ \degree N = \frac{z_1 + z_2}{2} \\ (N,F)=1}} d(N) d(KF+N) .
\end{align*}
By Lemma \ref{Double divisor sum, number 2} we have that

\begin{align*}
\sum_{\substack{K \in \mathcal{A} \\ \degree KF < \frac{z_1 + z_2}{2}}} \sum_{\substack{N \in \mathcal{M} \\ \degree N = \frac{z_1 + z_2}{2} \\ (N,F)=1}} d(N) d(KF+N)
\ll &q^{\frac{z_1 + z_2 }{2}} (z_1 + z_2 )^2 \sum_{\substack{K \in \mathcal{A} \\ \degree KF < \frac{z_1 + z_2}{2}}} \sum_{H \mid K} \frac{d(H)}{\lvert H \rvert} \\
\leq &q^{z_1 + z_2 -1} (z_1 + z_2 )^2 \frac{1}{\lvert F \rvert} \sum_{\substack{K \in \mathcal{A} \\ \degree KF < \frac{z_1 + z_2}{2}}} \frac{1}{\lvert K \rvert} \sum_{H \mid K} \frac{d(H)}{\lvert H \rvert} \\
\leq &q^{z_1 + z_2 } (z_1 + z_2 )^3 \frac{1}{\lvert F \rvert} ,
\end{align*}
where the last relation uses a similar calculation as that in Step 2.1. \\

\noindent \textbf{\underline{STEP 2.4:}} We apply steps 2.1, 2.2, and 2.3 to (\ref{z_1 z_2 off diagonal divisor split}) and we see that

\begin{align*}
\sum_{\substack{A,B,C,D \in \mathcal{M} \\ \degree AB = z_1 \\ \degree CD = z_2 \\ (ABCD,F)=1 \\ AC \equiv BD (\modulus F) \\ AC \neq BD}} 1 
\ll q^{z_1} q^{z_2} (z_1 + z_2)^3 \frac{1}{\phi (F)}
\end{align*}
for $z_1 + z_2 \geq \frac{19}{10} \degree F$.
\end{proof}

\noindent In fact, we can prove the following, more general Lemma:

\begin{lemma} \label{Lemma, off diagonal z_1 z_2 sum with alpha}
Let $F \in \mathcal{M}$, $z_1 , z_2$ be non-negative integers, and let $a \in \big( \mathbb{F}_q \big)^*$. Then, for all $\epsilon > 0$ we have that

\begin{align*}
\sum_{\substack{A,B,C,D \in \mathcal{M} \\ \degree AB = z_1 \\ \degree CD = z_2 \\ (ABCD,F)=1 \\ AC \equiv a BD (\modulus F) \\ AC \neq BD}} 1
\begin{cases}
\ll_{\epsilon} \frac{1}{\lvert F \rvert} \Big( q^{z_1} q^{z_2} \Big)^{1+\epsilon} &\text{ if $z_1 + z_2 \leq \frac{19}{10} \degree F$} \\
\ll \frac{1}{\phi (F)} q^{z_1} q^{z_2} (z_1 + z_2)^3 &\text{ if $z_1 + z_2 > \frac{19}{10} \degree F$} ,
\end{cases}
\end{align*}
\end{lemma}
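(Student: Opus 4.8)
The plan is to mimic the proof of Lemma~\ref{Lemma, off diagonal z_1 z_2 sum} step by step, carrying the unit $a$ along; the one genuinely new feature is a ``diagonal'' degree case which does not occur when $a=1$ and which we settle by a crude second-moment estimate. The mechanism that makes $a$ harmless is the elementary identity $d(cM)=d(M)$, valid for every unit $c\in(\mathbb F_q)^*$ and every $M\in\mathcal A$, together with the fact that Theorem~\ref{Brun-Titschmarsh theorem, divisor function case in function field, y equality version} and Lemma~\ref{Double divisor sum} were already formulated with an arbitrary unit in place.

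Assume $a\neq 1$ (the case $a=1$ is exactly Lemma~\ref{Lemma, off diagonal z_1 z_2 sum}). Since $\degree AC+\degree BD=z_1+z_2$ and $AC=aBD$ is impossible ($AC$ is monic while $aBD$ has leading coefficient $a\neq1$), one splits into (i) $\degree AC>\degree BD$, (ii) $\degree AC<\degree BD$, and (iii) $\degree AC=\degree BD=:m$. In case (i) write $AC=KF+aBD$ with $K$ monic, $\tfrac{z_1+z_2}{2}<\degree KF=\degree AC\leq z_1+z_2$ and $\degree BD=z_1+z_2-\degree KF$; setting $N=BD$ and using $\#\{(A,C):AC=KF+aN\}\cdot\#\{(B,D):BD=N\}\leq d(KF+aN)\,d(N)$ bounds this piece by $\sum_K\sum_N d(N)\,d(KF+aN)$ over the ranges already appearing in the proof of Lemma~\ref{Lemma, off diagonal z_1 z_2 sum}. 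Case (ii) is symmetric: writing $BD=K'F+a^{-1}AC$ with $K'$ monic and $N=AC$, it reduces to a sum of the same shape with $a$ replaced by $a^{-1}$. Note that here $\degree KF=\max(\degree AC,\degree BD)\geq\tfrac{z_1+z_2}{2}$ always, so there is no ``$\degree KF$ small'' subcase, which is a mild simplification over the $a=1$ situation.

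For cases (i) and (ii) I would reproduce the estimates of Lemma~\ref{Lemma, off diagonal z_1 z_2 sum}: when $z_1+z_2\leq\tfrac{19}{10}\degree F$ we have $\degree KF\geq\degree F>\tfrac{z_1+z_2}{2}$, and crude divisor bounds $d(\cdot)\ll_\epsilon(q^{z_1+z_2})^{\epsilon/2}$ together with $\sum_{\degree K\geq 0}q^{-\degree KF}\ll|F|^{-1}(z_1+z_2)$ give the bound $\ll_\epsilon|F|^{-1}(q^{z_1}q^{z_2})^{1+\epsilon}$; when $z_1+z_2>\tfrac{19}{10}\degree F$ I would split the range $\tfrac{z_1+z_2}{2}<\degree KF\leq\tfrac34(z_1+z_2)$, where Lemma~\ref{Double divisor sum} applies directly (with $\alpha=\tfrac14$, $\beta=\tfrac16$, and $d(KF+aN)$ being precisely the quantity it bounds), from the range $\tfrac34(z_1+z_2)<\degree KF\leq z_1+z_2$, where after rewriting $d(KF+aN)=d(a^{-1}KF+N)$ the inner sum is a sum of $d$ over a short interval in an arithmetic progression and Theorem~\ref{Brun-Titschmarsh theorem, divisor function case in function field} applies; both give $\ll q^{z_1}q^{z_2}(z_1+z_2)^3/\phi(F)$.

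Finally, in case (iii) the leading-coefficient count forces $\degree(AC-aBD)=m$, hence $\degree F\leq m$, and the case is empty when $m<\degree F$; in particular it is empty whenever $z_1+z_2\leq\tfrac{19}{10}\degree F$, since then $m\leq\tfrac{19}{20}\degree F$. When $m\geq\degree F$, grouping by $M_1=AC$ and $M_2=BD$ bounds the contribution by $\sum d(M_1)d(M_2)$ over monic $M_1,M_2$ of degree $m$ with $M_1\equiv aM_2\ (\modulus F)$ and $(M_2,F)=1$; each fixed $M_1$, and each fixed $M_2$, has $\ll q^{m-\degree F}$ admissible partners, so Cauchy--Schwarz together with $\sum_{\degree M=m}d(M)^2\ll q^m m^3$ yields $\ll q^{2m-\degree F}m^3\ll q^{z_1+z_2}(z_1+z_2)^3/\phi(F)$. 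Combining the three cases gives the stated bound. The only real obstacle is the bookkeeping: at each substitution one must check that the extracted polynomial is monic (or a fixed unit times a monic), that the hypotheses $\degree G<(1-\alpha)y$ and $\beta\degree X<y\leq\degree X$ of the Brun--Titchmarsh inputs hold with admissible $\alpha,\beta$ throughout each subrange of $\degree KF$, and that the coprimality conditions $(N,F)=1$ survive; none of this is deep, but it is where mishandling the unit $a$ would cause trouble.
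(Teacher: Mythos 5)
Your proposal is correct and, in fact, more careful than the paper's one-sentence proof. Cases (i) and (ii) proceed exactly as the paper intends, invoking Theorem~\ref{Brun-Titschmarsh theorem, divisor function case in function field, y equality version} and Lemma~\ref{Double divisor sum}, which carry a general unit $a$ precisely for this purpose. The divergence is case (iii). The paper dismisses the equal-degree case $\degree AC = \degree BD$ as impossible on the grounds that $AC$ is monic while $aBD$ has leading coefficient $a \neq 1$; but that argument only rules out the subcase $\degree(AC - aBD) < m$ (the analogue of Step~2.3 in the proof of Lemma~\ref{Lemma, off diagonal z_1 z_2 sum}), not the equal-degree case itself, which in fact has $\degree(AC - aBD) = m$ and is non-empty whenever $m = (z_1+z_2)/2 \geq \degree F$. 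You handle this residual case correctly: it is vacuous in the first regime (there $m \leq \tfrac{19}{20}\degree F < \degree F$), and in the second regime your second-moment/Cauchy--Schwarz bound via $\sum_{\degree M = m} d(M)^2 \ll q^m m^3$ yields $\ll q^{2m - \degree F} m^3 \ll q^{z_1+z_2}(z_1+z_2)^3/\phi(F)$, which is what the lemma requires. So your write-up supplies a step that the paper's proof, as worded, does not actually justify (though the lemma's statement remains true). The only small caveat is that the estimate $\sum_{\degree M=m}d(M)^2\ll q^m m^3$, while standard (the Dirichlet series is $\zeta_{\mathcal A}(s)^4/\zeta_{\mathcal A}(2s)$, a fourth-order pole at $s=1$), is not proved in the paper and would need a line of justification.
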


\begin{proof}
The case where $a=1$ is just Lemma \ref{Lemma, off diagonal z_1 z_2 sum}. The proof of the case where $a \neq 1$  is very similar to the proof of Lemma \ref{Lemma, off diagonal z_1 z_2 sum}. In fact it is easier, because the the case where $\degree AC = \degree BD$ cannot exist: We would require that $AC$ and $BD$ are both monic, but also require that at least one of $AC$ and $BD$ have leading coefficient equal to $a \neq 1$.
\end{proof}

\begin{proposition} \label{Proposition, off diagonal leq z_R terms}
Let $R \in \mathcal{M}$ and define $z_R := \degree R - \log_q 2^{\omega (R)}$. Also, let $a \in \big( \mathbb{F}_q \big)^*$. Then, 

\begin{align*}
\sum_{EF=R} \mu (E) \phi (F) \sum_{\substack{A,B,C,D \in \mathcal{M} \\ \degree AB \leq z_R \\ \degree CD \leq z_R \\ (ABCD,R)=1 \\ AC \equiv aBD (\modulus F) \\ AC \neq BD}} \frac{1}{\lvert ABCD \rvert^{\frac{1}{2}}}
\ll \lvert R \rvert \big( \degree R \big)^3 .
\end{align*}
\end{proposition}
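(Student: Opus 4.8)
The plan is to reduce the whole expression, organised by the degrees $z_1 := \degree AB$ and $z_2 := \degree CD$, to a direct application of Lemma \ref{Lemma, off diagonal z_1 z_2 sum with alpha}. First I would note that since $F \mid R$ the condition $(ABCD,R)=1$ implies $(ABCD,F)=1$, so replacing the former by the latter only enlarges the (non-negative) inner sum. Then, writing $\lvert ABCD \rvert^{-1/2} = q^{-(z_1+z_2)/2}$ and taking absolute values, it suffices to bound
\begin{align*}
\sum_{EF=R} \lvert \mu(E) \rvert \, \phi(F) \sum_{0 \leq z_1 , z_2 \leq z_R} \frac{1}{q^{\frac{z_1+z_2}{2}}} \sum_{\substack{A,B,C,D \in \mathcal{M} \\ \degree AB = z_1, \ \degree CD = z_2 \\ (ABCD,F)=1 \\ AC \equiv aBD (\modulus F), \ AC \neq BD}} 1 .
\end{align*}
For each triple $(z_1,z_2,F)$ I would then apply Lemma \ref{Lemma, off diagonal z_1 z_2 sum with alpha}, splitting the range of $(z_1,z_2)$ according to whether $z_1 + z_2 \leq \tfrac{19}{10}\degree F$ or $z_1 + z_2 > \tfrac{19}{10}\degree F$.

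In the range $z_1 + z_2 > \tfrac{19}{10}\degree F$ the lemma gives an inner count $\ll q^{z_1+z_2}(z_1+z_2)^3 / \phi(F)$, so the factor $\phi(F)$ cancels against the $\phi(F)$ in the outer sum. Bounding $(z_1+z_2)^3 \ll (\degree R)^3$, summing the geometric series $\sum_{z_1,z_2 \leq z_R} q^{(z_1+z_2)/2} \ll q^{z_R}$, and using $\sum_{EF=R}\lvert\mu(E)\rvert = 2^{\omega(R)}$, this part is
\begin{align*}
\ll \sum_{EF=R} \lvert \mu(E) \rvert \, \phi(F) \sum_{\substack{0 \leq z_1,z_2 \leq z_R \\ z_1+z_2 > \frac{19}{10}\degree F}} \frac{q^{z_1+z_2}(z_1+z_2)^3}{q^{\frac{z_1+z_2}{2}}\phi(F)}
\ll (\degree R)^3 \, q^{z_R} \, 2^{\omega(R)} .
\end{align*}
The crucial point, which is essentially built into the statement, is that the truncation level $z_R = \degree R - \log_q 2^{\omega(R)}$ has been calibrated precisely so that $q^{z_R} \, 2^{\omega(R)} = \lvert R \rvert$, giving the claimed bound $\ll \lvert R \rvert (\degree R)^3$ for this range.

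In the complementary range $z_1 + z_2 \leq \tfrac{19}{10}\degree F$ the lemma gives an inner count $\ll_\epsilon q^{(1+\epsilon)(z_1+z_2)} / \lvert F \rvert$, so after multiplying by $q^{-(z_1+z_2)/2}$ and summing over the admissible $(z_1,z_2)$ the inner double sum is $\ll_\epsilon (\degree F)\, q^{(\frac12+\epsilon)\frac{19}{10}\degree F} / \lvert F \rvert$. Since $\tfrac{19}{10}\cdot\tfrac12 = \tfrac{19}{20} < 1$, choosing $\epsilon$ small enough makes this $\ll q^{-c\,\degree F}(\degree F)$ for a fixed $c>0$. Multiplying by $\lvert \mu(E) \rvert\,\phi(F) \leq \lvert \mu(E) \rvert\,\lvert F \rvert$ and summing over $EF=R$ gives a contribution $\ll 2^{\omega(R)} \lvert R \rvert^{1-c}\,\degree R$, which is $o\big(\lvert R \rvert (\degree R)^3\big)$ because $2^{\omega(R)} = \lvert R \rvert^{o(1)}$ (by the bound $\omega(R) \ll \degree R / \log \degree R$ coming from Lemma \ref{Omega function upper bound}). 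Combining the two ranges proves the proposition.

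I do not expect a genuine obstacle here: the substantive work is contained in Lemma \ref{Lemma, off diagonal z_1 z_2 sum with alpha} (and ultimately in the Brun--Titchmarsh estimate, Theorem \ref{Brun-Titschmarsh theorem, divisor function case in function field}), while the present argument is assembly. The only points needing a little care are choosing $\epsilon$ small enough in the second range so that the $\epsilon$-loss in the divisor bound is beaten by the $q^{-\frac{1}{20}\degree F}$ saving, and recognising that the dominant term comes out to be exactly $\lvert R \rvert (\degree R)^3$ because $q^{z_R}$ is matched against $\sum_{EF=R}\lvert\mu(E)\rvert = 2^{\omega(R)}$.
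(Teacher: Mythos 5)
Your proposal is correct and follows essentially the same route as the paper: both apply Lemma \ref{Lemma, off diagonal z_1 z_2 sum with alpha} directly, split by the threshold $z_1 + z_2 \lessgtr \tfrac{19}{10}\degree F$, and exploit the calibration $q^{z_R}\,2^{\omega(R)} = \lvert R\rvert$ together with the cancellation of $\phi(F)$ against the outer weight. The only cosmetic differences are in the bookkeeping of the range $z_1+z_2 \leq \tfrac{19}{10}\degree F$, where the paper bounds the contribution by $\sum_{EF=R}\lvert\mu(E)\rvert\phi(F) = \lvert R\rvert$ while you use $2^{\omega(R)}\lvert R\rvert^{1-c}\degree R$; both are $o\bigl(\lvert R\rvert(\degree R)^3\bigr)$.
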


\begin{proof}
We apply Lemma \ref{Lemma, off diagonal z_1 z_2 sum with alpha} with $\epsilon = \frac{1}{50}$ to deduce that

\begin{align*}
\sum_{\substack{A,B,C,D \in \mathcal{M} \\ \degree AB \leq z_R \\ \degree CD \leq z_R \\ (ABCD,R)=1 \\ AC \equiv aBD (\modulus F) \\ AC \neq BD}} \frac{1}{\lvert ABCD \rvert^{\frac{1}{2}}}
\ll &\frac{1}{\lvert F \rvert} \sum_{\substack{z_1 , z_2 \leq z_R \\ z_1 + z_2 \leq \frac{19}{10} \degree F}} \Big( q^{z_1} q^{z_2} \Big)^{\frac{1}{2} + \epsilon}
+ \frac{1}{\phi (F)} \sum_{\substack{z_1 , z_2 \leq z_R \\ \frac{19}{10} \degree F < z_1 + z_2 \leq 2 \degree R}} q^{\frac{z_1}{2}} q^{\frac{z_2}{2}} (z_1 + z_2)^3 \\
\ll &\frac{1}{\lvert F \rvert^{\frac{1}{20} - 2 \epsilon}}
+ \frac{1}{\phi (F)} \big( \degree R \big)^3 \sum_{z_1 < z_R} \sum_{z_2 < z_R} q^{\frac{z_1}{2}} q^{\frac{z_2}{2}} \\
\ll &\frac{1}{\lvert F \rvert^{\frac{1}{20} - 2 \epsilon}}
+ \frac{1}{\lvert F \rvert} q^{z_R} \big( \degree R \big)^3 .
\end{align*}
So,

\begin{align*}
\sum_{EF=R} \mu (E) \phi (F) \sum_{\substack{A,B,C,D \in \mathcal{M} \\ \degree AB \leq z_R \\ \degree CD \leq z_R \\ (ABCD,R)=1 \\ AC \equiv BD (\modulus F) \\ AC \neq BD}} \frac{1}{\lvert ABCD \rvert^{\frac{1}{2}}}
\ll & q^{z_R} \big( \degree R \big)^3 \sum_{EF=R} \lvert \mu (E) \rvert + \sum_{EF=R} \lvert \mu (E) \rvert \frac{\phi (F)}{\lvert F \rvert^{\frac{1}{20} - 2 \epsilon}} \\
\ll &q^{z_R} \big( \degree R \big)^3 2^{\omega (R)} + \lvert R \rvert \\
\ll &\lvert R \rvert \big( \degree R \big)^3 ,
\end{align*}
where the second-to-last relation uses the following:

\begin{align*}
\sum_{EF=R} \lvert \mu (E) \rvert \frac{\phi (F)}{\lvert F \rvert^{\frac{1}{20} - 2 \epsilon}}
\leq &\sum_{EF=R} \lvert \mu (E) \rvert \phi (F)
= \phi (R) \sum_{EF=R} \lvert \mu (E) \rvert \bigg( \prod_{\substack{P \mid E \\ P^2 \mid R}} \frac{1}{\lvert P \rvert} \bigg) \bigg( \prod_{\substack{P \mid E \\ P^2 \nmid R}} \frac{1}{\lvert P \rvert -1} \bigg) \\
\leq & \phi (R) \sum_{EF=R} \lvert \mu (E) \rvert \prod_{P \mid E} \frac{1}{\lvert P \rvert -1}
= \phi (R) \prod_{P \mid R} 1 + \frac{1}{\lvert P \rvert -1}
= \phi (R) \frac{\lvert R \rvert}{\phi (R)}
= \lvert R \rvert.
\end{align*}
\end{proof}

%%%%%%%%%%%%%%%%%%%%%
%%%%%%%%%%%%%%%%%%%%%

\section{The Fourth Moment}

\noindent We now proceed to prove Theorem \ref{Fourth primitive moment function fields statement}. In the proof we implicitly state that some terms are of lower order than the main term and that is easy to check. We do not give the justification explicitly, although all the results one needs for a rigorous justification are given in Section \ref{Section, Results on Well-known Multiplicative Functions}.

\begin{proof}[Proof of Theorem \ref{Fourth primitive moment function fields statement}]
Let $\chi$ be a Dirichlet character of modulus $R$. By Propositions \ref{Proposition, short sum for odd squared L-function} and \ref{Proposition, short sum for even squared L-function}, we have that

\begin{align*}
\Big\lvert L \Big( \frac{1}{2} , \chi \Big) \Big\lvert^2
= 2 \sum_{\substack{A,B \in \mathcal{M} \\ \degree AB < \degree R}} \frac{\chi (A) \conj\chi (B)}{\lvert AB \rvert^{\frac{1}{2}}} + c (\chi ) 
= 2 a(\chi ) + 2 b(\chi ) + c (\chi ) ,
\end{align*}
where

\begin{align*}
z_R := &\degree R - \log_q (2^{\omega (Q)}) , \\
a (\chi) := &\sum_{\substack{A,B \in \mathcal{M} \\ \degree AB \leq z_R }} \frac{\chi (A) \conj\chi (B)}{\lvert AB \rvert^{\frac{1}{2}}} , \\
b (\chi) := &\sum_{\substack{A,B \in \mathcal{M} \\  z_R < \degree AB < \degree R}} \frac{\chi (A) \conj\chi (B)}{\lvert AB \rvert^{\frac{1}{2}}} ,
\end{align*}
and $c(\chi )$ is defined as in (\ref{Definition of c (chi)}). Then,

\begin{align*}
\sumstar_{\chi \modulus R} \Big\lvert L \Big( \frac{1}{2} , \chi \Big) \Big\lvert^4
= \sumstar_{\chi \modulus R} \Big( 2 a(\chi ) + 2 b(\chi ) + c (\chi ) \Big)^2 .
\end{align*}
We will show that $\sumstar_{\chi \modulus R} \lvert a (\chi ) \rvert^2$ has an asymptotic main term of higher order than $\sumstar_{\chi \modulus R} \lvert b (\chi ) \rvert^2$ and $\sumstar_{\chi \modulus R} \lvert c (\chi ) \rvert^2$. From this and the Cauchy-Schwarz inequality, we deduce that $\sumstar_{\chi \modulus R} \lvert a (\chi ) \rvert^2$ gives the leading term in the asymptotic formula. \\

\noindent \textbf{\underline{STEP 1:}} We have that

\begin{align*}
\sumstar_{\chi \modulus R} \lvert a (\chi ) \rvert^2
= &\sumstar_{\chi \modulus R} \sum_{\substack{A,B,C,D \in \mathcal{M} \\ \degree AB \leq z_R \\  \degree CD \leq z_R}} \frac{\chi (AC) \conj\chi (BD)}{\lvert ABCD \rvert^{\frac{1}{2}}}
= \sum_{\substack{A,B,C,D \in \mathcal{M} \\ \degree AB \leq z_R \\  \degree CD \leq z_R }} \frac{1}{\lvert ABCD \rvert^{\frac{1}{2}}} \sumstar_{\chi \modulus R} \chi (AC) \conj\chi (BD)\\
= &\sum_{\substack{A,B,C,D \in \mathcal{M} \\ \degree AB \leq z_R \\  \degree CD \leq z_R \\ (ABCD,R) = 1}} \frac{1}{\lvert ABCD \rvert^{\frac{1}{2}}} \sum_{\substack{ EF = R \\ F \mid (AC - BD)}} \mu (E) \phi (F) ,
\end{align*}
where the last equality follows from Proposition \ref{Primitive chracter sum, mobius inversion}. Continuing,

\begin{align}
\begin{split} \label{alpha chi primitive sum split into diagonal and off diagonal terms}
&\sumstar_{\chi \modulus R} \lvert a (\chi ) \rvert^2 = \sum_{EF = R} \mu (E) \phi (F) \sum_{\substack{A,B,C,D \in \mathcal{M} \\ \degree AB \leq z_R \\  \degree CD \leq z_R \\ (ABCD,R) = 1 \\  F \mid (AC - BD)}} \frac{1}{\lvert ABCD \rvert^{\frac{1}{2}}} \\
= &\sum_{EF = R} \mu (E) \phi (F) \sum_{\substack{A,B,C,D \in \mathcal{M} \\ \degree AB \leq z_R \\  \degree CD \leq z_R \\ (ABCD,R) = 1 \\  F \mid (AC - BD) \\ AC = BD}} \frac{1}{\lvert ABCD \rvert^{\frac{1}{2}}}
+ \sum_{EF = R} \mu (E) \phi (F) \sum_{\substack{A,B,C,D \in \mathcal{M} \\ \degree AB \leq z_R \\  \degree CD \leq z_R \\ (ABCD,R) = 1 \\  F \mid (AC - BD) \\ AC \neq BD}} \frac{1}{\lvert ABCD \rvert^{\frac{1}{2}}} \\
= &\bigg( \sum_{EF = R} \mu (E) \phi (F) \bigg) \bigg( \sum_{\substack{A,B,C,D \in \mathcal{M} \\ \degree AB \leq z_R \\  \degree CD \leq z_R \\ (ABCD,R) = 1 \\ AC = BD}} \frac{1}{\lvert ABCD \rvert^{\frac{1}{2}}} \bigg)
+ \sum_{EF = R} \mu (E) \phi (F) \sum_{\substack{A,B,C,D \in \mathcal{M} \\ \degree AB \leq z_R \\  \degree CD \leq z_R \\ (ABCD,R) = 1 \\  F \mid (AC - BD) \\ AC \neq BD}} \frac{1}{\lvert ABCD \rvert^{\frac{1}{2}}}
\end{split}
\end{align}

\noindent \textbf{\underline{STEP 1.1:}} We will look at the first term on the far-RHS. Since $AC = BD$, we can write $A = FU , B = FV , C = GV , D = GU$, where $F,G,U,V$ are monic and $U,V$ are coprime. Let us write $N = UV$, and note that there are $2^{\omega (N)}$ ways of writing $N = UV$ with $U,V$ being coprime. Then,

\begin{align}
\begin{split} \label{Sum of A,B,C,D monic , deg AB, deg CD < Z , ABCD coprime to Q , AC=BD to FGRS}
&\sum_{\substack{A,B,C,D \in \mathcal{M} \\ \degree AB \leq z_R \\  \degree CD \leq z_R \\ (ABCD,R) = 1 \\ AC = BD}} \frac{1}{\lvert ABCD \rvert^{\frac{1}{2}}} \\
= &\sum_{\substack{F,G,U,V \in \mathcal{M} \\ (U,V)=1 \\ \degree F^2 UV \leq z_R \\  \degree G^2 UV \leq z_R \\ (FGUV,R) = 1 }} \frac{1}{\lvert FGUV \rvert}
= \sum_{\substack{N \in \mathcal{M} \\ \degree N \leq z_R \\ (N,R) = 1}} \frac{2^{\omega(N)}}{\lvert N \rvert} \bigg( \sum_{\substack{ F \in \mathcal{M} \\ \degree F \leq \frac{z_R -\degree N}{2} \\ (F , R) = 1}} \frac{1}{\lvert F \rvert} \bigg)^2 \\
= &\sum_{\substack{N \in \mathcal{M} \\ \degree N \leq {z_R}' \\ (N,R) = 1}} \frac{2^{\omega(N)}}{\lvert N \rvert} \bigg( \sum_{\substack{ F \in \mathcal{M} \\ \degree F \leq \frac{z_R -\degree N}{2} \\ (F , R) = 1}} \frac{1}{\lvert F \rvert} \bigg)^2
+ \sum_{\substack{N \in \mathcal{M} \\ {z_R}' < \degree N \leq z_R \\ (N,R) = 1}} \frac{2^{\omega(N)}}{\lvert N \rvert} \bigg( \sum_{\substack{ F \in \mathcal{M} \\ \degree F \leq \frac{z_R -\degree N}{2} \\ (F , R) = 1}} \frac{1}{\lvert F \rvert} \bigg)^2 ,
\end{split}
\end{align}
where ${z_R}' := \degree R - \log_q 9^{\omega (R)}$.

Let us look at the first term on the far-RHS of (\ref{Sum of A,B,C,D monic , deg AB, deg CD < Z , ABCD coprime to Q , AC=BD to FGRS}). We apply Proposition \ref{Sum over (A,R)=1, deg A <= x of 1/A}. When $x = \frac{z_R - \degree N}{2}$ and $\degree N \leq {z_R}'$, we have that $\frac{2^{\omega (R)} x}{q^x} = O(1)$. Hence,

\begin{align}
\begin{split} \label{degree N leq z_R' sum}
&\sum_{\substack{N \in \mathcal{M} \\ \degree N \leq {z_R}' \\ (N,R) = 1}} \frac{2^{\omega(N)}}{\lvert N \rvert} \bigg( \sum_{\substack{ F \in \mathcal{M} \\ \degree F \leq \frac{z_R -\degree N}{2} \\ (F , R) = 1}} \frac{1}{\lvert F \rvert} \bigg)^2 \\
= &\bigg( \frac{\Phi (R)}{2 \lvert R \rvert} \bigg)^2 \sum_{\substack{N \in \mathcal{M}\\ \degree N \leq {z_R}' \\ (N,R) = 1}} \frac{2^{\omega(N)}}{\lvert N \rvert} \Bigg( {z_R}' - \degree N + O \big( \log \omega (R) \big) \Bigg)^2 \\
= &\bigg( \frac{\Phi (R)}{2 \lvert R \rvert} \bigg)^2 \sum_{\substack{N \in \mathcal{M} \\ \degree N \leq {z_R}' \\ (N,R) = 1}} \frac{2^{\omega(N)}}{\lvert N \rvert} \Bigg( \big( {z_R}' - \degree N \big)^2 + O \big( \degree R \log \omega (R) \big) \Bigg) \\
= &\frac{1-q^{-1}}{48} \prod_{\substack{P \text{ prime} \\ P \mid R}} \bigg( \frac{ \big( 1 - \lvert P \rvert^{-1} \big)^3}{1 + \lvert P \rvert^{-1}} \bigg) (\degree R)^4 \\
&\quad \quad \quad + O \Bigg( \bigg( \prod_{\substack{P \text{ prime} \\ P \mid R}}  \frac{\big( 1 - \lvert P \rvert^{-1} \big)^3 }{1 + \lvert P \rvert^{-1}} \bigg) \Big( (\degree R)^3 \omega (R)+ (\degree R)^3 \log \degree R \Big) \Bigg) ,
\end{split}
\end{align}
where the last equality follows from Proposition \ref{Proposition Sum of 2^omega(N) log(|Q|/|N|)^2 / |N|} and Lemma \ref{Lemma, Sum of 2^omega(N) / |N| with (N,R)=1}. \\

\noindent Now we look at the second term on the far-RHS of (\ref{Sum of A,B,C,D monic , deg AB, deg CD < Z , ABCD coprime to Q , AC=BD to FGRS}). Because ${z_R}' < \degree N \leq z_R$, we have that $\degree F \leq \log_q \big( \frac{3}{\sqrt{2}} \big)^{\omega (R)} $. Using this and Proposition \ref{Sum over (A,R)=1, deg A <= x of 1/A}, we have that

\begin{align*}
\sum_{\substack{ F \in \mathcal{M} \\ \degree F \leq \frac{z_R -\degree N}{2} \\ (F , R) = 1}} \frac{1}{\lvert F \rvert}
\leq \sum_{\substack{ F \in \mathcal{M} \\ \degree F \leq \log_q \big( \frac{3}{\sqrt{2}}^{\omega (R)} \big) \\ (F , R) = 1}} \frac{1}{\lvert F \rvert}
\ll \frac{\phi (R)}{\lvert R \rvert} \omega (R) .
\end{align*}
Also, by similar means as in Lemma \ref{Proposition Sum of 2^omega(N) / |N|}, we can see that

\begin{align*}
\sum_{\substack{N \in \mathcal{M} \\ {z_R}' \leq \degree N \leq z_R \\ (N,R) = 1}} \frac{2^{\omega(N)}}{\lvert N \rvert}
\leq \sum_{\substack{N \in \mathcal{M} \\ {z_R}' \leq \degree N \leq z_R }} \frac{2^{\omega(N)}}{\lvert N \rvert}
\ll \omega(R) \degree R .
\end{align*}
Hence,

\begin{align} \label{z_R' leq degree N leq z_R' sum}
\sum_{\substack{N \in \mathcal{M} \\ {z_R}' \leq \degree N \leq z_R \\ (N,R) = 1}} \frac{2^{\omega(N)}}{\lvert N \rvert} \bigg( \sum_{\substack{ F \in \mathcal{M} \\ \degree F \leq \frac{z_R -\degree N}{2} \\ (F , R) = 1}} \frac{1}{\lvert F \rvert} \bigg)^2
\ll \bigg( \frac{\phi (R)}{\lvert R \rvert} \bigg)^2 \big( \omega (R) \big)^3 \degree R
\end{align}

\noindent By (\ref{Sum of A,B,C,D monic , deg AB, deg CD < Z , ABCD coprime to Q , AC=BD to FGRS}), (\ref{degree N leq z_R' sum}) and (\ref{z_R' leq degree N leq z_R' sum}), we have that

\begin{align*}
&\bigg( \sum_{EF = R} \mu (E) \phi (F) \bigg) \bigg( \sum_{\substack{A,B,C,D \in \mathcal{M} \\ \degree AB \leq z_R \\  \degree CD \leq z_R \\ (ABCD,R) = 1 \\ AC = BD}} \frac{1}{\lvert ABCD \rvert^{\frac{1}{2}}} \bigg)\\
= &\frac{1-q^{-1}}{48} \phi^* (R) \prod_{\substack{P \text{ prime} \\ P \mid R}} \bigg( \frac{ \big( 1 - \lvert P \rvert^{-1} \big)^3}{1 + \lvert P \rvert^{-1}} \bigg) (\degree R)^4 \\
&\quad \quad \quad + O \Bigg( \phi^* (R) \bigg( \prod_{\substack{P \text{ prime} \\ P \mid R}}  \frac{\big( 1 - \lvert P \rvert^{-1} \big)^3 }{1 + \lvert P \rvert^{-1}} \bigg) \Big( (\degree R)^3 \omega (R)+ (\degree R)^3 \log \degree R \Big) \Bigg) . \\
\end{align*}

\noindent \textbf{\underline{STEP 1.2:}}
For the second term on the far-RHS of (\ref{alpha chi primitive sum split into diagonal and off diagonal terms}) we simply apply Proposition \ref{Proposition, off diagonal leq z_R terms}. From this, Step 1.1, and (\ref{alpha chi primitive sum split into diagonal and off diagonal terms}), we deduce that

\begin{align*}
&\sumstar_{\chi \modulus R} \lvert a (\chi ) \rvert^2 \\
= &\frac{1-q^{-1}}{48} \phi^* (R) \prod_{\substack{P \text{ prime} \\ P \mid R}} \bigg( \frac{ \big( 1 - \lvert P \rvert^{-1} \big)^3}{1 + \lvert P \rvert^{-1}} \bigg) (\degree R)^4 \\
&\quad \quad \quad + O \Bigg( \phi^* (R) \bigg( \prod_{\substack{P \text{ prime} \\ P \mid R}}  \frac{\big( 1 - \lvert P \rvert^{-1} \big)^3 }{1 + \lvert P \rvert^{-1}} \bigg) \Big( (\degree R)^3 \omega (R)+ (\degree R)^3 \log \degree R \Big) \Bigg) . \\
\end{align*}

\noindent \textbf{\underline{STEP 2:}} We will now look at $\sumstar_{\chi (\modulus R)} \lvert b (\chi) \rvert^2$. We have that

\begin{align}
\begin{split} \label{ B(X)^2 sum split into Equal and Nonequal} 
\sumstar_{\chi \modulus R} \lvert b (\chi) \rvert^2
\leq  &\sum_{\chi \modulus R} \lvert b (\chi) \rvert^2
= \phi (R) \sum_{\substack{A,B,C,D \in \mathcal{M} \\ z_R < \degree AB < \degree R \\ z_R < \degree CD <\degree R \\ (ABCD,R) = 1 \\ AC \equiv BD (\modulus R)}} \frac{1}{\lvert ABCD \rvert^{\frac{1}{2}}}\\
= &\phi (R) \sum_{\substack{A,B,C,D \in \mathcal{M} \\ z_R < \degree AB < \degree R \\ z_R < \degree CD <\degree R \\ (ABCD,R) = 1 \\ AC = BD}} \frac{1}{\lvert ABCD \rvert^{\frac{1}{2}}}
+ \phi (R) \sum_{\substack{A,B,C,D \in \mathcal{M} \\ z_R < \degree AB < \degree R \\ z_R < \degree CD <\degree R \\ (ABCD,R) = 1 \\ AC \equiv BD (\modulus R) \\ AC \neq BD}} \frac{1}{\lvert ABCD \rvert^{\frac{1}{2}}}
\end{split}
\end{align}

\noindent \textbf{\underline{STEP 2.1:}} Looking at the first term on the far-RHS, we apply the same technique as in (\ref{Sum of A,B,C,D monic , deg AB, deg CD < Z , ABCD coprime to Q , AC=BD to FGRS}) to obtain

\begin{align}
\begin{split} \label{z_R < deg AB, split with z_R'}
&\phi (R) \sum_{\substack{A,B,C,D \in \mathcal{M} \\ z_R < \degree AB < \degree R \\ z_R < \degree CD <\degree R \\ (ABCD,R) = 1 \\ AC = BD}} \frac{1}{\lvert ABCD \rvert^{\frac{1}{2}}} \\
= &\phi (R) \sum_{\substack{N \text{ monic} \\ \degree N < \degree R \\ (N,R) = 1}} \frac{2^{\omega(N)}}{\lvert N \rvert} \bigg( \sum_{\substack{ F \text{ monic} \\ \frac{z_R -\degree N}{2} < \degree F < \frac{\degree R -\degree N}{2} \\ (F , R) = 1}} \frac{1}{\lvert F \rvert} \bigg)^2 \\
\leq &\phi (R) \sum_{\substack{N \text{ monic} \\ \degree N \leq {z_R}' \\ (N,R) = 1}} \frac{2^{\omega(N)}}{\lvert N \rvert} \bigg( \sum_{\substack{ F \text{ monic} \\ \frac{z_R -\degree N}{2} < \degree F < \frac{\degree R -\degree N}{2} \\ (F , R) = 1}} \frac{1}{\lvert F \rvert} \bigg)^2
+ \phi (R) \sum_{\substack{N \text{ monic} \\ {z_R}' <\degree N < \degree R \\ (N,R) = 1}} \frac{2^{\omega(N)}}{\lvert N \rvert} \bigg( \sum_{\substack{ F \text{ monic} \\ \degree F < \frac{\degree R -\degree N}{2} \\ (F , R) = 1}} \frac{1}{\lvert F \rvert} \bigg)^2 ,
\end{split}
\end{align}
where ${z_R}' := \degree R - \log_q 9^{\omega (R)}$.\\

\noindent We look at the first term on the far-RHS:

\begin{align*}
&\phi (R) \sum_{\substack{N \text{ monic} \\ \degree N \leq {z_R}' \\ (N,R) = 1}} \frac{2^{\omega(N)}}{\lvert N \rvert} \bigg( \sum_{\substack{ F \text{ monic} \\ \frac{z_R -\degree N}{2} < \degree F < \frac{\degree R -\degree N}{2} \\ (F , R) = 1}} \frac{1}{\lvert F \rvert} \bigg)^2 \\
= &\phi (R) \sum_{\substack{N \text{ monic} \\ \degree N \leq {z_R}' \\ (N,R) = 1}} \frac{2^{\omega(N)}}{\lvert N \rvert} \bigg( \sum_{\substack{ F \text{ monic} \\ \degree F < \frac{\degree R -\degree N}{2} \\ (F , R) = 1}} \frac{1}{\lvert F \rvert} - \sum_{\substack{ F \text{ monic} \\ \degree F \leq \frac{z_R -\degree N}{2} \\ (F , R) = 1}} \frac{1}{\lvert F \rvert} \bigg)^2 \\
\ll &\phi (R) \sum_{\substack{N \text{ monic} \\ \degree N \leq {z_R}' \\ (N,R) = 1}} \frac{2^{\omega(N)}}{\lvert N \rvert} \bigg( \frac{\phi (R)}{\lvert R \rvert} \omega (R) + \log \omega (R) \bigg)^2 \\
\ll &\lvert R \rvert \bigg( \frac{\phi (R)}{\lvert R \rvert} \bigg)^3 \big( \omega (R) \big)^2 \sum_{\substack{N \text{ monic} \\ \degree N \leq {z_R}'' \\ (N,R) = 1}} \frac{2^{\omega(N)}}{\lvert N \rvert} \\
\ll &\lvert R \rvert \bigg( \frac{\phi (R)}{\lvert R \rvert} \bigg)^3 \big( \omega (R) \big)^2 \bigg( \prod_{P \mid R} \frac{1 - \lvert P \rvert^{-1}}{1 + \lvert P \rvert^{-1}} \bigg) \big( \degree R \big)^2 ,
\end{align*}
where for the second relation we applied Proposition \ref{Sum over (A,R)=1, deg A <= x of 1/A} twice. For the use of this proposition one may wish to note that, because $\degree N \leq {z_R}'$, we have that $\frac{\degree R -\degree N}{2} \geq \frac{z_R -\degree N}{2} \geq \log_q \big( \frac{3}{\sqrt{2}}\big)^{\omega (R)}$, and so when $x = \frac{\degree R -\degree N}{2}$ or $x = \frac{z_R -\degree N}{2}$ we have that $\frac{2^{\omega (R)} x}{q^x} = O (1)$. For the last relation we applied Lemma \ref{Lemma, Sum of 2^omega(N) / |N| with (N,R)=1}. \\

\noindent Now we look at the second term on the far-RHS of (\ref{z_R < deg AB, split with z_R'}). Because ${z_R}' <\degree N < \degree R$, we have that $\frac{\degree R - \degree N}{2} < \log_q 9^{\frac{\omega (R)}{2}}$. Hence,

\begin{align*}
\phi (R) \sum_{\substack{N \text{ monic} \\ {z_R}' <\degree N < \degree R \\ (N,R) = 1}} \frac{2^{\omega(N)}}{\lvert N \rvert} \bigg( \sum_{\substack{ F \text{ monic} \\ \degree F < \frac{\degree R -\degree N}{2} \\ (F , R) = 1}} \frac{1}{\lvert F \rvert} \bigg)^2
\leq &\phi (R) \sum_{\substack{N \text{ monic} \\ \degree N < \degree R \\ (N,R) = 1}} \frac{2^{\omega(N)}}{\lvert N \rvert} \bigg( \sum_{\substack{ F \text{ monic} \\ \degree F < \log_q 9^{\frac{\omega (R)}{2}} \\ (F , R) = 1}} \frac{1}{\lvert F \rvert} \bigg)^2 \\
\ll &\lvert R \rvert \bigg( \frac{\phi (R)}{\lvert R \rvert} \bigg)^3 \big( \omega (R) \big)^2 \sum_{\substack{N \text{ monic} \\ \degree N \leq {z_R}' \\ (N,R) = 1}} \frac{2^{\omega(N)}}{\lvert N \rvert} \\
\ll &\lvert R \rvert \bigg( \frac{\phi (R)}{\lvert R \rvert} \bigg)^3 \big( \omega (R) \big)^2 \bigg( \prod_{P \mid R} \frac{1 - \lvert P \rvert^{-1}}{1 + \lvert P \rvert^{-1}} \bigg) \big( \degree R \big)^2 ,
\end{align*}
where, again, we have used Propositions \ref{Sum over (A,R)=1, deg A <= x of 1/A} and Lemma \ref{Lemma, Sum of 2^omega(N) / |N| with (N,R)=1}. \\

\noindent Hence,

\begin{align*}
\phi (R) \sum_{\substack{A,B,C,D \in \mathcal{M} \\ z_R < \degree AB < \degree R \\ z_R < \degree CD <\degree R \\ (ABCD,R) = 1 \\ AC = BD}} \frac{1}{\lvert ABCD \rvert^{\frac{1}{2}}}
\ll &\lvert R \rvert \bigg( \frac{\phi (R)}{\lvert R \rvert} \bigg)^3 \big( \omega (R) \big)^2 \bigg( \prod_{P \mid R} \frac{1 - \lvert P \rvert^{-1}}{1 + \lvert P \rvert^{-1}} \bigg) \big( \degree R \big)^2 \\
\ll & \phi^* (R) \bigg( \prod_{\substack{P \text{ prime} \\ P \mid R}}  \frac{\big( 1 - \lvert P \rvert^{-1} \big)^3 }{1 + \lvert P \rvert^{-1}} \bigg) (\degree R)^3 \omega (R) .
\end{align*}

\noindent \textbf{\underline{STEP 2.2:}} We now look at the second term on the far right-hand-side of (\ref{ B(X)^2 sum split into Equal and Nonequal}):

\begin{align*}
\phi (R) \sum_{\substack{A,B,C,D \text{ monic} \\ z_R < \degree AB < \degree R \\ z_R < \degree CD < \degree R \\ (ABCD,R) = 1 \\ AC \equiv BD (\modulus R) \\ AC \neq BD}} \frac{1}{\lvert ABCD \rvert^{\frac{1}{2}}}
= & \phi (R) \sum_{z_R < z_1 , z_2 < \degree R} \frac{1}{(q^{z_1 + z_2})^{\frac{1}{2}}} \sum_{\substack{A,B,C,D \text{ monic} \\ \degree AB = z_1 \\ \degree CD = z_2 \\ (ABCD,R) = 1 \\ AC \equiv BD (\modulus R) \\ AC \neq BD}} 1\\
= & \phi (R) \frac{1}{\phi (R)} \sum_{z_R < z_1 , z_2 < \degree R} q^{\frac{z_1 + z_2}{2}} (z_1 + z_2)^3. \\
\ll & \lvert R \rvert \big( \degree R \big)^3
\ll \phi^* (R) \bigg( \prod_{\substack{P \text{ prime} \\ P \mid R}}  \frac{\big( 1 - \lvert P \rvert^{-1} \big)^3 }{1 + \lvert P \rvert^{-1}} \bigg) (\degree R)^3 \omega (R)
\end{align*}
as $\degree R \rightarrow \infty$. The second relation follows from Lemma \ref{Lemma, off diagonal z_1 z_2 sum} with $F := R$. This can be applied because

\begin{align*}
z_1 + z_2 \geq 2z_R = 2 \degree R - 2\log_q 2^{\omega (R)} > \frac{19}{10} \degree R
\end{align*}
for large enough $\degree R$. \\

\noindent \textbf{\underline{STEP 2.3:}} Hence, we see that

\begin{align*}
\sumstar_{\chi \modulus R} \lvert b (\chi) \rvert^2
\ll & \phi^* (R) \bigg( \prod_{\substack{P \text{ prime} \\ P \mid R}}  \frac{\big( 1 - \lvert P \rvert^{-1} \big)^3 }{1 + \lvert P \rvert^{-1}} \bigg) (\degree R)^3 \omega (R) .
\end{align*}

\noindent \textbf{\underline{STEP 3:}} We will now look at $\sum_{\chi \modulus R}^* \lvert c (\chi) \rvert^2$. We have that

\begin{align*}
\sumstar_{\chi \modulus R} \lvert c (\chi) \rvert^2
\leq \sum_{\chi \modulus R} \lvert c (\chi) \rvert^2
= \sum_{\chi \modulus R} \lvert c_o (\chi) \rvert^2
-\sum_{\substack{\chi \modulus R \\ \chi \text{ even}}} \lvert c_o (\chi) \rvert^2
+\sum_{\substack{\chi \modulus R \\ \chi \text{ even}}} \lvert c_e (\chi) \rvert^2 .
\end{align*}

\noindent Now,

\begin{align*}
\sum_{\chi \modulus R} \lvert c_o (\chi) \rvert^2
= &\sum_{\chi \modulus R} \sum_{\substack{A,B,C,D \in \mathcal{M} \\ \degree AB = (\degree R) -1 \\ \degree CD = (\degree R) -1}} \frac{\chi (AC) \conj\chi (BD)}{\lvert ABCD \rvert^{\frac{1}{2}}} \\
= &\phi (R) \sum_{\substack{A,B,C,D \in \mathcal{M} \\ \degree AB = (\degree R) -1 \\ \degree CD = (\degree R) -1 \\ (ABCD,R) = 1 \\ AC = BD}} \frac{1}{\lvert ABCD \rvert^{\frac{1}{2}}}
+ \phi (R) \sum_{\substack{A,B,C,D \in \mathcal{M} \\ \degree AB = (\degree R) -1 \\ \degree CD = (\degree R) -1 \\ (ABCD,R) = 1 \\ AC \equiv BD \\ AC \neq BD}} \frac{1}{\lvert ABCD \rvert^{\frac{1}{2}}} .
\end{align*}
For the first term on the far-RHS we have that

\begin{align*}
\sum_{\substack{A,B,C,D \in \mathcal{M} \\ \degree AB = (\degree R) -1 \\ \degree CD = (\degree R) -1 \\ (ABCD,R) = 1 \\ AC = BD}} \frac{1}{\lvert ABCD \rvert^{\frac{1}{2}}}
\leq &\sum_{\substack{N \in \mathcal{M} \\ \degree N \leq (\degree R) -1}} \frac{2^{\omega (N)}}{\lvert N \rvert} \bigg( \sum_{\substack{F \in \mathcal{M} \\ \degree F = \frac{\degree R - \degree N -1}{2}}} \frac{1}{\lvert F \rvert} \bigg)^2 \\
= &\sum_{\substack{N \in \mathcal{M} \\ \degree N \leq (\degree R) -1}} \frac{2^{\omega (N)}}{\lvert N \rvert}
\ll \big( \degree R \big)^2 .
\end{align*}
For the second term we have that

\begin{align*}
\sum_{\substack{A,B,C,D \in \mathcal{M} \\ \degree AB = (\degree R) -1 \\ \degree CD (\degree R) -1 \\ (ABCD,R) = 1 \\ AC \equiv BD \\ AC \neq BD}} \frac{1}{\lvert ABCD \rvert^{\frac{1}{2}}}
= \frac{q}{\lvert R \rvert} \sum_{\substack{A,B,C,D \in \mathcal{M} \\ \degree AB = (\degree R) -1 \\ \degree CD (\degree R) -1 \\ (ABCD,R) = 1 \\ AC \equiv BD \\ AC \neq BD}} 1
\ll \frac{\lvert R \rvert }{\Phi (R)} \big( \degree R \big)^3 ,
\end{align*}
where we have used Lemma \ref{Lemma, off diagonal z_1 z_2 sum}. Hence, 

\begin{align*}
\sum_{\chi \modulus R} \lvert c_o (\chi) \rvert^2
\ll \lvert R \rvert \big( \degree R \big)^3 
\ll \phi^* (R) \bigg( \prod_{\substack{P \text{ prime} \\ P \mid R}}  \frac{\big( 1 - \lvert P \rvert^{-1} \big)^3 }{1 + \lvert P \rvert^{-1}} \bigg) (\degree R)^3 \omega (R)
\end{align*}

\noindent Similarly, by using Lemma \ref{Lemma, off diagonal z_1 z_2 sum with alpha} for the even case, we can show, for $a=0,1,2,3$, that

\begin{align*}
\sum_{\chi \modulus R} \sum_{\substack{A,B,C,D \in \mathcal{M} \\ \degree AB = \degree R - a \\ \degree CD = \degree R - a}} \frac{\chi (AC) \conj\chi (BD)}{\lvert ABCD \rvert^{\frac{1}{2}}} 
\ll &\lvert R \rvert \big( \degree R \big)^3 
\ll \phi^* (R) \bigg( \prod_{\substack{P \text{ prime} \\ P \mid R}}  \frac{\big( 1 - \lvert P \rvert^{-1} \big)^3 }{1 + \lvert P \rvert^{-1}} \bigg) (\degree R)^3 \omega (R), \\
\sum_{\substack{\chi \modulus R \\ \chi \text{ even}}} \sum_{\substack{A,B,C,D \in \mathcal{M} \\ \degree AB = \degree R - a \\ \degree CD = \degree R - a}} \frac{\chi (AC) \conj\chi (BD)}{\lvert ABCD \rvert^{\frac{1}{2}}} 
\ll &\lvert R \rvert \big( \degree R \big)^3 
\ll \phi^* (R) \bigg( \prod_{\substack{P \text{ prime} \\ P \mid R}}  \frac{\big( 1 - \lvert P \rvert^{-1} \big)^3 }{1 + \lvert P \rvert^{-1}} \bigg) (\degree R)^3 \omega (R) .
\end{align*}
Hence, by using the Cauchy-Schwarz inequality, we can deduce that

\begin{align*}
\sumstar_{\chi \modulus R} \lvert c (\chi) \rvert^2
\ll \phi^* (R) \bigg( \prod_{\substack{P \text{ prime} \\ P \mid R}}  \frac{\big( 1 - \lvert P \rvert^{-1} \big)^3 }{1 + \lvert P \rvert^{-1}} \bigg) (\degree R)^3 \omega (R) . \\
\end{align*}

\noindent \textbf{\underline{STEP 4:}} From steps 1 to 3, and the use of the Cauchy-Schwarz inequality (as described at the start of the proof), the result follows.
\end{proof}

\vspace{1cm}

%\noindent \textbf{Acknowledgment:} The first author is grateful to the Leverhulme Trust (RPG-2017-320) for the support through the research project grant ``Moments of $L$-functions in Function Fields and Random Matrix Theory". The second author is grateful for an EPSRC Standard Research Studentship (DTP) (660036872).

\bibliography{YiasemidesBibliography1}{}
\bibliographystyle{bibstyle1}

\end{document}